\newtheorem{theo}{Theorem}[section]
\newtheorem{defi}[theo]{Definition}
\newtheorem{lem}[theo]{Lemma}
\newtheorem{prop}[theo]{Proposition}
\newtheorem{rem}[theo]{Remark}
\newtheorem{coro}[theo]{Corollary}
\newtheorem{exam}[theo]{Example}
\newenvironment{proof}{{\bf Proof.}}
\newcommand{\ggot}{\ensuremath{\mathfrak{g}}}
\newcommand{\hgot}{\ensuremath{\mathfrak{h}}}
\newcommand{\kgot}{\ensuremath{\mathfrak{k}}}
\newcommand{\qgot}{\ensuremath{\mathfrak{q}}}
\newcommand{\tgot}{\ensuremath{\mathfrak{t}}}
\newcommand{\zgot}{\ensuremath{\mathfrak{z}}}
\newcommand{\Acal}{\ensuremath{\mathcal{A}}}
\newcommand{\Bcal}{\ensuremath{\mathcal{B}}}
\newcommand{\Dcal}{\ensuremath{\mathcal{D}}}
\newcommand{\Ecal}{\ensuremath{\mathcal{E}}}
\newcommand{\Fcal}{\ensuremath{\mathcal{F}}}
\newcommand{\Hcal}{\ensuremath{\mathcal{H}}}
\newcommand{\Lcal}{\ensuremath{\mathcal{L}}}
\newcommand{\Ncal}{\ensuremath{\mathcal{N}}}
\newcommand{\Ocal}{\ensuremath{\mathcal{O}}}
\newcommand{\Pcal}{\ensuremath{\mathcal{P}}}
\newcommand{\Qcal}{\ensuremath{\mathcal{Q}}}
\newcommand{\Scal}{\ensuremath{\mathcal{S}}}
\newcommand{\Ucal}{\ensuremath{\mathcal{U}}}
\newcommand{\Vcal}{\ensuremath{\mathcal{V}}}
\newcommand{\Wcal}{\ensuremath{\mathcal{W}}}
\newcommand{\Xcal}{\ensuremath{\mathcal{X}}}
\newcommand{\Ycal}{\ensuremath{\mathcal{Y}}}
\newcommand{\Zcal}{\ensuremath{\mathcal{Z}}}
\newcommand{\Pbb}{\ensuremath{\mathbb{P}}}
\newcommand{\Z}{\ensuremath{\mathbb{Z}}}
\newcommand{\C}{\ensuremath{\mathbb{C}}}
\newcommand{\R}{\ensuremath{\mathbb{R}}}
\newcommand{\N}{\ensuremath{\mathbb{N}}}
\newcommand{\Lbb}{\ensuremath{\mathbb{L}}}
\newcommand{\mm}{\ensuremath{\hbox{\rm m}}}
\newcommand{\tr}{\ensuremath{\hbox{\bf Tr}}}
\newcommand{\ntr}{\ensuremath{\hbox{\bf nTr}}}
\newcommand{\croc}{\ensuremath{\hookrightarrow}}
\newcommand{\indice}{\ensuremath{\hbox{\rm Index}}}
\newcommand{\Char}{\ensuremath{\hbox{\rm Char}}}
\newcommand{\End}{\ensuremath{\hbox{\rm End}}}
\newcommand{\Sym}{\ensuremath{{\rm Sym}}}
\newcommand{\QS}{\ensuremath{\mathrm{Q}^{\mathrm{spin}}}}
\newcommand{\spinc}{\ensuremath{{\rm Spin}^{c}}}
\newcommand{\Qrm}{\ensuremath{\mathrm{Q}}}
\def \K  {{\rm\bf K}}
\def \wK {\widehat{K}}
\def \T {{\rm T}}
\def \what {\widehat}
\def \ad {{\rm ad}}
\def \clif {{\bf c}}
\begin{document}

\title{Equivariant Dirac operators and differentiable geometric invariant theory}

\author{Paul-Emile PARADAN\footnote{Institut de Math\'ematiques et de Mod\'elisation de Montpellier, CNRS UMR 5149,
Universit\'e Montpellier 2, \texttt{paradan@math.univ-montp2.fr}}
\hspace{1mm}  and
Mich\`ele VERGNE\footnote{Institut de Math\'ematiques de Jussieu, CNRS UMR 7586,
Universit\'e Paris 7, \texttt{vergne@math.jussieu.fr}}}


\maketitle

{\small
\tableofcontents}



\section{Introduction}

When  $D$ is an elliptic operator on a manifold $M$ preserved by a    symmetry group $K$,
one can understand the aim of ``geometric invariant theory" as the realization of the space of $K$-invariant solutions of $D$ as
the space of  solutions of an elliptic operator on a ``geometric quotient" $M_0$  of $M$.

The by now classical case is concerned with an action of a compact group $K$ on a compact complex manifold $M$: we may consider the
Dolbeault operator $D$ acting on sections of a holomorphic line bundle $L$.
When $L$ is {\bf ample}, Guillemin-Sternberg \cite{Guillemin-Sternberg82}  proved that the $K$-invariant solutions
of $D$ can be realized on Mumford's GIT quotient $M_0:=\Phi_L^{-1}(0)/K$: here $\Phi_L$ is the moment
map associated to the $K$-action on the line bundle $L$. This result was extended to the other cohomology groups of an {\bf ample} line bundle
by Teleman in \cite{Teleman-Annals00} (see also \cite{Sjamaar95}).

In our article, we show that the same construction can be generalized to the differentiable case if properly reformulated.
We consider  a compact connected Lie group $K$  with Lie algebra $\kgot$ acting on a compact, oriented and
even dimensional manifold $M$. In this introduction we assume for simplicity that $M$ carries a $K$-invariant
spin structure: the corresponding Dirac operator plays the role of the Dolbeaut operator.

For {\bf any} line bundle $L$, we consider the Dirac operator $D:=D_L$  twisted by $L$. It acts on sections of
the Clifford bundle $\Scal=\Scal_{\mathrm{spin}}\otimes L$ on $M$, where $\Scal_{\mathrm{spin}}$ is
the spinor bundle of $M$.
We are concerned with the equivariant index of $D$, that we denote by $\Qcal_K(M,\Scal)$
and we also say  that $\Qcal_K(M,\Scal)$ is the space  of virtual solutions of $D$. It belongs to the
Grothendieck group of representations of $K$. More generally, we can consider any
irreducible equivariant Clifford module $\Scal$ over $M$, when $M$ admits a $\spinc$ structure.

 An important example is when $M$ is a compact complex manifold, $K$ a compact group of
 holomorphic transformations of $M$, $L$ a holomorphic $K$-equivariant line bundle on $M$,
 {\bf  not necessarily ample}, and $D$ the Dolbeault operator acting on sections on the Clifford bundle
 $\Scal$ of $L$-valued differential forms of type $(0,q)$.
Then $\Qcal_K(M,\Scal)=\sum_{q=0}^{\dim_\C M} (-1)^q H^{0,q}(M,L)$.

Our aim is to show that the virtual space of $K$-invariant solutions of the twisted Dirac operator $D$
can be identified with the space of  virtual solutions of a twisted Dirac operator on a ``geometric quotient"
$M_{0}$  of $M$, constructed with the help of a moment map. 
To formulate a clean result in the context of Dirac operators is not obvious.
Let us first state the vanishing  theorem  (surprisingly difficult to prove) which will allow us to do so.

We use Duflo's notion of admissible coadjoint orbits (see Section \ref{sec:coadjoint-orbit})
to produce   unitary irreducible representations of $K$. There is a map  $\QS_K$
associating to an admissible coadjoint orbit $\Pcal$  a virtual representation  $\QS_K(\Pcal)$ of $K$.
By this correspondence, regular  admissible  coadjoint orbits parameterize the set $\wK$ of
classes of unitary irreducible representations of $K$. The coadjoint orbit of $\rho$ is regular admissible
and parameterizes the trivial representation of $K$.
 However, if $r$ is the rank of $[\kgot,\kgot]$, there are $2^r$ admissible orbits $\Pcal$
such that
$\QS_K(\Pcal)$ is the trivial representation of $K$.
We will say that such an orbit  $\Pcal$ is an ancestor of the trivial representation.

For $\hgot$ a subalgebra of $\kgot$, we denote by $(\hgot)$ the conjugacy class of $\hgot$.
If $\xi\in \kgot^*$, we denote by $\kgot_\xi$ its infinitesimal stabilizer.
The set $\Hcal_\kgot$ of conjugacy classes of the algebras $\kgot_\xi$,  $\xi$ running in $\in\mathfrak k^*$,
is a finite set. Indeed the complexified Lie algebras of $\kgot_\xi$ varies over the Levi subalgebras of $\kgot_\C$.
For $(\hgot)\in \Hcal_\kgot$, we say that a coadjoint orbit $K\xi$ is of type $(\hgot)$  if
 $\kgot_\xi$ belongs to the conjugacy class $(\hgot)$. The semi-simple part of $\kgot_\xi$ is $[\kgot_\xi,\kgot_\xi]$.

Let $(\kgot_M)$ be  the generic infinitesimal stabilizer of the $K$-action on $M$.
We prove the following theorem.
\begin{theo}\label{theo-intro-1}
If   $([\kgot_M,\kgot_M])$ is not equal to some $([\hgot,\hgot])$, for $\hgot\in \Hcal_\kgot$,
then for any $K$-equivariant line bundle $\Lcal$, $\Qcal_K(M,\Scal)=0$.
\end{theo}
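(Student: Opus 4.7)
The plan is to show that the multiplicity $m_\pi := [\Qcal_K(M,\Scal):\pi] = 0$ for every irreducible $\pi \in \wK$. By Duflo's correspondence each such $\pi$ is parameterized by a unique regular admissible coadjoint orbit $\Pcal_\pi$, whose infinitesimal stabilizer lies in some conjugacy class $(\hgot_\pi)\in\Hcal_\kgot$. Using the shifting trick (multiplication by the character of $\pi$ and integration over $K$), $m_\pi$ equals the $K$-invariant part of the equivariant index of a twisted Dirac operator on the product $M\times\Pcal_\pi^{-}$, so it is enough to show this invariant index vanishes whenever the hypothesis on $[\kgot_M,\kgot_M]$ holds.

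To extract this $K$-invariant index I would apply a Witten-type deformation with a Kirwan vector field coming from a moment map $\Phi$ on $M\times\Pcal_\pi^{-}$. The index then decomposes as a sum of contributions indexed by the connected components $Z_\beta$ of $\Cr(\|\Phi\|^2)$, each characterized by a conjugacy class of infinitesimal stabilizer $\beta\in\kgot$. Every such contribution is itself an equivariant index on a smaller submanifold, which I would analyze using the slice theorem around points of $M$ with generic stabilizer.

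The crux is that a nonzero contribution at $Z_\beta$ forces two simultaneous compatibility conditions: on the $M$-factor the generic stabilizer $(\kgot_M)$ must act compatibly with $\beta$, while on the $\Pcal_\pi^{-}$-factor $\beta$ is constrained by the stabilizer type $(\hgot_\pi)\in\Hcal_\kgot$. Combined, these force the semi-simple part $([\kgot_M,\kgot_M])$ to coincide with some $([\hgot,\hgot])$ for $(\hgot)\in\Hcal_\kgot$. The hypothesis rules this out, so every local contribution vanishes and hence $m_\pi=0$.

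The main obstacle is making the compatibility argument precise: I must identify exactly which stabilizer classes $(\beta)$ can contribute to the localization formula, and show that each such class forces $([\kgot_M,\kgot_M])$ to lie in $\{([\hgot,\hgot]):(\hgot)\in\Hcal_\kgot\}$. This requires a careful stratification of $M\times\Pcal_\pi^{-}$ by $K$-orbit type near the critical strata, combined with the structure theory of admissible orbits (whose stabilizer classes always lie in $\Hcal_\kgot$) and a precise identification of the $K$-invariant piece of each local index via Frobenius reciprocity from an appropriate Levi subgroup. Some care will also be needed to handle potentially infinite sums of contributions rigorously in the representation ring of $K$.
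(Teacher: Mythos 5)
Your overall strategy matches the paper's: reduce to the $K$-invariant part of $\Qcal_K(M\times\Ocal^*,\Scal\otimes\Scal_{\Ocal^*})$ via the shifting trick, apply the Witten deformation, localize on the critical set of the Kirwan vector field, and argue that every contributing component forces $([\kgot_M,\kgot_M])$ to equal some $([\hgot,\hgot])$. The slice-theorem argument you sketch for transferring the constraint to $(\kgot_M)$ is also the one the paper uses at the end of the proof of Theorem~\ref{theo:vanishing-2}.

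However, you have left open precisely the step that carries all the weight, and that step is not routine. What actually makes a local contribution vanish is not a stratification-by-orbit-type argument or Frobenius reciprocity by itself; it is a quantitative positivity estimate. The paper introduces a function $d_\Scal$ on the zero set $Z_\Scal$,
$$
d_\Scal(m)=\|\theta\|^2+\tfrac{1}{2}\ntr_{\T_m M}|\theta| -\ntr_{\kgot}|\theta|,\qquad \theta=\Phi_\Scal(m),
$$
and shows (Proposition~\ref{prop:annulation-d-S}) that the $K$-invariant part of the localized index on any component where $d_\Scal>0$ vanishes, by a direct eigenvalue-minoration of the infinitesimal action of $\theta$ on $\mathbf{d}_\theta(\Scal)\otimes\Sym(\Ncal_\theta)\otimes\bigwedge(\kgot/\kgot_\theta)_\C$. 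Applied to $M\times\Ocal^*$, the decisive input is the ``magical inequality'' (Proposition~\ref{prop:infernal-with-trace}): for $\lambda$ very regular, $\mu$ with $\beta=\mu-\lambda$, one has $\|\beta\|^2\geq\tfrac12\ntr_{\kgot_\mu}|b|$, with a precise characterization of the equality case (namely $s(K\mu)=K\lambda$). Plugging this into the formula for $d_\Ocal$ shows $d_\Ocal\geq 0$ everywhere, and that $d_\Ocal=0$ can only occur when $K\mu$ is an admissible ancestor of $\Ocal$ \emph{and} $\ntr_{E_m}|\beta|=0$; the latter, via the slice, yields $[\hgot,\hgot]\subset(\kgot_M)$ and hence $([\kgot_M,\kgot_M])=([\hgot,\hgot])$. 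Without the magical inequality and the function $d_\Ocal$ your ``two simultaneous compatibility conditions'' remain a heuristic: there is no a priori reason the localization contributions from components with $\beta$ of arbitrary type should cancel or vanish, and in fact they do not vanish individually --- only the $K$-invariant part does, and only because of this sharp estimate. You should supply this inequality (or an equivalent) to close the gap you yourself flag in the last paragraph.
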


We may thus assume that there exists   $(\hgot)\in \Hcal_\kgot$, such that
$([\kgot_M,\kgot_M])=([\hgot,\hgot])$ : this class is unique and is denoted $(\hgot_M)$.
This condition on the $K$-action is always satisfied in the Hamiltonian setting \cite{L-M-T-W}, but
not always in the spin setting (see the case of spheres in Example \ref{example-sphere}).

%

Consider our line bundle $L$. The choice of an Hermitian connection $\nabla$ determines a moment map
$$
\Phi_{L} : M\to \kgot^*
$$
by the relation $\Lcal(X)-\nabla_{X_M}=i\langle \Phi_{L},X\rangle$, for  all   $X\in\kgot$.

We now describe the geometric quotient $M_0$.
Let us first state the result, when the infinitesimal stabilizer $(\kgot_M)$ is abelian.
The corresponding $(\hgot_M)$ is the conjugacy class of Cartan subalgebras, and we consider
$$
M_0=\Phi_L^{-1}(K\rho)/K
$$
where $K\rho$ is the regular admissible orbit that parameterizes the trivial representation.
In the general case, we define
$\Ocal_M=\bigcup \Pcal$ to be the union of the ancestors of the trivial representation {\bf which are of type $(\hgot_M)$}. Thus $\Ocal_M$ is a  union of a
finite number of admissible coadjoint orbits, a number that might be greater than $1$
(see Example \ref{sec:induced-example} in the last section).
We then consider
$$M_0=\Phi_L^{-1}(\Ocal_M)/K.$$

Then we  define, by a desingularization procedure,
a virtual vector space $\QS(M_0)$ which coincides when $M_0$ is smooth with
the space of virtual solutions of a twisted Dirac operator on $M_0$.
We prove the following theorem.
\begin{theo}\label{theo-intro-2}
$$[\Qcal_K(M,\Scal)]^K=\QS(M_0).$$
\end{theo}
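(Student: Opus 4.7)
The plan is to prove Theorem \ref{theo-intro-2} by establishing a spin$^c$ ``quantization commutes with reduction'' identity for each admissible coadjoint orbit individually, and then summing the contributions from the ancestors of the trivial representation. For any admissible orbit $\Pcal\subset\kgot^*$, let $M_\Pcal:=\Phi_L^{-1}(\Pcal)/K$ denote the reduced space, equipped with its induced spin$^c$-quantization $\QS(M_\Pcal)$ (defined, when $M_\Pcal$ is singular, through a canonical partial desingularization). The key multiplicity formula to prove is that for every $\pi\in\wK$,
\[
\mathrm{mult}_\pi\bigl(\Qcal_K(M,\Scal)\bigr) \;=\; \sum_{\Pcal\,:\,\QS_K(\Pcal)=\pi}\QS(M_\Pcal).
\]
Setting $\pi$ equal to the trivial representation would immediately yield $[\Qcal_K(M,\Scal)]^K=\sum_\Pcal\QS(M_\Pcal)$, the sum ranging over ancestors of the trivial representation.

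The workhorse for the multiplicity formula is the shifting trick combined with a Witten-type deformation. By Frobenius reciprocity,
\[
\mathrm{mult}_{\QS_K(\Pcal)}\bigl(\Qcal_K(M,\Scal)\bigr)\;=\;\bigl[\Qcal_K(M\times\Pcal^-,\Scal\boxtimes\Scal_\Pcal)\bigr]^K,
\]
where $\Pcal^-$ denotes $\Pcal$ with the opposite $K$-action. On $M\times\Pcal^-$ the assignment $\Phi(m,\xi):=\Phi_L(m)-\xi$ is a moment-type map, and deforming the Dirac principal symbol by the Kirwan vector field $\kappa=\Phi^{\#}$ produces a transversally elliptic symbol whose equivariant index localizes on $\kappa^{-1}(0)$. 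The ``diagonal'' component of $\kappa^{-1}(0)$ sits over $\Phi_L^{-1}(\Pcal)$ and should contribute precisely $\QS(M_\Pcal)$, via a local spin$^c$-normal-form computation near the free part of $\Phi_L^{-1}(\Pcal)$ together with a gluing argument at the singular strata. Every remaining critical component carries an induced action of a proper subgroup $K'\subset K$ whose generic infinitesimal stabilizer fails the hypothesis of Theorem \ref{theo-intro-1}; its contribution therefore vanishes.

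To conclude, I would apply Theorem \ref{theo-intro-1} a second time: for any ancestor $\Pcal$ of the trivial representation whose type differs from $(\hgot_M)$, the generic infinitesimal $K$-stabilizer on $\Phi_L^{-1}(\Pcal)$ does not satisfy the semisimple-part condition, forcing $\QS(M_\Pcal)=0$. By definition of $\Ocal_M$ the surviving ancestors are precisely those indexed by $\Ocal_M=\bigsqcup\Pcal$, so $M_0=\bigsqcup_\Pcal M_\Pcal$ and $\QS(M_0)=\sum_\Pcal\QS(M_\Pcal)=[\Qcal_K(M,\Scal)]^K$, as required.

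The chief obstacle is the local-model analysis at the diagonal critical component of $\kappa$, especially when $M_\Pcal$ is singular. In the Hamiltonian setting one has at hand the Marle--Guillemin--Sternberg symplectic normal form and a preferred pre-quantum line bundle; in the present spin$^c$ setting no symplectic structure is available near $\Phi_L^{-1}(\Pcal)$, so one must develop a purely spin$^c$ normal form, descend the structure to the (possibly singular) quotient $M_\Pcal$, and verify that the resulting virtual $\QS(M_\Pcal)$ is independent of the chosen desingularization. A secondary but non-routine difficulty is to identify, at each off-diagonal critical component of $\kappa$, the auxiliary group $K'$ and the corresponding parameterized reduction to which Theorem \ref{theo-intro-1} is applied.
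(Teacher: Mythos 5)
Your overall strategy---shifting trick, Witten deformation by the Kirwan vector field, localization on the critical set, and identification of the ``diagonal'' contribution with $\QS(M_\Pcal)$---matches the paper's. But two substantive steps do not work as you describe them, and both concern how the non-contributing terms are discarded.

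First, the decomposition you propose is over \emph{all} ancestors $\Pcal$ of $\Ocal$ (all admissible $\Pcal$ with $\QS_K(\Pcal)=\pi$), and you then plan to argue that the wrong-type ancestors contribute zero. This is not what the localization delivers. The relevant decomposition (Proposition~\ref{prop:Zop}) shows that the $d_\Ocal=0$ locus of $Z_\Ocal\subset M\times\Ocal^*$ is parameterized precisely by the $(\hgot_M)$-ancestors of $\Ocal$, and the remaining components of $Z_\Ocal$ have $d_\Ocal>0$. Ancestors of a different type do not correspond to separate critical components to be dealt with; they simply never appear. The mechanism that kills the $d_\Ocal>0$ components is the magical inequality (Proposition~\ref{prop:infernal-with-trace}) fed into Proposition~\ref{prop:annulation-d-S}: the estimate $d_\Ocal\geq\|\beta\|^2+\tfrac12\ntr_{E_m}|\beta|-\tfrac12\ntr_{\kgot_\mu}|\beta|\geq0$, with a precise characterization of the equality case. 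Your proposed alternative---that each off-diagonal component carries an action of a proper $K'\subset K$ failing the hypothesis of Theorem~\ref{theo-intro-1}---is not how the vanishing is obtained and would be circular: Theorem~\ref{theo-intro-1} is itself a corollary of this very analysis (it is Theorem~\ref{theo:vanishing-1}), not an independently available tool.

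Second, the final step, applying Theorem~\ref{theo-intro-1} to $\Phi_L^{-1}(\Pcal)$ (or to $M_\Pcal$) to deduce $\QS(M_\Pcal)=0$ for ancestors of the wrong type, is a category error. Theorem~\ref{theo-intro-1} is a statement about the equivariant index $\Qcal_K(N,\Scal)$ of a compact $K$-manifold $N$, predicated on properties of the generic infinitesimal $K$-stabilizer. The quantity $\QS(M_\Pcal)$ is an index on the quotient orbifold $M_\Pcal=\Phi_L^{-1}(\Pcal)/K$, where $K$ no longer acts; the hypothesis and conclusion of Theorem~\ref{theo-intro-1} do not apply to it. In fact the paper only \emph{defines} $\QS(M_\Pcal)$ (Section~\ref{sec:multiplicity}) for $\Pcal\in\Acal((\hgot_M))$, using the slice $\Ycal_C$ which exists only in that case, so the objects your second appeal to Theorem~\ref{theo-intro-1} would be annihilating do not even have a meaning in the paper's framework.

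You correctly identify the technical heart---defining $\QS(M_\Pcal)$ for singular reduced spaces and proving its stability under perturbation of the level (Theorem~\ref{theo:Q-N-mu}, via the abelian slice and $\Qcal_{A_H}(\Ycal_C,\Scal^\Pcal_{\Ycal_C},\{0\})$). That part of your outline is at the right level of detail but defers the hard work. The two specific claims above, however, are gaps that would need to be replaced by the $d_\Ocal$ estimate and Proposition~\ref{prop:Zop}.
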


This is an equality of dimensions. However, this equality holds also in the Grothendieck
group of irreducible representations of $G$, if $G$ is a  compact group of symmetry
commuting with the action of $K$.

Thus our space $M_0$ plays the role of the geometric quotient in this purely
differentiable setting. The space $M_0$ may vary dramatically with the choice of the connection $\nabla$ (see Example  \ref{exa:P1} in the last section),
but not its quantized space $\QS(M_0)$.

Let us recall that we did not make any assumption on the line bundle $L$.
So this equality is true for any  line bundle $L$, and any choice of $K$-invariant connection $\nabla$ on $L$.
In particular, the curvature of $\nabla$ might be always degenerate, whatever choice
of connection.
In the last section, Section \ref{sec:examples}, we raise a question on existence of ``best connections".

\medskip

Let us recall  the previous results on this subject. After their work on the K\"ahler case, \cite{Guillemin-Sternberg82}, Guillemin-Sternberg
formulated the  conjecture ``Quantization commutes with reduction" denoted by $[Q,R]=0$
when $M$ was  symplectic and $L$ a Kostant line bundle  on $M$. The curvature of $L$ is thus $i$ times the symplectic form and  the reduced space $M_0$ is again a symplectic manifold.
 This conjecture was
proved  in full generality by Meinrenken-Sjamaar \cite{Meinrenken-Sjamaar}, following partial results notably by
\cite{Guillemin95, Vergne96, Jeffrey-Kirwan1997, Meinrenken98}. Later,  other proofs by analytic or topological methods
were given by \cite{Tian-Zhang98,pep-RR}.

After the remarkable results of Meinrenken-Sjamaar \cite{Meinrenken-Sjamaar},
 it was tempting to find in what way we can extend their results for the Hamiltonian case to the general $\spinc$ situation.
 In this general context, our manifold $M$ is not necessarily complex, nor even almost-complex.
 So the only elliptic operators which make sense in this case are
 twisted Dirac operators. We restrict ourselves to line bundles, the case of vector bundles
 being obtained by pushforward of index of line bundles.

When $M$ is a $\spinc$ manifold, with an action of $S^1$,
a partial answer to
 the question of quantization commutes with reduction in the spin setting  has been obtained by
\cite{Grossberg-Karshon94,Grossberg-Karshon98,CdS-K-T}.
The case of toric manifolds and non ample line bundles have been treated in
\cite{Karshon-Tolman93}.  These interesting examples (we give an example
  due to
 Karshon-Tolman in the last section, Example \ref{sec:hirzebruch-surface})
 motivated us to search for a
general result.
However,  to formulate what should be the result in the general non abelian case  was not immediately
clear to us, although  a posteriori very natural.
We really had to use (in the case where the generic stabilizer is non abelian)
non regular admissible orbits.

Let us also say that, due to the inevitable $\rho$-shift in the spin context,
our theorem does not imply  immediately the $[Q,R]=0$ theorem of the Hamiltonian case.
 Both theorems are somewhat  magical, but each one on its own.
We will come back to the
 comparison between these two formulations  in future work devoted to the special case of almost complex manifolds.

Recently, using analytic methods adapted from those of Braverman, Ma, Tian and Zhang \cite{Tian-Zhang98,Braverman-02,Ma-Zhang14,Braverman-14},
Hochs-Mathai \cite{Hochs-Mathai2014} and Hochs-Song \cite{Hochs-Song2015}  have extended our theorem to other natural settings
where the group and/or the manifold are not compact. Note that in their works, the authors have to use our result in the compact setting to obtain these extensions.

\subsection{Description of the results}

We now give a detailed description of the theorem proved in this article.

Let $M$ be a compact connected manifold. We assume that $M$ is even dimensional and oriented.
We consider a spin$^c$ structure on $M$, and denote by $\mathcal S$   the corresponding  spinor bundle.
Let $K$ be a compact connected Lie group acting on $M$ and $\Scal$ and
we denote by $D: \Gamma(M, \Scal^+)\to \Gamma(M, \Scal^-)$ the corresponding $K$-equivariant $\spinc$ Dirac operator.

Our aim is to describe the space of $K$-invariant solutions, or more generally,
the equivariant index  of $D$, denoted  by $\Qcal_K(M,\Scal)$. It belongs to the Grothendieck group of representations of $K$:
$$
\Qcal_K(M,\Scal) = \sum_{\pi\in \widehat{K}} \mm(\pi) \ \pi.
$$

Consider the determinant line bundle $\det(\Scal)$ of the  spin$^c$ structure. This is a $K$-equivariant complex line bundle on $M$.
The choice of a $K$-invariant hermitian metric  and of a $K$-invariant hermitian connection  $\nabla$ on $\det(\Scal)$
determines a  moment map
$$
\Phi_{\Scal} : M\to \kgot^*.
$$
If $M$ is spin and $\Scal=\Scal_{\mathrm{spin}}\otimes L$, then $\det(\Scal)=L^{\otimes 2}$ and
$\Phi_{\Scal}$ is equal to the moment map $\Phi_L$ associated to a connection on $L$.

We start to explain our result on the geometric description of $\mm(\pi)$ in the torus case.
The general case reduces (in spirit) to this case, using an appropriate slice for the $K$-action on $M$.

Let $K=T$ be a torus acting effectively on $M$. In contrast to the symplectic case, the image $\Phi_\Scal(M)$ might not be convex.
Let $\Lambda\subset \tgot^*$ be the lattice of weights. If $\mu\in \Lambda$, we denote by $\C_\mu$ the corresponding
one dimensional representation of $T$. The equivariant index $\Qcal_T(M,\Scal)$ decomposes as
$\Qcal_T(M,\Scal)=\sum_{\mu\in \Lambda} \mm_\mu\, \C_\mu$.

The topological space $M_{\mu}=\Phi_\Scal^{-1}(\mu)/T$, which may not be connected,
is an orbifold provided with a $\spinc$-structure when $\mu$ in $\tgot^*$ is a regular value of $\Phi_\Scal$.
 In this case  we  define the  integer $\QS(M_{\mu})$ as the index of the corresponding $\spinc$ Dirac operator on the
 orbifold  $M_{\mu}$. We can define $\QS(M_{\mu})$ even if $\mu$ is a singular value.
Postponing this definition, our result states that
\begin{equation}\label{eq:mm-mu}
\mm_\mu=\QS(M_{\mu}),\quad \forall\mu\in\Lambda.
\end{equation}

Here is the definition of $\QS(M_{\mu})$ (see Section \ref{sec:spin-index-singular}). We approach $\mu$
by a regular value $\mu+\epsilon$, and we define $\QS(M_{\mu})$ as the index of a $\spinc$
Dirac operator on the  orbifold  $M_{\mu+\epsilon}$, and this is independent of the choice of $\epsilon$ sufficiently close.
Remark here that $\mu$ has to be an interior point of  $\Phi_\Scal(M)$ in order for  $\QS(M_{\mu})$ to be non zero,
as otherwise we can take $\mu+\epsilon$ not in the image. In a forthcoming article, we will give  a more detailed description
of the function $\mu\to \QS(M_{\mu})$ in terms of locally quasi-polynomial functions on $\tgot^*$.

The identity (\ref{eq:mm-mu}) was obtained by Karshon-Tolman \cite{Karshon-Tolman93} when $M$ is a toric manifold, by
Grossberg-Karshon \cite{Grossberg-Karshon98} when $M$ is a locally toric space, and by
Cannas da Silva-Karshon-Tolman \cite{CdS-K-T}
when $\dim T=1$. In Figure \ref{nonample-intro}, we draw the picture of the function
$\mu\mapsto \QS(M_{\mu})$ for the Hirzebruch surface, and  a non ample line bundle on it (we give the details
of this example due to Karshon-Tolman in the last section).
The image of $\Phi$ is the union of the two large triangles in red and blue. The multiplicities are $1$
on the integral points of the interior of the red triangle, and $-1$ on the integral points of the interior of the blue triangle.

\begin{figure}
\begin{center}
  \includegraphics[width=2 in]{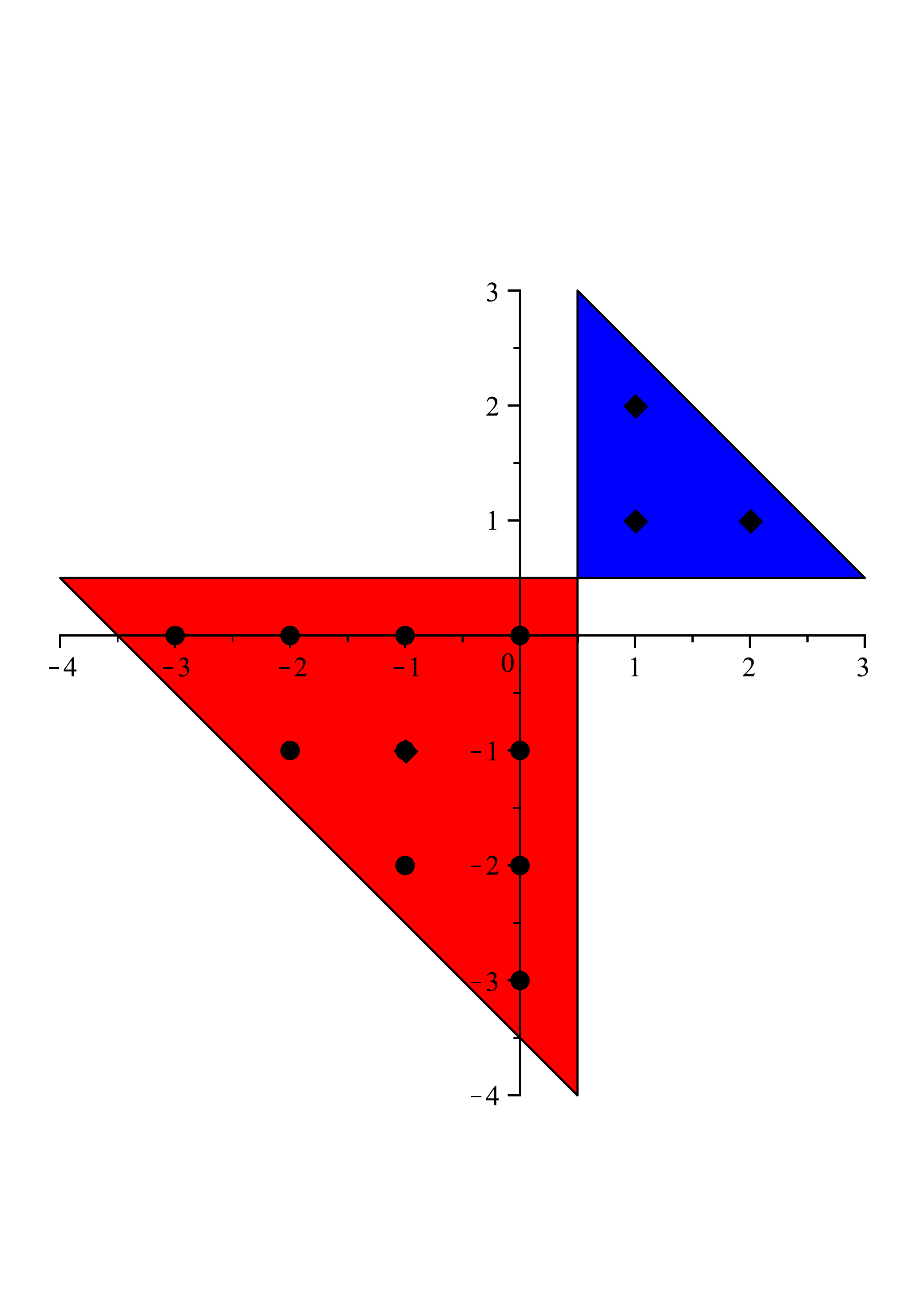}
\caption{$T$-multiplicities for  non ample bundle on Hirzebruch surface}
 \label{nonample-intro}
\end{center}
\end{figure}

Now consider the general case of a compact connected Lie group $K$ acting on
$M$ and $\Scal$. So we may assume that $([\kgot_M,\kgot_M])=([\hgot_M,\hgot_M])$ for $(\hgot_M)\in \Hcal_\kgot$, as otherwise $\Qcal_K(M,\Scal)=0$.

We say that a coadjoint orbit  $\Pcal\subset \kgot^*$ is admissible if $\Pcal$ carries a $\spinc$-bundle $\Scal_\Pcal$ such that
the corresponding moment map  is the inclusion $\Pcal\croc \kgot^*$. We denote simply by
$\QS_K(\Pcal)$ the element $\Qcal_K(\Pcal,\Scal_\Pcal)\in R(K)$. It is either $0$ or an irreducible representation
of $K$, and the map
$$
\Ocal\mapsto \pi_\Ocal:=\QS_K(\Ocal)
$$
defines a bijection between the regular admissible orbits and the dual $\widehat{K}$.
When $\Ocal$ is a regular admissible orbit, an admissible coadjoint orbit  $\Pcal$
is called an ancestor of $\Ocal$ (or an ancestor of $\pi_\Ocal$)
 if $\QS_K(\Pcal)=\pi_\Ocal$.

Denote by $\Acal((\hgot_M))$ the set of admissible orbits  of type $(\hgot_M)$.
If $\Pcal\in \Acal((\hgot_M))$, we can define the $\spinc$ index
$\QS(M_\Pcal)\in \Z$ of the reduced space $M_\Pcal=\Phi^{-1}_\Scal(\Pcal)/K$
(by a deformation procedure if $M_\Pcal$ is not smooth).

We obtain the following  theorem which is the main result of the paper.

\begin{theo}\label{theo:intro-QR}
Assume that $([{\mathfrak k}_M,{\mathfrak k}_M])=([{\mathfrak h_M},{\mathfrak h_M}])$ for $(\hgot_M)\in \mathcal H_\mathfrak k$.

$\bullet$ The multiplicity of the representation $\pi_\Ocal$ in $\Qcal_K(M,\Scal)$ is equal to
$$
\sum_{\Pcal}  \QS(M_{\Pcal})
$$
where the sum runs over the ancestors of $\Ocal$ of type $(\hgot_M)$. In other words
$$
\Qcal_K(M,\Scal)=\sum_{\Pcal\in \Acal((\hgot_M))}\QS(M_{\Pcal})\QS_K(\Pcal).
$$

\end{theo}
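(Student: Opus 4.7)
The plan is to prove the identity by a Witten-type deformation of the $\spinc$-Dirac symbol using the Kirwan vector field adapted to $\Phi_\Scal$, combined with a repeated application of the vanishing Theorem~\ref{theo-intro-1} on $K_\beta$-equivariant slices. Concretely, fix a $K$-invariant inner product on $\kgot$ to identify $\kgot\simeq\kgot^*$, and let $\kappa_\Phi$ denote the vector field on $M$ defined by $\kappa_\Phi(m)=(\Phi_\Scal(m))_M(m)$. Modifying the principal symbol of $D$ by $i\,\clif(\kappa_\Phi)$ gives a $K$-transversally elliptic symbol whose equivariant index equals $\Qcal_K(M,\Scal)$ and which is elliptic outside the closed $K$-invariant zero set $Z_\Phi = \{m:\kappa_\Phi(m)=0\}$.

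I would then decompose $Z_\Phi = \bigsqcup_\beta Z_\beta$ as a finite disjoint union indexed by a finite subset of $\tgot^*$, with $Z_\beta = K\cdot(M^{K_\beta}\cap \Phi_\Scal^{-1}(\beta))$, and invoke the excision principle for transversally elliptic indices to obtain
$$\Qcal_K(M,\Scal) = \sum_{\beta} \Qcal_K(M,\Scal)_\beta.$$
A $K$-equivariant tubular neighborhood of $Z_\beta$ is modeled on $K\times_{K_\beta} U_\beta$ for a $K_\beta$-invariant slice $U_\beta$, and the holomorphic induction formula gives
$$\Qcal_K(M,\Scal)_\beta = \HolB\bigl(\Qcal_{K_\beta}(U_\beta,\Scal|_{U_\beta})_\beta\bigr).$$
On the slice, a further deformation around $\beta$ together with the $\rho_\beta$-shift intrinsic to the $\spinc$ formalism brings the local index into a tubular normal form around $M^{K_\beta}\cap\Phi_\Scal^{-1}(\beta)$, where the Dirac index factorizes into a fiber contribution and a base contribution.

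Next, I would apply Theorem~\ref{theo-intro-1} on the slice: the local contribution $\Qcal_{K_\beta}(U_\beta,\Scal|_{U_\beta})_\beta$ vanishes unless the semisimple part of the generic $K_\beta$-stabilizer, after the $\rho_\beta$-shift, belongs to $([\hgot,\hgot])$ for some $(\hgot)\in\Hcal_\kgot$. Combined with the admissibility condition built into the deformation, this forces the surviving $\beta$ to be in bijection with those admissible orbits $\Pcal\in\Acal((\hgot_M))$ that are ancestors of some $\Ocal\in\widehat{K}$. For each surviving $\Pcal$, the contribution evaluates to $\QS(M_\Pcal)\,\QS_K(\Pcal)$: the holomorphic induction $\HolB$ turns the fiber factor into $\QS_K(\Pcal)$, while the transverse index along the base direction identifies with the $\spinc$-index $\QS(M_\Pcal)$ of the reduced orbifold $M_\Pcal = \Phi_\Scal^{-1}(\Pcal)/K$ (using the desingularization procedure when $M_\Pcal$ is singular, consistently with the torus case formula~(\ref{eq:mm-mu})).

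The main obstacle is the slice analysis of the preceding paragraph: one must rigorously identify the generic $K_\beta$-stabilizer on $U_\beta$, keep track of the $\rho_\beta$-shift so that Theorem~\ref{theo-intro-1} is applied in exactly the right form on the (not necessarily compact) slice, and establish the bijection between the surviving values of $\beta$ and the ancestors $\Pcal\in\Acal((\hgot_M))$ of the irreducible $\pi_\Ocal$. A secondary delicate point is to verify that the geometric desingularization used to define $\QS(M_\Pcal)$ when $M_\Pcal$ is singular matches the analytic Witten deformation on $M$, so that the two integers agree on the nose; this is expected to reduce, via cutting along regular values of $\|\Phi_\Scal\|^2$, to the abelian case~(\ref{eq:mm-mu}), which provides the inductive base.
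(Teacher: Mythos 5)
Your plan follows the broad spirit of the paper (Witten deformation of the Dirac symbol by the Kirwan vector field, localization on the critical set, reduction to a slice, and induction), but it departs from the paper's argument at the crucial point and, as written, has a genuine gap that would prevent it from going through.

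The central missing ingredient is the shifting trick. You deform $\sigma(M,\Scal)$ by $\kappa_{\Phi_\Scal}$ on $M$ itself and decompose $\Qcal_K(M,\Scal)=\sum_\beta \Qcal_K(M,\Scal,Z_\beta,\Phi_\Scal)$. Each local piece lies in $\hat R(K)$ and is typically an infinite-dimensional virtual representation; without further input there is no way to read off the multiplicity $\mm_\Ocal$ of a fixed $\pi_\Ocal$ from this decomposition. The paper instead passes to the product $M\times\Ocal^*$ with $\spinc$-bundle $\Scal\otimes\Scal_{\Ocal^*}$ (Equation~(\ref{eq:shifting-trick})), so that $\mm_\Ocal=[\Qcal_K(M\times\Ocal^*,\Scal\otimes\Scal_{\Ocal^*})]^K$ and only the $K$-invariant part needs to be analyzed. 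That change of manifold is what makes the subsequent estimates tractable. Related to this, the bijection between the ``surviving $\beta$'' and the ancestors of $\Ocal$ — which you describe as the main obstacle of your approach — is exactly what is achieved in the paper by introducing the locally constant function $d_\Ocal:Z_\Ocal\to\R$ (Section~\ref{sec:fonction-d-S}), proving $d_\Ocal\geq 0$ via the magical inequality Proposition~\ref{prop:infernal-with-trace}, and then identifying $Z_\Ocal^{=0}$ with $\bigsqcup_\Pcal Z^\Pcal_\Ocal$ over $(\hgot)$-ancestors (Proposition~\ref{prop:Zop}). Your proposal never introduces a quantity playing the role of $d_\Scal$, so you have no mechanism to discard the non-ancestor contributions or even to establish that their $K$-invariant parts vanish.

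There are two secondary problems. First, you invoke Theorem~\ref{theo-intro-1} on the $K_\beta$-slices as a black box; but in the paper the vanishing Theorem~\ref{theo-intro-1} and the multiplicity formula are proved \emph{together}, both falling out of Theorem~\ref{theo:vanishing-2} on $M\times\Ocal^*$. Treating the vanishing theorem as a prior result to apply on the slice risks circularity, and moreover the slice is non-compact, so Theorem~\ref{theo-intro-1} does not apply to it as stated. Second, your description of the critical components as $Z_\beta=K\cdot(M^{K_\beta}\cap\Phi_\Scal^{-1}(\beta))$ is not what Lemma~\ref{lem:crit} gives: the correct set uses $M^\beta$ (zeros of the vector field $\beta_M$), which in general strictly contains $M^{K_\beta}$; this matters for the Atiyah--Segal/localization step. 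None of these issues is cosmetic — the approach would need to be rebuilt around the shifting trick and a positivity estimate of the type encoded by $d_\Ocal$ to have a chance of closing.
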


When we consider the orbit $K\rho$, the multiplicity of the representation $\pi_{K\rho}$ in $\Qcal_K(M,\Scal)$ is the space of $K$-invariant virtual solutions of $D$
and Theorem  \ref{theo:intro-QR} implies Theorem \ref{theo-intro-2}.

\medskip

It may be useful to rephrase this theorem by describing the parametrization of  admissible orbits by
parameters belonging to the closed Weyl chamber ${\mathfrak t}^*_{\geq 0}$. Let $\Lambda_{\geq 0}:=
\Lambda\cap {\mathfrak t}^*_{\geq 0}$ be the set of dominant weights, and let $\rho$ be the half sum of the positive roots.

The set of regular admissible orbits is indexed by the set $\Lambda_{\geq 0}+\rho$: if
$\lambda\in\Lambda_{\geq 0}+\rho$, the coadjoint orbit $K\lambda$ is regular admissible and $\pi_{K\lambda}$ is the representation with highest weight $\lambda-\rho$.

Denote by $\mathcal{F}$  the set of the relative interiors of the  faces of ${\mathfrak t}^*_{\geq 0}$.
Thus ${\mathfrak t}^*_{\geq 0}=\coprod_{\sigma\in {\mathcal F}} \sigma$. The face $\tgot^*_{>0}$ is the open face in $\Fcal$.

Let  $\sigma\in \mathcal{F}$. The stabilizer $K_\xi$ of a point $\xi\in \sigma$ depends only of $\sigma$.
We denote it by $K_\sigma$, and by $\kgot_\sigma$ its Lie algebra. We choose on ${\mathfrak k}_\sigma$
the system  of positive roots compatible with $\tgot^*_{\geq 0}$, and let $\rho^{K_\sigma}$ be the corresponding $\rho$.
When $\mu\in\sigma$, the coadjoint orbit $K\mu$
is admissible  if and only if
 $\lambda=\mu-\rho+\rho^{K_\sigma} \in \Lambda$.

The map $\Fcal\longrightarrow \mathcal{H}_\kgot$, $\sigma\mapsto (\kgot_\sigma)$, is surjective but not injective. We denote by $\Fcal(M)$ the set of faces of $\tgot^*_{\geq 0}$ such that $(\kgot_\sigma)=(\hgot_M)$.

Using the above parameters, we may rephrase Theorem \ref{theo:intro-QR} as follows.

\begin{theo}\label{th:mult}
Assume that $([{\mathfrak k}_M,{\mathfrak k}_M])=([{\mathfrak h_M},{\mathfrak h_M}])$ with $(\hgot_M)\in \mathcal H_\mathfrak k$.
Let $\lambda\in \Lambda_{\geq 0}+\rho$ and let $\mm_\lambda\in\Z$ be the multiplicity of  the  representation
$\pi_{K\lambda}$ in $\Qcal_K(M,\Scal)$.
We have
\begin{equation}\label{eq:m-lambda}
\mm_\lambda=\sum_{\stackrel{\sigma\in \Fcal(M)}{\lambda-\rho^{K_\sigma}\in \sigma}} \QS(M_{K(\lambda-\rho^{K_\sigma})}).
\end{equation}
\end{theo}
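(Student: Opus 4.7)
The plan is to deduce Theorem \ref{th:mult} from Theorem \ref{theo:intro-QR} by translating the intrinsic sum over ancestors into a combinatorial sum over faces of $\tgot^*_{\geq 0}$. All of the geometric and analytic content is already captured by Theorem \ref{theo:intro-QR}; what remains is a change of parameters.

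First I would pin down the parametrization of admissible orbits by the closed Weyl chamber. Every coadjoint orbit of $K$ meets $\tgot^*_{\geq 0}$ in a unique point $\mu$, which lies in the relative interior of a unique face $\sigma\in\Fcal$, and the conjugacy class of the infinitesimal stabilizer of that orbit equals $(\kgot_\sigma)$. Thus admissible orbits of type $(\hgot_M)$ correspond bijectively to pairs $(\sigma,\mu)$ with $\sigma\in\Fcal(M)$, $\mu\in\sigma$, and $\mu-\rho+\rho^{K_\sigma}\in\Lambda$, exactly as recalled in the paragraph preceding the theorem.

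Second, for an admissible orbit $\Pcal=K\mu$ with $\mu$ in the relative interior of $\sigma$, I would use the identity
\[
\QS_K(K\mu)=\pi_{K(\mu+\rho^{K_\sigma})}
\]
when $\mu+\rho^{K_\sigma}\in\tgot^*_{>0}$, and $\QS_K(K\mu)=0$ otherwise. This is the $\spinc$ Borel--Weil--Bott theorem for the partial flag variety $K/K_\sigma$ with its canonical $\spinc$-structure (the one whose moment map is the inclusion $K\mu\hookrightarrow\kgot^*$). In the regular case $\sigma=\tgot^*_{>0}$ it reduces to the bijection $\Ocal\mapsto\pi_\Ocal$ between regular admissible orbits and $\widehat K$ recalled in the paper; in general it follows from Bott's computation on $K/K_\sigma$. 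Granting this, an admissible orbit $K\mu$ of type $(\hgot_M)$ is an ancestor of $\pi_{K\lambda}$ precisely when $\mu=\lambda-\rho^{K_\sigma}$ with $\mu\in\sigma\in\Fcal(M)$; regularity of $\mu+\rho^{K_\sigma}=\lambda$ is automatic since $\lambda\in\Lambda_{\geq 0}+\rho$ is strictly dominant, while the admissibility condition $\lambda-\rho\in\Lambda$ holds by hypothesis.

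Combining these two steps, the ancestors of $\pi_{K\lambda}$ of type $(\hgot_M)$ are in bijection with the set of $\sigma\in\Fcal(M)$ satisfying $\lambda-\rho^{K_\sigma}\in\sigma$, each such face contributing the orbit $K(\lambda-\rho^{K_\sigma})$ with reduced invariant $\QS(M_{K(\lambda-\rho^{K_\sigma})})$. Substituting this bijection into Theorem \ref{theo:intro-QR} then yields formula \eqref{eq:m-lambda}. The most delicate point in carrying out this plan is the vanishing half of the $\spinc$ Borel--Weil--Bott identity above, since this is what prevents double-counting of ancestors across neighbouring faces of the chamber.
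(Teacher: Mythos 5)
Your proof is correct and coincides with the paper's: the paper presents Theorem \ref{th:mult} explicitly as a rephrasing of Theorem \ref{theo:intro-QR} using the dictionary, set up in the paragraphs immediately preceding it, between admissible orbits of type $(\hgot_M)$ and pairs $(\sigma,\mu)$ with $\mu\in\sigma\in\Fcal(M)$. Your ``$\spinc$ Borel--Weil--Bott'' step is exactly Proposition \ref{prop:notregular} combined with Definition \ref{defi-shift}, and the delicate point you flag (that no ancestor is lost or double-counted across faces) is what that proposition, via the shift $s(\Pcal)$, encodes.
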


More explicitly, the sum (\ref{eq:m-lambda}) is taken over the faces $\sigma$ of the Weyl chamber such that
\begin{equation}\label{eq:condition}
([\kgot_M,\kgot_M])=([\kgot_\sigma,\kgot_\sigma]),\hspace{0.5cm}\Phi(M)\cap \sigma\neq \emptyset,\hspace{0.5cm}\lambda\in\{\sigma +\rho^{K_\sigma}\}.
\end{equation}

In Section \ref{sec:induced-example}, we give an example  of a $SU(3)$-manifold $M$ with generic stabilizer $SU(2)$, and a $\spinc$ bundle $\Scal$
where several $\sigma$
contribute to the multiplicity of a representation $\pi_{K\lambda}$ in $\Qcal_K(M,\Scal)$.
On Figure \ref{ancestors}, the picture of the decomposition of
 $\Qcal_K(M,\Scal)$
is given in terms of the representations $\QS_K(\Pcal)$ associated to the ancestors $\Pcal$ of type $(\hgot_M)=({\mathfrak s}{\mathfrak u}(2))$.
All reduced spaces  are points, but the multiplicity $\QS(M_{\Pcal})$ are equal to $-1$, following from the orientation rule. On the picture,
 the links between admissible  regular orbits $\Ocal$ and their ancestors $\Pcal$ are indicated by segments.
 We see that  the trivial representation of $K$ has two ancestors $\Pcal_1$ and $\Pcal_2$,
  of type $(\hgot_M)$ so that
the multiplicity of the trivial representation is equal to
$$ \QS(M_{\Pcal_1})+\QS(M_{\Pcal_2})=-2
$$
and comes from two different faces of the Weyl chamber.

\begin{figure}
\begin{center}
\includegraphics[width=2 in]{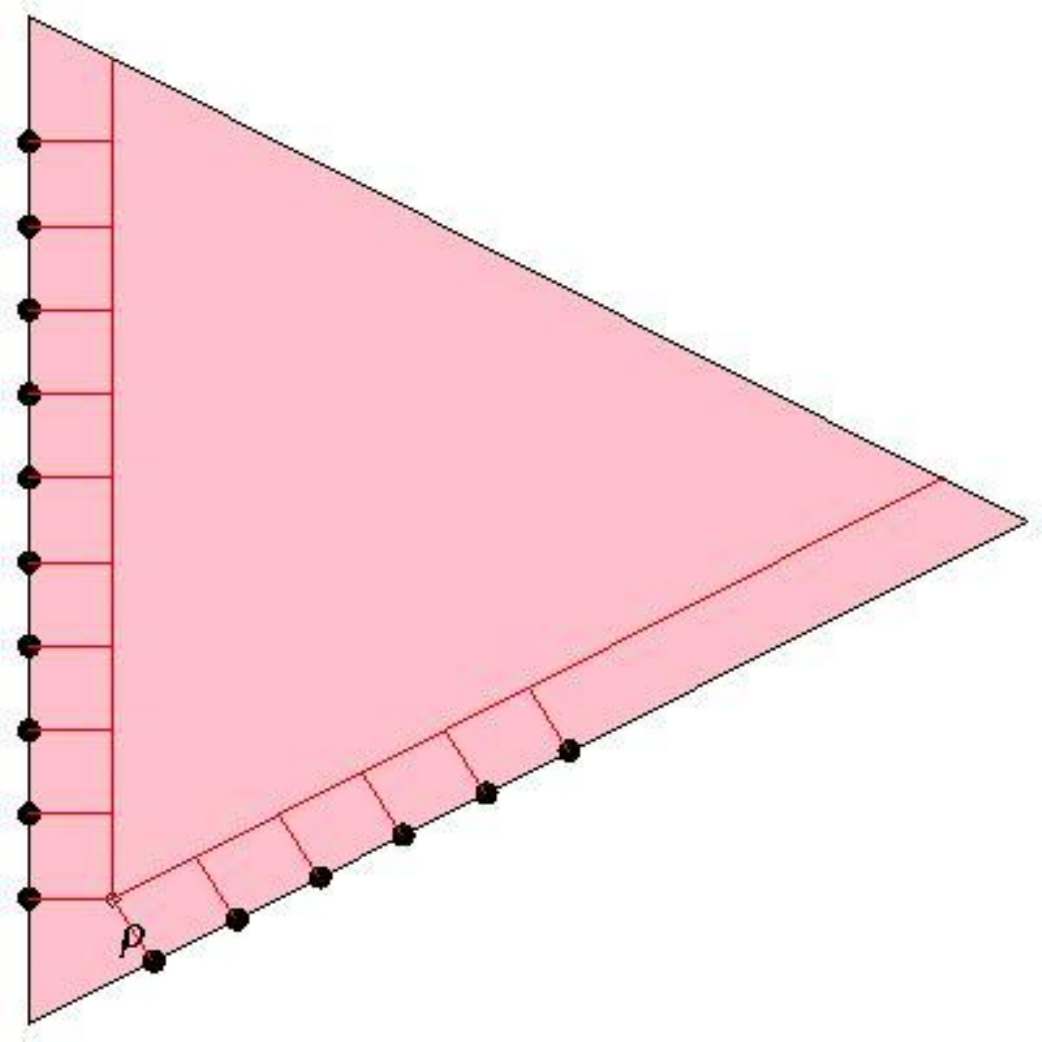}
\caption{$K$-multiplicities and ancestors}
 \label{ancestors}
\end{center}
\end{figure}

\subsection{Strategy}

The moment map $\Phi_\Scal$ allows us to define the Kirwan vector field $\kappa_{\Scal}$ on $M$:
at $m\in M$, $\kappa_\Scal$ is the tangent vector obtained by the infinitesimal action of
 $-\Phi_\Scal(m)$ at $m\in M$. Our proof is based on a localization procedure using the
 vector field $\kappa_\Scal$. Before going into the
details, let us recall the genealogy of the method.

In \cite{Atiyah-Bott-YM}, Atiyah and Bott calculate the cohomology of moduli spaces of vector bundles over Riemann
surfaces by using a stratification defined by the Yang-Mills functional. This functional turns to  be the square of a
moment map (in a infinite dimensional setting). Their approach was developed by Kirwan in \cite{Kirwan84}
to relate the cohomology of the Munford GIT quotient with the equivariant cohomology of the initial manifold.
Recall that, in the Hamiltonian setting, the Kirwan vector field is the Hamiltonian vector field of the square of the moment map.

In \cite{Witten}, Witten proposed a non abelian localization procedure on the zero set of the Kirwan vector field for
the integration of equivariant classes. This  idea had a great influence in many other contexts. For example, Tian and
Zhang \cite{Tian-Zhang98} gave an analytical proof of the $[Q,R]=0$ theorem by deforming \`a la Witten
the Dolbeault-Dirac operator with the Kirwan vector field.

In this paper, we use our $K$-theoretic analogue of the Witten non abelian localization procedure. We defined a topological deformation  of symbols
by pushing the zero section of $\T^*M$ inside $\T^*M$ using the Kirwan vector field
$\kappa_\Scal$. 
Let us briefly explain the main consequences of this powerful tool which was initiated in \cite{Vergne96,pep-RR} and developed in \cite{pep-vergne:witten}.

In Witten non abelian localization formula, computation of integrals of equivariant cohomology classes on $M$ reduces to the study  of contributions coming from a neighborhood of $Z_\Scal$, the set of zeroes of the invariant vector field $\kappa_\Scal$.
 Here, our  $K$-theoretical non abelian localization formula  allows us to compute the index $\Qcal_K(M,\Scal)$
as a sum of equivariant indices of transversally elliptic operators associated to connected components
$Z$ of $Z_\Scal$. We are able to identify them with some basic transversally elliptic symbols whose indices were
computed by Atiyah-Singer (see \cite{Atiyah74}). Although these indices are infinite dimensional representations,
they are easier to understand than the original finite
dimensional representation $\Qcal_K(M,\Scal).$
The main difficulty is in estimating which components $Z$
contributes to the $K$-invariant part. We are able to do so,
using a miraculous estimate on distance  between admissible coadjoint orbits
proved in \cite{pep-vergne:magic}. Here is when the ancestors of the trivial representation
enter the game.
 As shown by the final result, we have (in contrast to the Hamiltonian setting) to take in account several components and to identify their contributions.

\subsection{Outline of the article}
Let us explain the contents of the different sections of the article, and their main use in the final proof.

$\bullet$ In Section \ref{sec:spinc-index}, we give the definition of the index of a $\spinc$-bundle.

$\bullet$ In Section \ref{sec:coadjoint-orbit}, we describe the canonical $\spinc$-bundle on admissible
coadjoint orbits (see (\ref{eq:spinor-O})). For a $K$-admissible coadjoint orbit $\Pcal$, we determine the regular
admissible orbit $\Ocal$ such that if $\QS_K(\Pcal)$ is not zero, then $\QS_K(\Pcal)=\pi_\Ocal$
(Proposition \ref{prop:notregular}).

In Proposition \ref{prop:infernal-with-trace}, we state the ``magical inequality''  that will be used over and over again in this article.

$\bullet$ In Section \ref{sec:computing-multiplicities}, we define our $K$-theoretical
 analogue of the Witten deformation and recall some of its properties
(proved in \cite{pep-RR,pep-vergne:witten}). It allows us to reduce the computation of $\Qcal_K(M,\Scal)$ to indices $q_Z$
of simpler transversally elliptic operators defined in neighborhoods of connected components of $Z_\Scal=\{\kappa_\Scal=0\}$.

We introduce a function $d_\Scal : Z_\Scal\to \R$. If $d_\Scal$ takes strictly positive values on some component
$Z$ of $Z_\Scal$, then the $K$-invariant part of the (virtual) representation $q_Z$ is equal to $0$ (Proposition \ref{prop:annulation-d-S}). This is a very important technical proposition.

If $\Ocal$ is an admissible regular coadjoint orbit, the shifting trick leads us to study the manifold $M\times \Ocal^*$ with
$\spinc$-bundle $\Scal\otimes \Scal_{\Ocal^*}$. We want to select the component $Z$ of $Z_{\Scal\otimes \Scal_{\Ocal^*}}$
so that $[q_Z]^K$ is not zero.

Here is where we discover that, for $\Qcal_K(M,\Scal)$ to be non zero, it is necessary that the semi-simple part of the generic stabilizer
$\kgot_M$ of the action of $K$ on $M$ is equal to the semi-simple part of a Levi subalgebra $\hgot$ of $\kgot$. Let $H$ be the connected subgroup of $K$ with Lie algebra $\hgot$.
It follows that  such a   component  $Z$ is described rather simply  as an induced manifold $K\times_H(Y\times o(\hgot))$,
with $Y$ a $H/[H,H]$ manifold, and $o(\hgot)$ the $[H,H]$-orbit of the corresponding $\rho^{[H,H]}$ element.
Then we use the fact that the quantization of the orbit of $\rho$ is the trivial representation.
In fact, to determine  the contributing  components $Z$, we study  a function $d_\Ocal:Z_{\Scal\otimes \Scal_{\Ocal^*}}\to \R$ relating the representation  of $K_m$
on $\T_mM$ and the norm of $\Phi_\Scal(m)$. Here $K_m$ is the stabilizer of $m\in M$.
It relies on the ``magical inequality" (Proposition \ref{prop:infernal-with-trace}) on distance of regular
weights to faces of the Weyl chamber proved in \cite{pep-vergne:magic}.

$\bullet$ In Section \ref{sec:multiplicity}, we explain how to define indices on singular reduced spaces. The main theorem is their invariance under small deformation. We then have done all the work needed to be able to prove the main theorem.

$\bullet$ The last section is dedicated to some simple examples intended to show several features of our theory.

\section*{Acknowledgments}
We wish to thank the Research in Pairs program at Mathematisches \break
Forschungsinstitut Oberwolfach (February 2014), where this work was started.
The second author wish to thank Michel Duflo for many discussions.

\begin{center}
\bf Notations
\end{center}

Throughout the paper :
\begin{itemize}
\item $K$ denotes a compact connected Lie group with Lie algebra $\kgot$.
\item $T$ is a maximal torus in $K$ with Lie algebra $\tgot$.
\item $\Lambda\subset \tgot^*$ is the weight lattice  of $T$ : every $\mu\in \Lambda$ defines a $1$-dimensional
$T$-representation, denoted $\C_\mu$, where $t=\exp(X)$ acts by $t^\mu:= e^{i\langle\mu, X\rangle}$.

\item We fix a $K$-invariant inner product $(\cdot,\cdot)$ on $\kgot$. This allows us to identify $\kgot$ and $\kgot^*$ when needed.

We denote by $\langle \cdot,\cdot\rangle$ the natural duality between $\kgot$ and $\kgot^*$.

\item We denote by $R(K)$ the representation ring of $K$ : an element $E\in R(K)$ can be represented
as finite sum $E=\sum_{\mu\in\what{K}}\mm_\mu \pi_\mu$, with $\mm_\mu\in\Z$. The multiplicity
of the trivial representation is denoted $[E]^K$.

\item We denote by $\hat R(K)$ the space of $\Z$-valued functions on $\hat K$. An element $E\in
    \hat R(K)$ can be represented
as an infinite sum $E=\sum_{\mu\in\what{K}}\mm(\mu) V_\mu$, with $\mm(\mu)\in\Z$.

\item If $H$ is a closed subgroup of $K$, the induction map $\mathrm{Ind}_H^K: \hat{R}(H)\to \hat{R}(K)$ is the dual of the restriction morphism $R(K)\to R(H)$.

\item When $K$ acts on a set $X$, the stabilizer subgroup of $x\in X$ is denoted $K_x:=\{k\in K\ \vert\ k\cdot x=x\}$. The Lie algebra of $K_x$ is denoted $\kgot_x$.

\item An element $\xi\in \kgot^*$  is called regular if $K_\xi$ is a maximal torus of $K$.

\item When $K$ acts on a manifold $M$, we denote $X_M(m):=\frac{d}{dt}\vert_{t=0} e^{-tX}\cdot m$
the vector field generated by $-X\in \kgot$. Sometimes we will also use the notation $X_M(m)=-X\cdot m$.
The set of zeroes of the vector field $X_M$ is denoted $M^X$.

\item If $V$ is a complex (ungraded) vector space, then the exterior space $\bigwedge V=\bigwedge^+ V\oplus \bigwedge^-V$ will be $\Z/2\Z$ graded in even and odd elements.

\item
If $E_1=E_1^+\oplus E_1^-$ and
 $E_2=E_2^+\oplus E_2^-$  are two
$\Z/2\Z$ graded vector spaces (or vector bundles), the tensor product $E_1\otimes E_2$ is $\Z/2\Z$-graded with
$(E_1\otimes E_2)^+=E_1^+\otimes E_2^+\oplus E_1^-\otimes E_2^-$ and
$(E_1\otimes E_2)^-=E_1^-\otimes E_2^+\oplus E_1^+\otimes E_2^-$.
Similarly  the spaces $\End(E_i)$ are $\Z/2\Z$ graded. The action of
$\End(E_1)\otimes \End(E_2)$ on $E_1\otimes E_2$ obeys the usual sign rules: for example, if $f\in \End(E_2)^-$, $v_1\in E_1^-$ and $v_2\in E_2$, then
$f(v_1\otimes v_2)= -v_1 \otimes f v_2$.

\item If $E$ is a vector space and $M$ a manifold, we denote by
    $[E]$ the trivial vector bundle on $M$ with fiber $E$.

\end{itemize}

\section{Spin$^c$ equivariant index}\label{sec:spinc-index}

\subsection{Spin$^c$ modules}

Let $V$ be an oriented Euclidean space of even dimension $n=2\ell$. We denote by  $\mathrm{Cl}(V)$  its Clifford algebra.
If $e_1,\ldots,e_{n}$ is an oriented orthonormal frame of $V$, we define the element
$$
\epsilon:= (i)^\ell e_1\cdots e_{n}\in \mathrm{Cl}(V)
$$
that depends only of the orientation. We have $\epsilon^2=1$ and $\epsilon v=-v\epsilon$ for any $v\in V$.

If $E$ is a $\mathrm{Cl}(V)$-module, the Clifford map is denoted $\clif_E : \mathrm{Cl}(V)\to \End(E)$. We see then
that the element of order two $\epsilon_E:=\clif_E(\epsilon)$ defines a $\Z/2\Z$-graduation on $E$ by defining
$E^{\pm}:=\ker(\mathrm{Id}_E\mp\epsilon_E)$. Moreover the maps $\clif_E(v):E\to E$ for $v\in V$ interchanges
the subspace $E^\pm$. This graduation will be called the canonical graduation of the Clifford module $E$.

 We follow the conventions of \cite{B-G-V}.
Recall the following fundamental fact.
\begin{prop}
Let $V$ be an even dimensional Euclidean space.
\begin{itemize}
\item There exists a complex $\mathrm{Cl}(V)$-module $S$ such that the Clifford morphism
$\clif_S : \mathrm{Cl}(V)\to \End(S)$ induces an isomorphism of complex algebra $\mathrm{Cl}(V)\otimes\C\simeq \End(S)$.
\item The Clifford module $S$ is unique up to isomorphism. We call it  the spinor $\mathrm{Cl}(V)$-module.

\item Any complex $\mathrm{Cl}(V)$-module $E$ has the following decomposition
\begin{equation}\label{eq:iso-spinor}
E\simeq  S\otimes \hom_{\mathrm{Cl}(V)}(S,E)
\end{equation}
where $\hom_{\mathrm{Cl}(V)}(S,E)$  is the vector space spanned by the $\mathrm{Cl}(V)$-complex linear
maps from $S$ to $E$. If $V$ is oriented and the Clifford
modules $S$ and $E$ carry their canonical grading, then (\ref{eq:iso-spinor}) is an isomorphism of graded Clifford $\mathrm{CL}(V)$-modules.

\end{itemize}
\end{prop}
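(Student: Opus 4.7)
The plan is to prove the three claims in order: construct an explicit spinor module, then invoke the structure theory of matrix algebras.

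For existence, I would choose an orthogonal complex structure $J$ on $V$ compatible with the orientation (possible since $V$ is oriented and even-dimensional). This yields a decomposition $V\otimes\C = V^{1,0}\oplus V^{0,1}$ into maximal isotropic subspaces of complex dimension $\ell$ for the $\C$-bilinear extension of the inner product. Set $S:=\bigwedge V^{1,0}$ and define
$$
\clif_S(v)\,s := \sqrt{2}\bigl(v^{1,0}\wedge s - \iota(v^{0,1})\,s\bigr),\qquad v\in V,\ s\in S,
$$
where $\iota(v^{0,1})$ is contraction by $v^{0,1}$ via the pairing between $V^{0,1}$ and $V^{1,0}$ induced by the inner product. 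A direct calculation gives $\clif_S(v)^2 = -\|v\|^2\,\mathrm{Id}_S$, so $\clif_S$ extends to an algebra morphism $\mathrm{Cl}(V)\otimes\C \to \End(S)$. Since both sides have complex dimension $2^n = (2^\ell)^2$, it suffices to show that $S$ is simple. Any nonzero $\clif_S(V)$-invariant subspace of $S$ contains $1\in\bigwedge^0 V^{1,0}$ (by iterated contractions) and then the whole of $S$ (by iterated wedgings), proving simplicity and hence the isomorphism.

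Uniqueness is then immediate: $\mathrm{Cl}(V)\otimes\C\simeq\End(S)\simeq M_{2^\ell}(\C)$ is a simple matrix algebra and therefore has, up to isomorphism, a unique simple module. For the decomposition of an arbitrary complex Clifford module $E$, I would introduce the natural evaluation map
$$
\Psi : S\otimes \hom_{\mathrm{Cl}(V)}(S,E)\longrightarrow E,\qquad s\otimes \phi\longmapsto \phi(s),
$$
which is manifestly $\mathrm{Cl}(V)$-linear. Under the isomorphism $\mathrm{Cl}(V)\otimes\C\simeq\End(S)$, this is the classical Morita isomorphism for a matrix algebra: every $\End(S)$-module is a direct sum of copies of $S$, and $\hom_{\mathrm{Cl}(V)}(S,E)$ records the multiplicity space.

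For the graded statement, I would note that $\epsilon$ is a product of an even number of orthonormal vectors, so it belongs to the even Clifford subalgebra, and $\epsilon^2 = 1$; hence $\clif_E(\epsilon)$ is the involution giving the canonical $\Z/2$-grading on any Clifford module $E$. The multiplicity space $\hom_{\mathrm{Cl}(V)}(S,E)$ inherits a $\Z/2$-grading by declaring $\phi$ even (resp.\ odd) when $\clif_E(\epsilon)\circ\phi = \phi\circ\clif_S(\epsilon)$ (resp.\ with a minus sign). A direct check using the tensor product sign convention recalled in the Notations confirms that $\Psi$ intertwines the gradings. The main technical obstacle is verifying the Clifford relation with the correct $\sqrt{2}$ normalization together with the simplicity of $S$; these are classical but demand careful sign bookkeeping with the Hermitian pairing.
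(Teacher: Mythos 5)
Your proof is correct and follows the standard route; the paper does not actually prove this proposition (it is recalled from \cite{B-G-V}), but your realization of $S$ as $\bigwedge V^{1,0}$ with the creation--annihilation Clifford action is exactly the model the paper records in Example~\ref{ex:wedge-H}, written there as $\bigwedge_J H$ with $\clif(v)=m(v)-m(v)^*$. Two small remarks. For simplicity of $S$, the ``iterated contractions'' step needs the split operators $\clif_S(v)\mp i\,\clif_S(Jv)$, which are pure wedging and pure contraction respectively; this is legitimate because any $\mathrm{Cl}(V)$-invariant $\C$-subspace is automatically a $\mathrm{Cl}(V)\otimes\C$-submodule. For the graded statement, since $\epsilon\in\mathrm{Cl}(V)$, every $\phi\in\hom_{\mathrm{Cl}(V)}(S,E)$ already satisfies $\phi\circ\clif_S(\epsilon)=\clif_E(\epsilon)\circ\phi$; thus the $\Z/2$-grading you introduce on the multiplicity space is concentrated entirely in even degree, and the graded isomorphism $E^\pm\simeq S^\pm\otimes\hom_{\mathrm{Cl}(V)}(S,E)$ is then immediate, with no sign bookkeeping actually required.
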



Let $V=V_1\oplus V_2$ be an orthogonal decomposition of even dimensional Euclidean spaces. We choose an orientation $o(V_1)$ on $V_1$.
There is a one-to-one correspondence between the graded $\mathrm{Cl}(V_2)$-modules and the graded $\mathrm{Cl}(V)$-modules defined as follows.
Let $S_1$ be the spinor module for $\mathrm{Cl}(V_1)$.
If $W$ is a $\mathrm{Cl}(V_2)$-module,  the vector space $E:=S_1\otimes W$ is a  $\mathrm{Cl}(V)$-module with the Clifford map defined by
$$
\clif_E(v_1\oplus v_2):=\clif_{S_1}(v_1)\otimes \mathrm{Id}_W + \epsilon_{S_1}\otimes \clif_{W}(v_2).
$$
Here $v_i\in V_i$ and $\epsilon_{S_1}\in \End(S_1)$ defines the canonical graduation of $S_1$.
Conversely, if $E$ is a graded $\mathrm{Cl}(V)$-module,   the vector space $W:=\hom_{\mathrm{Cl}(V_1)}(S_1,E)$
formed by the complex linear maps $f:S_1\to E$ commuting with the action of $\mathrm{Cl}(V_1)$ has
 a natural structure of $\mathrm{Cl}(V_2)$ graded module and $E\simeq S_1\otimes W$.

If we fix an orientation $o(V)$ on $V$, it fixes an orientation $o(V_2)$ on $V_2$ by the relation $o(V)=o(V_1)o(V_2)$. Then the Clifford modules
$E$ and $W$ carries their canonical $\Z/2\Z$ graduation, and  $E\simeq S_1\otimes W$  becomes an identity
of graded Clifford modules.

\medskip

\begin{exam}\label{ex:wedge-H}
Let  $H$ be an Euclidean vector space equipped with a complex structure $J\in O(H)$: we consider
the complex vector space  $\bigwedge_J H$.
Denote by $m(v)$ the exterior multiplication by $v$.
 The action $\clif$ of $H$ on $\bigwedge_J H$ given by
$\clif(v)=m(v)-m(v)^*$ satisfies $c(v)^2=-\|v\|^2 {\rm Id}$.
Thus, $\bigwedge_J H$, equipped with the action $\clif$, is a realization of the spinor module
for $H$. Note that the group $\mathrm{U}(J)$ of unitary transformations of
$H$ acts naturally on $\bigwedge_J H$. If one choose the orientation on $H$ induced by the complex structure, one sees that the
canonical grading is $(\bigwedge_J H)^\pm=\bigwedge^{\pm}_J H$.

Suppose now that we have another complex structure $J'\in O(H)$ :  the vector space $\bigwedge_{J'} H$ is
another spinor module for $H$. We denote by $\epsilon_J^{J'}$ the ratio between the orientations defined by $J$ and $J'$.
One can check that
\begin{equation}\label{eq:wedge-J-J-prime}
\bigwedge_{J'} H\simeq \epsilon_J^{J'} \,\C_\chi\otimes \bigwedge_J H,
\end{equation}
as a graded $\mathrm{Cl}(H)$-module and also as a graded $\mathrm{U}(J')\cap\mathrm{U}(J)$-module. Here $\C_\chi$ is the $1$-dimensional representation of $\mathrm{U}(J')\cap\mathrm{U}(J)$ associated to the unique character $\chi$ defined by the relation
$\chi(g)^2=\det_{J'}(g)\det_J(g)^{-1}$, $\forall g\in \mathrm{U}(J')\cap\mathrm{U}(J)$.
\end{exam}

\medskip

%
%

\begin{exam}\label{ex:QpluqQ}
When $V= Q\oplus Q$ with $Q$ an Euclidean space, we can identify $V$ with $Q_\C$ by $(x,y)\to x\oplus iy$. Thus
$S_Q:=\bigwedge Q_\C$ is a realization of the spinor module for $V$. It carries a natural action of the orthogonal group
$\mathrm{O}(Q)$ acting diagonally. If $Q$ carries a complex structure $J\in O(Q)$, we can consider the spin modules
$\bigwedge_J Q$ and $\bigwedge_{-J} Q$ for $Q$. We have then the isomorphism
$S_Q\simeq \bigwedge_J Q\otimes\bigwedge_{-J} Q$ of graded $\mathrm{Cl}(V)$-modules (it is also an isomorphism of
$\mathrm{U}(J)$-modules).
\end{exam}

\subsection{Spin$^c$ structures}

Consider now the case of an Euclidean vector bundle $\Vcal\to M$ of {\em even rank}. Let $\mathrm{Cl}(\Vcal)\to M$ be the associated Clifford algebra bundle. A complex vector bundle $\Ecal\to M$ is a $\mathrm{Cl}(\Vcal)$-module if there is a bundle algebra morphism
$\clif_\Ecal : \mathrm{Cl}(\Vcal)\longrightarrow \End(\Ecal)$.

\begin{defi}
Let $\Scal\to M$ be  a $\mathrm{Cl}(\Vcal)$-module  such that the  map  $\clif_\Scal$ induces an isomorphism
$\mathrm{Cl}(\Vcal)\otimes_\R \C\longrightarrow \End(\Scal)$. Then we say that
$\Scal$ is a $\spinc$-bundle  for  $\Vcal$.
\end{defi}

As in the linear case, an orientation on the vector bundle $\Vcal$ determines a $\Z/2\Z$ grading of the vector bundle $\Scal$
(called the canonical graduation) such that for any $v\in \Vcal_m$, the linear map\footnote{The map $\clif_{\Scal}(m,-) : \Vcal_m\to \End(\Scal_m)$
will also be denoted by $\clif_{\Scal_m}$.} $\clif_{\Scal}(m,v): \Scal_m\to \Scal_m$ is odd.

\begin{exam}
When $\Hcal\to M$ is a Hermitian vector bundle, the complex vector bundle $\bigwedge \Hcal$ is a $\spinc$ bundle for $\Hcal$.  If one choose the orientation
of the vector bundle $\Hcal$ induced by the complex structure, one sees that the
canonical grading is $(\bigwedge \Hcal)^\pm=\bigwedge^{\pm} \Hcal$.
\end{exam}

We assume that the vector bundle $\Vcal$ is oriented, and we consider two $\spinc$-bundles $\Scal,\Scal'$
for $\Vcal$, both with their canonical grading.
We have the following identity of graded $\spinc$-bundles : $\mathcal{S}'\simeq\mathcal{S}\otimes \Lbb_{\Scal,\Scal'}$
 where $\Lbb_{\Scal,\Scal'}$ is a complex line bundle on $M$ defined by the relation
\begin{equation}\label{eq:differenceLine}
 \Lbb_{\Scal,\Scal'}:=\hom_{\mathrm{Cl}(\Vcal)}(\mathcal{S},\mathcal{S}').
\end{equation}

\begin{defi}\label{def:determinantLine}
Let $\Vcal\to M$ be an Euclidean vector bundle of even rank. The determinant line bundle of a $\spinc$-bundle $\Scal$ on $\Vcal$ is the line bundle $\det(\Scal)\to M$ defined by the relation
$$
\det(\Scal):=\hom_{\mathrm{Cl}(\Vcal)}(\overline{\mathcal{S}},\mathcal{S})
$$
where $\overline{\mathcal{S}}$ is the $\mathrm{Cl}(\Vcal)$-module with opposite complex structure.
\end{defi}

\begin{exam}
When $\Hcal\to M$ is a Hermitian vector bundle, the determinant line bundle of the $\spinc$-bundle $\bigwedge \Hcal$ is
$\mathrm{det}(\Hcal):=\bigwedge^{\mathrm{max}} \Hcal$.
\end{exam}

If $\Scal$ and $\Scal'$ are two $\spinc$-bundles for  $\Vcal$, we see that
\begin{eqnarray*}
\det(\Scal')=\det(\Scal)\otimes \Lbb_{\Scal,\Scal'}^{\otimes 2}.
\end{eqnarray*}

Assume that $\Vcal=\Vcal_1\oplus \Vcal_2$ is an orthogonal sum of Euclidean vector bundles of even rank. We assume that $\Vcal_1$ is oriented, and let $\Scal_1$ be a
$\spinc$-bundle for $\Vcal_1$ that we equip with its canonical grading. If $\Ecal$ is a Clifford bundle for $\Vcal$, then we have the following isomorphism\footnote{The proof is identical to the linear case explained earlier.}
\begin{equation}\label{eq:iso-clifford-bundle}
\Ecal\simeq \Scal_1\otimes \Wcal
\end{equation}
where $\Wcal:=\hom_{\mathrm{Cl}(\Vcal_1)}(\mathcal{S}_1,\mathcal{E})$  is a Clifford bundle for $\Vcal_2$. If $\Vcal$ is oriented,
it fixes an orientation $o(\Vcal_2)$ on $\Vcal_2$ by the relation $o(\Vcal)=o(\Vcal_1)o(\Vcal_2)$. Then the Clifford modules
$\Ecal$ and $\Wcal$ carries their canonical $\Z/2\Z$ grading, and  (\ref{eq:iso-clifford-bundle})  becomes an identity of graded Clifford modules.

In the particular situation where $\Scal$ is a $\spinc$-bundle for $\Vcal$, then $\Scal\simeq \Scal_1\otimes \Scal_2$ where
$\Scal_2:=\hom_{\mathrm{Cl}(\Vcal_1)}(\mathcal{S}_1,\mathcal{S})$  is a $\spinc$-bundle for $\Vcal_2$. At the level of determinant
line bundles we obtain $\det(\Scal)=\det(\Scal_1)\otimes\det(\Scal_2)$.

\medskip

Let us end this section by recalling the notion of Spin-structure and $\spinc$-structure.
 Let $\Vcal\to M$ be an oriented Euclidean vector bundle of rank $n$, and let $\mathrm{P}_{SO}(\Vcal)$ be its orthogonal frame bundle : it is a principal $\mathrm{SO}_n$ bundle over $M$.

Let us consider the spinor group $\mathrm{Spin}_n$ which is the double cover of the group $\mathrm{SO}_n$.
The group  $\mathrm{Spin}_n$ is a subgroup of the group $\mathrm{Spin}^{c}_n$ which covers  $\mathrm{SO}_n$ with fiber $U(1)$.

A Spin structure on $\Vcal$ is  a $\mathrm{Spin}_n$-principal bundle $\mathrm{P}_{Spin}(\Vcal)$ over $M$ together with a $\mathrm{Spin}_n$- equivariant map $\mathrm{P}_{Spin}(\Vcal)\to \mathrm{P}_{SO}(\Vcal)$.

We assume now that $\Vcal$ is of even rank $n=2\ell$. Let $\mathrm{S}_{n}$ be the irreducible complex spin representation of $\mathrm{Spin}_{n}$. Recall that $\mathrm{S}_{n}=\mathrm{S}_{n}^+\oplus\mathrm{S}_{n}^-$  inherits a canonical Clifford action $c:\R^{n} \to\End(\mathrm{S}_{n})$ which is $\mathrm{Spin}_n$-equivariant, and which interchanges the graduation: $c(v): \mathrm{S}_{n}^\pm\to \mathrm{S}_{n}^\mp$. The spinor bundle attached to the Spin-structure $\mathrm{P}_{Spin}(\Vcal)$ is
$$
\Scal:=\mathrm{P}_{Spin}(\Vcal)\times_{\mathrm{Spin}_{n}}\mathrm{S}_{n}.
$$


A $\spinc$-bundle for $\Vcal$ determines
a $\spinc$ structure, that is a principal bundle  over $M$ with structure group
$\mathrm{Spin}^{c}_n$. When $\Vcal$ admits a  Spin-structure, any $\spinc$-bundle for $\Vcal$ is of the form
$\Scal_L=\Scal_{\mathrm{spin}}\otimes L$  where $\Scal_{\mathrm{spin}}$
is the spinor bundle attached to the Spin-structure and
 $L$ is a line bundle on $M$. Then the determinant line bundle for $\Scal_L$ is $L^{\otimes 2}$.

\subsection{Moment maps and Kirwan vector field}\label{sec:abstract-moment-map}

In this section, we consider the case of a Riemannian  manifold $M$ acted on by a compact Lie group $K$.
Let $\Scal\to M$ be a $\spinc$-bundle on $M$.
If the
$K$-action lifts to the $\spinc$-bundle $\Scal$ in such a way that the bundle map
$\mathrm{c}_\Scal : \mathrm{Cl}(\T M)\to \End(\Scal)$ commutes with the $K$-action, we say
that $\Scal$ defines a $K$-equivariant  $\spinc$-bundle on $M$. In this case, the $K$-action lifts also
to the determinant line bundle $\det(\Scal)$.  The choice of an invariant Hermitian
connection $\nabla$ on $\det(\Scal)$ determines an equivariant map $\Phi_\Scal: M\to \kgot^{*}$ and a 2-form
$\Omega_\Scal$ on $M$ by means of the  Kostant relations
\begin{equation}\label{eq:kostant-L}
    \Lcal(X)-\nabla_{X_M}=2i \langle\Phi_\Scal,X\rangle\quad \mathrm{and} \quad \nabla^2= -2i \Omega_\Scal
\end{equation}
for every $X\in\kgot$. Here $\Lcal(X)$ denotes the infinitesimal action on the sections of $\det(\Scal)$.
We will say that $\Phi_\Scal$ is the moment map for $\Scal$ (it depends however of the choice of a connection).

Via the equivariant
 Bianchi formula, Relations (\ref{eq:kostant-L}) induce the  relations
\begin{equation}\label{eq:hamiltonian-action}
    \iota(X_M)\Omega_\Scal= -d\langle\Phi_\Scal,X\rangle \quad \mathrm{and} \quad d\Omega_\Scal=0
\end{equation}
for every $X\in\kgot$.

In particular the function $m\to \langle\Phi_\Scal(m),X\rangle$ is locally constant on $M^X$.

\begin{rem}\label{rem:L-S-gamma}
Let $b\in\kgot$ and  $m\in M^b$, the set of zeroes of $b_M$.
 We consider the linear actions
$\Lcal(b)\vert_{\Scal_m}$ and $\Lcal(b)\vert_{\det(\Scal)_m}$ on the fibers at $m$ of the $\spinc$-bundle
$\Scal$ and the line bundle $\det(\Scal)$. Kostant relations imply $\Lcal(b)\vert_{\det(\Scal)_m}=2i\langle \Phi_\Scal(m),b\rangle.$
The  irreducibility of $\Scal$ implies that $$\Lcal(b)\vert_{\Scal_m}=i\, \langle\Phi_\Scal(m),b\rangle \, \mathrm{Id}_{\Scal_m}.
$$
Furthermore the function
$m\to \langle\Phi_{\Scal}(m),b\rangle$ is locally constant on $M^b$.
\end{rem}

Note that the closed $2$-form $\Omega_\Scal$, which is {\em half} of the curvature of $\det(\Scal)$, is not (in general) a
symplectic form. Furthermore, if we take any (real valued) invariant $1$-form $A$ on $M$,
$\nabla+ i A$ is another connection on $\det(\Scal)$. The corresponding curvature and  moment map will be modified in
$\Omega_\Scal - \frac{1}{2}dA$ and $\Phi_\Scal -\frac{1}{2}\Phi_A$ where $\Phi_A: M\to \kgot^{*}$ is
defined by the relation $\langle\Phi_A,X\rangle=-\iota(X_M)A$.

\medskip


\medskip

Let $\Phi: M\to\kgot$ be a  $K$ equivariant map. We  define the $K$-invariant vector field $\kappa_\Phi$ on $M$ by
\begin{equation}\label{eq:K-S}
\kappa_\Phi(m):=-\Phi(m)\cdot m,
\end{equation}
and  we call it the {\bf Kirwan vector field} associated to $\Phi$.
The set where $\kappa_\Phi$ vanishes is a K-invariant subset that we denote by
$
Z_\Phi\subset M$.

\medskip

We identify $\kgot^*$ with $\kgot$ by our choice of $K$-invariant scalar product and we will have a particular interest in the equivariant
map  $\Phi_\Scal: M\to\kgot^*\simeq\kgot$ associated to the $\spinc$-bundle $\Scal$.
In this case we may denote
the $K$-invariant vector field $\kappa_{\Phi_\Scal}$ simply by $\kappa_\Scal$ (even if it depends of the choice of a connection):
$$
\kappa_\Scal(m):=-\Phi_\Scal(m)\cdot m.
$$
and we denote $Z_\Phi$ by $Z_\Scal$.

As $\Phi_\Scal$ is a moment map,
we have the following basic description (see \cite{pep-RR,pep-vergne:witten}).

\begin{lem} \label{lem:crit} If the manifold $M$ is compact, the set $\Phi_\Scal(Z_\Scal)$ is a {\em finite} collection of coadjoint orbits.
For any coadjoint orbit $\Ocal=K\beta$, we have
$$
Z_\Scal\cap\Phi_\Scal^{-1}(\Ocal)= K(M^\beta\cap\Phi^{-1}_\Scal(\beta)).
$$

Here we have  identified $\beta\in \kgot^*$ with an  element  in $ \kgot$ still denoted by $\beta$.
Furthermore, any $\beta$ in the image $\Phi_\Scal(Z_\Scal)$ is such that $\|\beta\|^2$
is a critical value of the map $\|\Phi\|^2$.
 \end{lem}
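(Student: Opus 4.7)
The plan is to prove the three assertions of the lemma, working throughout with the identification $\kgot^*\simeq\kgot$, under which $\kappa_\Scal(m)=-(\Phi_\Scal(m))_M(m)$, so that $m\in Z_\Scal$ if and only if $m\in M^{\Phi_\Scal(m)}$. For the intersection formula, fix $\beta\in\kgot^*$; the equivalence just noted tautologically yields $Z_\Scal\cap\Phi_\Scal^{-1}(\beta)=M^\beta\cap\Phi_\Scal^{-1}(\beta)$. Saturating by $K$, using the $K$-equivariance of $\Phi_\Scal$ and the resulting $K$-invariance of $\kappa_\Scal$ (and hence of $Z_\Scal$), gives
\[
Z_\Scal\cap\Phi_\Scal^{-1}(K\beta)=K\bigl(M^\beta\cap\Phi_\Scal^{-1}(\beta)\bigr).
\]

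For the critical value claim, fix $m_0\in Z_\Scal$ and set $\beta:=\Phi_\Scal(m_0)$. For any $v\in T_{m_0}M$, a direct computation gives
\[
d\|\Phi_\Scal\|^2|_{m_0}(v) \;=\; 2\,d\langle\Phi_\Scal,\beta\rangle|_{m_0}(v) \;=\; -2\,\Omega_\Scal|_{m_0}\bigl(\beta_M(m_0),v\bigr),
\]
by the moment map relation \eqref{eq:hamiltonian-action}. Since $m_0\in M^\beta$ the vector $\beta_M(m_0)$ vanishes, so $m_0$ is a critical point of $\|\Phi_\Scal\|^2$ and $\|\beta\|^2=\|\Phi_\Scal(m_0)\|^2$ is a critical value.

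Finally, for finiteness, $\Phi_\Scal(Z_\Scal)$ is compact and $K$-invariant, hence a union of coadjoint orbits, each meeting $\tgot^*_{\geq 0}$ in a unique point; it suffices to show that $\Phi_\Scal(Z_\Scal)\cap\tgot^*_{\geq 0}$ is finite. I would stratify $\tgot^*_{\geq 0}$ by its finitely many open faces $\sigma$: for $\beta$ in the relative interior of such a face, the fixed set $M^\beta$ coincides with $M^{T_\sigma}$, where $T_\sigma$ is the subtorus of $T$ generated by $\sigma$, and has a fixed finite collection of connected components. On each such component, Remark~\ref{rem:L-S-gamma} forces $\langle\Phi_\Scal,X\rangle$ to be locally constant, hence constant, for every $X\in\mathrm{Lie}(T_\sigma)$; consequently the component of $\Phi_\Scal$ in $\mathrm{Lie}(T_\sigma)^*$ is a single point. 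Combined with the constraint $\Phi_\Scal(m)=\beta\in\sigma\subset\mathrm{Lie}(T_\sigma)^*$, this leaves only finitely many admissible $\beta$. The main obstacle is precisely this last step: because $\Omega_\Scal$ need not be symplectic, Kirwan-style Morse-theoretic analysis of $\|\Phi_\Scal\|^2$ is unavailable, and one must instead exploit the finite orbit-type decomposition of $M$ together with the local constancy of moment map components on fixed-point loci.
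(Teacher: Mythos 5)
Your proofs of the intersection formula and of the critical-value assertion are correct and are the standard arguments. The finiteness argument, however, has a genuine gap at the step where you claim that ``for $\beta$ in the relative interior of a face $\sigma$, the fixed set $M^\beta$ coincides with $M^{T_\sigma}$.'' This is false in general: $M^\beta$ is the fixed-point set of the closure $\overline{\exp(\R b)}$ of the one-parameter subgroup generated by the element $b\in\tgot$ corresponding to $\beta$, and this closure depends on the rational structure of $b$, not merely on which open face of the Weyl chamber contains $\beta$. When $b$ has resonant coordinates, $\overline{\exp(\R b)}$ is a proper subtorus of $T_\sigma$, so $M^\beta$ strictly contains $M^{T_\sigma}$. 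Concretely, take $K=T=(S^1)^2$ acting on $M=\Pbb^2(\C)$ by $(t_1,t_2)\cdot[z_0:z_1:z_2]=[z_0:t_1z_1:t_2z_2]$; then $\tgot^*_{\geq 0}=\tgot^*$ has a single open face with $T_\sigma=T$ and $M^T$ the three coordinate points, yet for $b=(1,1)$ the set $M^\beta=\{[1:0:0]\}\cup\{[0:z_1:z_2]\}$ is strictly larger. A point $m\in Z_\Scal$ with $\Phi_\Scal(m)=\beta$ could then lie in $M^\beta$ but not in $M^{T_\sigma}$, and your local-constancy argument on the components of $M^{T_\sigma}$ does not reach it.

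The correct stratification is by $T$-orbit type, not by faces of the Weyl chamber. Since $M$ is compact, only finitely many subtori of $T$ occur as identity components $T_m^0$ of $T$-stabilizers, and each fixed-point set $M^{T_m^0}$ has finitely many connected components. If $m\in Z_\Scal$ with $\Phi_\Scal(m)=\beta\in\tgot^*_{\geq 0}$ and $b\in\tgot$ corresponds to $\beta$, then the vanishing $\kappa_\Scal(m)=0$ gives $b\in\kgot_m\cap\tgot=\tgot_m:=\mathrm{Lie}(T_m)$, and $m$ lies in some component $\Xcal$ of $M^{T_m^0}$. By the Kostant relation, the projection of $\Phi_\Scal$ to $\tgot_m^*$ is constant on $\Xcal$; since $b\in\tgot_m$, this constant determines $b$, hence $\beta$. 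There are only finitely many pairs $(\tgot_m,\Xcal)$, so $\Phi_\Scal(Z_\Scal)\cap\tgot^*_{\geq 0}$ is finite. This is in the spirit of what you gesture at in your closing sentence, with the stratification corrected.
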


\begin{rem}\label{rem: rigidity}
Although the map $\Phi_\Scal$ as well as the set $Z_\Scal$ vary when we vary the connection, we see that the image
$\Phi_\Scal(Z_\Scal)$ is contained in a finite set of coadjoint orbits that does not depend of
the connection (see \cite{pep-vergne:witten}).
\end{rem}

Figure \ref{fig:SU3-rho} describes the set $\Phi_\Scal(Z_\Scal)$ for the action of the diagonal torus of $K=\mathrm{SU}(3)$
on the orbit $K\rho$ equipped with its canonical $\spinc$-bundle.

\begin{figure}[!h]
\begin{center}
  \includegraphics[width=1 in]{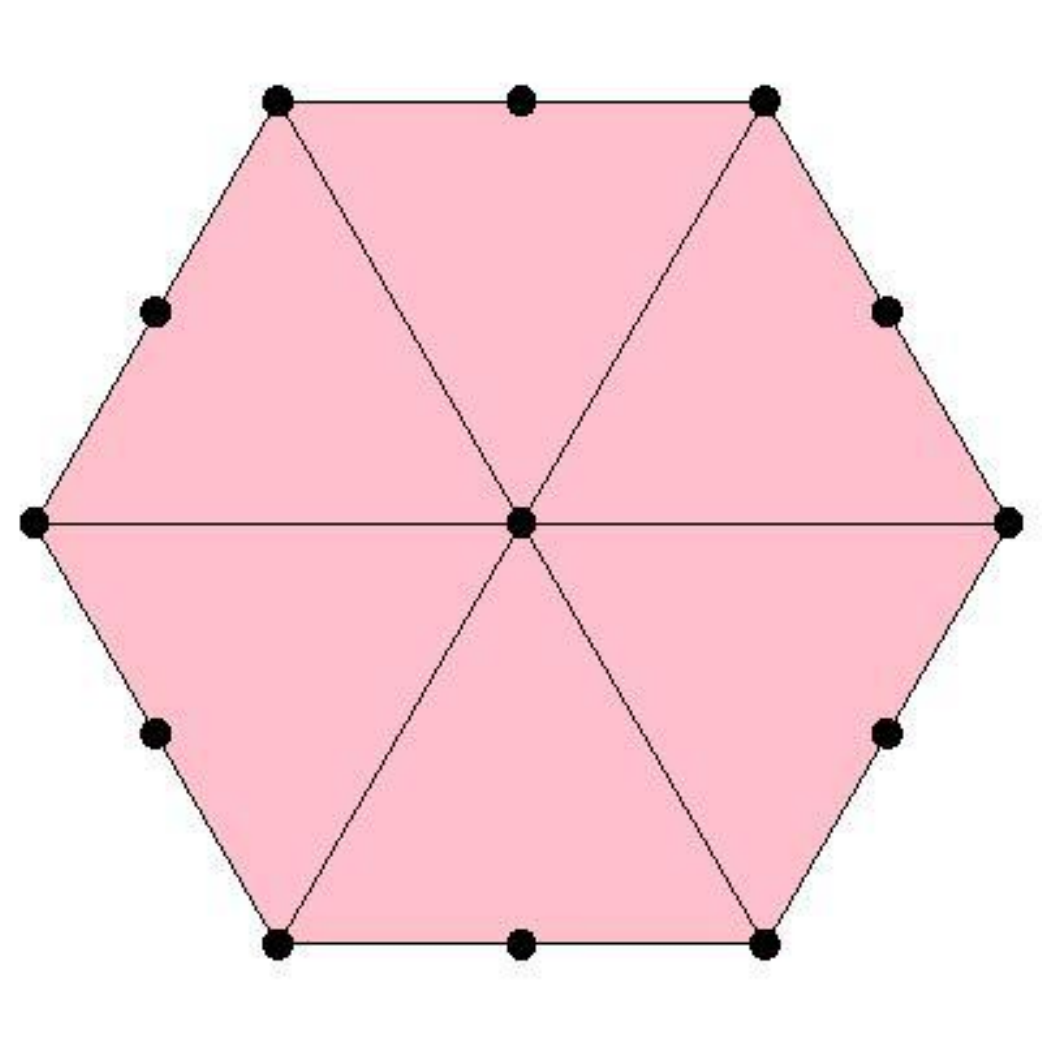}
\caption{ The set $\Phi_\Scal(Z_\Scal)$}
 \label{fig:SU3-rho}
\end{center}
\end{figure}
%

\subsection{Equivariant index}

Assume in this section that the Riemannian $K$-manifold $M$ is compact, even dimensional, oriented, and  equipped
with a $K$-equivariant $\spinc$-bundle $\Scal\to M$. The orientation
induces a decomposition $\Scal=\Scal^+\oplus \Scal^-$, and the corresponding Spin$^c$ Dirac operator
is a first order elliptic operator $\Dcal_\Scal: \Gamma(M,\Scal^+)\to \Gamma(M,\Scal^-)$ \cite{B-G-V,Duistermaat96}. Its principal symbol
is the bundle map  $\sigma(M,\Scal)\in \Gamma(\T^*M,\hom(p^*\Scal^+,p^*\Scal^-))$ defined by the relation
$$
\sigma(M,\Scal)(m,\nu)=\clif_{\Scal_m}(\tilde{\nu}): \Scal^+_m\longrightarrow\Scal_m^-.
$$
Here $\nu\in\T^* M\to \tilde{\nu}\in \T M$ is the identification defined by the Riemannian structure.

If $\Wcal\to M$ is a complex $K$-vector bundle, we can define similarly the twisted Dirac operator
$\Dcal_\Scal^\Wcal: \Gamma(M,\Scal^+\otimes \Wcal)\to \Gamma(M,\Scal^-\otimes \Wcal)$.

\begin{defi} \label{defi:symboldirac}
Let $\Scal\to M$ be an equivariant $\spinc$-bundle. We denote :

$\bullet$ $\Qcal_K(M,\Scal)\in R(K)$ the equivariant index of the operator $\Dcal_\Scal$,

$\bullet$ $\Qcal_K(M,\Scal\otimes \Wcal)\in R(K)$ the equivariant index of the operator $\Dcal_\Scal^\Wcal$.

\end{defi}

Let $\widehat{A}(M)(X)$ be the equivariant \^A-genus class of $M$: it is an equivariant analytic function from a neighborhood
of $0\in\kgot$ with value in the algebra of differential forms on $M$. Berline-Vergne equivariant index formula
\cite{B-G-V}[Theorem 8.2] asserts that
\begin{equation}\label{eq:ASS-index}
\Qcal_K(M,\Scal)(e^X)=(\frac{i}{2\pi})^{\frac{\dim M}{2}}\int_M e^{i(\Omega_\Scal+\langle\Phi_\Scal,X\rangle)}\widehat{A}(M)(X)
\end{equation}
for $X\in\kgot$ small enough.
 Here we write
 $\Qcal_K(M,\Scal)(e^X)$ for the trace of the element $e^X\in K$ in the virtual representation $\Qrm_K(M,\Scal)$ of $K$.
 It shows in particular that $\Qrm_K(M,\Scal)\in R(K)$ is a topological invariant : it only depends of the class
of the equivariant form $\Omega_\Scal+\langle\Phi_\Scal,X\rangle$, which represents {\em half} of the first equivariant Chern class of the line bundle $\det(\Scal)$.

\begin{exam}\label{P1}
We consider the simplest case of the theory. Let $M:=\Pbb^1(\C)$ be the projective space of (complex) dimension one. We write an element of $M$ as $[z_1,z_2]$ in homogeneous coordinates.
Consider the (ample) line bundle $\Lcal\to \Pbb^1$, dual of the tautological bundle.
Let $\Scal(n)$ be the $\spinc$-bundle $\bigwedge_\C \T M \otimes \Lcal^{\otimes n}$. The character $\Qcal_T(M, \Scal(n))$ is equal to
$H^{0}(\Pbb^1,\Ocal(n))-H^{1}(\Pbb^1,\Ocal(n))$.
Then for $n\geq 0$,
$$\Qcal_T(M, \Scal(n))=  \sum_{k=0}^{n}t^{k}.$$
Here $T=\{t\in \C; |t|=1\}$ acts on $[z_1,z_2]$ via $t\cdot [z_1,z_2]=[t^{-1}z_1,z_2]$.

%
%
%

\end{exam}

\section{Coadjoint orbits}\label{sec:coadjoint-orbit}

In this section, we describe $\spinc$-bundles on admissible coadjoint orbits of $K$  and the equivariant indices of the associated Dirac operators.

For any $\xi\in \kgot^*$, the stabilizer $K_\xi$ is a connected subgroup of $K$ with same rank. We denote by $\kgot_\xi$ its Lie algebra. Let $\mathcal{H}_{\mathfrak k}$ be the set of conjugacy classes of the reductive algebras ${\kgot}_\xi,\xi\in\mathfrak k^*$ it  contains the conjugacy class formed by the Cartan sub-algebras, and it contains also $\kgot$ (stabilizer of $0$).

We denote by $\mathcal S_{\mathfrak k}$ the set of conjugacy classes of the semi-simple parts $[\hgot,\hgot]$ of
the elements $(\hgot)\in \mathcal H_{\mathfrak k}$. The map $(\hgot)\to( [\hgot,\hgot])$ induces a bijection
between $\mathcal H_{\mathfrak k}$ and $\mathcal S_{\mathfrak k}$ (see \cite{pep-vergne:magic}).



We group the coadjoint orbits  according to the conjugacy class $(\hgot)\in \mathcal{H}_{\mathfrak k}$ of the stabilizer, and
we consider  the Dixmier sheet $\kgot^*_{(\hgot)}$ of orbits $K\xi$ with $\kgot_\xi$ conjugated to $\hgot$. The connected Lie subgroup  with Lie algebra $\hgot$ is denoted $H$, that is if $\hgot=\kgot_\xi$, then $H=K_\xi$.
We write $\hgot= \zgot \oplus [\hgot,\hgot]$ where $\zgot$ is the center and  $[\hgot,\hgot]$ is the semi-simple part of $\hgot$.
Thus $\hgot^*= \zgot^* \oplus [\hgot,\hgot]^*$ and
$\zgot^*$ is the set of elements in $ \hgot^*$ vanishing on the semi-simple part of $\hgot$.
We write $\kgot=\hgot\oplus [\zgot,\kgot]$, so we embed $\hgot^*$ in $\kgot^*$ as a $H$-invariant subspace,  that is
we consider an element $\xi\in \hgot^*$ also as an element of $\kgot^*$ vanishing on  $[\zgot,\kgot]$.
Let $\zgot^*_0$ be the set of $\xi\in \zgot^*$, such that $\kgot_\xi=\hgot$.
We see then that the Dixmier sheet  $\kgot^*_{(\hgot)}$ is equal to
$K\cdot \zgot^*_0$.

\medskip



%

\subsection{Admissible coadjoint orbits}

We first define the $\rho$-orbit.
Let $T$ be a Cartan subgroup of $K$.
Then $\tgot^*$ is imbedded in $\kgot^*$ as the subspace of $T$-invariant elements.
Choose a system of positive roots $\Delta^+\subset {\mathfrak t}^*$, and let
$\rho^K=\frac{1}{2}\sum_{\alpha>0}\alpha$.
The definition of $\rho^K$ requires
  the choice of a Cartan subgroup $T$ and of a  positive root system. However a different choice leads to a conjugate element. Thus we can make the following definition.

\begin{defi}\label{def:s-O}
We denote by
$o(\kgot)$ the coadjoint orbit of $\rho^K\in \kgot^*$.
We call $o(\kgot)$ the $\rho$-orbit.

\end{defi}

If $K$ is abelian, then $o(\kgot)$ is $\{0\}$.

\medskip

 The notion of admissible coadjoint orbit is defined in  \cite{du} for any real Lie group $G$.
When $K$ is a compact connected Lie group, we adopt the following equivalent definition:
  a coadjoint orbit  $\Ocal\subset \kgot^*$ is admissible  if  $\Ocal$ carries a $K$-equivariant
$\spinc$-bundle  $\Scal_\Ocal$, such that the associated moment map is the injection
$\Ocal\croc \kgot^*$.
If $K\xi$ is an admissible orbit, we also say that the element $\xi$ is admissible.
An admissible coadjoint orbit $\Ocal$ is oriented by its symplectic structure, and we denote
by $\QS_K(\Ocal):=\Qcal_K(\Ocal,\Scal_\Ocal)$ the corresponding equivariant spin$^c$ index.

We have  $\langle \xi, [\kgot_\xi,\kgot_\xi]\rangle=0$.
The quotient space $\qgot=\kgot/\kgot_\xi$ is equipped with the symplectic form $\Omega_\xi(\bar{X},\bar{Y}):=\langle \xi,[X,Y]\rangle$, and with a unique $K_\xi$-invariant complex structure $J_\xi$ such that $\Omega_\xi(-,J_\xi -)$ is a scalar product.
We denote by $\qgot^{\xi}$ the complex $K_\xi$-module $(\kgot/\kgot_\xi,J_\xi)$.

Any element $X\in \kgot_\xi$ defines a complex linear map $\mathrm{ad}(X): \qgot^\xi\to \qgot^\xi$.
\begin{defi}\label{def-rho-a}
 For any $\xi\in \kgot^*$, we denote $\rho(\xi)\in \kgot_\xi^*$ the element defined by
$$
\langle\rho(\xi),X\rangle= \frac{1}{2i}\tr_{\qgot^\xi}\mathrm{ad}(X), \quad X\in\kgot_\xi.
$$

We extend $\rho(\xi)$ to an element of $\kgot^*$, that we still  denote by $\rho(\xi)$.

\end{defi}

\bigskip

If $i\theta  : \kgot_\xi\to i\R$
is the differential of a character of $K_\xi$, we denote by $\C_\theta$ the corresponding $1$-dimensional representation
of $K_\xi$, and by $[\C_\theta]=K\times_{K_\xi}\C_\theta$ the corresponding  line bundle over the coadjoint orbit $K\xi\subset\kgot^*$.
The condition that $K\xi$ is admissible means that there exists a $\spinc$-bundle $\Scal$ on $K\xi$ such that
$\det(\Scal)=[\C_{2\xi}]$ ($2i\xi$ needs  to be the differential of a character of $K_\xi$).

\begin{lem}\label{lem:admissible}
\begin{itemize}

\item
 $\langle\rho(\xi),[\kgot_\xi,\kgot_\xi]\rangle=0$.

\item

The coadjoint orbit  $K\xi$ is admissible if and only if $i(\xi-\rho(\xi))$ is the differential of a $1$-dimensional representation of $K_\xi$.
\end{itemize}
\end{lem}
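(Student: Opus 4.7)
For the first claim, observe that $\mathrm{ad}([X,Y]) = [\mathrm{ad}(X),\mathrm{ad}(Y)]$ as endomorphisms of the finite-dimensional space $\qgot^\xi$ whenever $X, Y \in \kgot_\xi$, so $\tr_{\qgot^\xi}\mathrm{ad}([X,Y]) = 0$, and hence $\langle \rho(\xi), [X,Y]\rangle = 0$ by Definition \ref{def-rho-a}.

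For the second claim, the strategy is to classify every $K$-equivariant $\spinc$-bundle on the homogeneous space $K\xi \simeq K/K_\xi$ and then compare its determinant line bundle with the target $[\C_{2\xi}]$. Since $\T_\xi(K\xi) \simeq \qgot^\xi$ carries a complex $K_\xi$-module structure, the induced bundle
\[
\Scal_0 := K \times_{K_\xi} \bigwedge\nolimits_\C \qgot^\xi
\]
is a canonical $K$-equivariant $\spinc$-bundle on $K\xi$. By Definition \ref{def-rho-a}, the infinitesimal character of the $K_\xi$-representation on $\det_\C \qgot^\xi$ is $X \mapsto 2i\langle \rho(\xi), X\rangle$, whence $\det(\Scal_0) = [\C_{2\rho(\xi)}]$. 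Any other $K$-equivariant $\spinc$-bundle on $K\xi$ is of the form $\Scal = \Scal_0 \otimes [\C_\chi]$ for some character $\chi$ of $K_\xi$, because $K$-equivariant line bundles on $K/K_\xi$ are parametrized by characters of $K_\xi$; using the identity $\det(\Scal_0 \otimes L) = \det(\Scal_0)\otimes L^{\otimes 2}$ recalled in Section \ref{sec:spinc-index}, one obtains $\det(\Scal) = [\C_{2\rho(\xi) + 2\chi}]$.

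Admissibility of $K\xi$ requires some such $\Scal$ to satisfy both $\det(\Scal) = [\C_{2\xi}]$ and, with respect to some $K$-invariant Hermitian connection on $\det(\Scal)$, to have moment map equal to the inclusion $K\xi \hookrightarrow \kgot^*$. The determinant condition reduces to $\chi = \xi - \rho(\xi)$ on $\kgot_\xi$, which is exactly the statement that $i(\xi - \rho(\xi))\vert_{\kgot_\xi}$ is the differential of a character of $K_\xi$. Conversely, once this line bundle identity holds, Kostant's formula (\ref{eq:kostant-L}) evaluated at $\xi$ against $X \in \kgot_\xi$ (where $X_M(\xi) = 0$) gives $\langle \Phi_\Scal(\xi), X\rangle = \xi(X)$, independently of the connection; combining this with the $K_\xi$-invariance of $\Phi_\Scal(\xi)$ and the identification $\kgot^* \simeq \kgot$ forces $\Phi_\Scal(\xi) = \xi$, and $K$-equivariance then propagates this identity over the whole orbit.

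The main obstacle is the last step: bridging the purely topological determinant condition and the geometric requirement on the moment map. It is resolved by the observation that the isotropy representation $\qgot^\xi$ has no nonzero $K_\xi$-fixed vector---equivalently, $\kgot^{K_\xi} = \zgot$, which follows from the Levi structure of $\kgot_\xi$ via standard root-space considerations---which forbids any $K_\xi$-fixed perturbation that could shift $\Phi_\Scal(\xi)$ away from $\xi$.
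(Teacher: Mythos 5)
Your proof is correct and follows essentially the same approach as the paper: construct the canonical $\spinc$-bundle $K\times_{K_\xi}\bigwedge\qgot^\xi$ with determinant $[\C_{2\rho(\xi)}]$, classify all other $K$-equivariant $\spinc$-bundles as twists by characters of $K_\xi$, and match determinants against $[\C_{2\xi}]$. You additionally spell out a step the paper takes for granted (in the paragraph preceding the lemma), namely that the topological condition $\det(\Scal)=[\C_{2\xi}]$ already forces the moment map to be the inclusion; your argument that $\qgot^\xi$ has no nonzero $K_\xi$-fixed vector, so that $\Phi_\Scal(\xi)$ lies in $\zgot^*$ and is determined by its restriction to $\kgot_\xi$, is the correct justification.
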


\begin{proof}
 Consider the character $k\mapsto \det_{\qgot^\xi}(k)$ of $K_\xi$. Its differential is  $2i\rho(\xi)$.
Thus $\langle\rho(\xi),[\kgot_\xi,\kgot_\xi]\rangle=0$.

We can equip $K\xi\simeq K/K_\xi$ with the $\spinc$-bundle
$\Scal_\xi:= K\times_{K_\xi}\bigwedge \qgot^\xi$
with determinant line bundle $\det(\Scal_{\xi})= [\C_{2\rho(\xi)}]$. Any other $K$-equivariant $\spinc$-bundle
on $K\xi$ is of the form $\Scal_{\xi}\otimes [\C_\theta]$
where $i\theta$
is the differential of a character of $K_\xi$.
 Then $\det(\Scal_{\xi}\otimes [\C_\theta])=[\C_{2\xi}]$ if and only if $\xi-\rho(\xi)=\theta$. The lemma then follows.
\end{proof}

\medskip

In particular the orbit $o(\kgot)$ is admissible.
Indeed if $\xi=\rho^K$, then $\xi-\rho(\xi)=0$.

\medskip

An admissible coadjoint orbit $\Ocal=K\xi$ is then equipped with the $\spinc$-bundle
\begin{equation}\label{eq:spinor-O}
\Scal_{\Ocal}^\pm:= K\times_{K_\xi}\left({\bigwedge}^\pm \qgot^\xi\otimes\C_{\xi-\rho(\xi)}\right).
\end{equation}

Its $\spinc$ equivariant index is
\begin{equation}\label{eq:induction-index-Ocal}
\QS_K(\Ocal)=\mathrm{Ind}_{K_\xi}^K\left(\bigwedge \qgot^\xi\otimes \C_{\xi-\rho(\xi)}\right).
\end{equation}
See \cite{pep-vergne:witten}.

The following proposition is well known (see \cite{pep-vergne:magic}).

\begin{prop}\label{prop:easyregular}
\begin{itemize}
\item The map $\Ocal\mapsto \pi_\Ocal:=\QS_K(\Ocal)$ defines a bijection between the set of regular admissible orbits and $\wK$.

\item  $\QS_K(o(\kgot))$ is the trivial representation of $K$.

\end{itemize}
\end{prop}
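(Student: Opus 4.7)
The plan is to reduce the proposition to a classical computation (Borel--Weil--Bott or Weyl's formula) via the induction description (\ref{eq:induction-index-Ocal}).

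First I would unpack admissibility in the regular case. If $\Ocal = K\xi$ is a regular coadjoint orbit, then $K_\xi = T$ is a maximal torus, and choosing the positive root system $\Delta^+$ so that $\xi \in \overline{\tgot^*_{\geq 0}}$ is strictly dominant, the complex $T$-module $\qgot^\xi = \kgot/\tgot$ becomes $\bigoplus_{\alpha \in \Delta^+} \kgot_\alpha$. By Definition~\ref{def-rho-a} one computes directly that $\rho(\xi) = \tfrac12 \sum_{\alpha \in \Delta^+} \alpha = \rho^K$. Thus by Lemma~\ref{lem:admissible}, $K\xi$ is admissible if and only if $\xi - \rho^K \in \Lambda$. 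Setting $\lambda := \xi - \rho^K$, the strict dominance of $\xi$ combined with the standard fact that $\rho^K + \lambda$ is strictly dominant iff $\lambda \in \Lambda_{\geq 0}$ shows that regular admissible orbits are in natural bijection with the dominant weights $\lambda \in \Lambda_{\geq 0}$ via $\lambda \mapsto K(\lambda + \rho^K)$.

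Next I would compute $\QS_K(K\xi)$. Applying (\ref{eq:induction-index-Ocal}) with $K_\xi = T$, $\rho(\xi) = \rho^K$ and $\xi - \rho(\xi) = \lambda$, I get
$$
\QS_K(K\xi) \;=\; \mathrm{Ind}_T^K\!\left(\bigwedge \qgot^\xi \otimes \C_{\lambda}\right).
$$
This is precisely the index of the Dirac operator on the flag variety $K/T$ twisted by the line bundle $[\C_\lambda]$, since $\bigwedge \qgot^\xi$ is the fiber of the canonical spinor bundle on $K/T$ attached to the positive root system. By the $\spinc$ Borel--Weil--Bott theorem (equivalently, by the Atiyah--Bott fixed point formula giving the Weyl character formula), for $\lambda \in \Lambda_{\geq 0}$ this index equals the irreducible representation $V_\lambda$ of $K$ with highest weight $\lambda$. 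This produces the bijection $\Ocal \mapsto \pi_\Ocal$ with $\wK$.

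For the second assertion, I specialize to $\xi = \rho^K$, so that $\lambda = \xi - \rho^K = 0$, and the computation above gives $\QS_K(o(\kgot)) = V_0$, the trivial representation. The only mildly subtle points are (i) checking that $\rho(\xi)$ really coincides with $\rho^K$ for strictly dominant $\xi$ (immediate from the definition once $\qgot^\xi$ is identified with $\bigoplus_{\alpha>0}\kgot_\alpha$), and (ii) invoking the correct form of Borel--Weil--Bott in the $\spinc$ setting with the $\rho$-shift built into admissibility; both are classical, so I do not expect a real obstacle. The whole argument is essentially bookkeeping around the shift $\xi = \lambda + \rho^K$.
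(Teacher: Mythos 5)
Your argument is correct and is the standard one. Note, though, that the paper does not actually give its own proof of this proposition: it is stated as \textquotedblleft well known\textquotedblright\ with a citation to the companion reference [pep-vergne:magic], so there is no internal proof to compare against. What you wrote is presumably close to what appears there: identify $\qgot^\xi$ with $\bigoplus_{\alpha>0}\kgot_\alpha$ for the chamber making $\xi$ strictly dominant, deduce $\rho(\xi)=\rho^K$, translate admissibility via Lemma~\ref{lem:admissible} into $\lambda:=\xi-\rho^K\in\Lambda$ (and then dominance of $\lambda$ follows from strict dominance of $\xi$ together with integrality of the coroot pairings), and finally evaluate $\mathrm{Ind}_T^K\bigl(\bigwedge\qgot^\xi\otimes\C_\lambda\bigr)$ by the Weyl character formula / Atiyah--Bott on $K/T$ to get $V_\lambda$.

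Two places where you wave a hand that deserve a word of care, but neither is a real gap. First, the assertion that $\lambda\in\Lambda_{\geq0}$: as you note, strict dominance of $\xi$ alone gives $\langle\xi,\alpha^\vee\rangle>0$; it is admissibility, i.e.\ $\lambda\in\Lambda$, that upgrades $\langle\lambda,\alpha^\vee\rangle>-1$ to $\langle\lambda,\alpha^\vee\rangle\geq0$. Second, the Borel--Weil--Bott step in the $\spinc$ form: the graded character of $\bigwedge\qgot^\xi\otimes\C_\lambda$ is $t^\lambda\prod_{\alpha>0}(1-t^\alpha)$, and one must check that the symplectic orientation on $\Ocal$ coincides with the complex orientation of $J_\xi$ (it does, since $\Omega_\xi(\cdot,J_\xi\cdot)$ is positive) so that the canonical grading of $\Scal_\Ocal$ is indeed $\bigwedge^\pm\qgot^\xi$ and the sign in Weyl's formula comes out right; checking $K=SU(2)$, $\xi=\rho$, is a quick sanity test and gives the trivial representation as it should. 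Surjectivity onto $\wK$ is of course the highest weight classification, which your parametrization $\lambda\mapsto K(\lambda+\rho^K)$ already encodes.
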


We now  describe the representation  $\QS_K(\Pcal)$ attached to any admissible orbit $\Pcal$ in terms of regular admissible orbits.

\begin{defi}\label{defi-shift}
 To any coadjoint orbit $\Pcal\subset\kgot^*$, we associate the coadjoint orbit $s(\Pcal)\subset \kgot^*$
which is defined as follows : if $\Pcal=K\mu$, take $s(\Pcal)=K\xi$ with $\xi\in \mu +o(\kgot_\mu)$.
We call $s(\Pcal)$ the shift of the orbit $\Pcal$.
\end{defi}
If $\Pcal$ is regular,  $s(\Pcal)=\Pcal$. If $\Pcal=\{0\}$, then $s(\Pcal)=o(\kgot)$.

\medskip

The following proposition summarises the results concerning the quantization of admissible orbits.

\begin{prop}[\cite{pep-vergne:magic}]
\label{prop:notregular}
 Let $\Pcal$ be an admissible orbit.
\begin{itemize}

\item  $\Pcal^*:=-\Pcal$ is also admissible and $\QS_K(\Pcal^*)=\QS_K(\Pcal)^*.$

\item  If  $s(\Pcal)$ is regular, then $s(\Pcal)$ is also admissible.

\item Conversely, if $\Ocal$ is regular and admissible, and $\Pcal$ is such that
    $s(\Pcal)=\Ocal$, then $\Pcal$ is admissible.

\item
\begin{itemize}

\item If $s(\Pcal)$  is not regular, then $\QS_K(\Pcal)=0$.

\item If  $s(\Pcal)$  is regular, then $\QS_K(\Pcal)=\QS_K(s(\Pcal))=\pi_{s(\Pcal)}$.
\end{itemize}

\end{itemize}

\end{prop}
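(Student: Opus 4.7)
The plan is to reduce all four statements to the characterization of admissibility from Lemma~\ref{lem:admissible} (namely that $i(\xi - \rho(\xi))$ is the differential of a character of $K_\xi$) together with induction in stages applied to the formula~(\ref{eq:induction-index-Ocal}). Writing $\Pcal = K\mu$ with $\mu$ in the closure of the positive Weyl chamber, so that $T \subset K_\mu$ with $\Delta^+(K_\mu) = \{\alpha \in \Delta^+ : \langle \alpha, \mu\rangle = 0\}$ and the $J_\mu$-complex structure on $\qgot^\mu = \kgot/\kgot_\mu$ has positive weights $\Delta(\qgot^\mu) = \{\alpha \in \Delta^+ : \langle \alpha, \mu\rangle > 0\}$, one reads off the fundamental identity $\rho^K = \rho^{K_\mu} + \rho(\mu)$. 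Taking $\xi := \mu + \rho^{K_\mu}$ as representative of $\mu + o(\kgot_\mu)$, so that $s(\Pcal) = K\xi$, one obtains $\xi - \rho^K = \mu - \rho(\mu)$; moreover $\rho(\mu) \in \zgot^*$ by Lemma~\ref{lem:admissible}, so both $\mu$ and $\mu - \rho(\mu)$ are $W_{K_\mu}$-invariant central elements of $\kgot_\mu^*$.

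Parts (2) and (3) follow at once: when $s(\Pcal)$ is regular, $K_\xi = T$ and $\rho(\xi) = \rho^K$, so admissibility of $s(\Pcal)$ reduces to $\mu - \rho(\mu) \in \Lambda$; admissibility of $\Pcal$ asks the same element to integrate to a character of $K_\mu$, which is automatic because a $W_{K_\mu}$-invariant integral weight of $T$ extends uniquely to a character of the connected reductive group $K_\mu$. For part (1), negating $\xi$ reverses the complex structure $J_\xi$, giving $\rho(-\xi) = -\rho(\xi)$, so admissibility is preserved; and $\Scal_{\Pcal^*}$ is the complex conjugate of $\Scal_\Pcal$ as a graded Clifford module, whence $\QS_K(\Pcal^*) = \overline{\QS_K(\Pcal)} = \QS_K(\Pcal)^*$.

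Part (4) is the core of the statement, and I would handle it by induction in stages $T \subset K_\mu \subset K$. Decomposing $\bigwedge(\kgot/\tgot) = \bigwedge \qgot^\mu \otimes \bigwedge(\kgot_\mu/\tgot)$ as graded $K_\mu$-Clifford modules yields
$$
\mathrm{Ind}_T^K\bigl(\bigwedge(\kgot/\tgot) \otimes \C_{\mu - \rho(\mu)}\bigr) = \mathrm{Ind}_{K_\mu}^K\bigl(\bigwedge \qgot^\mu \otimes \mathrm{Ind}_T^{K_\mu}(\bigwedge(\kgot_\mu/\tgot) \otimes \C_{\mu - \rho(\mu)})\bigr).
$$
The inner Dirac induction is the $\spinc$-index of the $K_\mu$-orbit $K_\mu(\mu - \rho(\mu) + \rho^{K_\mu})$; since $\mu - \rho(\mu)$ is $K_\mu$-central and $\rho^{K_\mu}$ is $K_\mu$-regular, this orbit is regular admissible for $K_\mu$, and Proposition~\ref{prop:easyregular} applied to $K_\mu$ identifies its quantization with the $1$-dimensional character $\C_{\mu - \rho(\mu)}$. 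Hence
$$
\QS_K(\Pcal) = \mathrm{Ind}_{K_\mu}^K\bigl(\bigwedge \qgot^\mu \otimes \C_{\mu - \rho(\mu)}\bigr) = \mathrm{Ind}_T^K\bigl(\bigwedge(\kgot/\tgot) \otimes \C_{\mu - \rho(\mu)}\bigr),
$$
and the right-hand side is the $\spinc$-index of the orbit $K(\mu - \rho(\mu) + \rho^K) = K\xi = s(\Pcal)$. By the classical Weyl/Borel-Weil-Bott analysis, this vanishes if $\xi$ is singular (a reflection fixing $\xi$ collapses $\sum_{w \in W} \epsilon(w) e^{iw\xi}$ to zero), proving (4a), and equals $\pi_{s(\Pcal)}$ if $\xi$ is regular, proving (4b).

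The main technical obstacle lies in the intermediate identification $\mathrm{Ind}_T^{K_\mu}(\bigwedge(\kgot_\mu/\tgot) \otimes \C_{\mu - \rho(\mu)}) = \C_{\mu - \rho(\mu)}$: it reduces, via Proposition~\ref{prop:easyregular} for $K_\mu$, to the fact that the canonical $\spinc$-quantization of the $\rho$-orbit $o(\kgot_\mu) = K_\mu \rho^{K_\mu}$ is the trivial representation of $K_\mu$, but additionally requires checking that translation by the central element $\mu - \rho(\mu)$ intertwines Dirac quantization with tensoring by $\C_{\mu - \rho(\mu)}$. One must also verify that the decomposition $\bigwedge(\kgot/\tgot) = \bigwedge \qgot^\mu \otimes \bigwedge(\kgot_\mu/\tgot)$ is compatible with the $\Z/2\Z$-gradings and orientation conventions; with the conventions of Section~\ref{sec:spinc-index} this is routine but requires care.
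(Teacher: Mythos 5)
Your overall strategy—using Lemma~\ref{lem:admissible} and the identity $\rho^K=\rho^{K_\mu}+\rho(\mu)$ for parts (1)--(3), then induction in stages $T\subset K_\mu\subset K$ together with a Weyl/Bott character computation for part (4)—is the natural one, and it is almost certainly the shape of the argument in the cited reference (the present paper gives no independent proof). Parts (1)--(3) are correct; for the claim that a $W_{K_\mu}$-invariant weight in $\Lambda$ exponentiates to a character of $K_\mu$, the cleanest justification is that the irreducible $K_\mu$-module with that highest weight has a single weight (because $\langle\mu-\rho(\mu),\alpha^\vee\rangle=0$ for every root $\alpha$ of $\kgot_\mu$) and is therefore $1$-dimensional.

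The genuine gap is a sign in part (4b). The Weyl-type computation produces, for $\lambda:=\mu-\rho(\mu)$,
$$
\mathrm{Ind}_T^K\Bigl(\bigwedge(\kgot/\tgot)\otimes\C_{\lambda}\Bigr)=\epsilon(w_0)\,\pi_{K\xi}
$$
when $\xi:=\lambda+\rho^K=\mu+\rho^{K_\mu}$ is regular, where $w_0\in W$ is the unique element making $w_0^{-1}\xi$ dominant. Concluding (4b) requires $\epsilon(w_0)=1$, i.e.\ that $\xi$ is \emph{already} in the closed dominant chamber once $\mu$ is; this is not obvious, since $\rho^{K_\mu}$ is not $K$-dominant when $K_\mu\neq T$, so a priori $\mu+\rho^{K_\mu}$ could leave $\CW$. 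Your argument simply asserts the answer is $\pi_{s(\Pcal)}$. The missing step can be supplied precisely by the magical inequality (Proposition~\ref{prop:infernal-with-trace}), which you never invoke: take $\lambda_0\in s(\Pcal)$ dominant (hence very regular, once part~(2) is in hand), and $\tau\in o(\kgot_\mu)$ with $\mu+\tau\in K\lambda_0$; with $\beta:=-\tau$ one computes $\|\beta\|^2=\|\rho^{K_\mu}\|^2=\tfrac12\ntr_{\kgot_\mu}|\beta|$, so equality holds, and the equality clause forces $\lambda_0-\rho(\lambda_0)=\mu-\rho(\mu)$, i.e.\ $\lambda_0=\xi$, giving the needed dominance. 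Without this (or an equivalent root-theoretic argument), the sign in (4b) is unjustified. The other issues you flag at the end (grading and orientation compatibility in $\bigwedge(\kgot/\tgot)\simeq\bigwedge\qgot^\mu\otimes\bigwedge(\kgot_\mu/\tgot)$, and the central-twist intertwining) are real but routine, as you say.
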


\medskip

It is important to understand what are the admissible orbits $\Pcal$ such that $s(\Pcal)$ is equal to a fixed
regular admissible orbit $\Ocal$.

\begin{defi}\label{defi:h-ancestor}
$\bullet$ For a conjugacy class $(\hgot)\in\Hcal_\kgot$, we denote by $\Acal((\hgot))$ the set of admissible orbits belonging to the Dixmier sheet  $\kgot^*_{(\hgot)}$.

$\bullet$ If $\Pcal,\Ocal\subset \kgot^*$ are $K$-orbits, $\Pcal$ is called a $(\hgot)$-ancestor of $\Ocal$ if
$\Pcal\subset \kgot^*_{(\hgot)}$ and $s(\Pcal)=\Ocal$.
\end{defi}


We make the choice of
a connected Lie subgroup $H$ with Lie algebra $\hgot$ and write $\hgot= \zgot \oplus [\hgot,\hgot]$.
We denote by $\zgot^*_0$ the set of elements $\xi\in \zgot^*$ such that
$K_\xi=H$.
The orbit $o(\hgot)$ (the $\rho$-orbit for $H$)
is contained in $[\hgot,\hgot]^*$.
The orbit $\Pcal$ is a $(\hgot)$-ancestor to $\Ocal$, if and only if there exists $\mu\in \zgot_0^*$
such that $\Pcal=K\mu$ and  $\rho^H\in o(\hgot)$ such that
$\Ocal=K(\mu+\rho^H)$. If $\Ocal$ is admissible then $\Pcal$ is admissible (see \cite{pep-vergne:magic}).

Given a  regular admissible orbit $\Ocal$, there might be several $(\hgot)$-ancestors to $\Ocal$. There might also
be several classes of conjugacy $(\hgot)$ such that $\Ocal$ admits a $(\hgot)$-ancestor $\Pcal$.
For example, let $\Ocal=o(\kgot)$. Then, for any $\hgot\in \Hcal_\kgot$, the orbit $K(\rho^K-\rho^H)$ is a $(\hgot)$-ancestor to $\Ocal$.
Here we have chosen a Cartan subgroup $T$ contained in $H$,
 $H=K_\xi$ and a positive root system  such that $\xi$ is dominant to define $\rho^K$ and $\rho^H$.

\begin{exam}\label{exa:ExampleU3} Consider the group $K=SU(3)$ and let $(\hgot)$ be the centralizer  class of  a subregular element $f\in \kgot^*$ with centralizer $H=S(U(2)\times U(1))$.

We consider the Cartan subalgebra of diagonal matrices and choose a Weyl chamber.
Let $\omega_1,\omega_2$ be the two fundamental weights.
Let $\sigma_1,\sigma_2$ be the half lines $\R_{>0}\omega_1$, $\R_{>0}\omega_2$.
The set $\Acal((\hgot))$ is equal to the collection of orbits $K\cdot (\frac{1+2n}{2}\omega_1), n\in\Z$ (see Figure \ref{admissiblesSU2}).

\begin{figure}[!h]
\begin{center}
  \includegraphics[width=1 in]{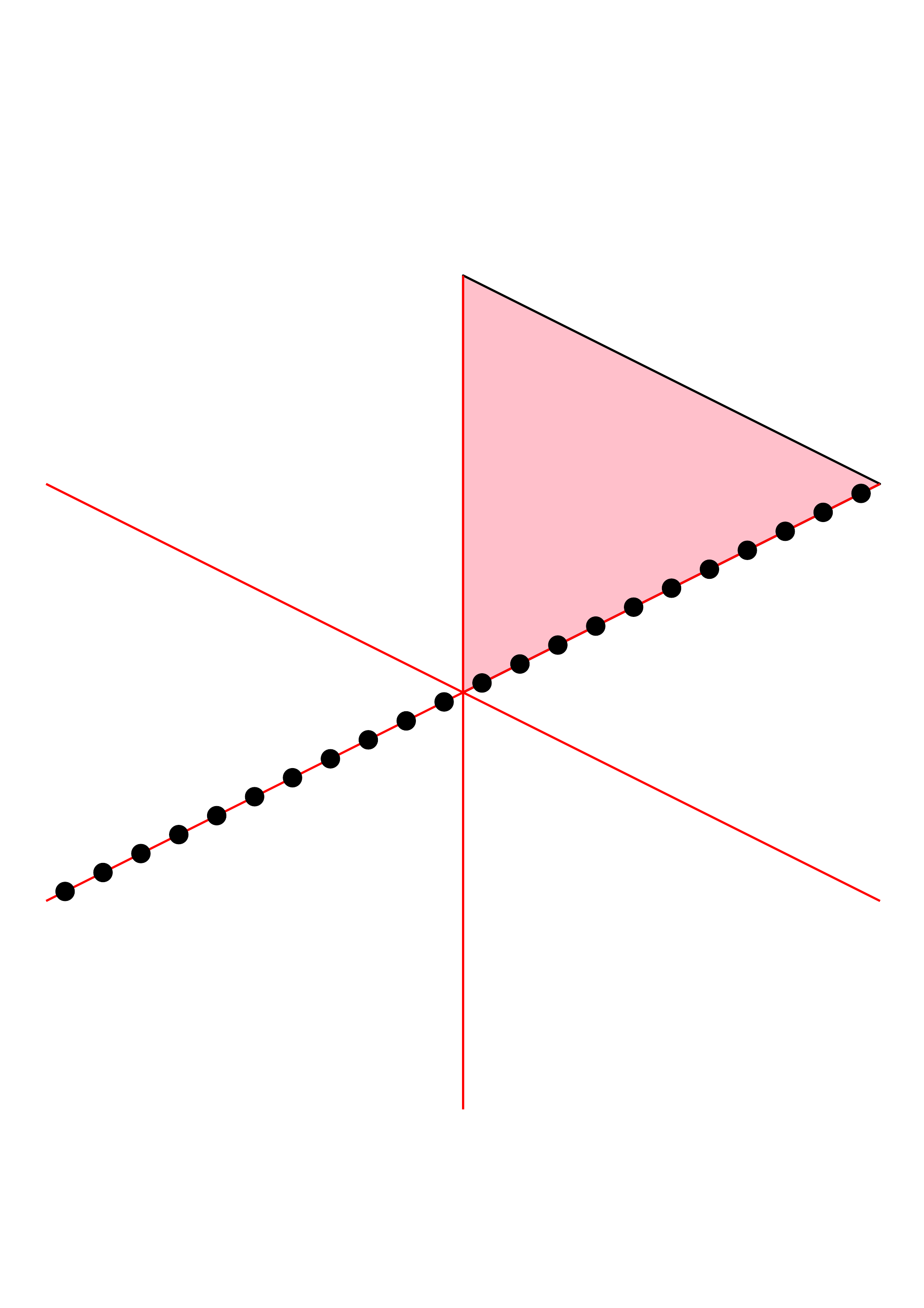}
\caption{ $H$-admissible orbits}
 \label{admissiblesSU2}
\end{center}
\end{figure}

As $-\omega_1$ is conjugated to $\omega_2$, we see
that the set $\Acal((\hgot))$ is equal to the collection of orbits $K\cdot (\frac{1+2n}{2}\omega_i), n\in\Z_{\geq 0}, i=1,2$. Here we have chosen the representatives in the chosen closed Weyl chamber.

One has
 $s(K\cdot (\frac{1+2n}{2}\omega_i))= K(\rho^K+(n-1)\omega_i)$.
Thus the shifted orbit  is a regular orbit if and only if $n>0$.
For $n=1$, both admissible orbits $K\cdot \frac{3}{2}\omega_1$ and $K\cdot (\frac{-3}{2}\omega_1)=K\cdot \frac{3}{2}\omega_2$
are $(\hgot)$-ancestors to the orbit  $K\rho^K=o(\kgot)$.

Both admissible orbits $\Pcal_1=K\cdot \frac{1}{2}\omega_1$ and $\Pcal_2=K\cdot \frac{1}{2}\omega_2$ are such that $\QS_K(\Pcal_i)=0$

In Figure \ref{ancestorall}, we draw the link between $H$-admissible orbits and their respective shifts.

\begin{figure}[!h]
\begin{center}
  \includegraphics[width=2 in]{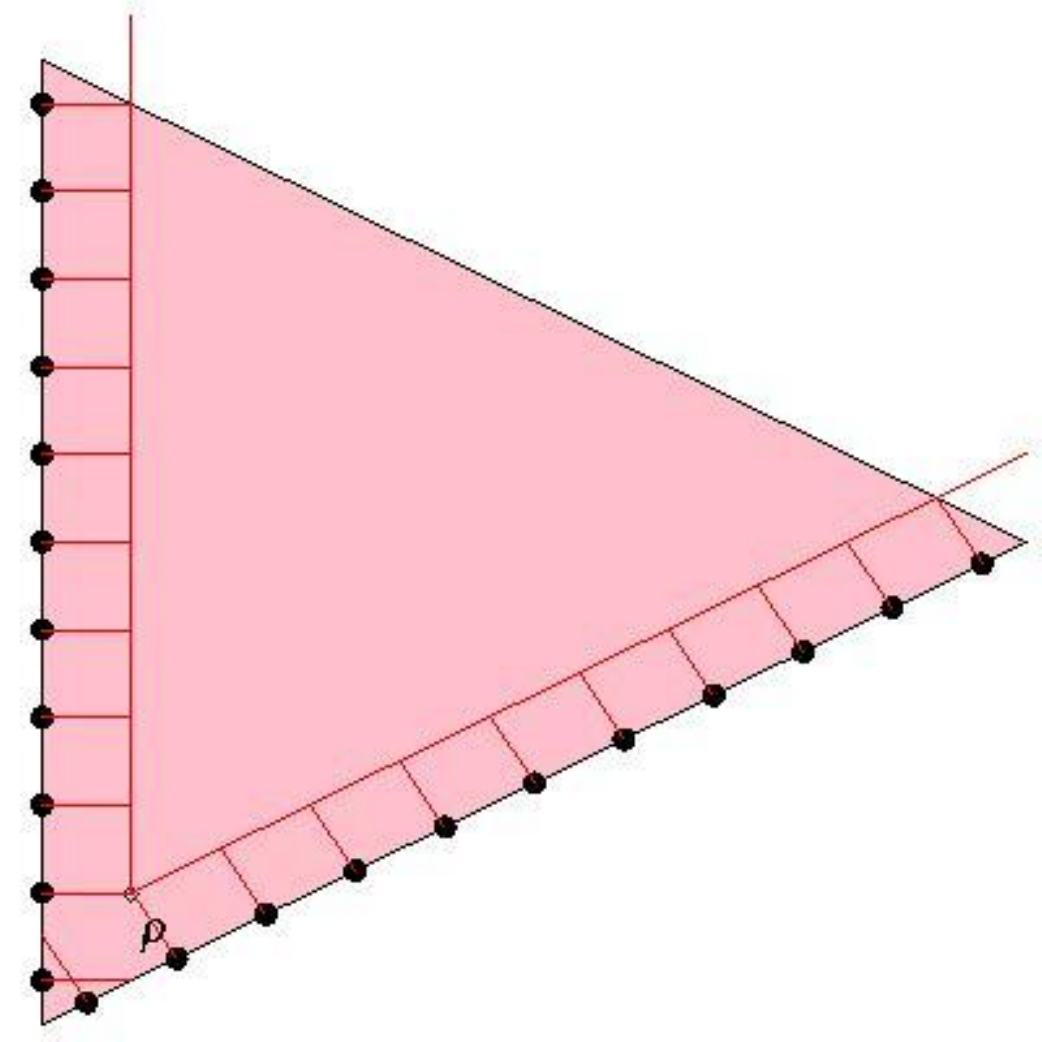}
\caption{ $H$-admissible orbits and their shifts}
 \label{ancestorall}
\end{center}
\end{figure}

\end{exam}

\subsection{Magical inequality}

We often will use complex structures and normalized traces on real vector spaces defined by the following procedure.

\begin{defi}
Let $N$ be a real vector space and $b: N\to N$ a linear transformation, such that $-b^2$ is diagonalizable
with non negative eigenvalues. Define

$\bullet$ the diagonalizable transformation $|b|$ of $N$ by $|b|=\sqrt{-b^2}$,

$\bullet$  the complex structure $J_b= b |b|^{-1}$ on $N/\ker(b)$

$\bullet$ we denote by $\ntr_N|b|=\frac{1}{2}\tr_N|b|$, that is half of the trace of the action of $|b|$ in the real vector space $N$.
We call  $\ntr_N|b|$  the normalized trace of $b$.

\end{defi}

If $N$ has a Hermitian structure invariant by $b$,   $\frac{1}{2}\tr_N|b|$  is the trace of $|b|$ considered as a Hermitian matrix. 
The interest  of our notation is that we do not need complex structures to define $\ntr_N|b|$.

If  $N$ is an Euclidean space and $b$ a skew-symmetric  transformation of $N$,
then $-b^2$ is diagonalizable with  non negative eigenvalues. By definition of $J_b$, the transformation
$b$ of $N$ determines a complex diagonalizable transformation of $N/\ker(b)$, and  the list of its complex
eigenvalues is $[i a_1,\ldots, i a_\ell]$ where the $a_k$ are strictly positive real numbers.
We have   $\ntr_N |b|=\sum_{k=1}^{\ell} a_k\geq 0$.

Recall our identification $\kgot=\kgot^*$ with the help of a scalar product.
When $\beta\in \kgot^*$, denote by $b$ the corresponding element of $\kgot$.
We have defined a complex structure $J_\beta$ on $\kgot/\kgot_\beta$. On the other hand, $b$ defines
an invertible transformation of $\kgot/\kgot_\beta$.
It can be checked  that $J_\beta=J_b$. If we choose a Cartan subalgebra containing $b$,
then
$\ntr_{\kgot}|b|=\sum_{\alpha>0}|\langle \alpha,b\rangle|$.

For further use, we include a lemma.
Let us consider $\kgot_\C$, the complexified space of $\kgot$.
Consider the complex space $\bigwedge \kgot_\C$.

\begin{lem}\label{lem:confusing}
Let $b\in \kgot$. Let $x\in \R$ be an eigenvalue for the action of $\frac{b}{i}$ in  $\bigwedge \kgot_\C$.
Then
$x\geq -\ntr_\kgot |b|$
\end{lem}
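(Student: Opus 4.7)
The plan is to diagonalize the action of $b$ on $\kgot_\C$ via a Cartan decomposition and then read off the eigenvalues on the exterior algebra. Choose a Cartan subalgebra $\tgot \subset \kgot$ containing $b$, and pick a system of positive roots $\Delta^+$. We have the root decomposition
$$\kgot_\C = \tgot_\C \oplus \bigoplus_{\alpha \in \Delta^+\cup(-\Delta^+)} \kgot_\alpha,$$
and since $\kgot$ is compact, $\ad(b)$ acts by $0$ on $\tgot_\C$ and by the purely imaginary scalar $i\langle\alpha,b\rangle$ on the root space $\kgot_\alpha$ (consistent with the convention used just before the lemma, where $\ntr_\kgot|b| = \sum_{\alpha>0}|\langle\alpha,b\rangle|$). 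Thus $b/i$ has real eigenvalues on $\kgot_\C$, equal to $0$ on $\tgot_\C$ and to $\langle\alpha,b\rangle$ on each $\kgot_\alpha$.

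Extend $\ad(b)$ as a derivation to $\bigwedge \kgot_\C$. Then a simultaneous eigenbasis of $b/i$ on $\bigwedge \kgot_\C$ is furnished by the wedge products
$$H_{i_1}\wedge\cdots\wedge H_{i_p}\wedge E_{\alpha_1}\wedge\cdots\wedge E_{\alpha_q},$$
where $H_1,\dots,H_r$ is a basis of $\tgot$ and $E_\alpha$ is a non-zero vector of $\kgot_\alpha$, and where the $\alpha_j$'s are distinct roots in $\Delta^+\cup(-\Delta^+)$. Such a wedge is an eigenvector of $b/i$ with eigenvalue $\sum_{j=1}^{q}\langle\alpha_j,b\rangle$. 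Consequently, every real eigenvalue $x$ of $b/i$ on $\bigwedge\kgot_\C$ is of the form
$$x = \sum_{\alpha\in S}\langle\alpha,b\rangle,\qquad S\subset \Delta^+\cup(-\Delta^+).$$

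Now for each positive root $\alpha$, the subset $S\cap\{\alpha,-\alpha\}$ contributes one of $0,\ \langle\alpha,b\rangle,\ -\langle\alpha,b\rangle,\ 0$ to the sum, hence at least $-|\langle\alpha,b\rangle|$. Summing over $\alpha\in\Delta^+$ and invoking the identity $\ntr_\kgot|b|=\sum_{\alpha>0}|\langle\alpha,b\rangle|$ recalled just before the lemma, we obtain
$$x \;\geq\; -\sum_{\alpha>0}|\langle\alpha,b\rangle| \;=\; -\ntr_\kgot|b|,$$
which is the claim. The argument is just bookkeeping of root eigenvalues, so there is no genuine obstacle; the only minor care is to keep straight that the extension to the exterior algebra is a derivation, so that eigenvalues add (rather than multiply) on wedge products.
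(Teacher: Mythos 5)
Your proof is correct and takes essentially the same approach as the paper: diagonalize $\ad(b)$ via a Cartan subalgebra containing $b$, so eigenvalues on $\bigwedge\kgot_\C$ are sums of root values $\langle\alpha,b\rangle$ over subsets $S\subset\Delta$, and bound the minimum by $-\sum_{\alpha>0}|\langle\alpha,b\rangle| = -\ntr_\kgot|b|$. The only cosmetic difference is that the paper chooses a positive system adapted to $b$ (so $\langle\alpha,b\rangle\ge 0$ for $\alpha>0$ and the minimizing subset is simply $-\Delta^+$), whereas you keep an arbitrary positive system and track signs with absolute values; the two are equivalent.
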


\begin{proof}
Indeed, consider a Cartan subalgebra $\tgot$ containing $b$, the system of roots $\Delta$  and  an order such that
$\langle \alpha,b\rangle\geq 0$ for all $\alpha>0$.
An eigenvalue $x$ on $\bigwedge \kgot_\C$ is thus of the form
$\sum_{\alpha\in I\subset \Delta} \langle \alpha,b\rangle$.
Thus we see that the lowest eigenvalue is
$-\sum_{\alpha>0} \langle \alpha,b\rangle=-\ntr_\kgot |b|$.
\end{proof}\bigskip

\medskip

Assume now that $\Ncal\to M$ is a real vector bundle equipped with an action of a compact Lie group
 $K$. For any $b\in\kgot$, and any $m\in M$ such that $b_M(m)=0$, we  may consider
 the linear action $\Lcal(b)\vert_{\Ncal_m}$ which is induced by $b$ on the fibers $\Ncal_m$. It is easy to check that
 $(\Lcal(b)\vert_{\Ncal_m})^2$ is diagonalizable with  eigenvalues which are negative or equal to zero.
 We denote by
$|\Lcal_m(b)|= \sqrt{- (\Lcal(b)\vert_{\Ncal_m})}^2.$

\begin{defi}
We denote by $\ntr_{\Ncal_m}|b|=\frac{1}{2} \tr |\Lcal_m(b)|$ 
that is half of the trace of the real endomorphism $|\Lcal_m(b)|$ on $\Ncal_m$. We call $\ntr_{\Ncal_m}|b|$ 
  the normalized trace of the action of $b$ on $\Ncal_m$.
\end{defi}

For any $b\in\kgot$ and $\mu\in \kgot^*$ fixed by $b$,
we may consider the action $\ad(b) : \kgot_\mu\to \kgot_\mu$ and the corresponding normalized trace
$\ntr_{\kgot_\mu}|\ad(b)|$ denoted simply by
$\ntr_{\kgot_\mu}|b|.$

\begin{defi}
A regular element $\lambda\in \kgot^*$ determines a closed positive Weyl chamber $C_\lambda\subset \kgot_\lambda^*$.
We say that $\lambda$ is very regular if $\lambda\in \rho(\lambda)+C_\lambda$.
 \end{defi}
Notice that regular admissible elements are very regular.

The following ``magical inequality'', that is proved in \cite{pep-vergne:magic}, will be a crucial tool in Section
\ref{sec:witten-deformation}.

\begin{prop}[Magical Inequality]
\label{prop:infernal-with-trace}
Let $b\in \kgot$ and
 denote by $\beta$ the corresponding element in $\kgot^*$.
Let $\lambda,\mu$ be elements of $\kgot^*$ fixed by $b$.
 Assume that $\lambda$ is very regular and that $\mu-\lambda=\beta$.
Then
 $$\|\beta\|^2 \geq \frac{1}{2} \ntr_{\kgot_\mu} |b|.$$
If the equality holds, then
 $\mu$ belongs to  the positive Weyl chamber  $C_\lambda$ and
\begin{enumerate}
\item $\lambda-\rho(\lambda)= \mu- \rho(\mu)$, hence $\lambda$ is admissible if and only if $\mu$ is admissible,
\item $s(K\mu)=K\lambda$.
\end{enumerate}
\end{prop}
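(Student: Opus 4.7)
The plan is to reduce to an inequality in the Cartan subalgebra, then to a sub-root-system inequality within $\kgot_\mu$, and finally to exploit very regularity via an elementary fact about coroot heights. Identifying $\kgot\cong\kgot^*$ via the invariant inner product, regularity of $\lambda$ forces $\kgot_\lambda=\tgot$ to be a Cartan subalgebra; the hypothesis $b\cdot\lambda=0$ then places $b\in\tgot$, and hence $\mu=\lambda+b\in\tgot$. I would choose the positive root system $\Delta^+$ making $\lambda$ dominant, so $\rho(\lambda)=\rho^K=\tfrac{1}{2}\sum_{\alpha\in\Delta^+}\alpha$. Setting $\Delta_\mu:=\{\alpha\in\Delta:(\alpha,\mu)=0\}$, $\Delta_\mu^+:=\Delta_\mu\cap\Delta^+$, and $\rho_\mu:=\tfrac{1}{2}\sum_{\alpha\in\Delta_\mu^+}\alpha$, a direct computation using $(\alpha,b)=-(\alpha,\lambda)$ for $\alpha\in\Delta_\mu$ gives
\[ \ntr_{\kgot_\mu}|b| \;=\; \sum_{\alpha\in\Delta_\mu^+}(\alpha,\lambda) \;=\; 2(\rho_\mu,\lambda). \]

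I would then introduce $V:=\mathrm{span}_{\R}(\Delta_\mu)\subset\tgot^*$. Since $\mu\in V^\perp$ (each root of $\Delta_\mu$ annihilates $\mu$) and $\rho_\mu\in V$, decomposing $\lambda=\lambda_V+\lambda_\perp$ orthogonally yields $\|b\|^2=\|\mu-\lambda_\perp\|^2+\|\lambda_V\|^2\geq\|\lambda_V\|^2$ and $(\rho_\mu,\lambda)=(\rho_\mu,\lambda_V)$. The main inequality thereby reduces to the sub-root-system inequality $\|\lambda_V\|^2\geq(\rho_\mu,\lambda_V)$ inside $V$. Now $\lambda_V$ is regular dominant for $(\Delta_\mu,\Delta_\mu^+)$ since $\Delta_\mu^+\subset\Delta^+$, and I would argue that $\lambda_V-\rho_\mu$ is dominant for $(\Delta_\mu,\Delta_\mu^+)$ as follows: for $\alpha$ simple in $(\Delta_\mu,\Delta_\mu^+)$ one has $(\alpha,\rho_\mu)=(\alpha,\alpha)/2$; very regularity gives $(\alpha,\lambda)\geq(\alpha,\rho^K)$; and the elementary fact $\langle\rho^K,\alpha^\vee\rangle=\mathrm{ht}(\alpha^\vee)\geq 1$ for any $\alpha\in\Delta^+$ (since $\rho^K$ is the sum of the fundamental weights of $\Delta^+$) yields $(\alpha,\rho^K)\geq(\alpha,\alpha)/2$. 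Chaining the two gives $(\alpha,\lambda_V-\rho_\mu)\geq 0$.

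Expanding $\lambda_V=\sum_i c_i\,\omega_i^\mu$ and $\rho_\mu=\sum_i\omega_i^\mu$ in the fundamental weights of $(\Delta_\mu,\Delta_\mu^+)$, the dominance just established gives $c_i\geq 1$, while regularity gives $c_i>0$. Then
\[ \|\lambda_V\|^2-(\rho_\mu,\lambda_V) \;=\; (\lambda_V,\lambda_V-\rho_\mu) \;=\; \sum_{i,j} c_i(c_j-1)(\omega_i^\mu,\omega_j^\mu) \;\geq\; 0, \]
because the Gram matrix $(\omega_i^\mu,\omega_j^\mu)$ of fundamental weights of a semi-simple root system has non-negative entries, strictly positive within each simple factor of $[\kgot_\mu,\kgot_\mu]$ and zero across distinct factors. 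Combining, $\|b\|^2\geq\|\lambda_V\|^2\geq(\rho_\mu,\lambda_V)=\tfrac{1}{2}\ntr_{\kgot_\mu}|b|$.

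For the equality case, both $\|\mu-\lambda_\perp\|=0$ and the vanishing of the last non-negative sum must hold. The latter, combined with strict positivity of $(\omega_i^\mu,\omega_j^\mu)$ within each simple factor, forces every $c_j=1$, so $\lambda_V=\rho_\mu$ and hence $\lambda=\mu+\rho_\mu$, i.e.\ $b=-\rho_\mu$. To close the argument I would verify, using very regularity, that $\mu\in C_\lambda$ and $\rho(\mu)=\rho^K-\rho_\mu$; granting these, $\mu-\rho(\mu)=\lambda-\rho^K=\lambda-\rho(\lambda)$ and $s(K\mu)=K(\mu+\rho_\mu)=K\lambda$ follow at once. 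I expect this final verification in the equality case, and in particular the dominance of $\mu$ with respect to $\Delta^+$, to be the main obstacle; by contrast, the inequality itself is unlocked by the single elementary observation that $\langle\rho^K,\alpha^\vee\rangle\geq 1$ for every positive root, which bridges dominance in the ambient system $\Delta^+$ with dominance in the (possibly quite different) simple system of the sub-root system $\Delta_\mu^+$.
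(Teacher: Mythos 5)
Your proof of the main inequality $\|\beta\|^2 \geq \tfrac{1}{2}\ntr_{\kgot_\mu}|b|$ is correct and complete. The reduction to $V=\mathrm{span}_\R(\Delta_\mu)$, the computation $\ntr_{\kgot_\mu}|b|=2(\rho_\mu,\lambda_V)$, the use of $\langle\rho^K,\alpha^\vee\rangle\geq 1$ to bridge ambient dominance to dominance for the (possibly non-standard) sub-system $\Delta_\mu^+$, and the sign of the Gram matrix of fundamental weights all check out. (The paper itself cites \cite{pep-vergne:magic} for this proposition rather than proving it in situ, so no direct comparison with the paper's argument is possible.)

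The gap you flagged in the equality case --- dominance of $\mu$ --- is genuine but closes more easily than you fear, and the key is already embedded in your inequality. When equality holds you have $\lambda_V=\rho_\mu$, hence $(\alpha,\lambda)=(\alpha,\rho_\mu)=(\alpha,\alpha)/2$ for every $\alpha$ that is simple in $\Delta_\mu^+$. Chaining this with your two inequalities $(\alpha,\lambda)\geq(\alpha,\rho^K)\geq(\alpha,\alpha)/2$ forces both to be equalities, so $\langle\rho^K,\alpha^\vee\rangle=1$: the coroot $\alpha^\vee$ has height one, i.e.\ $\alpha$ is simple for $\Delta^+$. Thus $\Delta_\mu^+$ is generated by a subset $I$ of the simple roots of $\Delta^+$ --- a \emph{standard} Levi, not merely a pseudo-Levi. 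It is then elementary that $\rho^K-\rho_\mu$ is dominant: pairing against a simple coroot $\alpha_i^\vee$ gives $0$ when $i\in I$, and $1-\langle\rho_\mu,\alpha_i^\vee\rangle\geq 1$ when $i\notin I$, since $\rho_\mu$ is a nonnegative combination of the $\alpha_j$ with $j\in I$ and $\langle\alpha_j,\alpha_i^\vee\rangle\leq 0$ for $j\neq i$. Writing $\mu=(\lambda-\rho^K)+(\rho^K-\rho_\mu)$ as a sum of two dominant elements (the first by very regularity) gives $\mu\in C_\lambda$; once $\mu$ is dominant, $\rho(\mu)=\rho^K-\rho_\mu$ is immediate from the definition, and conclusions (1) and (2) follow exactly as you indicated. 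So the argument goes through, but you should make this Levi-standardness observation explicit rather than leaving it as a hoped-for verification.
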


\subsection{Slices and induced Spin$^c$ bundles}\label{sec:Slices}

We suppose here that $M$ is a $K$-manifold and that $\Phi:M\to \kgot^*$ is a $K$-equivariant map. If $\Ocal$ is a coadjoint orbit, a neighborhood of $\Phi^{-1}(\Ocal)$ in $M$ can be identified with an induced manifold, and the restriction of $\spinc$-bundles to a neighborhood of  $\Phi^{-1}(\Ocal)$ can be identified with  an induced bundle. To this aim,
let us recall the notion of slice \cite{L-M-T-W}.

\begin{defi}Let $M$ be a $K$-manifold and $m\in M$ with stabilizer subgroup $K_m$. A submanifold $Y\subset M$ containing $m$ is a
slice at $m$ if $Y$ is $K_m$-invariant, $KY$ is a neighborhood of $m$, and the map
$$
K\times_{K_m}Y\longrightarrow M,\ [k,y]\mapsto ky
$$
is an isomorphism  on $KY$.
\end{defi}
Consider the coadjoint action of  $K$ on $\kgot^*$. Let us fix $\mu\in\kgot^*$ and $H:=K_\mu$. Let $C$ be the connected of the open subset
$\hgot^*_0:=\{\xi\in\hgot^*\ \vert\ K_\xi\subset H\}$ containing $\mu$. The map
$K\times_{H}C\to K C$ is a diffeomorphism. Thus $C$ is a slice at $\mu$ for the coadjoint action.

The following  construction was used as a fundamental tool
in the symplectic setting \cite{Guillemin-Sternberg84}.
\begin{prop}\label{prop:slice-general}
Let $\Phi: M\to \kgot^*$ be a $K$-invariant map.
Let $\mu\in\kgot^*$, and let $C$ be the slice at $\mu$ defined previously.
\begin{itemize}
\item  $\Ycal_C=\Phi^{-1}(C)$ is a $K_\mu$-invariant submanifold of $M$ (perhaps empty).
\item  $KC$ is an open neighborhood of $\Phi^{-1}(K\mu)$ diffeomorphic to $K\times_{K_\mu} \Ycal_C$.
\end{itemize}

\end{prop}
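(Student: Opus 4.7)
The plan is to reduce everything to the slice theorem for coadjoint orbits and then transport the local product structure on $KC \subset \kgot^*$ back to $M$ via the equivariant map $\Phi$.

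First I would establish the ``$K$-equivariant projection'' to the orbit. Since $C$ is a slice at $\mu$, the action map
\[
\alpha : K\times_H C \longrightarrow KC, \qquad [k,c]\mapsto kc,
\]
is a $K$-equivariant diffeomorphism onto the open subset $KC\subset \kgot^*$, and composing its inverse with the first projection gives a smooth $K$-equivariant map $q : KC \to K/H$ whose fiber over $[e]$ is $C$. Pulling this back by $\Phi$, I obtain a $K$-equivariant map
\[
\pi : \Phi^{-1}(KC) \longrightarrow K/H, \qquad \pi = q\circ \Phi,
\]
defined on an open subset of $M$ (open since $KC$ is open and $\Phi$ is continuous). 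Because $K/H$ is a single $K$-orbit, for every $m\in \Phi^{-1}(KC)$ the infinitesimal action $X \mapsto X_{K/H}(\pi(m))$ is surjective onto $T_{\pi(m)}(K/H)$; combined with $K$-equivariance, $d\pi_m(X_M(m))=X_{K/H}(\pi(m))$ shows that $d\pi_m$ is surjective. Thus $\pi$ is a submersion, and $\Ycal_C = \pi^{-1}([e]) = \Phi^{-1}(C)$ is a closed submanifold of $\Phi^{-1}(KC)$. Its $H$-invariance is immediate, as $\Phi$ is $K$-equivariant and $C$ is $H$-invariant.

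Next, I would define
\[
\Psi : K\times_{H} \Ycal_C \longrightarrow \Phi^{-1}(KC), \qquad [k,y]\mapsto ky,
\]
which is well defined since for $y\in \Ycal_C$ one has $\Phi(ky)=k\Phi(y)\in kC\subset KC$, and check that $\Psi$ is a $K$-equivariant diffeomorphism.

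For \emph{injectivity}: if $ky=k'y'$, then $k\Phi(y)=k'\Phi(y')$ with $\Phi(y),\Phi(y')\in C$, so the injectivity of $\alpha$ on $K\times_H C$ yields $h\in H$ with $k'=kh^{-1}$, $\Phi(y')=h\Phi(y)$, and from $ky=k'y'=kh^{-1}y'$ we get $y'=hy$, hence $[k',y']=[k,y]$. For \emph{surjectivity}: given $m$ with $\Phi(m)\in KC$, write $\Phi(m)=kc$ with $c\in C$; then $y:=k^{-1}m$ satisfies $\Phi(y)=c\in C$, so $y\in \Ycal_C$ and $m=\Psi([k,y])$. Finally $\Psi$ is smooth and $K$-equivariant, and a tangent space count at $[e,y]$ shows $d\Psi$ is an isomorphism (the $\kgot/\hgot$ direction is identified via $q$ with the $K/H$ directions of $\Phi(m)$, while $T_y\Ycal_C\hookrightarrow T_yM$ gives the remaining directions); $K$-equivariance extends this to all of $K\times_H \Ycal_C$. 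The inverse function theorem combined with the bijection above then yields that $\Psi$ is a diffeomorphism onto $\Phi^{-1}(KC)$, which proves the second claim (where ``$KC$'' in the statement is read as $\Phi^{-1}(KC)$, the natural open neighborhood of $\Phi^{-1}(K\mu)$).

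The main obstacle is the submersion argument in the first step: once one sees that $K$-equivariance of $\pi$ to a homogeneous space automatically forces submersivity via the infinitesimal action, everything else is a direct translation of the coadjoint slice theorem to $M$ through $\Phi$.
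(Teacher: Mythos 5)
Your proof is correct and rests on essentially the same idea as the paper's: use the fact that the differential of the $K$-action at $y$ already surjects onto the relevant ``normal'' directions. The only difference is packaging. You build the $K$-equivariant map $\pi=q\circ\Phi:\Phi^{-1}(KC)\to K/H$ and invoke the general principle that an equivariant map to a homogeneous space is automatically a submersion, then take $\Ycal_C=\pi^{-1}([e])$. The paper instead works linearly: it splits $\kgot^*=\kgot_\mu^*\oplus\qgot^*$, forms $\Pi_y=[\,\cdot\,]_{\qgot^*}\circ T_y\Phi$, and argues $\Pi_y$ is onto because $T_y(Ky)$ maps onto $T_{\Phi(y)}(K\Phi(y))\supset[\qgot,\Phi(y)]=\qgot^*$ when $\Phi(y)\in C$. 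These are the same observation seen from two angles (the paper's $\qgot^*$ is just the model for $T_{[e]}(K/H)$). Your version is marginally cleaner in that it applies uniformly on all of $\Phi^{-1}(KC)$ rather than pointwise on $\Ycal_C$, and you spell out the injectivity/surjectivity/differential checks for $\Psi$ that the paper dismisses as routine; the paper's version has the small advantage of producing directly the splitting $T_yM=T_y\Ycal_C\oplus\qgot\cdot y$, which it needs later.
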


The manifold $\Ycal_C$, when is not empty, is called the slice (of $M$) at $\mu\in \kgot^*$. Note
that $\Ycal_C$ can be disconnected.

\medskip

\begin{proof}
Let us consider the $K_\mu$-invariant decompositions
$\kgot=\kgot_\mu\oplus \qgot$,
 $\kgot^*=\kgot^*_\mu\oplus \qgot^*$:
 we denote $\xi\to [\xi]_{\qgot^*}$ the corresponding projection to $\qgot^*$.

A point $\xi$ is in $(\kgot_\mu^*)_o$ if and only if the map
$X\mapsto \xi\circ \ad(X)$ is an isomorphism  from $\qgot$ to $\qgot^*$.
Thus for any $y\in \Ycal_C$, the linear map
$\Pi_y:=[-]_{\qgot^*}\circ \T_y\Phi :\T_yM \to \qgot^*$ is onto. Indeed,
the tangent space to $Ky$ projects onto the tangent space to  $K\Phi(y)$, which contains $[\qgot, \Phi(y)]=\qgot^*$.
Thus we obtain that $\Ycal_C$ is a submanifold  with tangent space $\ker(\Pi_y)$ and
furthermore $\T_y M=\T_y \Ycal_C\oplus \qgot\cdot y $.

The rest of the assertions follow from the fact that $C$ is a slice at $\mu$ for the coadjoint action.
%
%
%
%

\end{proof}

\bigskip

Suppose now that $M$ is oriented and carries a  $K$-equivariant $\spinc$-bundle $\Scal$. Let us explain how this data induces a $\spinc$-bundle on the slice $\Ycal_C$.

Any element $\xi\in\hgot^*_0:=\{\zeta\in\hgot^*\,\vert\, K_\zeta\subset H\}$ determines a complex
structure $J_\xi$ on $\qgot:=\kgot/\hgot$ which depends only of the connected component
$C$ of $\hgot^*_0$ containing $\xi$: thus we denote by $J_C$  the corresponding complex structure on $\qgot:=\kgot/\hgot$. We denote $\qgot^C$  the complex $H$-module $(\qgot,J_C)$, and $\rho_C$ the element of
$\zgot^*$ defined by the relation
\begin{equation}\label{def:rho-C}
\langle\rho_C,X\rangle= \frac{1}{2i}\tr_{\qgot^C}\mathrm{ad}(X), \quad X\in\hgot.
\end{equation}

Consider  the $H$-manifold $\Ycal_C$ and the open subset $K\times_H \Ycal_C$ of $M$.
At the level of tangent spaces we have $\T M\vert_{\Ycal_C}=[\qgot]\oplus\T \Ycal_C$. We orient the manifold $Y$
through the relation $o(M)=o(J_C)o(\Ycal_C)$. The restriction of the $\spinc$-bundle $\Scal$ to the submanifold $\Ycal_C$ allows to define the unique $\spinc$-bundle $\Scal_{\Ycal_C}$ on $\Ycal_C$ such that
\begin{equation}\label{eq;inducedbundle}
\Scal\vert_{\Ycal_C}=\bigwedge\qgot^C\otimes \Scal_{\Ycal_C}.
\end{equation}
This gives a bijection between the
$K$-equivariant $\spinc$-bundles on $K\times_H \Ycal_C$ and the
$H$-equivariant $\spinc$-bundles on $\Ycal_C$.
If  the relation (\ref{eq;inducedbundle}) holds, we say that $\Scal_{\Ycal_C}$ is the  $\spinc$-bundle
induced by $\Scal$. Notice that at the level of determinant line bundles we have
$$
\det(\Scal)\vert_{\Ycal_C}=\det(\Scal_{\Ycal_C})\otimes \C_{2\rho_C}.
$$

\medskip

Let us consider the particular situation where the slice $\Ycal_C$ is a compact submanifold of $M$. It is the case when $M=K\times_H \Ycal_C$, and in this setting we have a simple formula that relate the $\spinc$-indices on $M$ and on the slice $\Ycal_C$:
\begin{equation}\label{eq:induction-index}
\Qcal_K(M,\Scal)=\mathrm{Ind}_H^K\left(\bigwedge \qgot^C\otimes \Qcal_H(\Ycal_C,\Scal_{\Ycal_C})\right).
\end{equation}
See \cite{pep-vergne:witten}.

\section{Computing the multiplicities}\label{sec:computing-multiplicities}

\subsection{Transversally elliptic operators}

In this subsection,  we recall the basic definitions from the theory of transversally
elliptic symbols (or operators) defined by Atiyah and Singer in \cite{Atiyah74}.
We refer to \cite{B-V.inventiones.96.2,pep-vergne:bismut} for more details.

Let $M$ be a compact $K$-manifold with cotangent bundle $\T^*M$. Let $p:\T^*
M\to M$ be the projection.
If $\Ecal$ is a vector bundle on $M$, we may denote still by $\Ecal$ the vector bundle $p^*\Ecal$ on the cotangent bundle $\T^*M$.
If $\Ecal^{+},\Ecal^{-}$ are
$K$-equivariant  complex vector bundles over $M$, a
$K$-equivariant morphism $\sigma \in \Gamma(\T^*
M,\hom(\Ecal^{+},\Ecal^{-}))$ is called a {\em symbol} on $M$.
 For $x\in  M$, and $\nu\in T_x^*M$,  thus $\sigma(x,\nu):\Ecal^{+}_{x}\to
\Ecal^{-}_{x}$
is a linear map from $\Ecal^{+}_{x}$ to
$\Ecal^{-}_{x}$.
The
subset of all $(x,\nu)\in \T^* M$ where the map $\sigma(x,\nu)$  is not invertible is called the {\em characteristic set}
of $\sigma$, and is denoted by $\Char(\sigma)$.
A symbol  is elliptic if its characteristic set is compact.
 An elliptic symbol $\sigma$  on $M$ defines an
element $[\sigma]$ in the equivariant $\K$-theory of $\T^*M$ with compact
support, which is denoted by $\K_{K}^0(\T^* M)$.
The
index of $\sigma$ is a virtual finite dimensional representation of
$K$, that we denote by $\indice_{K}^M(\sigma)\in R(K)$.

Recall the notion of {\it transversally elliptic symbol}.
Let  $\T^*_KM$ be the following $K$-invariant closed subset of $\T^*M$
$$
   \T^*_{K}M\ = \left\{(x,\nu)\in \T^* M,\ \langle \nu ,X\cdot x\rangle=0 \quad {\rm for\ all}\
   X\in\kgot \right\} .
$$
 Its fiber over a point $x\in M$ is  formed by all the cotangent vectors $\nu\in T^*_xM$  which vanish on the tangent space to the orbit of $x$  under $K$, in the point $x$.
A symbol $\sigma$ is  $K$-{\em transversally elliptic} if the
restriction of $\sigma$ to $\T^*_{K}M$ is invertible outside a
compact subset of $\T^*_{K}M$ (i.e. $\Char(\sigma)\cap
\T_{K}^*M$ is compact).

A $K$-{\em transversally elliptic} symbol $\sigma$ defines an
element of $\K_{K}^0(\T^*_{K}M)$, and the index of
$\sigma$
defines an element $\indice_K^M(\sigma)$ of $\hat R(K)$ defined in \cite{Atiyah74}.

We will use the following obvious remark.
Let
$\sigma \in \Gamma(\T^*
M,\hom(\Ecal^{+},\Ecal^{-}))$ be a transversally elliptic  symbol on $M$.
\begin{lem}\label{lem:triv}
Assume an element $b\in K$ acts trivially on $M$, and that $\Ecal^\pm$ are $K$-equivariant vector bundles on $M$ such that the subbundles $[\Ecal^{\pm}]^b$ fixed by $b$ are equal to $\{0\}$.
Then
 $[\indice_K^M(\sigma)]^K=0$
\end{lem}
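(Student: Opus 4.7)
The plan is to reduce the vanishing of the $K$-invariant multiplicity to a pointwise statement about sections, exploiting the fact that the transversal ellipticity index has a genuine interpretation in terms of kernel and cokernel on each isotypic component.

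First I would observe that since $b$ acts trivially on $M$, a section $s\in \Gamma(M,\Ecal^{\pm})$ satisfies $(b\cdot s)(x)=b\cdot s(x)$ at every $x\in M$. Consequently, if $s$ is $K$-invariant, hence in particular $b$-invariant, then at each $x$ the value $s(x)$ lies in the fiber $[\Ecal^{\pm}_x]^{b}$, which is zero by hypothesis. This gives the pointwise vanishing $\Gamma(M,\Ecal^{\pm})^{K}=\{0\}$. This step is elementary; it is the key geometric input and I expect it to carry the whole argument.

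Next I would pick any $K$-equivariant transversally elliptic operator $D:\Gamma(M,\Ecal^{+})\to \Gamma(M,\Ecal^{-})$ with principal symbol $\sigma$. By Atiyah's theory \cite{Atiyah74}, the multiplicity in $\mathrm{Index}_{K}^{M}(\sigma)\in \hat R(K)$ of an irreducible representation $\pi$ equals the (finite) difference of dimensions of the $\pi$-isotypic components in $\ker D$ and $\mathrm{coker}\,D$. Specialising to the trivial representation,
\[
[\mathrm{Index}_{K}^{M}(\sigma)]^{K}=\dim(\ker D)^{K}-\dim(\mathrm{coker}\,D)^{K}.
\]
Since $(\ker D)^{K}\subset \Gamma(M,\Ecal^{+})^{K}=\{0\}$, the first term vanishes. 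For the cokernel, compactness of $K$ (via Haar averaging) makes the functor of $K$-invariants exact on the relevant $K$-modules of smooth sections, so $(\mathrm{coker}\,D)^{K}$ is a quotient of $\Gamma(M,\Ecal^{-})^{K}=\{0\}$, hence also zero.

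The only delicate point I anticipate is this exactness of $K$-invariants in the last step, since in the transversally elliptic setting $D$ need not have closed range and $\ker D$, $\mathrm{coker}\,D$ may be infinite-dimensional. This is resolved by restricting to isotypic components (where $D$ becomes Fredholm, by Atiyah's work) before taking invariants: on each isotypic piece ordinary linear algebra applies, and the averaging operator $s\mapsto \int_{K} k\cdot s\,dk$ splits the short exact sequence $0\to \mathrm{im}(D)\to \Gamma(M,\Ecal^{-})\to \mathrm{coker}\,D\to 0$ on the trivial isotypic component. Combining the two vanishings yields $[\mathrm{Index}_{K}^{M}(\sigma)]^{K}=0$, proving the lemma.
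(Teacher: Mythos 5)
Your proof is correct and follows essentially the same route as the paper's: reduce to the pointwise observation that, since $b$ acts trivially on $M$ and $[\Ecal^{\pm}]^b=\{0\}$, there are no nonzero $b$-invariant (hence no $K$-invariant) smooth sections, so both the kernel and cokernel contributions to the trivial isotypic component vanish. The paper states this more tersely (implicitly realizing the cokernel as the kernel of a formal adjoint acting on the same space of invariant sections); your extra step about exactness of $K$-invariants via Haar averaging is a valid way to handle the cokernel, though one can sidestep it entirely by using $\mathrm{coker}\,D\simeq\ker D^{*}\subset\Gamma(M,\Ecal^{-})$ and applying the pointwise vanishing directly.
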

\begin{proof}
The space $[\indice_K^M(\sigma)]^K$ is constructed as the (virtual) subspace of invariant  $C^{\infty}$-sections of the bundle $\Ecal^{\pm}$ which are solutions of a $K$-invariant pseudo-differential operator on $M$ with symbol $\sigma$.
But, as the action of $b$ is trivial on the basis, and
$[\Ecal^{\pm}]^b=\{0\}$, the space of
$b$-invariant  $C^{\infty}$-sections of the bundle $\Ecal^{\pm}$ is reduced to $0$.
\end{proof}

\medskip

Any elliptic symbol  is $K$-transversally
elliptic, hence we have a restriction map $\K_{K}^0(\T^*
M)\to \K_{K}^0(\T_{K}^*M)$, and a commutative
diagram
\begin{equation}\label{indice.generalise}
\xymatrix{ \K_{K}^0(\T^* M) \ar[r]\ar[d]_{\indice^M_K}
&
\K_{K}^0(\T_{K}^*M)\ar[d]^{\indice^M_K}\\
R(K)\ar[r] & \hat{R}(K)\ .
   }
\end{equation}

\medskip

Using the {\em excision property}, one can easily show that the
index map $\indice_K: \K_{K}^0(\T_{K}^*\Ucal)\to
\hat{R}(K)$ is still defined when $\Ucal$ is a
$K$-invariant relatively compact open subset of a
$K$-manifold (see \cite{pep-RR}[section 3.1]).

In the rest of this article, $M$ will be a Riemannian manifold, and we denote $\nu\in\T^* M\to \tilde{\nu}\in\T M$ the corresponding identification.

\subsection{The Witten deformation}\label{subsec:Wittendeformation}

In this section $M$ is an oriented $K$-manifold of even dimension (not necessarily compact). Let $\Phi: M\to\kgot^*$ be a
$K$-equivariant map. Let $\kappa_\Phi$ be the {\em Kirwan vector field}  associated to
$\Phi$ (see (\ref{eq:K-S})). We denote by $Z_\Phi$ the set of zeroes of $\kappa_\Phi$ : clearly $Z_\Phi$ contains the set of fixed points of the action of $K$ on $M$ as well as $\Phi^{-1}(0)$).

\begin{defi}\label{def:pushed-sigma}
Let $\sigma(M,\Scal)(m,\nu)=\clif_{\Scal_m}(\tilde{\nu}): \Scal_m^+\to \Scal_m^-$ be the symbol of the Dirac operator attached to the $\spinc$-bundle $\Scal$,
and let $\Phi:M\to\kgot^*$ be an equivariant map.  The symbol  $\sigma(M,\Scal,\Phi)$ pushed by the vector field $\kappa_\Phi$ is the
symbol defined by
$$
\sigma(M,\Scal,\Phi)(m,\nu)=\clif_{\Scal_m}(\tilde{\nu}-\kappa_\Phi(m))
: \Scal_m^+\longrightarrow \Scal_m^-
$$
for any $(m,\nu)\in\T M$.

Similarly if $\Wcal \to M$ is a $K$-equivariant vector bundle,
we define
$$
\sigma(M,\Scal\otimes \Wcal,\Phi)(m,\nu)=\sigma(M,\Scal,\Phi)(m,\nu)\otimes {\rm Id}_{\Wcal_m}.
$$

\end{defi}

Note that $\sigma(M,\Scal,\Phi)(m,\nu)$ is invertible except if
$\tilde{\nu}=\kappa_\Phi(m)$. If furthermore $(m,\nu)$ belongs to the subset $\T_K^* M$
of cotangent vectors orthogonal to the $K$-orbits, then $\nu=0$ and
$\kappa_\Phi(m)=0$.  Indeed $\kappa_\Phi(m)$ is tangent to $K\cdot m$ while
$\tilde\nu$ is orthogonal. So we note that $(m,\nu)\in \Char(\sigma(M,\Scal,\Phi_\Scal))\cap \T_K^* M$ if and only if $\nu=0$ and $\kappa_\Phi(m)=0$.

For any $K$-invariant open subset $\Ucal\subset M$ such that $\Ucal\cap Z_\Phi$ is compact in $M$, we see that the restriction
$\sigma(M,\Scal,\Phi)\vert_\Ucal$ is a transversally elliptic symbol on $\Ucal$, and so its equivariant index is a well defined element in
$\hat{R}(K)$.

Thus we can define the following localized equivariant indices.

\begin{defi}\label{def:indice-localise}
\begin{itemize}
\item A closed invariant subset $Z\subset Z_\Phi$ is called a {\em component} if it is a union of connected components of $Z_\Phi$.

\item  If $Z\subset Z_\Phi$ is a {\em compact component}, and $\Wcal$ is a $K$-equivariant
vector bundle over $M$, we denote by
$$
\Qcal_K(M,\Scal\otimes \Wcal,Z,\Phi)\ \in\ \hat{R}(K)
$$
the equivariant index of $\sigma(M,\Scal\otimes \Wcal,\Phi)\vert_\Ucal$ where $\Ucal$ is an invariant neighborhood of $Z$
so that $\Ucal\cap Z_\Phi=Z$.
\item If we make the Witten deformation with the map $\Phi=\Phi_\Scal$, the term $\Qcal_K(M,\Scal\otimes \Wcal,Z,\Phi_\Scal)$ is denoted simply by
$\Qcal_K(M,\Scal\otimes \Wcal,Z)$.
\end{itemize}
\end{defi}

By definition, $Z=\emptyset$ is a component and $\Qcal_K(M,\Scal\otimes\Wcal,\emptyset,\Phi)=0$.

When $M$ is compact it is clear that the classes of the symbols
$\sigma(M,\Scal,\Phi)$ and $\sigma(M,\Scal)$ are equal in
$\K_{K}^0(\T_{K}^*M)$, thus we get the first form of the localization theorem.

\begin{theo}\label{theo:indexsumoflocal}
Assume that $M$ is compact. If $Z_\Phi=Z_1\coprod\ldots\coprod Z_p$ is a decomposition
into disjoint (compact) components, we have the following equality in $\hat{R}(K)$ :
$$
\Qcal_K(M,\Scal)=\sum_{i=1}^p \Qcal_K(M,\Scal, Z_i,\Phi)
$$
\end{theo}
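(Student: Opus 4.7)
My plan is to prove the theorem in two steps: first, use a linear homotopy to show that the pushed symbol $\sigma(M,\Scal,\Phi)$ and the Dirac symbol $\sigma(M,\Scal)$ define the same class in $\K_{K}^0(\T_{K}^*M)$; second, apply the excision/additivity property of the index on $\T_K^*M$ to split this class into contributions supported near each connected component $Z_i$.

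For the first step, I will consider the family of symbols
$$\sigma_t(m,\nu):=\clif_{\Scal_m}\bigl(\tilde\nu-t\,\kappa_\Phi(m)\bigr),\qquad t\in[0,1],$$
interpolating between the Dirac symbol ($t=0$) and the pushed symbol ($t=1$). The key check is that each $\sigma_t$ is $K$-transversally elliptic, with characteristic set inside $\T^*_K M$ remaining in a fixed compact set as $t$ varies. This relies on the orthogonality observation already made just before Definition \ref{def:indice-localise}: a point $(m,\nu)\in\Char(\sigma_t)\cap\T^*_KM$ satisfies $\tilde\nu=t\,\kappa_\Phi(m)$, where $\tilde\nu$ is orthogonal to $T_m(K\cdot m)$ while $\kappa_\Phi(m)$ is tangent to it, forcing $\tilde\nu=0$ and $t\,\kappa_\Phi(m)=0$. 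For $t>0$ this gives the zero section over the compact set $Z_\Phi$, and for $t=0$ the zero section over $M$; both are compact since $M$ is compact. Homotopy invariance then yields $[\sigma(M,\Scal)]=[\sigma(M,\Scal,\Phi)]$ in $\K_{K}^0(\T^*_KM)$, and the commutative diagram (\ref{indice.generalise}) identifies the index of the left-hand side with $\Qcal_K(M,\Scal)$ viewed in $\hat R(K)$.

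For the second step, I will choose pairwise disjoint $K$-invariant open neighborhoods $\Ucal_1,\ldots,\Ucal_p$ of $Z_1,\ldots,Z_p$ with $\Ucal_i\cap Z_\Phi=Z_i$. On the complement of $\bigcup_i\T^*_K\Ucal_i$ inside $\T^*_KM$, the symbol $\sigma(M,\Scal,\Phi)$ is already invertible, because all of its characteristic points in $\T^*_KM$ lie over $Z_\Phi=\coprod_i Z_i$. Applying the excision property of transversally elliptic indices recalled after diagram (\ref{indice.generalise}), the class $[\sigma(M,\Scal,\Phi)]\in \K_{K}^0(\T^*_KM)$ decomposes as the sum of the classes of the restrictions $\sigma(M,\Scal,\Phi)\vert_{\Ucal_i}\in \K_{K}^0(\T^*_K\Ucal_i)$. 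Taking indices and using additivity then yields the claimed equality $\Qcal_K(M,\Scal)=\sum_{i=1}^p \Qcal_K(M,\Scal,Z_i,\Phi)$ in $\hat R(K)$.

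The only delicate point is the uniform transverse ellipticity of the homotopy $\sigma_t$, which hinges entirely on the orthogonality argument described above; once this is verified, the remainder of the proof is a formal application of homotopy invariance and excision/additivity for indices of transversally elliptic symbols as developed in \cite{Atiyah74}.
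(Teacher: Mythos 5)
Your proposal is correct and follows exactly the route the paper has in mind: the paper states just before the theorem that "it is clear that the classes of the symbols $\sigma(M,\Scal,\Phi)$ and $\sigma(M,\Scal)$ are equal in $\K_{K}^0(\T_{K}^*M)$," relying implicitly on the linear homotopy and the orthogonality observation you invoke (both already set out just after Definition~\ref{def:pushed-sigma}), and then the decomposition over the $Z_i$ is the same excision/additivity argument. You have merely written out the details the paper leaves to the reader.
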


\begin{rem}\label{rem: loca}
Write
$\Phi_\Scal(Z_\Scal)=\coprod_j \Ocal_j$ as  a  disjoint union of a finite set of coadjoint orbits.
Then we obtain the decomposition
$$\Qcal_K(M,\Scal)=\sum_{j} \Qcal_{\Ocal_j}$$ with
$\Qcal_{\Ocal}=\Qcal_K(M,\Scal, \Phi_\Scal^{-1}(\Ocal)\cap Z_\Scal)$.
As in \cite{pep-RR}, this decomposition is the main tool of our study.
However, in this work, we will need to introduce a further refinement of this decomposition.
\end{rem}

\begin{exam}
We return to our basic example (Example \ref{P1}).
Let $p_+=[1,0]$ and $p_-=[0,1]$  be the fixed points of the $T$-action on $M=\Pbb^1(\C)$.
The determinant line bundle of $\Scal(n)$ is  $\Lbb_n=[\C_{-1}]\otimes \Lcal^{\otimes 2n+2}$
where $[\C_{-1}]$ is the trivial line bundle equipped with the representation $t^{-1}$ on $\C$. We choose  the moment map
$\Phi_n$ associated to a connection on the determinant bundle (see more details in Section \ref{sec:examples}): $$\Phi_n([z_1,z_2])=(n+1)\frac{|z_1|^2}{|z_1|^2+|z_2|^2}-\frac{1}{2}.$$
Then,  for $n\geq 0$, $\Zcal=\{p_+\}\cup \{p_-\}\cup \Phi_{n}^{-1}(0)$, thus
$\Phi_n(Z_\Scal)=\{-\frac{1}{2}\}\cup \{0\} \cup \{n+\frac{1}{2}\}$.
Remark that $Z_\Scal$ is smooth: it has $3$ connected components, the two fixed points, and  $\Phi_{n}^{-1}(0)$  a circle with free action of $T$.
Then we obtain the associated decomposition
$\Qcal_T(M,\Scal(n))=Q_{-\frac{1}{2}}+Q_0+Q_{\frac{1}{2}}$ with
$$
Q_{-\frac{1}{2}}=-\sum_{k=-1}^{-\infty} t^k,\quad Q_{0}=\sum_{k=-\infty}^{-\infty} t^k,\quad
Q_{\frac{1}{2}}=-\sum_{k=n+1}^{\infty} t^k.
$$
\end{exam}

\begin{exam}
 Take the product $N=\Pbb^1(\C)\times \Pbb^1(\C)$,  with spin bundle $\Scal=\Scal(0)\otimes \Scal(0)$, moment map $\Phi_0$ and we consider  the diagonal action of $T$ with moment map $\Phi(m_1,m_2)=\Phi_0(m_1)+\Phi_0(m_2)$.
As $\Qcal_T(\Pbb^1(\C),\Scal(0))$ is the trivial representation of $T$,
  $\Qcal_T(N,\Scal)$ is still the trivial representation of $T$.

We have $\Phi(Z_\Scal)=\{-1\}\cup \{0\} \cup \{1\}$. In this case $\Phi^{-1}(\pm 1)=\{(p_\pm,p_\pm)\}$, and $\Phi^{-1}(0)$  is not smooth.

Consider the associated decomposition  of $\Qcal_T(N,\Scal)=Q_{-1}+Q_0+Q_{1}$.
We have
$$
Q_{-1}=\sum_{k=-\infty}^{-2} (-k-1)t^k,\quad Q_{0}=\sum_{k=-\infty}^{-\infty} (|k|-1) t^k,\quad
Q_{1}=\sum_{k=2}^{\infty}(k-1) t^k.
$$
We see that indeed $Q_{-1}+Q_{0}+Q_{1}=t^0$. Figure  \ref{Q0} shows the corresponding multiplicity functions.

\begin{figure}[!h]
\begin{center}
 \includegraphics[width=2 in]{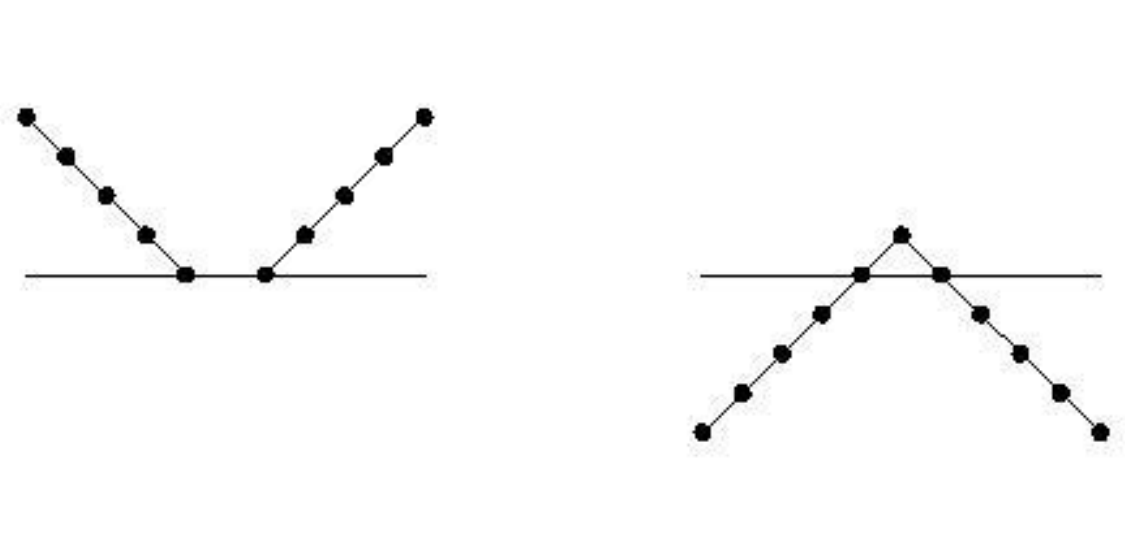}
 \end{center}
\caption{ The graph of  $Q_{-1}+Q_1$ and  the graph of $Q_0$}
 \label{Q0}
\end{figure}

\end{exam}

\subsection{Some properties of the localized index}

In this subsection, we recall the properties
of the localized index $\Qcal_K(M,\Scal, Z, \Phi)$
that we will use in this article.

\subsubsection{Fixed point submanifolds and $\spinc$-bundles}

Let $\Scal$ be a $K$-equivariant $\spinc$-bundle over the tangent bundle $\T M$ of a $K$-manifold $M$ (equipped with an invariant Riemannian metric).
The manifold $M$ is oriented and the Clifford bundle $\Scal$ is equipped with its canonical $\Z/2\Z$-grading.
Let $b\in\kgot$ be a non-zero $K$-invariant element, and consider the submanifold $M^{b}$ where
the vector field $b_M$ vanishes. We have an orthogonal decomposition
$$
\T M\vert_{M^b}=\Ncal\oplus \T M^b.
$$

The normal bundle $\Ncal$ inherits a fibrewise linear endomorphism $\Lcal(b)$ which is anti-symmetric relatively to the metric.
\begin{defi}\label{def:J-beta}
$\bullet$ We denote by $\Ncal_b$ the vector bundle $\Ncal$ over $M^b$  equipped with the complex structure
$J_b:=\Lcal(b) |\Lcal(b)|^{-1}$.

$\bullet$ We take on $\Ncal$ the orientation $o(\Ncal)$ induced by the complex structure $-J_b$.  On $M^b$  we take the orientation $o(M^b)$  defined by
$o(\Ncal)o(M^b)=o(M)$.
\end{defi}

Note that the endomorphism $\Lcal(b): \Ncal_b\to \Ncal_b$ is $\C$-linear, diagonalizable,
with eigenvalues  $i \theta^1_\Xcal,\ldots, i\theta^p_\Xcal$ that
depends of the connected component $\Xcal$ of $M^b$.
For further use, we note the following positivity result which follows directly from the definition of $J_b$.
\begin{lem}\label{lem:betapositive}
The eigenvalues  of the action of
$\frac{1}{i}\Lcal(b)$ on $\Ncal_b$ are positive.
\end{lem}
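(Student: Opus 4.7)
The plan is to reduce the statement to a fibrewise piece of linear algebra and then read it off directly from the definition of $J_b$. On each fibre $\Ncal_m$ with $m\in M^b$, the operator $\Lcal(b)$ is skew-symmetric for the invariant metric (because $b$ acts by isometries), and it is invertible, since $\T_m M^b$ is precisely the kernel of the infinitesimal action of $b$ on $\T_m M$. Consequently $-\Lcal(b)^2$ is a symmetric \emph{positive definite} endomorphism of $\Ncal$, and $|\Lcal(b)|:=\sqrt{-\Lcal(b)^2}$ is a well-defined strictly positive self-adjoint endomorphism commuting with $\Lcal(b)$.

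From the definition $J_b=\Lcal(b)\,|\Lcal(b)|^{-1}$ one immediately obtains the factorisation
\[
\Lcal(b) \;=\; J_b\circ |\Lcal(b)|,
\]
and the three operators $\Lcal(b)$, $|\Lcal(b)|$, $J_b$ pairwise commute; a routine check gives $J_b^2=-\mathrm{Id}$. In particular $\Lcal(b)$ and $|\Lcal(b)|$ are both $\C$-linear with respect to the complex structure $J_b$ on $\Ncal_b$.

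Finally, in the complex structure $J_b$ multiplication by $i$ \emph{is} the action of $J_b$, so $\frac{1}{i}J_b=\mathrm{Id}$ as $\C$-linear operators on $\Ncal_b$. Substituting into the factorisation yields the identity
\[
\tfrac{1}{i}\Lcal(b) \;=\; |\Lcal(b)|
\]
of $\C$-linear endomorphisms of $\Ncal_b$, whose spectrum consists of strictly positive real numbers by construction of $|\Lcal(b)|$. I do not expect any real obstacle; the lemma is essentially tautological once $J_b$ is unpacked, its purpose being only to fix the sign conventions for later computations localised on $M^b$.
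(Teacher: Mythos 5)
Your proof is correct and is exactly the argument the paper is pointing to when it says the lemma ``follows directly from the definition of $J_b$''; the key identity $\tfrac{1}{i}\Lcal(b)=|\Lcal(b)|$ on $(\Ncal,J_b)$, together with the invertibility of $\Lcal(b)$ on the normal bundle (since $\T M^b=\ker\Lcal(b)$), gives strict positivity.
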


 If we consider the complex line
bundle $\det(\Ncal_b)\to M^b$, we see that $\frac{1}{i}\Lcal(b)$ acts on the fibers of
$\det(\Ncal_b)\vert_\Xcal$ by multiplication by the positive number
$$
\ntr_{\Ncal_b\vert_\Xcal}|b|= \sum_{j=1}^p\theta^j_\Xcal.
$$

\begin{prop} \label{prop:spin-induit}
Let $\det(\Scal)$ be the determinant line bundle of the spin$^c$ bundle  $\Scal$.
 There exists an equivariant $\spinc$-bundle $\mathbf{d}_b(\Scal)$ on the tangent bundle $\T M^b$  with
 determinant line bundle equal to
\begin{equation}\label{eq:L_beta}
\det(\mathbf{d}_b(\Scal)):=\det(\Scal)\vert_{M^b}\otimes \det(\Ncal_b).
\end{equation}

\end{prop}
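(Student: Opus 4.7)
The plan is to apply the tensor-product decomposition of $\spinc$-bundles over orthogonal direct sums (the bundle version of the discussion surrounding equation (\ref{eq:iso-clifford-bundle})) to the $K$-invariant splitting
\[
\T M\vert_{M^b}=\Ncal\oplus \T M^b.
\]
The key observation is that the endomorphism $\Lcal(b)$ of $\Ncal$ is skew-symmetric and commutes with the $K$-action (since $b$ is $K$-invariant), so the complex structure $J_b=\Lcal(b)|\Lcal(b)|^{-1}$ is $K$-invariant and parallel in the sense we need.

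First I would equip $\Ncal$ with the complex structure $-J_b$, which by Definition \ref{def:J-beta} induces exactly the orientation $o(\Ncal)$, and set
\[
\Scal_\Ncal:=\bigwedge \overline{\Ncal_b},
\]
viewed as a $K$-equivariant $\spinc$-bundle on $\Ncal$ with its canonical grading. Its determinant line bundle is $\det(\Scal_\Ncal)=\overline{\det(\Ncal_b)}=\det(\Ncal_b)^{-1}$. Since $o(M^b)$ is fixed by the rule $o(\Ncal)o(M^b)=o(M)$, the bundle version of the decomposition recalled after (\ref{eq:iso-clifford-bundle}) allows us to define
\[
\mathbf{d}_b(\Scal):=\hom_{\mathrm{Cl}(\Ncal)}(\Scal_\Ncal,\Scal\vert_{M^b}),
\]
a $K$-equivariant $\spinc$-bundle on $\T M^b$ with its canonical grading, such that
\[
\Scal\vert_{M^b}\simeq \Scal_\Ncal\otimes \mathbf{d}_b(\Scal)
\]
as graded equivariant Clifford modules over $\T M\vert_{M^b}$.

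Taking determinant line bundles of both sides and using that $\det$ is multiplicative under tensor product of $\spinc$-bundles gives
\[
\det(\Scal)\vert_{M^b}=\det(\Scal_\Ncal)\otimes \det(\mathbf{d}_b(\Scal))=\det(\Ncal_b)^{-1}\otimes \det(\mathbf{d}_b(\Scal)),
\]
which rearranges to
\[
\det(\mathbf{d}_b(\Scal))=\det(\Scal)\vert_{M^b}\otimes \det(\Ncal_b),
\]
as required.

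There is really no hard step here; the only point that requires care is the choice of complex structure on $\Ncal$. Using $J_b$ directly would produce the opposite orientation and the wrong sign on $\det(\Ncal_b)$, so the conventions of Definition \ref{def:J-beta} (orienting $\Ncal$ via $-J_b$) are precisely what make the determinant formula (\ref{eq:L_beta}) come out as stated. Equivariance under $K$ is automatic because $b$ is $K$-invariant, so $\Lcal(b)$ and hence $J_b$ commute with the $K$-action on $\Ncal$.
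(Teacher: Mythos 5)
Your proof is correct and follows essentially the same route as the paper: split $\T M\vert_{M^b}=\Ncal\oplus\T M^b$, take $\bigwedge\overline{\Ncal_b}$ as the spinor bundle on $\Ncal$ with the $-J_b$ convention of Definition~\ref{def:J-beta}, peel it off to define $\mathbf{d}_b(\Scal)$, and compare determinant line bundles. The only difference is that you make explicit the formula $\mathbf{d}_b(\Scal)=\hom_{\mathrm{Cl}(\Ncal)}(\bigwedge\overline{\Ncal_b},\Scal\vert_{M^b})$ and the identification $\det\bigl(\bigwedge\overline{\Ncal_b}\bigr)=\det(\Ncal_b)^{-1}$, which the paper leaves implicit.
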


\begin{proof} The restriction $\Scal\vert_{M^b}$ is a $\spinc$-bundle over the tangent bundle $\T M\vert_{M^b}=\Ncal\oplus \T M^b$. We denote
$\overline{\Ncal_b}$ the vector bundle $\Ncal$  with the complex structure $-J_b$. Let
$\bigwedge \overline{\Ncal_b}$ be the spinor bundle on $\Ncal$ with its canonical grading : since $o(\Ncal)=o(-J_b)$ we have
$(\bigwedge \overline{\Ncal_b})^\pm=\bigwedge^{\pm}\overline{\Ncal_b}$.

Since $\bigwedge \overline{\Ncal_b}$ is a graded $\spinc$-bundle over $\Ncal$, we know that there exists an equivariant $\spinc$
bundle $\mathbf{d}_b(\Scal)$ over the tangent bundle $\T M^b$ (with its canonical grading) such that
\begin{equation}\label{eq:Sb}
\Scal\vert_{M^b}= \bigwedge \overline{\Ncal_b}\otimes \mathbf{d}_b(\Scal).
\end{equation}
is an isomorphism of graded Clifford modules. At the level of determinant line bundle,  we get $\det(\Scal)\vert_{M^b}=\det(\overline{\Ncal_b})\otimes \det(\mathbf{d}_b(\Scal))$.
The identity (\ref{eq:L_beta}) then follows.
\end{proof}\bigskip

Consider the linear action $\Lcal(b)\vert_{\Scal_b}$ of $b$
on the fibers of the $\spinc$-bundle $\Scal_b\to M^b$.
\begin{lem}\label{lem:L-beta-S}
We have $\frac{1}{i}\Lcal(b)\vert_{\mathbf{d}_b(\Scal)} = a \, \mathrm{Id}_{\Scal_b}$ where
$$
a(m)= \langle\Phi_\Scal(m),b\rangle +\frac{1}{2} \ntr_{\T_m M}|b|
$$
is a locally constant function on $M^b$.
\end{lem}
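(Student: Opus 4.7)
The plan is to reduce everything to a Schur-type argument on the fiber and then read off the scalar from the determinant line bundle.

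First, I would argue that $\Lcal(b)|_{\mathbf{d}_b(\Scal)_m}$ is a scalar operator. The crucial observation is that $\mathbf{d}_b(\Scal)$ is a $\spinc$-bundle for the tangent bundle $\T M^b$, and since $b$ fixes $M^b$ pointwise, the induced action of $b$ on $\T M^b$ is trivial. Consequently $\Lcal(b)$ on $\mathbf{d}_b(\Scal)_m$ commutes with the Clifford action of $\mathrm{Cl}(\T_m M^b)$. Because $\mathbf{d}_b(\Scal)_m$ is an irreducible module for $\mathrm{Cl}(\T_m M^b)\otimes\C\simeq \End(\mathbf{d}_b(\Scal)_m)$, Schur's lemma forces $\Lcal(b)|_{\mathbf{d}_b(\Scal)_m}=i\,a(m)\,\mathrm{Id}$ for some real $a(m)$. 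The same reasoning (applied to the $\spinc$-bundle $\mathbf{d}_b(\Scal)$, in the spirit of Remark \ref{rem:L-S-gamma}) shows that the action of $\frac{1}{i}\Lcal(b)$ on the determinant line $\det(\mathbf{d}_b(\Scal))_m$ is exactly $2a(m)$.

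Next, I would compute $a(m)$ by exploiting the factorization (\ref{eq:L_beta}):
\begin{equation*}
\det(\mathbf{d}_b(\Scal))=\det(\Scal)\vert_{M^b}\otimes \det(\Ncal_b).
\end{equation*}
The Kostant relation (\ref{eq:kostant-L}) applied to $\det(\Scal)$ at the fixed point $m\in M^b$ (where $b_M(m)=0$) gives $\frac{1}{i}\Lcal(b)|_{\det(\Scal)_m}=2\langle\Phi_\Scal(m),b\rangle$. On the complex line $\det(\Ncal_b)_m$, the eigenvalues of $b$ acting $\C$-linearly on $\Ncal_b|_m$ are $i\theta^1_\Xcal,\dots,i\theta^p_\Xcal$ (by Lemma \ref{lem:betapositive}), so $\frac{1}{i}\Lcal(b)$ acts on $\det(\Ncal_b)_m$ by $\sum_j \theta^j_\Xcal=\ntr_{\Ncal_b|_m}|b|$. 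Adding these two contributions gives
\begin{equation*}
2a(m)=2\langle\Phi_\Scal(m),b\rangle+\ntr_{\Ncal_b|_m}|b|.
\end{equation*}

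Finally, since $b$ acts trivially on $\T M^b$, we have $\ntr_{\T_m M^b}|b|=0$, so $\ntr_{\Ncal_b|_m}|b|=\ntr_{\T_m M}|b|$, yielding the desired formula $a(m)=\langle\Phi_\Scal(m),b\rangle+\frac{1}{2}\ntr_{\T_m M}|b|$. Local constancy of $a$ on $M^b$ follows because $m\mapsto\langle\Phi_\Scal(m),b\rangle$ is locally constant on $M^b$ (a consequence of (\ref{eq:hamiltonian-action})) and because the eigenvalues $\theta^j_\Xcal$ of $\Lcal(b)$ on $\Ncal_b$ are constant on each connected component $\Xcal$ of $M^b$.

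The only delicate point — and the one that requires care rather than calculation — is the Schur-lemma step: one must justify that for a general $\spinc$-bundle the fiberwise action of an element acting trivially on the base tangent space is scalar, and that this scalar relates to the action on $\det$ by a factor of two. This is standard but is the conceptual heart of the argument; the rest is bookkeeping with the identity (\ref{eq:L_beta}) and the Kostant relations.
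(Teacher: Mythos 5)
Your proof is correct and follows essentially the same path as the paper's: argue that $\Lcal(b)$ acts as a scalar on $\mathbf{d}_b(\Scal)_m$, then read that scalar off the determinant line via the factorization $\det(\mathbf{d}_b(\Scal))=\det(\Scal)\vert_{M^b}\otimes\det(\Ncal_b)$ and the Kostant relation. The one place where you add genuine value is the Schur step: the paper simply invokes Remark~\ref{rem:L-S-gamma}, whose literal statement (that $\Lcal(b)$ is scalar on the fiber $\Scal_m$ of any $\spinc$-bundle at any $m\in M^b$) is too strong — in general $[\Lcal(b),\clif(v)]=\clif(\Lcal(b)v)$, and the Clifford-commuting part $\mu_b\in\mathrm{Cl}^2(\T_m M)$ need not vanish. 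You correctly isolate the feature that makes the argument go through for $\mathbf{d}_b(\Scal)$, namely that $b$ acts trivially on $\T_m M^b$, so $\Lcal(b)$ genuinely commutes with $\mathrm{Cl}(\T_m M^b)$ and Schur applies without residue. The rest of your computation (the factor $2$ from $\det$, the identity $\ntr_{\Ncal_b\vert_m}|b|=\ntr_{\T_m M}|b|$ because $b$ is trivial on $\T M^b$, local constancy from (\ref{eq:hamiltonian-action}) and from constancy of the $\theta^j_\Xcal$ on components) matches the paper's and is sound.
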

\begin{proof}
Thanks to Remark \ref{rem:L-S-gamma}, we know that $a(m)$ is  equal to $\langle\Phi_{b}(m),b\rangle$
where $\Phi_{b}$ is a  moment map attached to the line bundle $\det(\mathbf{d}_b(\Scal))$.
Thanks to (\ref{eq:L_beta}) we see that $\langle\Phi_{b}(m),b\rangle=\langle\Phi_{\Scal}(m),b\rangle +
\frac{1}{2}\tr_{\Ncal_b}|b|$.
But $\ntr_{\T M}|b|=\tr_{\Ncal_b}|b|$ as well as and
$\langle\Phi_{\Scal}(m),b\rangle$ are locally constant on $M^b$.

\end{proof}

\medskip

The localization formula of Atiyah-Segal can be expressed in the following way (see \cite{pep-vergne:witten}):

\begin{theo}\label{theo:Atiyah-Segal}
Let $b\in\kgot$ be a non-zero $K$-invariant element and assume that $M$ is compact. For any complex $K$-vector bundle $\Wcal\to M$, we have the following equalities in $\hat{R}(K)$ :
$$
\Qcal_K(M,\Scal\otimes \Wcal)=\Qcal_{K}\left(M^b,\mathbf{d}_b(\Scal) \otimes \Wcal\vert_{M^b}
\otimes \Sym (\Ncal_b)\right).
$$
Here $\Sym (\Ncal_b)$ is the symmetric algebra of the complex vector bundle $\Ncal_b$.
\end{theo}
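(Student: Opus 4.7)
The plan is to derive this identity from the classical Atiyah--Segal--Singer localization formula, recast in the $\spinc$ language via the decomposition (\ref{eq:Sb}).

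First, one must check that the right-hand side is a well-defined element of $\hat R(K)$. The bundle $\Sym(\Ncal_b)$ is infinite-dimensional, but by Lemma \ref{lem:betapositive} the element $b$ acts on $\Ncal_b$ with strictly positive weights on each connected component of $M^b$. Hence, on each connected component, the $b$-weights on the fibers of $\Sym(\Ncal_b)$ form a discrete set bounded below, and the twisted Dirac operator $\Dcal_{\mathbf{d}_b(\Scal)}^{\Wcal|_{M^b}\otimes \Sym^n(\Ncal_b)}$ has a well-defined equivariant index for each $n$; summing over $n$ gives a virtual representation with finite multiplicities, i.e.\ an element of $\hat R(K)$.

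Next, I would compare the distributional characters on $K$. By the equivariant index formula (\ref{eq:ASS-index}) applied near $b$, the character of the left-hand side at $e^{tb+sY}$ (with $Y \in \kgot_b$ and $t,s$ small) is an integral over $M$ of an equivariant characteristic form. I apply the Atiyah--Bott--Berline--Vergne localization formula for the torus generated by $b$ to push this integral onto $M^b$, picking up the equivariant Euler class of $\Ncal$ in the denominator. The classical Koszul-type identity
\[
\frac{1}{\mathrm{Eul}_K(\Ncal)(tb+sY)} \;=\; \frac{\mathrm{ch}\bigl(\Sym(\Ncal_b)\bigr)(tb+sY)}{\mathrm{ch}\bigl(\det(\Ncal_b)\bigr)(tb+sY)\,\widehat A(\Ncal)(tb+sY)^{-1}}
\]
(a formal geometric-series expansion valid because the $b$-weights on $\Ncal_b$ are positive) allows me to rewrite the localized integrand. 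Inserting the decomposition (\ref{eq:Sb}), namely $\Scal|_{M^b}=\bigwedge\overline{\Ncal_b}\otimes\mathbf{d}_b(\Scal)$, and using $\mathrm{ch}(\bigwedge\overline{\Ncal_b})=\mathrm{ch}(\det(\Ncal_b))^{-1}\cdot\mathrm{ch}(\bigwedge \Ncal_b)$ together with standard Chern--Weil manipulations to convert $\widehat A(\Ncal)$-factors into Todd-type factors, one identifies the resulting integral over $M^b$ as precisely the Berline--Vergne integral formula for the equivariant index of $\mathbf{d}_b(\Scal)\otimes\Wcal|_{M^b}\otimes\Sym(\Ncal_b)$.

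Since the two sides of the claimed identity thus have the same character as distributions in a neighborhood of $b$, and since the relevant convergence on the right is governed by the positivity of Lemma \ref{lem:betapositive}, they agree in $\hat R(K)$. The main obstacle, and the step that requires care, is the last one: one must justify the formal power-series manipulations in the equivariant cohomology ring near $b$ and verify that the resulting distribution on $K$ indeed coincides with the character of the infinite-dimensional formal sum $\bigoplus_n \Qcal_K(M^b,\mathbf{d}_b(\Scal)\otimes\Wcal|_{M^b}\otimes\Sym^n(\Ncal_b))$. This is where the positivity of the $b$-action on $\Ncal_b$ is essential, both to make the geometric series expansion convergent as a character near $b$ and to ensure that the right-hand side defines an honest element of $\hat R(K)$ and not merely a formal series.
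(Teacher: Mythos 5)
Your cohomological strategy---localizing the Berline--Vergne integral onto $M^b$ and expanding the equivariant Euler class as $\mathrm{ch}(\Sym(\Ncal_b))$---is a genuinely different route from the one the paper invokes: the paper cites \cite{pep-vergne:witten}, where the result is obtained through a K-theoretic Witten deformation of the symbol by $b_M$, as sketched in Remark~\ref{rem:Atiyahsegalabelian}. Your approach is in the spirit of the original Lefschetz fixed-point argument of Atiyah--Segal--Singer, which is legitimate, but as written it has two gaps. First, the displayed ``Koszul-type identity'' does not hold: with Chern roots $x_i$ of $\Ncal_b$ one has $\Eul(\Ncal)=\prod x_i$, $\widehat A(\Ncal)=\prod\frac{x_i/2}{\sinh(x_i/2)}$, $\mathrm{ch}(\Sym(\Ncal_b))=\prod(1-e^{x_i})^{-1}$, $\mathrm{ch}(\det(\Ncal_b))=e^{\sum x_i}$, and your formula does not balance. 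The identity actually needed is (up to sign and powers of $i$)
$$
\frac{\widehat A(\Ncal)}{\Eul(\Ncal)}=\pm\, e^{\frac{1}{2}c_1(\Ncal_b)}\,\mathrm{ch}\bigl(\Sym(\Ncal_b)\bigr),
$$
the factor $e^{\frac{1}{2}c_1(\Ncal_b)}$---not $\mathrm{ch}(\det(\Ncal_b))=e^{c_1(\Ncal_b)}$---being precisely what compensates for the change of determinant line bundle in~(\ref{eq:L_beta}): since $\det(\mathbf{d}_b(\Scal))=\det(\Scal)|_{M^b}\otimes\det(\Ncal_b)$, the curvature two-form entering the Berline--Vergne integrand for $\mathbf{d}_b(\Scal)$ on $M^b$ differs from that for $\Scal|_{M^b}$ by $\frac{1}{2}c_1(\Ncal_b)$, and the two discrepancies must cancel.

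Second, and more seriously, agreement of characters ``as distributions near $b$'' does not by itself determine the element of $\hat R(K)$: the same meromorphic germ admits inequivalent formal expansions. Already for $\hat R(S^1)$, both $\sum_{n\geq 0}t^n$ and $-\sum_{n\geq 1}t^{-n}$ have character $\frac{1}{1-t}$ for $t\neq 1$, yet they are distinct elements, differing by $\sum_{n\in\Z}t^n$. To pin down coefficients one must pass to the complexified torus, push the imaginary part of the argument in the $+b$ direction so that Lemma~\ref{lem:betapositive} yields absolute convergence of the geometric series defining $\mathrm{ch}(\Sym(\Ncal_b))$, check that the Berline--Vergne integral for $\Qcal_K(M,\Scal\otimes\Wcal)$ continues holomorphically to that region, and only then compare Laurent coefficients. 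You acknowledge this as ``the main obstacle'' but do not carry it out; it is not a routine verification but the entire content of the theorem, being precisely what forces the a priori infinite formal sum on the right to collapse to the finite-dimensional virtual representation on the left.
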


\subsubsection{The localization formula over a coadjoint orbit}
Let $\Phi: M\to \kgot^*$ be an equivariant map.
Let $\beta\in \kgot^*$. We also consider $\beta$ as an element of $\kgot$ that we denote by the same symbol. In this section we assume
that $Z_\beta=K(M^\beta\cap \Phi^{-1}(\beta))$ is a compact component of $Z_\Phi\subset M$. The study of
$\Qcal_K(M,\Scal\otimes \Wcal,Z_\beta,\Phi)\ \in \  \hat{R}(K)$
is thus localized in a neighborhood of $\Phi^{-1}(K\beta)$, an induced manifold. Let us recall the corresponding  induction formula.

The restriction of $\Phi$ to $M^\beta$ is a $K_\beta$-equivariant map taking value in $\kgot^*_\beta$. The subset $Z'_{\beta}=M^\beta\cap \Phi^{-1}(\beta)$ is a compact component of $Z_{\Phi\vert_{M^\beta}}=Z_\Phi\cap M^\beta$. We may then define the localized index
$$
\Qcal_{K_\beta}(M^\beta,\mathbf{d}_\beta(\Scal)\otimes \Wcal\vert_{M^\beta},Z'_\beta,\Phi\vert_{M^\beta})\ \in \  \hat{R}(K_\beta)
$$
where $\mathbf{d}_\beta(\Scal)$ is the graded $\spinc$-bundle on $M^\beta$ defined in   Proposition \ref{prop:spin-induit}.

We consider the normal bundle $\Ncal\to M^\beta$ of $M^\beta$ in $M$. Recall that $\Ncal_\beta$ denotes the vector
bundle $\Ncal$ equipped with the complex $J_\beta$.
The following formula is proved in \cite{pep-RR,pep-vergne:witten}:
\begin{eqnarray*}
\lefteqn{\Qcal_K(M,\Scal\otimes \Wcal, Z_{\beta},\Phi)}\\
&=&\mathrm{Ind}_{K_\beta}^K\left(\Qcal_{K_\beta}(M^\beta,\mathbf{d}_\beta(\Scal)\otimes\Wcal\vert_{M^\beta}\otimes\Sym(\Ncal_\beta), Z'_{\beta},\Phi\vert_{M^\beta})\otimes
\bigwedge (\kgot/\kgot_\beta)_\C\right).
\end{eqnarray*}

\begin{rem}\label{rem:Atiyahsegalabelian}
When $K$ is abelian, this gives
\begin{eqnarray*}
\lefteqn{\Qcal_K(M,\Scal\otimes \Wcal, \Phi^{-1}(\beta)\cap M^\beta,\Phi)}\\
&=&\Qcal_{K}(M^\beta,\mathbf{d}_\beta(\Scal)\otimes\Wcal\vert_{M^\beta}\otimes\Sym(\Ncal_\beta),\Phi^{-1}(\beta)\cap M^{\beta},\Phi\vert_{M^\beta})
\end{eqnarray*}
which shows that the Atiyah-Segal localization formula  (\ref{theo:Atiyah-Segal})  still holds for the Witten deformation.
\end{rem}

Thus we obtain the following proposition.

\begin{prop}
Let $\Scal$ be a $K$-equivariant $\spinc$-bundle over $M$, with its canonical grading. Let $\Phi:M\to \kgot^*$ be an equivariant map.
Let $\Wcal\to M$ be an equivariant complex vector bundle. Assume that $Z_\beta=K(M^\beta\cap \Phi^{-1}(\beta))$ is a compact component
of $Z_\Phi\subset M$. Then
\begin{eqnarray}\label{eq:loc-induction-invariant}
   \lefteqn{\left[\Qcal_K(M,\Scal\otimes \Wcal, Z_\beta,\Phi)\right]^K= }\nonumber\\
   & & \left[\Qcal_{K_\beta}(M^{\beta},\mathbf{d}_\beta(\Scal)\otimes\Wcal\vert_{M^\beta}\otimes\Sym(\Ncal_\beta), Z'_\beta,\Phi\vert_{M^\beta})\otimes
\bigwedge (\kgot/\kgot_\beta)_\C\right]^{K_\beta}.
\end{eqnarray}
\end{prop}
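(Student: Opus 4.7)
The proposition is obtained by taking $K$-invariants on both sides of the induction formula displayed immediately above it:
\begin{eqnarray*}
\lefteqn{\Qcal_K(M,\Scal\otimes \Wcal, Z_{\beta},\Phi)}\\
&=&\mathrm{Ind}_{K_\beta}^K\left(\Qcal_{K_\beta}(M^\beta,\mathbf{d}_\beta(\Scal)\otimes\Wcal\vert_{M^\beta}\otimes\Sym(\Ncal_\beta), Z'_{\beta},\Phi\vert_{M^\beta})\otimes
\bigwedge (\kgot/\kgot_\beta)_\C\right).
\end{eqnarray*}
So my plan is simply to apply Frobenius reciprocity in the form $[\mathrm{Ind}_H^K V]^K=[V]^H$ to this identity, where $H=K_\beta$ and $V$ is the element of $\hat R(K_\beta)$ appearing inside the induction.

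The main point to justify is that this form of Frobenius reciprocity is valid at the level of $\hat R$. By the definition recalled in the Notations, the induction map $\mathrm{Ind}_H^K:\hat R(H)\to\hat R(K)$ is defined as the dual of the restriction $R(K)\to R(H)$, which means
$$\langle \mathrm{Ind}_H^K V,\pi\rangle_K \,=\, \langle V,\pi|_H\rangle_H$$
for every $\pi\in R(K)$ and $V\in\hat R(H)$. Specializing to $\pi=\mathbf{1}_K$, whose restriction is $\mathbf{1}_H$, gives
$$[\mathrm{Ind}_H^K V]^K \;=\; [V]^H,$$
which is the Frobenius identity we need. Because the pairing makes sense for any $V\in\hat R(H)$ (each irreducible of $K$ restricts to a finite sum of irreducibles of $H$), there is no convergence issue in applying this to our formal infinite sum $V$.

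Plugging
$$V=\Qcal_{K_\beta}\bigl(M^\beta,\mathbf{d}_\beta(\Scal)\otimes\Wcal|_{M^\beta}\otimes\Sym(\Ncal_\beta),Z'_\beta,\Phi|_{M^\beta}\bigr)\otimes \bigwedge (\kgot/\kgot_\beta)_\C$$
into the Frobenius identity $[\mathrm{Ind}_{K_\beta}^K V]^K=[V]^{K_\beta}$ gives exactly the asserted equality. Since the induction formula above is proved in the cited references \cite{pep-RR,pep-vergne:witten}, there is no further obstacle; the only thing to be careful about is the formal/distributional nature of $\hat R$, which is handled by the remark that the duality $\hat R(H)\times R(K)\to\Z$ is well defined through restriction.
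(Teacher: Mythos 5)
Your proof is correct and matches the paper's intent exactly: the proposition is presented as an immediate consequence ("Thus we obtain the following proposition") of the induction formula displayed just before it, with the $K$-invariant part extracted via the Frobenius identity $[\mathrm{Ind}_{K_\beta}^K V]^K=[V]^{K_\beta}$ that follows from the definition of $\mathrm{Ind}_H^K:\hat R(H)\to\hat R(K)$ as the dual of restriction. Your remark about finiteness of the pairing (each $K$-irreducible restricting to a finite sum of $K_\beta$-irreducibles) correctly disposes of the only possible technical worry in $\hat R$.
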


This proposition will be used   to obtain vanishing results, by studying  the infinitesimal action of $\beta$ on the vector bundle
$\mathbf{d}_\beta(\Scal)\otimes\Wcal\vert_{M^\beta}\otimes\Sym(\Ncal_\beta)$.

\medskip

The formula (\ref{eq:loc-induction-invariant}) can be specialized to each connected component of $M^\beta$.
For  a connected component
$\Xcal\subset M^\beta$ intersecting $\Phi^{-1}(\beta)$, we define the compact subset
$$
Z_\beta(\Xcal)=K\left(\Xcal\cap\Phi^{-1}(\beta)\right)\subset Z_\beta.
$$
First we note that $\Qcal_K(M,\Scal\otimes \Wcal, Z_\beta,\Phi)$ is equal to the sum $\sum_{\Xcal}\Qcal_K(M,\Scal\otimes \Wcal, Z_\beta(\Xcal),\Phi)$ parameterized by the connected component of $M^\beta$ intersecting $\Phi^{-1}(\beta)$ (their are finite in number).

We have a localization formula for each term $\Qcal_K(M,\Scal\otimes \Wcal, Z_\beta(\Xcal),\Phi)$ separately (see \cite{pep-RR,pep-vergne:witten}) :
\begin{eqnarray}\label{eq:loc-induction-invariant-local}
   \lefteqn{\left[\Qcal_K(M,\Scal\otimes \Wcal, Z_\beta(\Xcal),\Phi)\right]^K= }\nonumber\\
   & & \left[\Qcal_{K_\beta}(\Xcal,\mathbf{d}_\beta(\Scal)\vert_{\Xcal}\otimes\Wcal\vert_{\Xcal}\otimes\Sym(\Ncal_\beta)\vert_{\Xcal}, Z'_\beta(\Xcal),\Phi\vert_{\Xcal})\otimes
\bigwedge (\kgot/\kgot_\beta)_\C\right]^{K_\beta}
\end{eqnarray}
where $Z'_\beta(\Xcal)=\Xcal\cap\Phi^{-1}(\beta)\subset Z'_\beta$.

\subsubsection{Induction formula}
For the Witten deformation, we proved in \cite{pep-vergne:witten} the following variation on the invariance of the index under direct images.

Let $H$ be a closed subgroup of $K$, and consider
a $H$-invariant decomposition
$$\kgot=\hgot\oplus \qgot.$$

Let $B_\qgot$ be an open ball in $\qgot$, centered at $0$ and $H$-invariant.
Let $N'$ be a $H$-manifold, and
consider $N=K\times_H (B_\qgot \times N').$
Then $N'$ is a submanifold of $M$, and the normal bundle of $N'$ in $N$ is isomorphic to the trivial bundle with fiber $\qgot\oplus \qgot$.
Let $S_\qgot$ be the $\spinc$ module for $\qgot\oplus \qgot$ (we can take $\bigwedge \qgot_\C$ as realization of $S_\qgot$).
Thus if $\Ecal$ is a $K$-equivariant graded Clifford bundle on $N$, there exists a $H$-equivariant graded Clifford bundle $\Ecal'$ on $N'$ such that
$$\Ecal|_{N'}=S_\qgot \otimes \Ecal'.$$

Let $\Phi': N'\to \hgot^*$ be a $H$-equivariant map, and let $\Phi: N\to \kgot^*$ be a $K$-equivariant map. We assume that these maps are linked
by the following relations  :
\begin{equation}\label{eq-phi-prime}
\begin{cases}
\ \       \Phi\vert_{N'}=\Phi',     \\
 \ \    \Phi([1;X,n'])\in \hgot^*\Longleftrightarrow X=0,\\
 \ \    (\Phi([1;X,n']),X)\geq 0,
\end{cases}
\end{equation}
for $(X,n')\in B_\qgot \times N'$.

Under these conditions, we see that the critical sets $Z_\Phi\subset N$ and $Z_{\Phi'}\subset N'$ are related by :
$Z_\Phi=K\times_H (\{0\}\times Z_{\Phi'})$.

\begin{prop}[\cite{pep-vergne:witten}]\label{induction}
Let $Z$ be a compact component of $Z_\Phi$ and $Z'$ its intersection with $N'$. Then $Z'$ is a compact component of $Z_{\Phi'}$ and
$$\Qcal_K(N,\Ecal,Z,\Phi)={\rm Ind}_H^K \left(\Qcal_H(N',\Ecal',Z',\Phi')\right).$$
This leads to the relation $\left[\Qcal_K(N,\Ecal,Z, \Phi)\right]^K=\left[\Qcal_H(N',\Ecal', Z',\Phi')\right]^H$.
\end{prop}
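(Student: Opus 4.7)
The plan is to construct a homotopy of $K$-transversally elliptic symbols over a suitable neighborhood of $Z$ in $N$, interpolating between the restriction of $\sigma(N,\Ecal,\Phi)$ and an externally induced symbol built from $\sigma(N',\Ecal',\Phi')$ and the Bott class on the normal direction $\qgot\oplus\qgot$; the conclusion then follows from Atiyah's free-induction formula for transversally elliptic operators on $K\times_H V$.

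First I would choose an $H$-invariant relatively compact open subset $U'\subset N'$ with $U'\cap Z_{\Phi'}=Z'$; the open set $\Ucal:=K\times_H(B_\qgot\times U')$ then satisfies $\Ucal\cap Z_\Phi=Z$ by the hypothesis $Z_\Phi=K\times_H(\{0\}\times Z_{\Phi'})$. By excision, $\Qcal_K(N,\Ecal,Z,\Phi)$ is the index of $\sigma(N,\Ecal,\Phi)\vert_\Ucal$, viewed as an element of $\K^0_K(\T^*_K\Ucal)$. Atiyah's induction \cite{Atiyah74} gives an isomorphism $\K^0_H(\T^*_H(B_\qgot\times U'))\simeq \K^0_K(\T^*_K\Ucal)$ intertwining the indices via $\mathrm{Ind}_H^K$, so it suffices to identify, inside $\K^0_H(\T^*_H(B_\qgot\times U'))$, the class of $\sigma(N,\Ecal,\Phi)\vert_\Ucal$ with the external product of the Bott symbol on $B_\qgot$ (associated to the spinor module $S_\qgot$ of $\qgot\oplus\qgot$) and the Witten-deformed symbol $\sigma(N',\Ecal',\Phi')\vert_{U'}$.

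Next, along $N'$ the tangent bundle of $N$ decomposes as $\T N\vert_{N'}=[\qgot]\oplus[\qgot]\oplus \T N'$, where the first $\qgot$ comes from the $K$-orbit direction and the second from the slice $B_\qgot$; correspondingly $\Ecal\vert_{N'}=S_\qgot\otimes\Ecal'$ with the Clifford action factoring accordingly. At the point $[1;X,n']$, decompose $\Phi([1;X,n'])=\Phi'(n')+\psi(X,n')$ with $\psi(X,n')\in\qgot^*$, so that the Kirwan vector field has a $\qgot$-component whose Clifford action on $S_\qgot$ recovers (at first order in $X$) the Bott symbol, while its $\T N'$-component is $\kappa_{\Phi'}(n')$. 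The hypotheses (\ref{eq-phi-prime}) give $\psi(X,n')=0\iff X=0$ and $(\psi(X,n'),X)\geq 0$. I would then introduce a linear homotopy $\sigma_t$ which progressively replaces the true Kirwan term on the slice direction by its linearization in $X$, keeping the $\T N'$ part equal to $\kappa_{\Phi'}$; the positivity $(\psi(X,n'),X)\geq 0$ implies that along $\T^*_K\Ucal$ the Clifford terms coming from $\tilde\nu$, from the $X$-direction and from $\kappa_{\Phi'}$ cannot cancel each other, so the characteristic set meets $\T^*_K\Ucal$ only at $Z$ for all $t\in[0,1]$. At $t=1$ the symbol factors as the external product announced above, whose index in $\hat R(H)$ equals $\Qcal_H(N',\Ecal',Z',\Phi')$ because the Bott symbol has index $1$.

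The main obstacle is the transversal ellipticity check during the deformation: one must show that for every $t$ and every $(m,\nu)\in\T^*_K\Ucal$ outside $Z$, the deformed Clifford symbol is invertible. This boils down to a pointwise positivity computation involving $\|\tilde\nu\|^2$, $\|\kappa_{\Phi'}(n')\|^2$, $\|X\|^2$ and the pairing $(\psi(X,n'),X)$, and it is precisely the third hypothesis in (\ref{eq-phi-prime}) that makes the relevant cross terms non-negative. Once transversal ellipticity is established uniformly in $t$, invariance of the index under homotopy in $\K^0_K(\T^*_K\Ucal)$ together with Atiyah's induction formula gives $\Qcal_K(N,\Ecal,Z,\Phi)=\mathrm{Ind}_H^K\!\left(\Qcal_H(N',\Ecal',Z',\Phi')\right)$, and Frobenius reciprocity $[\mathrm{Ind}_H^K E]^K=[E]^H$ yields the statement for $K$-invariants.
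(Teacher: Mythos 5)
Your strategy — excision to a relatively compact neighborhood $\Ucal=K\times_H(B_\qgot\times U')$, Atiyah's free-induction isomorphism between $\K^0_K(\T^*_K\Ucal)$ and $\K^0_H(\T^*_H(B_\qgot\times U'))$, a homotopy driven by the positivity hypothesis to factor the Clifford symbol as a Bott class on $B_\qgot$ times the Witten symbol on $U'$, then multiplicativity and Frobenius reciprocity — is exactly the paradigm used in \cite{pep-vergne:witten}, to which the paper defers for this proposition.

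One point needs correction, because as written it makes an intermediate claim false. You write $\Phi([1;X,n'])=\Phi'(n')+\psi(X,n')$ with $\psi(X,n')\in\qgot^*$, and deduce that the $\T N'$-part of $\kappa_\Phi$ equals $\kappa_{\Phi'}(n')$ throughout the homotopy. The hypotheses (\ref{eq-phi-prime}) give no such splitting: the first condition only fixes $\Phi$ on $\{X=0\}$, and for $X\neq 0$ the difference $\Phi([1;X,n'])-\Phi'(n')$ will generally have a nonzero $\hgot^*$-component. The correct decomposition is the orthogonal one, $\Phi=\Phi_\hgot+\Phi_\qgot$; the second and third hypotheses control only $\Phi_\qgot$, namely $\Phi_\qgot([1;X,n'])=0\iff X=0$ and $(\Phi_\qgot([1;X,n']),X)=(\Phi([1;X,n']),X)\geq 0$ since $\hgot\perp\qgot$. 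In particular the $\T N'$-component of $\kappa_\Phi$ at $[1;X,n']$ is governed by $\Phi_\hgot([1;X,n'])$, which equals $\Phi'(n')$ only at $X=0$, so your homotopy deforms that piece too. Fortunately the transversal-ellipticity check you flag as the main obstacle still works: writing the interpolated Clifford argument as $(a_t,b_t)$ with $a_t$ the $\qgot\oplus\qgot$-part and $b_t$ the $\T N'$-part, the first component of $a_t=0$ reads $(1-t)\Phi_\qgot+tX=0$; pairing with $X$ gives $(1-t)(\Phi_\qgot,X)+t\|X\|^2=0$, a sum of nonnegative terms, forcing $X=0$ for every $t\in[0,1]$. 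Once $X=0$ the ambiguity in $\Phi_\hgot$ disappears, $a_t=0$ forces $\tilde\nu_{B_\qgot}=0$, and $b_t=0$ on $\T^*_K\Ucal$ forces $\tilde\nu_{N'}=0$ and $\kappa_{\Phi'}(n')=0$, so the characteristic set is $Z$ for all $t$. With the decomposition stated correctly, your argument goes through.
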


\subsection{The function $d_\Scal$}\label{sec:fonction-d-S}

Let $M$ be a {\em compact} oriented even dimensional $K$-manifold, equipped with a $K$-equivariant $\spinc$
bundle $\Scal$. Let $\Phi_\Scal$ be the associated  moment map on $M$, and $\kappa_\Scal$ be the Kirwan vector field.
Let $Z_\Scal$ be the vanishing set of $\kappa_\Scal$~:
$$
Z_\Scal=\left\{m\in M\ | \ \Phi_\Scal(m)\cdot m=0\right\}=\bigcup_{\theta} M^\theta\cap \Phi^{-1}_\Scal(\theta).
$$
We now introduce a function  $d_\Scal : Z_\Scal \longrightarrow \R$ which will localize our study of
$\left[\Qcal_K(M,\Scal, Z_\Scal)\right]^K$  to
special components $Z$ of $Z_\Scal$.

\medskip

Define  $d_\Scal : Z_\Scal \longrightarrow \R$ by the following relation
\begin{equation}\label{eq:d-S}
d_\Scal(m)=\|\theta\|^2+\frac{1}{2}\ntr_{\T_m M}|\theta| -\ntr_{\kgot}|\theta|,\quad \mathrm{with}\quad  \theta=\Phi_\Scal(m).
\end{equation}

\begin{lem}
\begin{itemize}
\item The function $d_\Scal$ is a $K$-invariant locally constant function on $Z_\Scal$ that takes a finite number of values.
\item The subsets $Z_\Scal^{>0}=\{d_\Scal>0\}$, $Z_\Scal^{=0}=\{ d_\Scal=0\}$,
$Z_\Scal^{<0}=\{d_\Scal<0\}$ are components of $Z_\Scal$.
\end{itemize}
\end{lem}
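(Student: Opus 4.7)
The plan is to invoke Lemma \ref{lem:crit} to decompose $Z_\Scal$ according to the finite image $\Phi_\Scal(Z_\Scal)$, and to verify that each of the three terms in the definition of $d_\Scal$ is $K$-invariant and locally constant on the resulting pieces.

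$K$-invariance is immediate. If $m\in Z_\Scal$, $\theta=\Phi_\Scal(m)$, and $k\in K$, then $\Phi_\Scal(km)=k\theta$, and the differential of the $K$-action is an isometry $\T_mM\simeq \T_{km}M$ intertwining $\Lcal(\theta)\vert_{\T_mM}$ with $\Lcal(k\theta)\vert_{\T_{km}M}$. Combined with the $K$-invariance of the inner product on $\kgot^*$, this makes all three terms in (\ref{eq:d-S}) $K$-invariant.

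For the local constancy and finiteness of values, fix a coadjoint orbit $\Ocal=K\beta\subset\Phi_\Scal(Z_\Scal)$, which by Lemma \ref{lem:crit} is one of only finitely many. By $K$-invariance it suffices to examine $d_\Scal$ on $M^\beta\cap\Phi_\Scal^{-1}(\beta)$, where $\theta\equiv\beta$, so $\|\theta\|^2$ and $\ntr_\kgot|\theta|$ are constants depending only on $\beta$. For the middle term $\ntr_{\T_mM}|\beta|$, observe that the compact torus $T_\beta:=\overline{\exp(\R\beta)}\subset K$ acts fibrewise on $\T M\vert_{M^\beta}$. Decomposing this restricted bundle into $T_\beta$-isotypical components yields a finite family of subbundles whose ranks are constant on any connected component $\Xcal$ of $M^\beta$. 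The eigenvalues of $\Lcal(\beta)\vert_{\T_mM}$ are then recovered by evaluating the corresponding $T_\beta$-weights on $\beta$, with the (constant) multiplicities just described, so $\ntr_{\T_mM}|\beta|$ is constant on $\Xcal$.

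Since $M$ is compact, each $M^\beta$ has finitely many connected components, so together with Lemma \ref{lem:crit} we conclude that $d_\Scal$ takes only finitely many values and is locally constant on $Z_\Scal$. The level sets of such a function are simultaneously open and closed, hence they are unions of connected components of $Z_\Scal$, i.e., components in the sense of Definition \ref{def:indice-localise}. The only step that is not purely formal is the local constancy of $\ntr_{\T_mM}|\beta|$, handled by the isotypical decomposition of $\T M\vert_{M^\beta}$ under $T_\beta$ above; everything else is a direct bookkeeping from Lemma \ref{lem:crit} and the definition of the normalized trace.
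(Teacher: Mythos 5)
Your proof is correct and follows essentially the same route as the paper: invoke Lemma \ref{lem:crit} to split $Z_\Scal$ according to the finite set of orbits in $\Phi_\Scal(Z_\Scal)$, observe that on each piece $K(\Xcal^j\cap\Phi^{-1}_\Scal(\beta))$ the three ingredients of $d_\Scal$ are constant, and conclude. The only difference is that you make explicit (via the $T_\beta$-isotypical decomposition of $\T M\vert_{M^\beta}$) why $m\mapsto\ntr_{\T_m M}|\beta|$ is locally constant on $M^\beta$, a point the paper's proof asserts without elaboration; this is a correct and welcome filling-in of detail rather than a different argument.
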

\begin{proof} The $K$-invariance of $d_\Scal$ is immediate.

The image $\Phi_\Scal(Z_\Scal)$ is equal to a finite union $\bigcup_j \Ocal_j$ of coadjoint orbits. For each
coadjoint orbit $\Ocal= K\beta$, the set $Z_\Scal\cap\Phi_\Scal^{-1}(\Ocal)$ is equal to a finite disjoint union
$\bigcup_j K( \Xcal^j\cap\Phi^{-1}_\Scal(\beta))$ where
$(\Xcal^j)$ are the connected components of $M^{\beta}$ intersecting $\Phi^{-1}_\Scal(\beta)$.
Since $m\mapsto \ntr_{\T_m M}|\theta|$ is well defined and locally constant on $M^\theta$,
the map $d_\Scal$ is constant on each
component $K( \Xcal^j\cap\Phi^{-1}_\Scal(\beta))$. This proves that $d_\Scal$ is locally constant function that takes a finite number of values.

The second point is a direct consequence of the first.
\end{proof}

\medskip

We now prove the following fundamental fact.

\begin{prop}\label{prop:annulation-d-S}
Let $Z_\Scal^{>0}$ be the component of $Z_\Scal$ where $d_\Scal$ takes strictly positive values. We have
$\left[\Qcal_K(M,\Scal, Z_\Scal^{>0})\right]^K=0$.
\end{prop}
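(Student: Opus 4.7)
The plan is to apply the refined localization formula (\ref{eq:loc-induction-invariant-local}) piece by piece over the connected components of $M^\beta$, and then to combine the three a priori eigenvalue bounds from Section~\ref{sec:coadjoint-orbit} so that they exactly reconstruct the function $d_\Scal$.

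By Lemma~\ref{lem:crit} and the decomposition of each $M^\beta$ into connected components, the set $Z_\Scal$ is a finite disjoint union of pieces of the form $Z_\beta(\Xcal)=K(\Xcal\cap\Phi_\Scal^{-1}(\beta))$. Since $d_\Scal$ is locally constant on $Z_\Scal$, the subset $Z_\Scal^{>0}$ is a finite union of such pieces on which $d_\Scal>0$. Applying (\ref{eq:loc-induction-invariant-local}) to each one and using the additivity of the localized index, the proposition reduces to showing
\[
\Bigl[\Qcal_{K_\beta}\bigl(\Xcal,\; \mathbf{d}_\beta(\Scal)|_\Xcal \otimes \Sym(\Ncal_\beta)|_\Xcal,\; Z'_\beta(\Xcal),\; \Phi_\Scal|_\Xcal\bigr) \otimes {\textstyle\bigwedge}(\kgot/\kgot_\beta)_\C\Bigr]^{K_\beta}=0
\]
whenever $d_\Scal>0$ on $Z_\beta(\Xcal)$.

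Next, I would lower-bound the eigenvalues of the infinitesimal action of $\beta$ on the fibers of the twist bundle $\Fcal:=\mathbf{d}_\beta(\Scal)\otimes\Sym(\Ncal_\beta)\otimes[\bigwedge(\kgot/\kgot_\beta)_\C]$ over $\Xcal$ by combining three ingredients. Lemma~\ref{lem:L-beta-S} shows that $\tfrac{1}{i}\Lcal(\beta)$ acts on $\mathbf{d}_\beta(\Scal)|_\Xcal$ by the constant $\|\beta\|^2+\tfrac12\ntr_{\T M}|\beta|$ (using Remark~\ref{rem:L-S-gamma} and the fact that $\Xcal$ meets $\Phi_\Scal^{-1}(\beta)$). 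Lemma~\ref{lem:betapositive} gives strictly positive eigenvalues of $\tfrac{1}{i}\Lcal(\beta)$ on $\Ncal_\beta$, hence non-negative ones on $\Sym(\Ncal_\beta)$. Lemma~\ref{lem:confusing}, applied to the roots of $\kgot$ not in $\kgot_\beta$, bounds the eigenvalues on $\bigwedge(\kgot/\kgot_\beta)_\C$ from below by $-\ntr_\kgot|\beta|$. Adding the three contributions on any fiber, every eigenvalue of $\tfrac{1}{i}\Lcal(\beta)$ on $\Fcal|_\Xcal$ is at least
\[
\|\beta\|^2+\tfrac12\ntr_{\T M}|\beta|-\ntr_\kgot|\beta|=d_\Scal,
\]
which is strictly positive on $Z_\Scal^{>0}$ by assumption.

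Finally, let $T_\beta\subset K_\beta$ be the closure of $\{\exp(t\beta):t\in\R\}$: this is a central subtorus of $K_\beta$ that acts trivially on $\Xcal\subset M^\beta$. A fiber vector of $\Fcal|_\Xcal$ is fixed by a topological generator $b\in T_\beta$ iff it is annihilated by the infinitesimal action of $\beta$; by the previous step no such nonzero vector exists, so $[\Fcal^\pm]^b=\{0\}$. Lemma~\ref{lem:triv} then yields the vanishing of the $K_\beta$-invariant localized index, which is the required identity. The main technical subtlety to handle carefully will be that $\Sym(\Ncal_\beta)$ is infinite-dimensional, so Lemma~\ref{lem:triv} must be applied degree-by-degree in the decomposition $\Sym(\Ncal_\beta)=\bigoplus_{n\geq 0}\Sym^n(\Ncal_\beta)$; this is legitimate because the lower bound $d_\Scal>0$ on the $\tfrac{1}{i}\Lcal(\beta)$-eigenvalues holds uniformly in $n$ (in fact the eigenvalues grow with $n$), and each finite-degree summand is an honest finite-rank $K_\beta$-equivariant bundle.
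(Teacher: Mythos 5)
Your proposal is correct and follows the paper's proof essentially verbatim: decompose $Z_\Scal^{>0}$ into the pieces $Z_\beta(\Xcal)$, apply the localization formula (\ref{eq:loc-induction-invariant-local}), and combine the three eigenvalue bounds (Lemmas~\ref{lem:L-beta-S}, \ref{lem:betapositive}, \ref{lem:confusing}) so that the lower bound is exactly $d_\Scal>0$. The only difference is that you make explicit, via Lemma~\ref{lem:triv} and a topological generator of the closure of $\exp(\R\beta)$, the mechanism by which ``$\tfrac{1}{i}\Lcal(\beta)>0$ on the fibers'' forces the invariant index to vanish, and you flag the degree-by-degree treatment of $\Sym(\Ncal_\beta)$; both are steps the paper leaves implicit.
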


\medskip

Since $\Qcal_K(M,\Scal)=\Qcal_K(M,\Scal, Z_\Scal^{<0})+\Qcal_K(M,\Scal, Z_\Scal^{=0})+\Qcal_K(M,\Scal, Z_\Scal^{>0})$ by Theorem \ref{theo:indexsumoflocal}, note first the following immediate corollary.
\begin{coro}\label{coro:annulation-d-S}
If $d_\Scal$ takes non negative  values on $Z_\Scal$, we have
$$
[\Qcal_K(M,\Scal)]^K=[\Qcal_K(M,\Scal, Z_\Scal^{=0})]^K.
$$
\end{coro}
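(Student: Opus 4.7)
The plan is to reduce the claim to a fibrewise computation of the infinitesimal action of $\beta$, using the induction formula over coadjoint orbits. Since $d_\Scal$ is locally constant on $Z_\Scal$ and $\Phi_\Scal(Z_\Scal)$ is a finite union of coadjoint orbits, one can write $Z_\Scal^{>0}$ as a finite disjoint union of pieces $Z_\beta(\Xcal)=K(\Xcal\cap\Phi_\Scal^{-1}(\beta))$, where $\beta$ runs over representatives of orbits in $\Phi_\Scal(Z_\Scal^{>0})$ and $\Xcal$ over the connected components of $M^\beta$ meeting $\Phi_\Scal^{-1}(\beta)$ on which $d_\Scal$ is positive. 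By the additivity of the localized index (Theorem \ref{theo:indexsumoflocal} applied locally), it suffices to prove that $[\Qcal_K(M,\Scal,Z_\beta(\Xcal))]^K=0$ for each such piece.

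I would then invoke the localization formula (\ref{eq:loc-induction-invariant-local}): the $K$-invariant part equals the $K_\beta$-invariant part of a $K_\beta$-equivariant transversally elliptic index on $\Xcal$ with coefficient bundle
\[
\Ecal := \mathbf{d}_\beta(\Scal)\vert_\Xcal\otimes\Sym(\Ncal_\beta)\vert_\Xcal\otimes{\textstyle\bigwedge}(\kgot/\kgot_\beta)_\C.
\]
Since $\Xcal\subset M^\beta$, the element $\beta\in\kgot_\beta$ acts trivially on $\Xcal$, and I would compute its infinitesimal action on the fibers of $\Ecal$ over $Z'_\beta(\Xcal):=\Xcal\cap\Phi_\Scal^{-1}(\beta)$ by combining three inputs: Lemma \ref{lem:L-beta-S} shows $\beta$ acts on $\mathbf{d}_\beta(\Scal)$ by the scalar $i\langle\Phi_\Scal,\beta\rangle+\tfrac{i}{2}\ntr_{\T M}|\beta|$, which equals $i\|\beta\|^2+\tfrac{i}{2}\ntr_{\T M}|\beta|$ on $Z'_\beta(\Xcal)$; Lemma \ref{lem:betapositive} shows that $\beta/i$ has strictly positive eigenvalues on $\Ncal_\beta$, hence non-negative eigenvalues on $\Sym(\Ncal_\beta)$; and Lemma \ref{lem:confusing} gives that the eigenvalues of $\beta/i$ on $\bigwedge(\kgot/\kgot_\beta)_\C$ are bounded below by $-\ntr_\kgot|\beta|$. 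Adding these bounds, every eigenvalue of $\beta/i$ on the fibers of $\Ecal|_{Z'_\beta(\Xcal)}$ is at least
\[
\|\beta\|^2+\tfrac{1}{2}\ntr_{\T M}|\beta|-\ntr_\kgot|\beta|=d_\Scal>0.
\]
In particular the zero weight of $\beta$ does not appear in $\Ecal$.

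The remaining step is to promote this fibrewise statement to vanishing of the $K_\beta$-invariant part of the index. Since $\exp(\R\beta)$ fixes $\Xcal$ pointwise, I would decompose $\Sym(\Ncal_\beta)$ as a Hilbert sum of its $\beta$-weight subbundles; each weight subbundle is a finite-dimensional $K_\beta$-equivariant subbundle, and the Witten-deformed transversally elliptic symbol splits correspondingly. For a fixed weight subbundle, the set of eigenvalues of $\beta$ on the (now finite-dimensional) fibers is finite and bounded away from the integer multiples of $2\pi i$ by a single choice of small $t>0$, so Lemma \ref{lem:triv} applies with $b=\exp(t\beta)$ to kill the $K_\beta$-invariants of that piece. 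Summing over weights, and using the uniform bound $d_\Scal>0$ to rule out any zero weight contribution, yields $[\Qcal_K(M,\Scal,Z_\beta(\Xcal))]^K=0$ and hence the proposition. I expect the main obstacle to be precisely this last maneuver: because $\Sym(\Ncal_\beta)$ is infinite-dimensional, no single $b\in K$ can annihilate all weights simultaneously, so one must justify carefully that the localized index decomposes weight-by-weight in a manner compatible with taking $K_\beta$-invariants.
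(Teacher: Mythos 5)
Your argument is correct, but it re-derives Proposition \ref{prop:annulation-d-S} from scratch instead of citing it. The paper's proof of the corollary is essentially a one-liner: by Theorem \ref{theo:indexsumoflocal}, $\Qcal_K(M,\Scal)=\Qcal_K(M,\Scal,Z_\Scal^{<0})+\Qcal_K(M,\Scal,Z_\Scal^{=0})+\Qcal_K(M,\Scal,Z_\Scal^{>0})$; the hypothesis $d_\Scal\geq 0$ forces $Z_\Scal^{<0}=\emptyset$, so the first summand is zero; and $[\Qcal_K(M,\Scal,Z_\Scal^{>0})]^K=0$ by Proposition \ref{prop:annulation-d-S}. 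You implicitly use the decomposition but never state that the hypothesis kills the $Z_\Scal^{<0}$ piece, which is the only place the assumption $d_\Scal\geq 0$ enters --- make that explicit.

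The middle of your argument (localization formula \eqref{eq:loc-induction-invariant-local} together with the three eigenvalue bounds from Lemmas \ref{lem:L-beta-S}, \ref{lem:betapositive}, \ref{lem:confusing}) reproduces the paper's proof of Proposition \ref{prop:annulation-d-S} faithfully. Your worry about the infinite-dimensionality of $\Sym(\Ncal_\beta)$ is legitimate and the paper elides it, but your fix via Lemma \ref{lem:triv} and a $j$-dependent choice of $b=\exp(t\beta)$, while it does work, is more elaborate than necessary: the localized index in $\hat R(K_\beta)$ is by construction the formal sum over $j\geq 0$ of the indices with coefficient bundle $\mathbf{d}_\beta(\Scal)\vert_\Xcal\otimes\Sym^j(\Ncal_\beta)\vert_\Xcal\otimes\bigwedge(\kgot/\kgot_\beta)_\C$, and for each fixed $j$ the central element $\beta\in\kgot_\beta$ acts on the (finite-rank) fibers with eigenvalues bounded below by $d_\Scal>0$ uniformly in $j$; since the $\beta$-weights of the fibers are locally constant on the connected component $\Xcal$, the $\beta$-weight of every $K_\beta$-isotype in the index of the $j$-th piece is strictly positive, so the trivial isotype never occurs. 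That observation finishes the vanishing without invoking Lemma \ref{lem:triv} at all.
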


\medskip

We now prove Proposition \ref{prop:annulation-d-S}.

\begin{proof} Consider  a coadjoint orbit $K\beta$ contained in $\Phi_\Scal(Z_\Scal)$. Let $\Xcal$ be the connected component of $M^{\beta}$ and let
$Z'_\beta(\Xcal):=\Xcal \cap \Phi^{-1}(\beta)$. Let $Z_\beta(\Xcal)=KZ'_\beta(\Xcal)$. Let us show that $[\Qcal_K(M,\Scal, Z_\beta(\Xcal))]^K=0$ if $d_\Scal$ is strictly positive on $Z_\beta(\Xcal)$.

As $\left[\Qcal_K(M,\Scal, Z_\beta(\Xcal))\right]^K$ is equal to
\begin{equation}\label{eq:loc-Z-beta-X-S}
\left[\Qcal_{K_\beta}(\Xcal,\mathbf{d}_\beta(\Scal)\vert_{\Xcal}\otimes\Sym(\Ncal_\beta)\vert_{\Xcal}, Z'_\beta(\Xcal),\Phi_\Scal\vert_\Xcal)\otimes
\bigwedge (\kgot/\kgot_\beta)_\C\right]^{K_\beta}
\end{equation}
by the localization formula (\ref{eq:loc-induction-invariant-local}),   it is sufficient to prove that
the infinitesimal action  $\Lcal(\beta)$ on the fibers of the vector bundles
$\mathbf{d}_\beta(\Scal)\vert_{\Xcal}\otimes\mathrm{Sym}^j(\Ncal_\beta)\vert_{\Xcal}\otimes \bigwedge(\kgot/\kgot_\beta)_\C$
have only strictly positive eigenvalues. We establish this  by minorizing the possible eigenvalues : they  are sums of eigenvalues on each factor of the tensor product.

We have
$$
\frac{1}{i}\Lcal(\beta)=
\begin{cases}
\|\beta\|^2+ \frac{1}{2}\ntr_{\T M\vert_\Xcal}|\beta|\hspace{9mm} {\rm on}\ \ \mathbf{d}_\beta(\Scal)\vert_{\Xcal},\\
\geq 0\hspace{40mm} {\rm on}\ \ \mathrm{Sym}^j(\Ncal_\beta)\vert_{\Xcal},\\
\geq - \ntr_{\kgot}|\beta|\hspace{24mm} {\rm on}\ \ \bigwedge(\kgot/\kgot_\beta)_\C.
\end{cases}
$$

In the first equality, we have used
Lemma \ref{lem:L-beta-S}:
the function $m\mapsto \langle\Phi_\Scal(m),\beta\rangle$ is constant on $\Xcal$, and as $\Xcal$
contains a point projecting on $\beta$,   $\frac{1}{i}\Lcal(\beta)\vert_{\mathbf{d}_\beta(\Scal)|_\Xcal} =(\|\beta\|^2+ \frac{1}{2}\ntr_{\T M\vert_\Xcal}|\beta|)  \,
\mathrm{Id}_{\mathbf{d}_\beta(\Scal)\vert_\Xcal}.$

In the second inequality, we used Lemma \ref{lem:betapositive}, so that the action of
$\frac{1}{i}\Lcal(\beta)$ on the graded piece
$\Sym^j(\Ncal_\beta)$ is strictly positive for $j>0$ or equal to $0$ for $j=0$.

In the last inequality, we have used Lemma \ref{lem:confusing}.

If  $d_\Scal$ takes a strictly positive value on $Z_\beta(\Xcal)$, we see that $\frac{1}{i}\Lcal(\beta)>0$ on
$\mathbf{d}_\beta(\Scal)\vert_{\Xcal}\otimes \mathrm{Sym}^j(\Ncal_\beta)\vert_{\Xcal}\otimes \bigwedge(\kgot/\kgot_\beta)_\C$ : this forces (\ref{eq:loc-Z-beta-X-S}) to be equal to zero.


\end{proof}

\medskip

\subsection{The Witten deformation on the product $M\times \Ocal^*$}\label{sec:witten-deformation}

In this section, $M$ is  a {\em compact} oriented even dimensional $K$-manifold, equipped with a $K$-equivariant $\spinc$
bundle $\Scal$. Let $\Phi_\Scal$ be the associated  moment map on $M$. Our aim is to compute geometrically the multiplicities
of the equivariant index $\Qcal_K(M,\Scal)$.

\subsubsection{Vanishing theorems}\label{sec:vanishing}

Let $\mathcal{H}_{\mathfrak k}$ be the set of conjugacy classes of the reductive algebras ${\kgot}_\xi,\xi\in\mathfrak k^*$.
We denote by $\mathcal S_{\mathfrak k}$ the set of conjugacy classes of the semi-simple parts $[\hgot,\hgot]$ of
the elements $(\hgot)\in \mathcal H_{\mathfrak k}$.

Recall that an  orbit $\Pcal$ is a $(\hgot)$-ancestor of $\Ocal$ if $\Pcal$ belongs to the Dixmier sheet  $\kgot^*_{(\hgot)}$ and
$s(\Pcal)=\Ocal$. Here $s(\Pcal)$ is defined as follows~: if $\Pcal=K\mu$ with $\kgot_\mu=\hgot$, then  $s(\Pcal)=K(\mu +o(\hgot))$
(see Definition \ref{defi-shift}).

\medskip

Recall that the map $\Ocal\mapsto \pi_\Ocal:=\QS_K(\Ocal)$ is a bijection between the regular admissible orbits and $\wK$. If $\Ocal$ is a regular admissible orbit, then $\Ocal^*:=-\Ocal$ is also admissible and $\pi_{\Ocal^*}=(\pi_\Ocal)^*$. If we apply the shifting trick, we see that the multiplicity of $\pi_\Ocal$ in $\Qcal_K(M,\Scal)$
 is equal to
\begin{eqnarray}\label{eq:shifting-trick}
\mm_\Ocal&=& \left[\Qcal_K(M,\Scal)\otimes (\pi_\Ocal)^*\right]^K \nonumber\\
&=& \left[\Qcal_K(M\times \Ocal^*,\Scal\otimes\Scal_{\Ocal^*})\right]^K.
\end{eqnarray}

Let $(\kgot_M)$ be  the generic infinitesimal stabilizer of the $K$-action on $M$. In this section,
we prove the following vanishing results.

\begin{theo}\label{theo:vanishing-1}
\begin{itemize}
\item If  $([\kgot_M,\kgot_M])\neq ([\hgot,\hgot])$ for any $(\hgot)\in\mathcal{H}_{\mathfrak k}$,  then
$$
\Qcal_K(M,\Scal)=0
$$
for any $K$-equivariant $\spinc$-bundle $\Scal$ on $M$.

\item Assume that $([\kgot_M,\kgot_M])= ([\hgot,\hgot])$ for  $(\hgot)\in\mathcal{H}_{\mathfrak k}$. Then
$$
\mm_\Ocal= 0
$$
 if there is no $(\hgot)$-ancestor $\Pcal$ to $\Ocal$ contained in $\Phi_\Scal(M)$.
\end{itemize}
\end{theo}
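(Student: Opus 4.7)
The plan is to apply the shifting trick and then the Witten deformation on $M\times \Ocal^*$, using the function $d$ of Section \ref{sec:fonction-d-S} to concentrate the $K$-invariant part on a very restricted locus. By (\ref{eq:shifting-trick}), $\mm_\Ocal = [\Qcal_K(M\times\Ocal^*,\Scal\otimes\Scal_{\Ocal^*})]^K$. At a zero $(m,\xi^*)$ of the associated Kirwan vector field, set $\beta=\Phi_\Scal(m)+\xi^*$, $\lambda=-\xi^*\in\Ocal$ and $\mu=\Phi_\Scal(m)$, so that $\mu=\lambda+\beta$ with $\beta\in\kgot_\mu\cap\kgot_\lambda$ and $\lambda$ very regular; this is exactly the configuration needed to invoke the magical inequality.

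First I would compute $d=d_{\Scal\otimes\Scal_{\Ocal^*}}$ at such a critical point. Because $\kgot_{\xi^*}$ is an abelian Cartan subalgebra and $\T_{\xi^*}\Ocal^*\simeq \kgot/\kgot_{\xi^*}$, the contribution of the $\Ocal^*$ factor exactly cancels half of $\ntr_\kgot|\beta|$, leaving
\begin{equation*}
d(m,\xi^*) = \|\beta\|^2 + \tfrac{1}{2}\ntr_{\T_m M}|\beta| - \tfrac{1}{2}\ntr_{\kgot}|\beta|.
\end{equation*}
Using the inclusion $\kgot_m\subseteq\kgot_\mu$, the splittings $\ntr_\kgot|\beta| = \ntr_{\kgot_\mu}|\beta|+\ntr_{\kgot/\kgot_\mu}|\beta|$ and $\ntr_{\T_m M}|\beta|=\ntr_{\kgot_\mu/\kgot_m}|\beta|+\ntr_{\kgot/\kgot_\mu}|\beta|+\ntr_{N_m}|\beta|$, and the magical inequality $\|\beta\|^2\geq \tfrac{1}{2}\ntr_{\kgot_\mu}|\beta|$ (Proposition \ref{prop:infernal-with-trace}), one obtains the key identity
\begin{equation*}
d(m,\xi^*) = \bigl(\|\beta\|^2 - \tfrac{1}{2}\ntr_{\kgot_\mu}|\beta|\bigr) + \tfrac{1}{2}\ntr_{\kgot_\mu/\kgot_m}|\beta| + \tfrac{1}{2}\ntr_{N_m}|\beta| \geq 0.
\end{equation*}
By Corollary \ref{coro:annulation-d-S}, the $K$-invariant part of the index localizes on the subset $Z^{=0}$ where all three summands vanish.

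The next step is to analyze this locus. The equality case of the magical inequality yields $s(K\mu)=K\lambda=\Ocal$, so $\Pcal:=K\mu\subset\Phi_\Scal(M)$ is a $(\kgot_\mu)$-ancestor of $\Ocal$. The vanishings $\ntr_{N_m}|\beta|=0$ and $\ntr_{\kgot_\mu/\kgot_m}|\beta|=0$ mean that $\beta$ acts trivially on a slice at $m$ and satisfies $[\beta,\kgot_\mu]\subseteq\kgot_m$. The main remaining step---and the principal technical obstacle---is to upgrade these two conditions into the structural statement $([\kgot_\mu,\kgot_\mu])=([\kgot_M,\kgot_M])$. My approach would be to use the local model $M\approx K\times_{K_m}N_m$ furnished by the slice theorem: trivial action of $\beta$ on $N_m$ places $\beta$ in the ineffective kernel of the slice representation of $K_m$, which is contained in the generic infinitesimal stabilizer of $K$ on a neighborhood of $Km$ and is therefore conjugate into $\kgot_M$; the bracket relation $[\beta,\kgot_\mu]\subseteq\kgot_m$ combined with the reductivity of $\kgot_\mu$ should then allow one to identify $[\kgot_\mu,\kgot_\mu]$ with a conjugate of $[\kgot_M,\kgot_M]$.

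Once this identification is in hand, both statements of the theorem follow at once. Under the hypothesis of Part 1, the class $([\kgot_M,\kgot_M])$ is not realized as $([\hgot,\hgot])$ for any $\hgot\in\Hcal_\kgot$, so the conjugacy class $(\kgot_\mu)$ produced above cannot exist; therefore $Z^{=0}=\emptyset$ and the proposition gives $\mm_\Ocal=0$ for every regular admissible $\Ocal$, whence $\Qcal_K(M,\Scal)=0$. Under the hypothesis of Part 2, the locus $Z^{=0}$ can only produce $(\hgot)$-ancestors of $\Ocal$ lying inside $\Phi_\Scal(M)$; if no such ancestor exists then $Z^{=0}=\emptyset$ and $\mm_\Ocal=0$. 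The numerical magical-inequality argument is routine once the right decompositions are written down; the delicate part is the final slice-theoretic identification of the conjugacy class of $\kgot_\mu$.
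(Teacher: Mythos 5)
Your strategy---shifting trick, Witten deformation on $M \times \Ocal^*$, the function $d$, the magical inequality, and analysis of the equality locus---is the same as the paper's (this is precisely the content of Theorem~\ref{theo:vanishing-2} together with Proposition~\ref{prop:annulation-d-S} and Corollary~\ref{coro:annulation-d-S}), and your decomposition of $d$ into the three manifestly non-negative summands $\bigl(\|\beta\|^2 - \tfrac{1}{2}\ntr_{\kgot_\mu}|\beta|\bigr) + \tfrac{1}{2}\ntr_{\kgot_\mu/\kgot_m}|\beta| + \tfrac{1}{2}\ntr_{N_m}|\beta|$ is correct, and in fact a slightly cleaner rewriting of the paper's estimate. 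Your route to the structural identification does diverge from the paper's: the paper passes through the slice for the coadjoint action, $Y = \Phi_\Scal^{-1}(U_\mu)$ from Proposition~\ref{prop:slice-general}, while you use the linear slice $N_m$ furnished by the slice theorem at the point $m$. Both are workable; yours has the merit of being purely local at $m$.

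However, the step you yourself concede is delicate is a genuine gap, and the missing ingredient is a specific Lie-theoretic fact that the paper uses explicitly and your sketch never invokes: since $\beta = -\rho^H$ is a regular element of the semisimple algebra $[\hgot,\hgot]$ with non-zero projection onto every simple factor, \emph{the ideal of $[\hgot,\hgot]$ generated by $\rho^H$ is all of $[\hgot,\hgot]$}. Merely knowing that $\rho^H$ lands in the ineffective kernel of the slice representation (hence in $\kgot_M$ up to conjugacy) says nothing about the rest of $[\hgot,\hgot]$, and ``the bracket relation combined with reductivity'' does not by itself bridge that gap. To close your argument you need the following chain: (i) the vanishing $\ntr_{\kgot_\mu/\kgot_m}|\beta| = 0$ says $[\rho^H,\hgot] \subseteq \kgot_m$, and since $\rho^H$ is regular the Lie subalgebra generated by $[\rho^H,[\hgot,\hgot]]$ is all of $[\hgot,\hgot]$, so $[\hgot,\hgot] \subseteq \kgot_m$; (ii) hence $[\hgot,\hgot]$ acts linearly on $N_m$, the set of $X\in[\hgot,\hgot]$ acting trivially on $N_m$ is an ideal of $[\hgot,\hgot]$ containing $\rho^H$, so it is all of $[\hgot,\hgot]$; (iii) thus $[\hgot,\hgot]$ lies in the ineffective kernel of the slice representation of $K_m$, hence in $\kgot_M$ up to conjugacy, and combined with $\kgot_M \subseteq \kgot_m \subseteq \hgot$ this gives $([\kgot_M,\kgot_M]) = ([\hgot,\hgot])$. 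Without the ``ideal generated by $\rho^H$'' observation the argument does not close.
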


 We consider the product $M\times \Ocal^*$ equipped with the  $\spinc$-bundle $\Scal\otimes\Scal_{\Ocal^*}$.
 The corresponding  moment map is $\Phi_{\Scal\otimes\Scal_{\Ocal^*}}(m,\xi)=\Phi_\Scal(m)+\xi$.
We use the simplified notation $\Phi_\Ocal$ for $\Phi_{\Scal\otimes\Scal_{\Ocal^*}}$, $\kappa_\Ocal$  for the
 corresponding Kirwan vector field on $M\times \Ocal^*$, $Z_\Ocal:=\{\kappa_\Ocal=0\}$, and  $d_\Ocal$ for the function $d_{\Scal\otimes\Scal_{\Ocal^*}}$ on $Z_\Ocal$.
Theorem \ref{theo:vanishing-1}  will result from a careful analysis of the function
$d_\Ocal : Z_\Ocal\to \R$ that was introduced in Section \ref{sec:fonction-d-S}.
Thanks to Proposition \ref{prop:annulation-d-S} and Corollary \ref{coro:annulation-d-S}, Theorem \ref{theo:vanishing-1} is a direct consequence of the following theorem.

\medskip

\begin{theo} \label{theo:vanishing-2}
Let $\Ocal$ be a regular admissible orbit.
\begin{itemize}
\item The function $d_\Ocal$ is non negative on $Z_\Ocal$.

\item If the  function $d_\Ocal$ is not strictly positive, then there exists a unique $(\hgot)\in\mathcal{H}_{\mathfrak k}$ such that the following conditions are satisfied:
\begin{enumerate}
\item $([\kgot_M,\kgot_M])= ([\hgot,\hgot])$.
\item the orbit $\Ocal$ has an $(\hgot)$-ancestor $\Pcal$ contained in $\Phi_\Scal(M)$.
\end{enumerate}
\end{itemize}
\end{theo}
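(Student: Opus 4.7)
The plan is to reduce Theorem~\ref{theo:vanishing-2} to a direct application of the magical inequality (Proposition~\ref{prop:infernal-with-trace}) to the pair $\mu := \Phi_\Scal(m)$, $\lambda := -\xi \in \Ocal$, for which $\mu - \lambda = \theta := \Phi_\Scal(m)+\xi$ is exactly the element appearing in the definition of $d_\Ocal$.

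First I will compute $d_\Ocal(m,\xi)$ explicitly. The condition $(m,\xi)\in Z_\Ocal$ means $\theta$ kills both $m$ and $\xi$; equivariance of $\Phi_\Scal$ then gives $\theta\in \kgot_m \subset \kgot_\mu$, while regularity of $\lambda\in\Ocal$ forces $\kgot_\lambda$ to be a Cartan subalgebra $\tgot'$ containing $\theta$. Decomposing $\T_m M = (\kgot/\kgot_m)\oplus \Ncal_m$ with $\Ncal_m$ the $K_m$-invariant orthogonal complement to the orbit direction, and using $\ntr_{\kgot/\tgot'}|\theta| = \ntr_\kgot|\theta|$ (since $\theta$ acts trivially on the abelian algebra $\tgot'$), a short computation from the definition (\ref{eq:d-S}) yields
\begin{equation*}
d_\Ocal(m,\xi)\;=\;\|\theta\|^2 \;-\; \tfrac{1}{2}\,\ntr_{\kgot_m}|\theta| \;+\; \tfrac{1}{2}\,\ntr_{\Ncal_m}|\theta|.
\end{equation*}
Non-negativity now follows from two observations. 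Since $\kgot_m \subset \kgot_\mu$ is $\ad(\theta)$-invariant and $|\ad(\theta)|$ is positive semi-definite, one has $\ntr_{\kgot_m}|\theta|\leq \ntr_{\kgot_\mu}|\theta|$. The orbit $\Ocal$ being regular admissible, $\lambda$ is very regular, and the magical inequality applied to $(\lambda,\mu)$ with $\mu-\lambda=\theta$ gives $\|\theta\|^2\geq \tfrac{1}{2}\ntr_{\kgot_\mu}|\theta|$. Combining the two, $d_\Ocal(m,\xi)\geq \tfrac{1}{2}\ntr_{\Ncal_m}|\theta|\geq 0$.

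For the equality analysis, vanishing of $d_\Ocal(m,\xi)$ forces simultaneously: (a) $\ntr_{\Ncal_m}|\theta|=0$, i.e.\ $\theta$ acts trivially on $\Ncal_m$; (b) $\ntr_{\kgot_m}|\theta| = \ntr_{\kgot_\mu}|\theta|$, equivalently $\ad(\theta)$ acts trivially on $\kgot_\mu/\kgot_m$, i.e.\ $[\theta,\kgot_\mu]\subset \kgot_m$; and (c) equality in the magical inequality, which by the last clause of Proposition~\ref{prop:infernal-with-trace} gives $\lambda-\rho(\lambda)=\mu-\rho(\mu)$ and $s(K\mu)=K\lambda=\Ocal$. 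Setting $\hgot := \kgot_\mu$, (c) exhibits $\Pcal := K\mu$ as a $(\hgot)$-ancestor of $\Ocal$ contained in $\Phi_\Scal(M)$; uniqueness of $(\hgot)$ in $\Hcal_\kgot$ is then automatic from the bijection $(\hgot)\mapsto ([\hgot,\hgot])$ between $\Hcal_\kgot$ and $\mathcal S_\kgot$.

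The substantial remaining step, which I expect to be the main obstacle, is assertion~(1): identifying $[\kgot_\mu,\kgot_\mu]$ with the generic class $[\kgot_M,\kgot_M]$. The input is (a)--(b): condition (a) says $\theta$ acts trivially on the slice $\Ncal_m$, so small deformations $m'$ of $m$ in $\Ncal_m$ remain fixed by $\theta$ and have stabilizers $\kgot_{m'}\subset \kgot_m \subset \kgot_\mu$; condition (b) says $\ad(\theta)$ pushes $\kgot_\mu$ into $\kgot_m$, so that $\kgot_\mu$ and $\kgot_m$ differ only by directions on which $\ad(\theta)$ vanishes, hence share the same semi-simple part up to central factors. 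Taking $m'$ generic in the slice so that $\kgot_{m'}$ is conjugate to $\kgot_M$, one concludes that $[\kgot_M,\kgot_M]$ is conjugate to $[\kgot_\mu,\kgot_\mu]$, giving~(1). The delicate point is that the slice/normal-form analysis on $M^\theta$ and the control of semi-simple parts via (b) must be combined carefully, using that the bijection $(\hgot)\mapsto ([\hgot,\hgot])$ is insensitive to central summands.
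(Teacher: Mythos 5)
Your argument for the first bullet (non-negativity of $d_\Ocal$) is correct and follows exactly the paper's route: decompose $\T_{(m,-\lambda)}(M\times\Ocal^*)$, use the monotonicity $\ntr_{\kgot_m}|\theta|\leq\ntr_{\kgot_\mu}|\theta|$ coming from $\kgot_m\subset\kgot_\mu$, and conclude with the magical inequality. The treatment of assertion (2) and of uniqueness via the bijection $(\hgot)\mapsto([\hgot,\hgot])$ is also in line with the paper.

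The real gap is in your proof of assertion (1), as you yourself suspect. From condition (b), $[\theta,\kgot_\mu]\subset\kgot_m$, you conclude that $\kgot_\mu$ and $\kgot_m$ ``share the same semi-simple part up to central factors''. This needs justification, and in fact hinges on the crucial point that $\theta=-\rho^H$ is a \emph{regular} element of $[\hgot,\hgot]$ (so the centralizer of $\theta$ in $\hgot$ is a Cartan subalgebra, $\mathrm{Im}(\ad\theta|_\hgot)$ contains every root space of $\hgot$, and since $\kgot_m$ is a subalgebra containing these it contains all of $[\hgot,\hgot]$). But even granting $[\hgot,\hgot]\subset\kgot_m$, passing to the \emph{generic} stabilizer $\kgot_M$ is where your sketch breaks down: from ``$m'$ in the slice is fixed by $\theta$ with $\kgot_{m'}\subset\kgot_m$'' you cannot directly deduce $[\hgot,\hgot]\subset\kgot_{m'}$. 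What is needed (and what the paper does) is to upgrade the pointwise statement ``$\rho^H$ acts trivially on the slice $Y_m$'' to ``$[\hgot,\hgot]$ acts trivially on $Y_m$'': the subspace $\{X\in[\hgot,\hgot]\mid X_{Y_m}=0\}$ is an \emph{ideal} of $[\hgot,\hgot]$ (because $[\hgot,\hgot]$ acts on $Y_m$), and the ideal of $[\hgot,\hgot]$ generated by the regular element $\rho^H$ is all of $[\hgot,\hgot]$. Only then does $([\hgot,\hgot])\subset(\kgot_M)$ follow, from which the equality $([\kgot_M,\kgot_M])=([\hgot,\hgot])$ is obtained by sandwiching with $(\kgot_M)\subset(\kgot_m)\subset(\hgot)$. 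This ideal-generation argument is the paper's key lemma here; without it your proof of (1) does not close.

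One further omission: to deduce that $\rho^H$ acts trivially on the full connected component $Y_m$ of the slice $\Phi_\Scal^{-1}(U_\mu)$, one must know $\rho^H$ acts trivially on all of $\T_m Y$, not just on a $K_m$-complement $E_m\subset\T_m Y$. For this, condition (b) (i.e.\ $\rho^H$ acts trivially on $\hgot/\kgot_m$) is indeed required in addition to (a); you correctly list (b) but do not connect it to this step explicitly.
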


\medskip

\begin{proof}
Let $P=M\times \Ocal^*$ and
let us compute the function $d_\Ocal$ on $Z_\Ocal$. Let $m\in M$ and $\lambda\in\Ocal$. The point $p=(m,-\lambda)\in Z_\Ocal\subset P$ if and only $\Phi_\Ocal(p)\cdot p=0$. Let $\beta=\Phi_\Ocal(p)$. This means that $\beta$ stabilizes  $m$ and $\lambda$, and if $\mu=\Phi_\Scal(m)\in \kgot^*$, then $\beta=\mu-\lambda$.

We write $\T_{(m,-\lambda)} P=\T_m M\oplus \T_{-\lambda}\Ocal^*$ and, since $\Ocal^*$ is a regular orbit, we have $\ntr_{\T_{-\lambda}\Ocal^*}|\beta|=\ntr_{\kgot}|\beta|$.

We consider a $K_m$-invariant decomposition $\T_m M= \kgot\cdot m\oplus E_m$ where
$\kgot\cdot m\simeq \kgot/\kgot_m$, we obtain $\ntr_{\T_m M}|\beta|=\ntr_{E_m}|\beta|+\ntr_{\kgot}|\beta|-\ntr_{\kgot_m}|\beta|$.
Thus,
\begin{eqnarray}\label{eq:minoration-d}
d_{\Ocal}(p)&=&\|\beta\|^2 +\frac{1}{2}\ntr_{\T_{(m,-\lambda)}P}|\beta| -\ntr_{\kgot}|\beta|  \nonumber \\
&=& \|\beta\|^2 +\frac{1}{2}\ntr_{\T_m M}|\beta| -\frac{1}{2}\ntr_{\kgot}|\beta| \nonumber\\
&=& \|\beta\|^2 +\frac{1}{2}\ntr_{E_m}|\beta|-\frac{1}{2}\ntr_{\kgot_m}|\beta| \nonumber\\
&\geq & \|\beta\|^2 +\frac{1}{2}\ntr_{E_m}|\beta|- \frac{1}{2}\ntr_{\kgot_\mu}|\beta|.
\end{eqnarray}

In the last inequality, we used  $\kgot_m\subset \kgot_{\mu}$ as $\mu=\Phi_\Scal(m)$. By Proposition
\ref{prop:infernal-with-trace},
$\|\beta\|^2 - \frac{1}{2}\ntr_{\kgot_\mu}|\beta| \geq 0$ when $\beta=\mu-\lambda$, as $\lambda$ is very regular (being regular and admissible), and $\beta\in \kgot_\mu\cap \kgot_\lambda$.
Then the first point follows.

Assume now that there exists a point $p=(m,-\lambda)\in Z_\Ocal$ such that $d_{\Ocal}(p)=0$. It implies then that $\|\beta\|^2 =\frac{1}{2}\ntr_{\kgot_{\mu}}|\beta|$
and $\ntr_{E_m}|\beta|=0$. The first equality implies, thanks to Proposition \ref{prop:infernal-with-trace}, that
 $K\mu$ is an admissible orbit such that $s(K\mu)=\Ocal$.
Let us denote $H=K_\mu$ : the relation $s(K\mu)=\Ocal$ implies that  $-\beta\in o(\hgot)\subset [\hgot,\hgot]^*$.
We write $-\beta=\rho^H$.
 Now we have to explain why the condition  $\ntr_{E_{m}}|\rho^H|=0$ implies  $([\kgot_M,\kgot_M])=([\hgot,\hgot])$. Since $\Phi_\Scal(m)=\mu$, we have
\begin{equation}\label{eq:sigma-stabiliser-1}
(\kgot_M)\subset (\kgot_{m})\subset (\hgot).
\end{equation}
Consider $Y=\Phi_\Scal^{-1}(U_\mu)$ the
 $H$-invariant slice constructed in Proposition   \ref{prop:slice-general}. The product $KY$ is an invariant neighborhood of $m$ isomorphic to $K\times_{H} Y$. The subspace
$E_{m}$ can be taken as the subspace $\T_{m}Y\subset \T_m M$.
 Now the condition $\ntr_{E_{m}}|\rho^H|=0$ implies that $\rho^H$ acts trivially on the connected component $Y_m$ of $Y$ containing $m$.
Elements $X\in [\hgot,\hgot]$ such that $X_{Y_m}=0$ form an ideal in  $[\hgot,\hgot]$.
 Since the ideal generated by $\rho^H$ in $[\hgot,\hgot]$ is equal to $[\hgot,\hgot]$, we have proved that $[\hgot,\hgot]$ acts trivially on $Y_m$.
Since $KY_m$ is an open subset of $M$, we get
\begin{equation}\label{eq:sigma-stabiliser-2}
([\hgot,\hgot])\subset (\kgot_M).
\end{equation}
With (\ref{eq:sigma-stabiliser-1}) and (\ref{eq:sigma-stabiliser-2}) we get $([\kgot_M,\kgot_M])=([\hgot,\hgot])$.
Finally we have proven that if $d_{\Ocal}$ vanishes at some point $p$, then $([\kgot_M,\kgot_M])= ([\hgot,\hgot])$
for some $(\hgot)\in\mathcal{H}_{\mathfrak k}$, and
there exists an admissible orbit $K\mu \subset \kgot^*_{(\hgot)}\cap \Phi_\Scal(M)$ such that $s(K\mu)=\Ocal$.
\end{proof}

\medskip

\subsubsection{Geometric properties}\label{sec:geometric}

We summarize here some of the  geometric properties enjoyed by ($M$, $\Phi=\Phi_\Scal$),
when  $\Qcal_K(M,\Scal)$ is not zero.

Let $(\hgot)\in \Hcal_\kgot$.
We choose a representative $\hgot$. Let $H$ the corresponding group and $N_K(H)$ the normalizer of $H$ in $K$.
Consider the decomposition $\hgot=[\hgot,\hgot]\oplus \zgot$ where $\zgot$ is the center
of $\hgot$. Thus $\zgot^*\subset \hgot^*$.
Consider the open set
$$
\hgot^*_0=\{\xi\in \hgot^*\ \vert\
\kgot_\xi\subset\hgot\}
$$
of $\hgot^*$. Let $\zgot^*_0=\hgot^*_0  \cap \zgot^*$ be the corresponding open subset of $\zgot^*_0$.

We first note   the following  basic proposition.
\begin{prop}\label{prop:basic}
Let $M$ be a $K$-manifold such that $([\kgot_M,\kgot_M])=([\hgot,\hgot])$
and let $\Phi: M\to \kgot^*$  be an equivariant map.
Then

\begin{itemize}

\item $\Phi(M)\subset K\zgot^*$.

\item Assume $\Ycal:=\Phi^{-1}(\hgot^*_0)$ non empty, then
\begin{itemize}
\item[a)] $\Ycal$ is a submanifold of $M$ invariant by the action of $N_K(H)$, and $[H,H]$ acts trivially on $\Ycal$.

\item[b)] The image $\Phi(\Ycal)$ is contained in $\zgot^*_0$.

\item[c)] The open subset $K\Ycal$ is diffeomorphic to $K\times_{N_K(H)}\Ycal$.

\end{itemize}
\end{itemize}

\end{prop}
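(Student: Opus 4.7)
I would prove the four assertions in the order $(1)$, the submanifold-plus-$N_K(H)$-invariance half of $(a)$, $(b)$, the $[H,H]$-triviality half of $(a)$, and $(c)$: the two halves of $(a)$ must be separated, since the triviality claim relies on $(b)$. The whole proof exploits the hypothesis $([\kgot_M,\kgot_M])=([\hgot,\hgot])$ through a dense open subset $M^{\mathrm{gen}}\subset M$ on which $[\kgot_m,\kgot_m]$ is $K$-conjugate to $[\hgot,\hgot]$, combined with $K$-equivariance of $\Phi$ and a structural lemma on Levi subalgebras.

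For $(1)$, if $m\in M^{\mathrm{gen}}$, the inclusion $\kgot_m\subseteq\kgot_{\Phi(m)}$ forces the Levi $\kgot_{\Phi(m)}$ to contain a $K$-conjugate of $[\hgot,\hgot]$. The key algebraic lemma is then: any Levi $\mathfrak l=Z_\kgot(\sgot)$ containing $[\hgot,\hgot]$ contains a $K$-conjugate of $\hgot$. Its proof uses the reductive algebra $\mathfrak c:=Z_\kgot([\hgot,\hgot])$; since $\mathfrak c\cap\hgot=\zgot$, the torus $\zgot$ is self-centralizing in $\mathfrak c$, hence a maximal torus of $\mathfrak c$. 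Thus the subtorus $\sgot\subseteq\mathfrak c$ is $\mathfrak c$-conjugate to some $\sgot'\subseteq\zgot$, say $g\sgot g^{-1}=\sgot'$ with $g$ in the connected group of $\mathfrak c$, and $g\mathfrak l g^{-1}=Z_\kgot(\sgot')\supseteq Z_\kgot(\zgot)=\hgot$. Applying this to $\mathfrak l=\kgot_{\Phi(m)}$ gives $\Phi(m)\in K\zgot^*$ for generic $m$, and density together with the closedness of $K\zgot^*$ finishes $(1)$.

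For the submanifold half of $(a)$, at $y\in\Ycal$ we have $\kgot_{\Phi(y)}\subseteq\hgot$, so $X\mapsto\Phi(y)\circ\ad(X)$ is an isomorphism $\qgot\to\qgot^*$ with $\qgot=\hgot^\perp$, and the transversality argument of Proposition \ref{prop:slice-general} applies directly, yielding $\Ycal$ as a submanifold with $\T_yM=\T_y\Ycal\oplus\qgot\cdot y$; its $N_K(H)$-invariance is immediate since $N_K(H)$ preserves $\hgot^*_0$. For $(b)$, if $\xi\in\hgot^*_0\cap K\zgot^*$ and $\xi=k\eta$ with $\eta\in\zgot^*$, then $k\hgot k^{-1}\subseteq\kgot_\xi\subseteq\hgot$ forces $k\in N_K(H)$, so $\xi\in N_K(H)\zgot^*_0=\zgot^*_0$; combined with $(1)$ this gives $\Phi(\Ycal)\subseteq\zgot^*_0$. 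For the triviality half of $(a)$, the same genericity argument shows $M=K\cdot M^{[\hgot,\hgot]}$; given $y\in\Ycal$, pick $k\in K$ with $ky\in M^{[\hgot,\hgot]}$, so $k^{-1}[\hgot,\hgot]k\subseteq\kgot_y\subseteq\hgot$ by $(b)$. A short calculation, using that $\zgot$ is central in $\hgot$, shows any semisimple subalgebra of $\hgot$ of dimension $\dim[\hgot,\hgot]$ equals $[\hgot,\hgot]$, whence $k\in N_K([\hgot,\hgot])$ and $[\hgot,\hgot]\cdot y=k^{-1}([\hgot,\hgot]\cdot ky)=0$; connectedness of $[H,H]$ then gives triviality. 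Part $(c)$ is routine: the tangent decomposition $\T_yM=\qgot\cdot y\oplus\T_y\Ycal$ with $\qgot\cap\kgot_y=0$ makes $K\times_{N_K(H)}\Ycal\to K\Ycal$ a local diffeomorphism, and injectivity follows because $ky_1=y_2$ conjugates $\kgot_{\Phi(y_1)}=\hgot$ to $\kgot_{\Phi(y_2)}=\hgot$ by $(b)$, forcing $k\in N_K(H)$. The main obstacle is the Levi lemma in paragraph two: in general $Z_\kgot([\hgot,\hgot])\supsetneq\zgot$ (for instance $K=\mathrm{SU}(2)\times\mathrm{SU}(2)$ with $\hgot=\mathfrak{su}(2)\times\tgot$), so the naive conclusion $\Phi(m)\in\zgot^*$ is false and one must pass to the $K$-saturation by conjugating subtori inside $\mathfrak c$.
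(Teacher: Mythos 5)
Your proof is correct and follows essentially the same route as the paper's: both arguments hinge on the observation that $\zgot$ is a Cartan subalgebra of $Z_\kgot([\hgot,\hgot])$ (which you package as a "Levi lemma"), both use the transversality argument of Proposition \ref{prop:slice-general} for the submanifold claim, and both use the dimension count for semisimple subalgebras of $\hgot$ to derive the triviality of the $[H,H]$-action. The only cosmetic difference is the order of dependencies: you derive (b) first and then use $\kgot_y\subseteq\kgot_{\Phi(y)}=\hgot$ for the triviality half of (a), while the paper obtains the inclusion $\kgot_y\subseteq\kgot_{\Phi(y)}\subseteq\hgot$ directly from $\Phi(y)\in\hgot^*_0$, proves the triviality first on a dense open $\Ycal'\subseteq\Ycal$ by genericity of stabilizers, and then deduces (b) as a consequence; both orderings are valid.
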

\begin{proof}
Let us prove the first item.
Using our $K$-invariant inner
product, we consider $\Phi$ as a map $\Phi: M\to \kgot$.
The condition on the infinitesimal stabilizer $(\kgot_M)$ gives that $M=K M^{[H,H]}$. If $m\in M^{[H,H]}$, the term
$\Phi(m)$ belongs to the Lie algebra $\ggot$ of the centralizer subgroup $G:=Z_K([H,H])$. But one can easily prove that
$\zgot$ is a Cartan subalgebra of $\ggot$: hence $\Phi(m)$ is conjugated to an element of $\zgot$.
This proves the first item.

If $\Ycal$ is non empty, the proof that it is a submanifold follows the same line than the proof of Proposition \ref{prop:slice-general}.
The set $K\Ycal$ is a non empty open set  in $M$~: so on $\Ycal$ we have $(\kgot_M)=(\kgot_y)$
on a dense open subset $\Ycal'$. The condition
$([\kgot_M,\kgot_M])=([\hgot,\hgot])$
implies that $\dim [\hgot,\hgot]=
\dim[\kgot_y,\kgot_y]$ on $\Ycal'$, but since $\kgot_y\subset \kgot_{\Phi(y)}\subset \hgot$,
we  conclude that $[\hgot,\hgot]=
[\kgot_y,\kgot_y]\subset \kgot_y$ on $\Ycal'$ : in other words $[H,H]$  acts trivially on $\Ycal$,
and $[\hgot, \hgot]=[\kgot_y,\kgot_y]$ for any $y\in \Ycal$.
Furthermore, if $\xi=\Phi(y)$, then $[\hgot,\hgot]$ acts trivially on $\xi$. So $\xi$ is in the center of $\hgot$.

Let us  prove that $\pi: K\times_{N_K(H)} \Ycal\to K\Ycal$ is one to one. If $y_1=k y_2$, we have $\xi_1=k\xi_2$ with $\xi_i=\Phi(y_i)$.
As $\Phi(Y)\subset \zgot^*_0$, the stabilizers of $\xi_1,\xi_2$ are both equal to $H$. It follows that $k$ belongs to the normalizer of $H$.

\end{proof}
%

\medskip

The following theorem results directly from Theorem \ref{theo:vanishing-2} and Lemma \ref{prop:basic}.
Indeed, in the case where $\Qcal_K(M,\Scal)\neq \{0\}$, then
$([\kgot_M,\kgot_M])=([\hgot,\hgot])$ for some $(\hgot)\in \Hcal_\kgot$. Furthermore,
there exists at least a regular
admissible orbit $\Ocal$ such that
$m_\Ocal$ is non zero, and consequently
there exists orbit $\Pcal\subset \kgot^*_{(\hgot)}\cap\Phi_\Scal(M)$.

\begin{theo}\label{theo-H-slice}
Let $M$ be a $K$-manifold and let $\Scal$ be an equivariant $\spinc$-bundle on $M$ with moment map $\Phi_\Scal$.
Assume $\Qcal_K(M,\Scal)\neq \{0\}$.
Then
\begin{itemize}
\item[(1)]
There exists $(\hgot) \in \Hcal_\kgot$ such that
 $([\kgot_M,\kgot_M])=([\hgot,\hgot])$.

\item[(2)]
If $\zgot$ is the center of $\hgot$, then $\Phi_\Scal(M)\subset K \zgot^*$ and the open set  $\Phi_\Scal^{-1}(K\zgot_0^*)$ is non empty.

\item[(3)]
The group $[H,H]$ acts trivially on the submanifold
$\Ycal=\Phi_\Scal^{-1}(\zgot_0^*)$.

\end{itemize}
\end{theo}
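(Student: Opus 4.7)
The plan is to assemble the three conclusions from Theorem \ref{theo:vanishing-2} and Proposition \ref{prop:basic}, with the non-vanishing hypothesis $\Qcal_K(M,\Scal)\neq 0$ serving only to select an input orbit $\Ocal$.

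First, I will establish (1). Since $\Qcal_K(M,\Scal)$ is a non zero element of $R(K)$, there is at least one regular admissible orbit $\Ocal$ whose multiplicity $\mm_\Ocal$ in $\Qcal_K(M,\Scal)$ is non zero. By the shifting trick \eqref{eq:shifting-trick}, $\mm_\Ocal=[\Qcal_K(M\times\Ocal^*,\Scal\otimes\Scal_{\Ocal^*})]^K$. Decomposing this invariant part using Theorem \ref{theo:indexsumoflocal} according to the components of $Z_\Ocal\subset M\times\Ocal^*$, Proposition \ref{prop:annulation-d-S} (applied to the product manifold with $\spinc$-bundle $\Scal\otimes\Scal_{\Ocal^*}$) kills the contribution of the component $Z_\Ocal^{>0}$. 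Hence if $d_\Ocal$ were strictly positive on all of $Z_\Ocal$, the multiplicity $\mm_\Ocal$ would vanish, a contradiction. Thus $d_\Ocal$ attains a non positive value, and Theorem \ref{theo:vanishing-2} supplies a (unique) class $(\hgot)\in\Hcal_\kgot$ such that $([\kgot_M,\kgot_M])=([\hgot,\hgot])$ and such that $\Ocal$ admits a $(\hgot)$-ancestor $\Pcal\subset\Phi_\Scal(M)$. This is assertion (1), and it fixes the class $(\hgot)$ (hence, after a choice of representative, the subalgebra $\hgot\subset\kgot$, the subgroup $H$, and the center $\zgot$ of $\hgot$) needed for the remaining points.

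Next, I will invoke Proposition \ref{prop:basic} with this choice of $(\hgot)$. Its first bullet immediately yields $\Phi_\Scal(M)\subset K\zgot^*$. To see that $\Ycal:=\Phi_\Scal^{-1}(\zgot_0^*)$ is non empty, I unwind Definition \ref{defi:h-ancestor}: the $(\hgot)$-ancestor $\Pcal$ lies in the Dixmier sheet $\kgot^*_{(\hgot)}=K\zgot_0^*$, so $\Pcal\cap\zgot_0^*\neq\emptyset$. Combined with $\Pcal\subset\Phi_\Scal(M)$, this produces some $\mu\in\zgot_0^*$ and $m\in M$ with $\Phi_\Scal(m)=\mu$, i.e.\ $m\in\Ycal$. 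Hence both $\Ycal$ and $\Phi_\Scal^{-1}(K\zgot_0^*)=K\Ycal$ are non empty, completing (2). Since $\Ycal\neq\emptyset$, part (a) of Proposition \ref{prop:basic} applies and tells us that $[H,H]$ acts trivially on $\Ycal$, which is precisely (3).

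There is no substantive obstacle, since the analytic and combinatorial work has already been carried out in Section \ref{sec:witten-deformation}. The two places needing a brief verification are the translation $\mm_\Ocal\neq 0 \Rightarrow d_\Ocal\ \text{is not strictly positive on}\ Z_\Ocal$ via the localization formula of Theorem \ref{theo:indexsumoflocal} combined with Proposition \ref{prop:annulation-d-S}, and the elementary unpacking of the ancestor definition which extracts an actual point of $\Ycal$ from the sheet-level inclusion $\Pcal\subset K\zgot_0^*\cap\Phi_\Scal(M)$.
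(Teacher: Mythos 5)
Your proposal is correct and follows precisely the route the paper indicates in the brief paragraph preceding the theorem: you pick a regular admissible $\Ocal$ with $\mm_\Ocal\neq 0$, use the shifting trick together with Theorem \ref{theo:indexsumoflocal} and Proposition \ref{prop:annulation-d-S} to conclude that $d_\Ocal$ is not strictly positive, apply Theorem \ref{theo:vanishing-2} to obtain $(\hgot)$ and an $(\hgot)$-ancestor $\Pcal\subset\Phi_\Scal(M)$, and then feed this into Proposition \ref{prop:basic}. The only thing the paper leaves tacit and you spell out cleanly is the extraction of a point of $\Ycal$ from the inclusions $\Pcal\subset K\zgot_0^*$ and $\Pcal\subset\Phi_\Scal(M)$; your unpacking is exactly right.
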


This condition $(1)$ on the $K$-action is always satisfied in the Hamiltonian setting \cite{L-M-T-W}, but
not always in the spin setting as can be seen in the following example.

\begin{exam}\label{example-sphere}
For $n\geq 3$, the sphere $S^n$ admits a unique spin structure,
equivariant under the action of the group $K:=\mathrm{Spin}(n+1)$. The generic stabilizer for the $K$-action is isomorphic to the group
$K_M:=\mathrm{Spin}(n)$ and we see that $(\kgot_M)=([\kgot_M,\kgot_M])$ is not equal to  $([\hgot,\hgot])$, for any $(\hgot)\in \Hcal_\kgot$.
\end{exam}

\medskip

In the next sections, we will restrict the submanifold $\Ycal$ to a connected component $C$ of $\hgot^*_0$. We work with the $H$-invariant submanifold $\Ycal_C:=\Phi^{-1}_\Scal(C)\subset \Ycal$~: here the open subset $K\Ycal_C$ is diffeomorphic to
$K\times_H \Ycal_C$.

We follow the notations of Section \ref{sec:Slices}. We denote $\qgot^C$ the vector space $\kgot/\hgot$
equipped with the complex structure $J_C$. There exists a unique $H$-equivariant $\spinc$-bundle $\Scal_{\Ycal_C}$
on $\Ycal_C$ such that
\begin{equation}\label{eq:S-Y-C}
\Scal\vert_{\Ycal_C}\simeq \bigwedge  \qgot^C\otimes\Scal_{\Ycal_C}.
\end{equation}
At the level of determinant line bundles we have $\det(\Scal_{\Ycal_C})=\det(\Scal)\vert_{\Ycal_C}\otimes \C_{-2\rho_C}$,  and
the corresponding moment map satisfy the relation $\Phi_{\Ycal_C}=\Phi_\Scal\vert_{\Ycal_C}-\rho_C$.

We know already that the subgroup $[H,H]$ acts trivially on the submanifold $\Ycal_C$ (see Theorem \ref{theo-H-slice}). It acts also trivially on the bundle $\Scal_{\Ycal_C}$ since the moment map $\Phi_{\Ycal_C}$ takes value in $\zgot^*$ (see Remark \ref{rem:L-S-gamma}).

\subsubsection{Localization on $Z^{=0}_\Ocal$}\label{sec:localization-Z-0-Ocal}

Let $\Ocal$ be a regular admissible orbit. By Theorem \ref{theo:vanishing-2} and  Corollary \ref{coro:annulation-d-S},
we know that our object of study
$$
m_\Ocal= \left[\Qcal_K(M\times\Ocal^*,\Scal\otimes\Scal_{\Ocal^*})\right]^K
$$
is equal to $\left[\Qcal_K(M\times \Ocal^*,\Scal\otimes\Scal_{\Ocal^*},Z^{=0}_\Ocal,\Phi_\Ocal)\right]^K$.

Let us give a description of the subset  $Z^{=0}_\Ocal$ of $Z_\Ocal\subset M\times \Ocal^*$ where $d_\Ocal$ vanishes.
We denote by $q: M\times \Ocal^*\to \kgot^*\oplus \kgot^*$
the map given by $q(m,\xi)=(\Phi_\Scal(m),-\xi)$. If $\mu$ belongs to a coadjoint orbit $\Pcal$,
and  $\xi\in \mu+o(\kgot_\mu)$, then $\Pcal$ is an ancestor to the orbit $\Ocal$ of $\xi$.

\begin{defi}
Let $\Pcal$ be a coadjoint orbit.
\begin{itemize}
\item
 Define the following subset of $\kgot^*\oplus \kgot^*$:
$$R(\Pcal)=\{(\mu, \xi); \mu\in \Pcal;  \xi\in \mu+o(\kgot_\mu)\}.$$

\item Define $Z^\Pcal_\Ocal=q^{-1}(R(\Pcal))\subset M\times \Ocal^*$.
\end{itemize}
\end{defi}

\medskip

\begin{prop}\label{prop:Zop}
Assume $M$ is a $K$-manifold with $([\kgot_M,\kgot_M])=([\hgot,\hgot])$.
Let $\Scal$ be a $K$-equivariant $\spinc$-bundle over $M$ with moment map $\Phi_\Scal$.
Let $\Ocal$ be a regular admissible coadjoint.
 Then $$Z_\Ocal^{=0}=\bigsqcup_{\Pcal} Z^\Pcal_\Ocal$$
where the disjoint union is over the set of $(\hgot)$-ancestors to $\Ocal$. Furthermore, for $\Pcal$ a
$(\hgot)$-ancestor to $\Ocal$, the set $Z_\Ocal^{\Pcal}$ is equal to
$(\Phi_\Scal^{-1}(\Pcal)\times \Ocal^*)\cap Z_\Ocal^{=0}$.

\end{prop}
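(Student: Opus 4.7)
My plan is to reinterpret the computation of $d_\Ocal$ carried out in the proof of Theorem~\ref{theo:vanishing-2}, which established for $p=(m,-\lambda)\in Z_\Ocal$ with $\mu=\Phi_\Scal(m)$ and $\beta=\mu-\lambda$ the chain
$$
d_\Ocal(p) \;=\; \|\beta\|^2 + \tfrac{1}{2}\ntr_{E_m}|\beta| - \tfrac{1}{2}\ntr_{\kgot_m}|\beta| \;\geq\; \|\beta\|^2 + \tfrac{1}{2}\ntr_{E_m}|\beta| - \tfrac{1}{2}\ntr_{\kgot_\mu}|\beta| \;\geq\; 0.
$$
Vanishing of $d_\Ocal(p)$ forces simultaneously (a) $\|\beta\|^2=\tfrac{1}{2}\ntr_{\kgot_\mu}|\beta|$ and (b) $\ntr_{E_m}|\beta|=0$. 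By the magical inequality (Proposition~\ref{prop:infernal-with-trace}), (a) says that $K\mu$ is admissible, that $s(K\mu)=K\lambda=\Ocal$, and that $\lambda-\mu\in o(\kgot_\mu)$, i.e.\ $q(p)\in R(K\mu)$. The slice argument of Theorem~\ref{theo:vanishing-2}, which uses the assumption $([\kgot_M,\kgot_M])=([\hgot,\hgot])$, upgrades (b) to $([\kgot_\mu,\kgot_\mu])=([\hgot,\hgot])$, and hence to $(\kgot_\mu)=(\hgot)$ via the bijection $\Hcal_\kgot\simeq\Scal_\kgot$. Therefore $\Pcal:=K\mu$ is an $(\hgot)$-ancestor of $\Ocal$ and $p\in Z^\Pcal_\Ocal$.

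For the reverse inclusion I fix an $(\hgot)$-ancestor $\Pcal$ and take $p=(m,-\lambda)\in Z^\Pcal_\Ocal$, so that $\mu:=\Phi_\Scal(m)\in\Pcal$ and $\lambda\in\mu+o(\kgot_\mu)$. By $K$-equivariance I may assume $\kgot_\mu=\hgot$ exactly, in which case $m$ lies in the slice $Y_C=\Phi_\Scal^{-1}(C)$ constructed at $\mu$. Theorem~\ref{theo-H-slice} then tells me that $[H,H]$ acts trivially on $Y_C$; since $\beta=\mu-\lambda\in[\hgot,\hgot]$ (using $-o(\kgot_\mu)=o(\kgot_\mu)$ and the identification $\kgot\simeq\kgot^*$), the element $\beta$ fixes $m$. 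It also fixes $\lambda=\mu-\beta$ since $\beta$ kills $\mu$ and commutes with itself, whence $p\in Z_\Ocal$. Equality in (a) is then built in by construction ($s(\Pcal)=\Ocal$), and (b) follows from the $H$-module splitting $T_m M=\qgot\cdot m\oplus T_m Y_C$ together with $\hgot\cdot m\subset T_m Y_C$: these identify $E_m$ with the quotient $T_m Y_C/\hgot\cdot m$, on which $\beta\in[\hgot,\hgot]$ acts trivially. Hence $d_\Ocal(p)=0$ and $p\in Z^{=0}_\Ocal$.

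Disjointness of the decomposition is immediate, since distinct coadjoint orbits in $\kgot^*$ are disjoint, so $\Phi_\Scal(m)$ determines the ancestor $\Pcal$ unambiguously. The final identity $Z^\Pcal_\Ocal=(\Phi_\Scal^{-1}(\Pcal)\times\Ocal^*)\cap Z^{=0}_\Ocal$ then follows from the two inclusions already established combined with the very definition $Z^\Pcal_\Ocal=q^{-1}(R(\Pcal))$, whose first coordinate lies in $\Pcal$. The main obstacle is the reverse inclusion: the set $Z^\Pcal_\Ocal$ is defined abstractly as a preimage under $q$, and nothing in its definition forces $\beta$ to belong to $\kgot_m$, which is what is needed for $p\in Z_\Ocal$. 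The key mechanism that rescues this is Theorem~\ref{theo-H-slice}, whose conclusion that $[H,H]$ acts trivially on the slice $Y_C$ is precisely what makes every element $\beta\in[\hgot,\hgot]$ automatically fix the points of $Y_C$.
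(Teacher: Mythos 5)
Your proof follows the same route as the paper: it reuses the analysis of $d_\Ocal$ from the proof of Theorem \ref{theo:vanishing-2} for the forward inclusion, and the triviality of the $[H,H]$-action on the slice $\Ycal$ (Theorem \ref{theo-H-slice}/Proposition \ref{prop:basic}) for the reverse inclusion. You do well to make explicit the point that the paper leaves implicit, namely that an arbitrary element of $Z^\Pcal_\Ocal=q^{-1}(R(\Pcal))$ really does lie in $Z_\Ocal$, which requires observing that $\beta\in[\hgot,\hgot]$ fixes both $m$ and $\xi=\mu+\beta$.

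However, your reverse inclusion has a gap at the last step. The identity you recorded is
$d_\Ocal(p)=\|\beta\|^2+\tfrac12\ntr_{E_m}|\beta|-\tfrac12\ntr_{\kgot_m}|\beta|$, with $\ntr_{\kgot_m}$ in the last term, while your condition (a) involves $\ntr_{\kgot_\mu}$. Verifying (a) and (b) therefore only tells you that $\|\beta\|^2+\tfrac12\ntr_{E_m}|\beta|-\tfrac12\ntr_{\kgot_\mu}|\beta|=0$, i.e.\ that the middle term in your chain vanishes; it gives
$d_\Ocal(p)=\tfrac12\bigl(\ntr_{\kgot_\mu}|\beta|-\ntr_{\kgot_m}|\beta|\bigr)\geq 0$, which is not yet zero. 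You must also show the first inequality in your chain is an equality, that is $\ntr_{\kgot_m}|\beta|=\ntr_{\kgot_\mu}|\beta|$. This is true in your situation and is an easy consequence of facts you already have in hand: since $[\hgot,\hgot]\subset\kgot_m\subset\hgot=\kgot_\mu$ (the left inclusion being the statement that $[H,H]$ fixes $m$, which you invoked for (b)) and $\beta\in[\hgot,\hgot]$, the operator $\mathrm{ad}(\beta)$ vanishes on $\hgot/\kgot_m$, a subquotient of the abelian center $\zgot=\hgot/[\hgot,\hgot]$, so $\ntr_{\kgot_m}|\beta|=\ntr_{\hgot}|\beta|$. The paper's proof closes with precisely this observation (``since $[\hgot,\hgot]\subset\kgot_m\subset\hgot$, \dots $\tfrac12\ntr_{\kgot_m}|\rho^H|=\tfrac12\ntr_{\hgot}|\rho^H|=\|\rho^H\|^2$''), and you should supply it to complete the argument.
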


\begin{proof}
In the  proof of Proposition \ref{theo:vanishing-2}, we have seen that, if $d_\Ocal(m,-\lambda)=0$,
then the element $\mu=\Phi_\Scal(m)$ is such that $(\kgot_\mu)=(\hgot)$ and
$\lambda=\beta+\mu$ with $\beta\in o(\kgot_\mu)$. So $K\mu$ is a $(\hgot)$ ancestor of $\Ocal$
and $q(m,-\lambda)\in \bigsqcup_{\Pcal} Z^\Pcal_\Ocal$.
This proves the first assertion.

Conversely take now $(m,-\xi)\in Z^\Pcal_\Ocal$ , define $\mu=\Phi_\Scal(m)$.
So $K\mu$ is a $(\hgot)$ ancestor of $\Ocal$ and  $\xi=\mu+\beta$ with $\beta\in o(\kgot_\mu)$.
By $K$-invariance, we may assume $\mu\in \zgot^*_0$,
so $m\in \Ycal$. We have $T_m M=\kgot/\kgot_m\oplus T_m \Ycal$.
So
$$
d_\Ocal(m,-\xi)= \|\beta\|^2 -\frac{1}{2}\ntr_{\kgot_m}|\beta| +\frac{1}{2}\ntr_{\T_m \Ycal}|\beta|.
$$

As $\beta \in o(\hgot)\subset [\hgot,\hgot]$ acts trivially on $\Ycal$ by Lemma \ref{prop:basic}, we have
$d_\Ocal (m,-\xi)= \|\rho^H\|^2 -\frac{1}{2}\ntr_{\kgot_m}|\rho^H|$.
But since $[\hgot,\hgot]\subset \kgot_m\subset \hgot$, and then $\frac{1}{2}\ntr_{\kgot_m}|\rho^{H}|=
\frac{1}{2}\ntr_{\hgot}|\rho^H|=\|\rho^H\|^2$ : finally $d_\Ocal(m,-\xi)=0$.
\end{proof}

\bigskip

At this stage we have proved that
\begin{equation}\label{eq:sum-mop}
 \mm_\Ocal=\sum_{\Pcal}\mm_\Ocal^{\Pcal}
\end{equation}
where the sum runs over the $(\hgot)$-ancestor of $\Ocal$ and
$$
\mm^\Pcal_\Ocal=\left[\Qcal_K(M\times \Ocal^*,\Scal\otimes\Scal_{\Ocal^*},Z^\Pcal_\Ocal,\Phi_\Ocal)\right]^K.
$$
In the next section we will go into the computation of the terms $\mm^\Pcal_\Ocal$.
We finish this section with the following important fact.

\begin{prop}\label{prop:independant}
Each individual term
$\mm_\Ocal^{\Pcal}$
is independent of the choice of the moment map $\Phi_\Scal$.
\end{prop}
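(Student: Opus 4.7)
The plan is to use a homotopy argument on the space of connections. Given two $K$-invariant Hermitian connections $\nabla^0$ and $\nabla^1$ on $\det(\Scal)$, with associated moment maps $\Phi^0_\Scal, \Phi^1_\Scal$, I interpolate linearly: the family $\nabla^t := (1-t)\nabla^0 + t\nabla^1$ is a $K$-invariant Hermitian connection for every $t \in [0,1]$, with moment map $\Phi^t_\Scal = (1-t)\Phi^0_\Scal + t\Phi^1_\Scal$. This produces a continuous family of equivariant maps $\Phi^t_\Ocal(m,\xi) := \Phi^t_\Scal(m) + \xi$ on $M\times \Ocal^*$, with Kirwan vector fields $\kappa^t_\Ocal$, critical sets $Z^t_\Ocal$, and the decomposition $Z^{t,=0}_\Ocal = \bigsqcup_\Pcal Z^{\Pcal,t}_\Ocal$ of Proposition \ref{prop:Zop}. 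The aim is to show that each localized index $\mm^\Pcal_{\Ocal,t}$ is constant in $t$.

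The crucial input is a uniform rigidity statement for the critical image. Remark \ref{rem: rigidity} applied to the $K$-manifold $M\times \Ocal^*$ with bundle $\Scal \otimes \Scal_{\Ocal^*}$ shows that $\Phi^t_\Ocal(Z^t_\Ocal)$ is contained in a finite collection of coadjoint orbits $\Fcal \subset \kgot^*$ independent of $t$. By Lemma \ref{lem:crit}, any $(m,-\lambda) \in Z^t_\Ocal$ satisfies $\Phi^t_\Scal(m) = \beta + \lambda$ with $\beta \in \Fcal$ and $\lambda \in (\Ocal^*)^\beta$; since $(\Ocal^*)^\beta$ is a finite union of $K_\beta$-orbits, $\Phi^t_\Scal(Z^t_\Ocal)$ lies in a finite collection $\Qcal$ of $K$-orbits in $\kgot^*$ independent of $t$, which contains the $(\hgot)$-ancestors $\Pcal$ of $\Ocal$. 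I then choose pairwise disjoint $K$-invariant open neighborhoods $V_\Pcal \subset \kgot^*$ of the ancestor orbits, each with closure disjoint from every other orbit in $\Qcal$.

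Fix $t_0 \in [0,1]$. Using the uniform continuity of $(m,t) \mapsto \Phi^t_\Scal(m)$ on the compact set $M \times [0,1]$, I choose a neighborhood $J \subset [0,1]$ of $t_0$ small enough that the open subset
$$
U_\Pcal := \bigl\{ (m,\xi) \in M \times \Ocal^* : \Phi^t_\Scal(m) \in V_\Pcal \text{ for some } t \in J \bigr\}
$$
satisfies $U_\Pcal \cap Z^t_\Ocal = Z^{\Pcal,t}_\Ocal$ for every $t \in J$. The Kirwan-pushed symbols $\sigma(M \times \Ocal^*, \Scal\otimes \Scal_{\Ocal^*}, \Phi^t_\Ocal)|_{U_\Pcal}$ then form a continuous family of $K$-transversally elliptic symbols on the fixed open set $U_\Pcal$, whose characteristic sets (intersected with $\T^*_K U_\Pcal$) lie in the compact subset $\bigcup_{t\in J} Z^{\Pcal,t}_\Ocal \subset U_\Pcal$. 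By the homotopy invariance of the $K$-transversally elliptic index and the excision property, the integers $\mm^\Pcal_{\Ocal,t}$ coincide for $t \in J$. Covering $[0,1]$ by finitely many such intervals yields $\mm^\Pcal_{\Ocal,0} = \mm^\Pcal_{\Ocal,1}$.

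The main obstacle is the uniform rigidity step, together with arranging a single open neighborhood $U_\Pcal$ that isolates $Z^{\Pcal,t}_\Ocal$ cleanly from all other pieces of $Z^t_\Ocal$ simultaneously for all $t$ in a small interval; the remainder of the argument is a standard application of the $K$-theoretic homotopy invariance of the transversally elliptic index.
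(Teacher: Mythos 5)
Your overall strategy — linear interpolation of the connection, use of the rigidity of $\Phi^t_\Ocal(Z^t_\Ocal)$ to isolate the orbit $\Pcal$, and homotopy invariance of the transversally elliptic index on a fixed open set — is essentially the same as the paper's. However, there is a genuine gap in the middle of the argument.

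You claim that $J$ can be chosen small enough that $U_\Pcal\cap Z^t_\Ocal = Z^{\Pcal,t}_\Ocal$ for all $t\in J$. This is too strong, and no choice of $J$ or $V_\Pcal$ will fix it. Your rigidity step correctly gives that $\Phi^t_\Scal(Z^t_\Ocal)$ lies in a $t$-independent finite collection $\Qcal$ of $K$-orbits, so once $\overline{V_\Pcal}$ is disjoint from the other orbits of $\Qcal$ and $J$ is small, membership $(m,-\lambda)\in U_\Pcal\cap Z^t_\Ocal$ forces $\Phi^t_\Scal(m)\in\Pcal$. But $\Phi^t_\Scal(m)\in\Pcal$ does \emph{not} force $(m,-\lambda)\in Z^{\Pcal,t}_\Ocal$. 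By Proposition \ref{prop:Zop}, $Z^{\Pcal}_\Ocal=(\Phi_\Scal^{-1}(\Pcal)\times\Ocal^*)\cap Z^{=0}_\Ocal$: one also needs $d_\Ocal=0$, equivalently $\lambda\in\mu+o(\kgot_\mu)$ for $\mu=\Phi_\Scal(m)$. In general $Z_\Ocal$ contains points $(m,-\lambda)$ with $\Phi_\Scal(m)=\mu\in\Pcal$ but $\lambda\in\Ocal\cap(\mu+\kgot_\mu^*)$ lying \emph{outside} $\mu+o(\kgot_\mu)$; these have $d_\Ocal>0$ and lie in $U_\Pcal\cap Z^t_\Ocal$ but not in $Z^{\Pcal,t}_\Ocal$. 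So what you actually isolate is the larger component $A^t_\Pcal:=(\Phi^{t,-1}_\Scal(\Pcal)\times\Ocal^*)\cap Z^t_\Ocal = Z^{\Pcal,t}_\Ocal\sqcup R^t$ with $R^t\subset Z^{t,>0}_\Ocal$.

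Your homotopy argument then shows $[\mathrm{Ind}_K(\sigma^t\vert_{U_\Pcal})]^K$ is locally constant in $t$, but this index equals $Q^\Pcal_\Ocal(t)+\Qcal_K(M\times\Ocal^*,\Scal\otimes\Scal_{\Ocal^*},R^t,\Phi^t_\Ocal)$, not $Q^\Pcal_\Ocal(t)$ itself. To close the gap you must invoke Proposition \ref{prop:annulation-d-S} to conclude $\left[\Qcal_K(M\times\Ocal^*,\Scal\otimes\Scal_{\Ocal^*},R^t,\Phi^t_\Ocal)\right]^K=0$ since $R^t\subset Z^{t,>0}_\Ocal$. This is precisely the step the paper's proof makes explicit (it only establishes $U_0\cap Z^{=0}_\Ocal(t)=Z^{\Pcal}_\Ocal(t)$, writes $U_0\cap Z_\Ocal(t)=Z^\Pcal_\Ocal(t)\cup Z_t$ with $Z_t\subset Z^{>0}_\Ocal(t)$, and kills $Z_t$'s contribution by Proposition \ref{prop:annulation-d-S}). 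Once you insert that vanishing argument, the rest of your proposal is sound.
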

\begin{proof}
Let   $\Phi^t_\Scal, t\in [0,1]$ be a family of moment maps for $\Scal$. This gives a family
$\Phi^t_\Ocal(m,\xi):= \Phi^t_\Scal(m) +\xi$ for $\Scal\otimes\Scal_{\Ocal^*}$.

Let $\kappa_\Ocal^t$ be the Kirwan vector field associated to $\Phi_\Ocal^t$, and
let $Z_\Ocal(t):= \{\kappa_\Ocal^t=0\}$. We denote simply by $\sigma^t$ the symbol
$\sigma(M\times \Ocal^*,\Scal\otimes\Scal_{\Ocal^*}, \Phi^t_\Ocal)$. For any $t\in [0,1]$, we consider the quantity
$Q_\Ocal^{\Pcal}(t)\in \hat{R}(K)$ which is the equivariant index of $\sigma^t\vert_{U_t}$, where $U_t$ is a (small) neighborhood of
$$
Z_\Ocal^{\Pcal}(t)\subset Z_\Ocal(t)
$$
such that $U_t\cap Z_\Ocal(t)= Z_\Ocal^{\Pcal}(t)$.

Let us prove that the multiplicity $\mm_\Ocal^{\Pcal}(t)=[Q_\Ocal^{\Pcal}(t)]^K$ is independent
of $t$. It is sufficient to prove that $t\to [Q_\Ocal^{\Pcal}(t)]^K$ is locally constant : let us show
that it is constant in a neighborhood of $0$.
We follow the same line  of proof that the proof of the independence of the connection of the  local piece
$\Qcal_K(M, \Scal,\Phi_\Scal^{-1}(\Ocal)\cap Z_\Scal)$
of  $\Qcal_K(M, \Scal)$
 in \cite{pep-vergne:witten}.

Let $U_0$ be a neighborhood of $Z_\Ocal^{\Pcal}(0)$ such that
\begin{equation}\label{eq:U-0}
\overline{U_0} \cap
Z_\Ocal(0)= Z_\Ocal^{\Pcal}(0).
\end{equation}
The vector field $\kappa_\Ocal^0$ does not vanish on $\partial U_0$ : there
exist $\epsilon>0$ so that $\kappa_\Ocal^t$ does not vanish on $\partial U_0$ for $t\in [0,\epsilon]$. The family
$\sigma^t\vert_{U_0}$, $t\in [0,\epsilon]$ is then an homotopy of transversally elliptic symbols : hence they have the same equivariant index.
\begin{lem} For small $t$ we have
$$
U_0\cap Z^{=0}_\Ocal(t)= Z_\Ocal^{\Pcal}(t).
$$
\end{lem}
Indeed, by Proposition \ref{prop:Zop},
 $Z^{=0}_\Ocal(t)$ projects by the first projection $\Phi^t_\Scal: M\times \Ocal^*\to M\to \kgot^*$ to a finite union of coadjoint orbits (the $(\hgot)$-ancestors to $\Ocal$) and
$Z_\Ocal(0)$ projects on $\Pcal$.
So, for $t$ small,
 $U_0\cap Z^{=0}_\Ocal(t)$ is the subset  $Z_\Ocal^{\Pcal}(t)$ of
 $Z^{=0}_\Ocal(t)$ projecting on $\Pcal$.

So, for small $t$, we have the decomposition $U_0\cap Z_\Ocal(t)= Z_\Ocal^{\Pcal}(t)\cup Z_t$, where $Z_t$ is a component contained in
$Z^{>0}_\Ocal(t)$. Finally, for small $t$, we have
\begin{eqnarray*}
Q_\Ocal^{\Pcal}(0)&=&\indice_{K}(\sigma^0\vert_{U_0})\\
&=&\indice_{K}(\sigma^t\vert_{U_0})\\
&=& Q_\Ocal^{\Pcal}(t)+\Qcal_K(M\times\Ocal^*,\Scal\otimes\Scal_{\Ocal^*}, Z_t,\Phi^t_\Ocal).
\end{eqnarray*}
Since $[\Qcal_K(M\times\Ocal^*,\Scal\otimes\Scal_{\Ocal^*}, Z_t,\Phi^t_\Ocal)]^K=0$ by Proposition
\ref{prop:annulation-d-S} the proof of  Proposition
\ref{prop:independant} is completed.
\end{proof}

\subsubsection{Computation of $\mm_\Ocal^{\Pcal}$}

In this section we compute
$$
\mm_\Ocal^{\Pcal}:=\left[\Qcal_K(M\times \Ocal^*,\Scal\otimes\Scal_{\Ocal^*},Z^\Pcal_\Ocal,\Phi_\Ocal)\right]^K.
$$
Let $C$ be a connected component of $\hgot^*_0$ that intersects the orbit $\Pcal$. With the help of
Proposition \ref{induction}, we will reduce the computation of
$\mm_\Ocal^{\Pcal}$
to a similar computation
where the group $K$ acting on $M$ is replaced with the torus $A_H=H/[H,H]$ acting on the slice $\Ycal_C$.

\medskip

Let $\mu\in \Pcal\cap C$ : $\kgot_\mu=\hgot$ and $\mu-\rho(\mu)$ defines a character of $H$.
Then $Z^\Pcal_\Ocal$ is equal to $K(\Phi_\Scal^{-1}(\mu)\times (-\mu+o(\hgot)^*)$. Here $o(\hgot)$ is the
$\rho$ orbit for $H$, so $o(\hgot)=o(\hgot)^*$ and $\QS_H(o(\hgot)^*)$ is the trivial representation of $H$.

Let $\Ycal_C=\Phi_\Scal^{-1}(C)$ be the slice relative to the connected component $C$ (see Section \ref{sec:geometric}).
Thus $K\Ycal_C$ is an open neighborhood of  $\Phi_\Scal^{-1}(\Pcal)$ in $M$ diffeomorphic with $K\times_H \Ycal_C$.
We see that
$$
Z_\Ocal^{\Pcal}\subset (K\times_H \Ycal_C)\times \Ocal^*\simeq K\times_H \left( \Ycal_C\times \Ocal^*\right).
$$

We consider the $H$-manifold $N':= \Ycal_C\times o(\hgot)^*$  and the $K$-manifold
$$
N=K\times_H ( B_\qgot\times N')=K\times_H (B_\qgot\times \Ycal_C\times  o(\hgot)^*),
$$
 where $B_\qgot$ is a small open ball in $\qgot$, centered at $0$ and $H$-invariant.

When $B_\qgot$ is small enough, the map $(X,\xi)\mapsto \exp(X)(-\mu+\xi)$, from $B_\qgot\times o(\hgot)^*$
 into $\Ocal^*$,  defines a diffeomorphism into a $H$-invariant neighborhood of the $H$-orbit $-\mu+o(\hgot)^*$
 in $\Ocal^*$.
Hence a $K$-invariant neighborhood of $Z_\Ocal^\Pcal$ in
$M\times \Ocal^*$ is diffeomorphic to $N$.
Under this isomorphism, the equivariant  map $\Phi_\Ocal=\Phi_\Scal+i_{\Ocal^*}$ defines a map $\Phi$ on $N$.
For $k\in K, X\in B_\qgot,y\in \Ycal_C,\xi\in o(\hgot)^*$, we have
$$
\Phi([k;X,y,\xi]):=k\left( \Phi_\Scal(y)+\exp(X)(-\mu+\xi)\right).
$$
It restricts to $N'$ as the $H$-equivariant map
$\Phi'(y,\xi)=\Phi_\Scal(y)-\mu+\xi$ with value in $\hgot^*$.
Furthermore, if $B_\qgot$ is small enough,  $\Phi([1;X,y,\xi])$ belongs to $\hgot^*$  if and only $X=0$.
As $X\in \qgot$, we see also that $(\Phi([1;X,y,\xi]),X)=
(\Phi_\Scal(y),X)+(\exp(X)(-\mu+\xi),X)=(\Phi_\Scal(y)-\mu+\xi,X)=0$
 for all
$(X,y,\xi)\in B_\qgot\times \Ycal_C\times o(\hgot)^*$. Conditions (\ref{eq-phi-prime}) are satisfied. Proposition
\ref{induction} tells us that
$$
\mm_\Ocal^{\Pcal}:=\left[\Qcal_H(N',\Scal',Z',\Phi')\right]^H
$$
where $Z':=\Phi_\Scal^{-1}(\mu)\times o(\hgot)^*$.

Now we have to explain the nature of the spinor bundle $\Scal'$ over $N'=\Ycal_C\times o(\hgot)^*$.
Let $\Scal_{o(\hgot)^*}$ be the canonical $\spinc$-bundle of the orbit $o(\hgot)^*$. Let $\Scal_{\Ycal_C}$ be
the $\spinc$-bundle on  $\Ycal_C$ defined by (\ref{eq:S-Y-C}).

\begin{prop}
We have $\Scal'=\Scal_{\Ycal_C}^\Pcal\boxtimes \Scal_{o(\hgot)^*}$ where $\Scal_{\Ycal_C}^\Pcal=
\Scal_{\Ycal_C}\otimes \C_{-\mu+\rho(\mu)}$ is a $\spinc$-bundle on $\Ycal_C$. The determinant line bundle
of $\Scal_{\Ycal_C}^{\Pcal}$ is equal to $\det(\Scal)\vert_{\Ycal_C}\otimes \C_{-2\mu}$, and the corresponding
moment map is $\Phi_{\Ycal_C}^\Pcal:=\Phi_\Scal|_{\Ycal_C}-\mu$.

The subgroup $[H,H]$ acts trivially on $(\Ycal_C,\Scal_{\Ycal_C}^{\Pcal})$.

\end{prop}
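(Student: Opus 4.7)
The plan is to apply the general induction formula (\ref{eq;inducedbundle}) to the embedding $N' \hookrightarrow N = K\times_H(B_\qgot\times N')$, which says that $(\Scal\otimes\Scal_{\Ocal^*})|_{N'} = S_\qgot \otimes \Scal'$ for a unique $H$-equivariant $\spinc$-bundle $\Scal'$ on $N'$, where $S_\qgot$ denotes the spinor module for the trivial normal bundle $\qgot\oplus\qgot$. Using the realization $S_\qgot \simeq \bigwedge_{J_C}\qgot \otimes \bigwedge_{-J_C}\qgot$ from Example \ref{ex:QpluqQ}, it then suffices to decompose each of the two tensor factors $\Scal|_{\Ycal_C}$ and $\Scal_{\Ocal^*}|_{-\mu + o(\hgot)^*}$ separately and match terms.

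For the first factor, the decomposition $\Scal|_{\Ycal_C} = \bigwedge \qgot^C \otimes \Scal_{\Ycal_C}$ is already (\ref{eq:S-Y-C}). For the second, I would apply the same induction principle to the inclusion of the $H$-orbit $H(-\xi) = -\mu + o(\hgot)^*$ inside the $K$-orbit $\Ocal^* = K(-\xi)$, with $\xi = \mu + \rho^H$. The key observation is that the complex structure $J_{-\xi}$ restricted to the normal fiber $\qgot \subset T_{-\xi}\Ocal^*$ equals $-J_C$: indeed the segment joining $-\mu$ to $-\xi$ stays inside $\hgot^*_0$ (the stabilizer only shrinks from $H$ to $T$), so $-\xi$ lies in the same connected component of $\hgot^*_0$ as $-\mu$, which determines the complex structure $-J_C$ on $\kgot/\hgot$. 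This yields $\Scal_{\Ocal^*}|_{-\mu+o(\hgot)^*} = \bigwedge_{-J_C}\qgot \otimes \Scal''$ for some $H$-equivariant $\spinc$-bundle $\Scal''$ on $o(\hgot)^*$. To pin down $\Scal''$, I would match determinant line bundles: $\det(\Scal_{\Ocal^*})|_{H(-\xi)}$ is the restriction of $[\C_{-2\xi}]$, which factors as $\C_{-2\mu}\otimes[\C_{-2\rho^H}]$; the induction shift is $\C_{-2\rho_{-C}} = \C_{2\rho(\mu)}$ (using $\rho_C = \rho(\mu)$ because $\mu\in C$ and Definition \ref{def-rho-a}); and $\det(\Scal_{o(\hgot)^*}) = [\C_{-2\rho^H}]$ by (\ref{eq:spinor-O}). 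These three identities force $\Scal'' = \C_{-\mu + \rho(\mu)}\otimes \Scal_{o(\hgot)^*}$.

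Combining the two factors gives $\Scal' = \bigl(\Scal_{\Ycal_C}\otimes\C_{-\mu+\rho(\mu)}\bigr)\boxtimes \Scal_{o(\hgot)^*}$, which is the claimed formula with $\Scal_{\Ycal_C}^\Pcal := \Scal_{\Ycal_C}\otimes\C_{-\mu+\rho(\mu)}$. The determinant identity $\det(\Scal_{\Ycal_C}^\Pcal) = \det(\Scal)|_{\Ycal_C}\otimes \C_{-2\mu}$ follows from $\det(\Scal_{\Ycal_C}) = \det(\Scal)|_{\Ycal_C}\otimes \C_{-2\rho_C}$ together with $\rho_C = \rho(\mu)$, and the moment-map identity $\Phi_{\Ycal_C}^\Pcal = \Phi_\Scal|_{\Ycal_C}-\mu$ is a direct consequence of the Kostant relation (\ref{eq:kostant-L}): tensoring the determinant bundle by the constant character $\C_{-2\mu}$ shifts the moment map by $-\mu$. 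For the $[H,H]$-triviality, Theorem \ref{theo-H-slice} already provides triviality of the $[H,H]$-action on the base $\Ycal_C$, so by Remark \ref{rem:L-S-gamma} the infinitesimal action of $b\in[\hgot,\hgot]$ on each fiber of $\Scal_{\Ycal_C}^\Pcal$ is multiplication by $i\langle \Phi_{\Ycal_C}^\Pcal(y),b\rangle$; this pairing vanishes because $\Phi_{\Ycal_C}^\Pcal(y) = \Phi_\Scal(y)-\mu$ lies in $\zgot^*$ (by Proposition \ref{prop:basic} and the choice $\mu \in \zgot^*$), while $b$ annihilates $\zgot^*$. Connectedness of $[H,H]$ then upgrades infinitesimal triviality to the group level.

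The main obstacle is the careful identification $J_{-\xi}|_\qgot = -J_C$ and the precise bookkeeping of the character twist $\C_{-\mu+\rho(\mu)}$: this requires tracking Definition \ref{def-rho-a} against the $\rho$-shift $\rho_{-C}$ produced by the induction formula applied to $\Ocal^*$, and verifying that the $\rho^H$ shifts on the orbit $o(\hgot)^*$ and the $\rho(\mu)$ shifts coming from the induction assemble consistently.
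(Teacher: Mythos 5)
Your proof is correct and runs parallel to the paper's, with one genuine variation worth noting. Both proofs factor $(\Scal\otimes\Scal_{\Ocal^*})|_{N'}$ through $S_\qgot\simeq\bigwedge_{J_C}\qgot\otimes\bigwedge_{-J_C}\qgot$, so the work reduces to decomposing $\Scal_{\Ocal^*}$ over the $H$-orbit $-\mu+o(\hgot)^*$ as $\bigwedge\overline{\qgot^C}\otimes\Scal''$ and identifying $\Scal''$. The paper identifies $\Scal''$ directly: it writes out $\Scal_{\Ocal^*}$ from (\ref{eq:spinor-O}), peels off $\bigwedge\overline{\qgot^C}$, and then uses $\lambda-\rho(\lambda)=\mu-\rho(\mu)\in\zgot^*$ to recognize the result as $\Scal_{o(\hgot)^*}\otimes\C_{-\mu+\rho(\mu)}$. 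You instead pin down $\Scal''$ by comparing determinant line bundles and appealing to the fact that, on the homogeneous space $o(\hgot)^*\simeq H/T'$, an $H$-equivariant $\spinc$-bundle is uniquely recovered from its determinant (equivariant line bundles on $H/T'$ are characters of $T'$, and doubling is injective). This buys you a cleaner bookkeeping argument at the price of needing the uniqueness fact, which you should at least state. The other point to flag: your "key observation" that $-\xi$ lies in the same connected component of $\hgot^*_0$ as $-\mu$ (equivalently $J_{-\xi}|_\qgot=-J_C$) is indeed the pivot on which both proofs turn, and the paper uses it without comment. However, your justification "(the stabilizer only shrinks from $H$ to $T$)" is too terse: what actually makes the segment argument work is that $\mu$ lies in the closed Weyl chamber $\overline{C_\xi}$, so that for every root $\alpha\notin\hgot$ the linear function $t\mapsto\langle\alpha,\mu+t\rho^H\rangle$ is positive (or negative) at both endpoints and hence throughout. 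That $\mu\in\overline{C_\xi}$ is a consequence of the equality case of the magical inequality (Proposition \ref{prop:infernal-with-trace}), already invoked in identifying $Z^{=0}_\Ocal$; citing it here would close the gap. With that one sentence added, the proof is complete.
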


\begin{proof}
Let $\lambda$ be an element of the $H$-orbit $\Ocal_\Pcal:=\mu+o(\hgot)$.
The spinor bundle $\Scal_{\Ocal^*}$ on $\Ocal^*=(K\lambda)^*$ induces a
spinor bundle $\Scal_1$ over $\Ocal_\Pcal^*$ through the relation
$\Scal_{\Ocal^*}\vert_{\Ocal_\Pcal^*}\simeq \bigwedge\overline{\qgot^C}\otimes \Scal_{1}$.

We can check that $\Scal_{1}$ is the $H$-spinor bundle on $\Ocal_\Pcal^*=(H\lambda)^*\simeq o(\hgot)^*$ equal to
\begin{eqnarray*}
H\times_{H_\lambda}\Big(\bigwedge_{-J_\lambda}\hgot/\hgot_\lambda\otimes \C_{-\lambda+\rho(\lambda)}\Big)
&\simeq& \Big(H\times_{H_\lambda}\bigwedge_{-J_\lambda}\hgot/\hgot_\lambda\Big)\otimes \C_{-\lambda+\rho(\lambda)}\\
&\simeq& \Scal_{o(\hgot)^*}\otimes \C_{-\mu+\rho(\mu)}
\end{eqnarray*}
since $\lambda-\rho(\lambda)=\mu-\rho(\mu)\in\zgot^*$.

As the spinor bundle $\Scal_\qgot$ is equal to the product $\bigwedge\overline{\qgot^C}
\otimes \bigwedge\qgot^C$ (see Example \ref{ex:QpluqQ}), we know then that
$\Scal'\simeq \Scal_{\Ycal_C}\boxtimes \Scal_{1}\simeq
\Scal_{\Ycal_C}\boxtimes \Scal_{o(\hgot)^*}\otimes \C_{-\mu+\rho(\mu)}$.

The relation $\det(\Scal_{\Ycal_C}^{\Pcal})=\det(\Scal)\vert_{\Ycal_C}\otimes \C_{-2\mu}$ comes from the fact that
$\det(\Scal_{\Ycal_C})= \det(\Scal)\vert_{\Ycal_C}\otimes \C_{-2\rho(\mu)}$ since $\rho_C=\rho(\mu)$.
\end{proof}

\medskip

We consider now the $H$-manifold $\Ycal_C$ equipped with the $\spinc$-bundle $\Scal_{\Ycal_C}^{\Pcal}$. Let
\begin{equation}\label{eq:Q-sigma-lambda}
\Qcal_{H}(\Ycal_C, \Scal_{\Ycal_C}^{\Pcal},\{0\})\in \hat{R}(H)
\end{equation}
be the equivariant index localized on the compact component $\{\Phi_{\Ycal_C}^\Pcal=0\}=\{\Phi_\Scal =\mu\}\subset \Ycal_C$.
Let $A_H$ be the torus $H/[H,H]$. Since $[H,H]$ acts trivially on $(\Ycal_C, \Scal_{\Ycal_C}^{\Pcal})$ we may also define the localized index $\Qcal_{A_H}(\Ycal_C, \Scal_{\Ycal_C}^{\Pcal},\{0\})\in \hat{R}(A_H)$.

We can now prove the main result of this section.
\begin{theo}\label{th:m-lambda-sigma-induction}
The multiplicity $\mm_\Ocal^{\Pcal}$ is equal to
$$
\Big[\Qcal_{H}(\Ycal_C, \Scal_{\Ycal_C}^{\Pcal},\{0\})\Big]^{H}=
\Big[\Qcal_{A_H}(\Ycal_C, \Scal_{\Ycal_C}^{\Pcal},\{0\})\Big]^{A_H}.
$$
\end{theo}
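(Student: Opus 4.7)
The plan is to exploit the product structure of $N' = \Ycal_C \times o(\hgot)^*$ together with the fact that $\QS_H(o(\hgot))$ is the trivial $H$-representation. First I would unwind the Kirwan vector field on $N'$. The moment map is $\Phi'(y,\xi) = \Phi_{\Ycal_C}^\Pcal(y) + \xi$, where by Theorem~\ref{theo-H-slice} the first summand lies in $\zgot^*$ (since $[H,H]$ acts trivially on $\Ycal_C$, Remark~\ref{rem:L-S-gamma} forces its moment map into $\zgot^*$) and the second lies in $[\hgot,\hgot]^*$. Because $\zgot$ is central and $[H,H]$ acts trivially on $\Ycal_C$, the infinitesimal action of $\Phi'(y,\xi)$ on $N'$ splits as the sum of actions on each factor; moreover, on $o(\hgot)^*$ the coadjoint action of $\xi$ on itself vanishes identically. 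Therefore $\kappa_{\Phi'}(y,\xi) = (\kappa_{\Phi_{\Ycal_C}^\Pcal}(y),\, 0)$ and $Z' = \{\Phi_{\Ycal_C}^\Pcal = 0\} \times o(\hgot)^*$ is a compact component of $Z_{\Phi'}$.

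Next I would use the factorization $\Scal' = \Scal_{\Ycal_C}^\Pcal \boxtimes \Scal_{o(\hgot)^*}$ of the $\spinc$-bundle together with the decomposition of $\kappa_{\Phi'}$: the deformed transversally elliptic symbol $\sigma(N', \Scal', \Phi')$, restricted to a suitable tubular neighborhood of $Z'$, is the external product in $K_H^0(\T_H^*N')$ of the deformed symbol $\sigma(\Ycal_C, \Scal_{\Ycal_C}^\Pcal, \Phi_{\Ycal_C}^\Pcal)$ (restricted near $\{\Phi_{\Ycal_C}^\Pcal = 0\}$) with the honestly elliptic Dirac symbol $\sigma(o(\hgot)^*, \Scal_{o(\hgot)^*})$ on the compact orbit $o(\hgot)^*$ (no deformation is needed there since the Kirwan vector field is zero). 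Multiplicativity of the equivariant index then gives
$$
\Qcal_H(N', \Scal', Z', \Phi') \;=\; \Qcal_H(\Ycal_C, \Scal_{\Ycal_C}^\Pcal, \{0\}) \,\otimes\, \QS_H(o(\hgot)^*).
$$
By Proposition~\ref{prop:easyregular}, $\QS_H(o(\hgot))$ is the trivial $H$-representation, so the same is true of its dual $\QS_H(o(\hgot)^*)$. Combined with the identity $\mm_\Ocal^\Pcal = [\Qcal_H(N', \Scal', Z', \Phi')]^H$ established immediately before the theorem, this yields the first claimed equality.

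Finally, to pass from $H$-invariants to $A_H$-invariants, I would observe that $[H,H]$ acts trivially both on $\Ycal_C$ (Theorem~\ref{theo-H-slice}) and on the fibers of $\Scal_{\Ycal_C}^\Pcal$ (again by Remark~\ref{rem:L-S-gamma}, since $\Phi_{\Ycal_C}^\Pcal$ takes values in $\zgot^*$). Hence every weight appearing in $\Qcal_H(\Ycal_C, \Scal_{\Ycal_C}^\Pcal, \{0\})$ is trivial on $[H,H]$, i.e.\ pulled back from $A_H = H/[H,H]$, and the $H$-invariant part coincides with the $A_H$-invariant part of the corresponding $A_H$-equivariant localized index. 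The main obstacle I anticipate is the rigorous justification of the multiplicativity step in the transversally elliptic framework: the factorization of both $\Scal'$ and $\kappa_{\Phi'}$ makes the conclusion essentially formal, but one must carefully match symbol classes in $K_H^0(\T_H^*N')$ near $Z'$ and invoke the ``elliptic $\boxtimes$ transversally elliptic'' product formula of Atiyah--Singer, taking care that the compactness of $o(\hgot)^*$ permits treating the second factor as an ordinary elliptic index.
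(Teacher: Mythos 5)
Your proof is correct and follows essentially the same approach as the paper: both factor the deformed symbol on $N'=\Ycal_C\times o(\hgot)^*$ into an external product of a transversally elliptic symbol on $\Ycal_C$ and the elliptic Dirac symbol on $o(\hgot)^*$, invoke the Atiyah--Singer multiplicativity with one elliptic factor, and use that $\QS_H$ of the $\rho$-orbit is trivial together with the trivial $[H,H]$-action on $(\Ycal_C,\Scal_{\Ycal_C}^\Pcal)$ to pass from $H$- to $A_H$-invariants. Your observation that the second component of $\kappa_{\Phi'}$ vanishes identically (because $\Phi_{\Ycal_C}^\Pcal$ is central and $\xi$ fixes itself) is a slight streamlining of the paper's homotopy argument, which instead uses the compactness of $o(\hgot)^*$ to deform away that term, but the two are otherwise the same proof.
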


\begin{proof} Let $Z':=\Phi_\Scal^{-1}(\mu)\times o(\hgot)^*$.
The character $\Qcal_H(N',\Scal', Z',\Phi')\in\hat{R}(H)$ is equal to the equivariant index of
$\sigma(N',\Scal',\Phi')\vert_\Ucal$ where $\Ucal\subset N'$ is an invariant open subset such that
$\Ucal\cap Z_{\Phi'}=Z'$. For $(y,\xi)\in N'=\Ycal_C\times o(\hgot)^*$ and $(v,\eta)\in \T_{(y,\xi)}N'$, the endomorphism
$\sigma(N',\Scal_{N'},\Phi')\vert_{(y,\xi)}(v,\eta)$ is equal to
$$
\clif_1(v+(\Phi_\Scal(y)-\mu+\xi)\cdot y)\otimes {\rm Id}_{\Scal_{o(\hgot)^*}\vert_\xi} + \epsilon_1\otimes  \clif_2(\eta+(\Phi_\Scal(y)-\mu+\xi)\cdot\xi).
$$
Here $\clif_1$ acts on $\Scal_{\Ycal_C}^{\Pcal}\vert_y$, $\clif_2$ acts on $\Scal_{o(\hgot)^*}\vert_\xi$ and $\epsilon_1$
is the canonical grading operator on $\Scal_{\Ycal_C}^{\Pcal}\vert_y$.

Since $o(\hgot)^*$ is compact, we can replace the term $\clif_2(\eta+(\Phi_\Scal(y)-\mu+\xi)\cdot\xi)$
simply by  $\clif_2(\eta)$. Since $[H,H]$ acts trivially on $\Ycal_C$, and $\xi\in [\hgot,\hgot]$, the vector
field $y\mapsto (\Phi_\Scal(y)-\mu+\xi)\cdot y$ is equal to $y\mapsto (\Phi_\Scal(y)-\mu)\cdot y$. Thus our symbol
is homotopic to the  symbol
$$
\clif_1(v+(\Phi_\Scal(y)-\mu)\cdot y)\otimes
 {\rm Id}_{\Scal_{o(\hgot)}\vert_\xi} + \epsilon_1\otimes  \clif_2(\eta).
$$

This last expression is the  product symbol of the $H$-transversally elliptic symbol $\clif_1(v+(\Phi_\Scal(y)-\mu)\cdot y)$
on $\Ycal_C$ and of the elliptic symbol $\clif_2(\eta)$ on $o(\hgot)^*$. The equivariant indices multiply under the product (as one is elliptic) (\cite{Atiyah74},\cite{pep-vergne:bismut}).

Now the $H$-equivariant index of $c_2(\eta)$ acting on $\Scal_{o(\hgot)^*}$ is the trivial representation of $H$.
Thus we obtain our theorem. We have also to remark that the identity
$\left[\Qcal_{H}(\Ycal_C, \Scal_{\Ycal_C}^{\Pcal},\{0\})\right]^{H}=
\left[\Qcal_{A_H}(\Ycal_C, \Scal_{\Ycal_C}^{\Pcal},\{0\})\right]^{A_H}$
follows from the fact that $[H,H]$ acts trivially on $(\Ycal_C, \Scal_{\Ycal_C}^{\Pcal})$.
\end{proof}

\section{Multiplicities and reduced spaces }\label{sec:multiplicity}

In this section, we interpret the multiplicity as an equivariant index on a reduced space.

Let $\Ocal\subset\kgot^*$ be a regular admissible orbit, and $(\hgot)\in \Hcal_\kgot$ so that
$([\hgot,\hgot])=([\kgot_M,\kgot_M])$. In the previous section, we have proved that the multiplicity
of $\pi_\Ocal$ in $\Qcal_K(M,\Scal)$ is equal to
$$
\mm_\Ocal= \sum_{\Pcal} {\mm_\Ocal^{\Pcal}}
$$
where the sum runs over the $K$-orbits $\Pcal$ which are  $(\hgot)$-ancestors of $\Ocal$.
Furthermore, we have proved that $\mm_\Ocal^{\Pcal}=\Big[\Qcal_{A_H}(\Ycal_C, \Scal_{\Ycal_C}^{\Pcal},\{0\})\Big]^{A_H}$.

The  aim  of this section is to prove the following theorem.

\begin{theo}\label{theo:QR}
The multiplicity  $\mm_\Ocal^{\Pcal}$ is equal to the spin$^c$ index of the (possibly singular) reduced space
$M_{\Pcal}:=\Phi^{-1}_\Scal(\Pcal)/K$.
\end{theo}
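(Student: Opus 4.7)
The proof proceeds by reducing to a statement about the abelian slice and then invoking the torus version of $[Q,R]=0$ at a regular value. By Theorem \ref{th:m-lambda-sigma-induction}, we already know
$$
\mm_\Ocal^{\Pcal} = \bigl[\Qcal_{A_H}(\Ycal_C, \Scal_{\Ycal_C}^{\Pcal}, \{0\})\bigr]^{A_H},
$$
so the task is to show that this torus invariant coincides with $\QS(M_\Pcal)$.

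First I would identify the reduced space with an $A_H$-reduction of the slice. Using the slice isomorphism $K\Ycal_C \simeq K\times_H \Ycal_C$ of Proposition \ref{prop:slice-general}, the preimage $\Phi_\Scal^{-1}(\Pcal)$ is swept out by the $K$-orbit of $\Phi_\Scal^{-1}(\mu) \subset \Ycal_C$. Since $[H,H]$ acts trivially on $\Ycal_C$ (Theorem \ref{theo-H-slice}), the residual $H$-action factors through $A_H = H/[H,H]$, and using $\Phi_{\Ycal_C}^\Pcal = \Phi_\Scal|_{\Ycal_C} - \mu$ we obtain
$$
M_\Pcal \;=\; \Phi_\Scal^{-1}(\mu)/H \;=\; (\Phi_{\Ycal_C}^\Pcal)^{-1}(0)/A_H.
$$
Hence both sides of the desired equality live on the same reduced space.

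Next I would handle the possibly singular nature of this reduction through a desingularization by small perturbation. Pick $\epsilon \in \zgot^*$ arbitrarily close to zero that is a regular value of $\Phi_{\Ycal_C}^{\Pcal}$ on some neighborhood of the compact set $(\Phi_{\Ycal_C}^{\Pcal})^{-1}(0)$; then $(\Phi_{\Ycal_C}^\Pcal - \epsilon)^{-1}(0)/A_H$ is a $\spinc$ orbifold, whose Dirac index is by definition $\QS(M_\Pcal)$ (and is independent of the choice of sufficiently small $\epsilon$). On the index side, a local version of Proposition \ref{prop:independant}, applied to the abelian situation on the slice, shows that translating $\Phi_{\Ycal_C}^\Pcal$ by $-\epsilon$ does not change $\bigl[\Qcal_{A_H}(\Ycal_C, \Scal_{\Ycal_C}^{\Pcal}, \{0\})\bigr]^{A_H}$, provided the neighborhood used to define the localized index is chosen so that no critical points of the Kirwan vector field cross the boundary during the homotopy.

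It then remains to execute the $[Q,R]=0$ identity at a regular value in the torus case. On an $A_H$-invariant tubular neighborhood of the compact submanifold $N:=(\Phi_{\Ycal_C}^\Pcal - \epsilon)^{-1}(0)$, the symbol pushed by the Kirwan vector field of $\Phi_{\Ycal_C}^\Pcal - \epsilon$ is $A_H$-transversally elliptic and homotopic to the product of the Dirac symbol pulled back from the orbifold $N/A_H$ (with the induced $\spinc$-bundle, via (\ref{eq;inducedbundle}) and (\ref{eq:S-Y-C})) and a Koszul-type symbol on the normal directions; this latter symbol has $A_H$-invariant index equal to $1$. Taking $A_H$-invariants therefore extracts precisely the Dirac index on $N/A_H$, which is $\QS(M_\Pcal)$. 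The main obstacle is this last step: executing the tubular/Koszul argument in the $\spinc$ category while accommodating the orbifold nature of $N/A_H$ (the $A_H$-action on $N$ being only locally free). This requires the Atiyah--Singer formalism for transversally elliptic operators in the orbifold setting together with careful bookkeeping of the $\spinc$-structures induced along the chain of slice restrictions.
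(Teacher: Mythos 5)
Your reduction strategy is the same as the paper's: pass via Theorem~\ref{th:m-lambda-sigma-induction} to $[\Qcal_{A_H}(\Ycal_C, \Scal_{\Ycal_C}^\Pcal, \{0\})]^{A_H}$, identify $M_\Pcal$ with the abelian reduction $(\Phi_{\Ycal_C}^\Pcal)^{-1}(0)/A_H$ through the slice, perturb to a regular value $\epsilon$, and finish with the torus version of $[Q,R]=0$ at a regular value (which the paper imports from its companion work). The gap is in your $\epsilon$-independence step. Choosing a neighborhood $\Ucal$ of $(\Phi_{\Ycal_C}^\Pcal)^{-1}(0)$ on whose boundary the Kirwan vector field stays non-vanishing does give an equi-index family of transversally elliptic symbols $\sigma(\Ycal_C,\Scal_{\Ycal_C}^\Pcal,\Phi_{\Ycal_C}^\Pcal - t\epsilon)\vert_\Ucal$; but for $t=1$ the localized index is a \emph{sum} over all the components of the perturbed zero set that lie inside $\Ucal$, and the fiber over $\epsilon$ is only one of them. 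After perturbing, components of the form $\Ycal_C^\beta\cap(\Phi_{\Ycal_C}^\Pcal)^{-1}(\beta+\epsilon)$ for nonzero $\beta$ (the projections of $\epsilon$ onto the finitely many stabilizer subspaces) enter $\Ucal$, and one must prove that the $A_H$-invariant part of each of their contributions vanishes. A ``local version of Proposition~\ref{prop:independant}'' does not deliver this: that proposition's vanishing mechanism is the function $d_\Ocal$ on the compact manifold $M\times\Ocal^*$, via Proposition~\ref{prop:annulation-d-S}, and there is no analogue of it on the non-compact abelian slice with no regular orbit shift.

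What is needed instead (Lemma~\ref{lem:epsilon} of the paper) is a direct eigenvalue bound: at a point $n$ in such an extra component one has $\langle\Phi(n),\beta\rangle=0$, so $\Lcal(\beta)$ acts trivially on $\Scal_n$; applying Atiyah--Segal localization and using that $\frac{1}{2}\tr_{\T N}|\beta|>0$ --- which holds precisely because $\beta$ lies in the direction of $\ggot_N^\perp$ and hence acts non-trivially on $N$ --- one sees that $\frac{1}{i}\Lcal(\beta)>0$ on $\mathbf{d}_\beta(\Scal)\otimes\Sym(\Ncal_\beta)$, forcing the invariant part to vanish. Without this lemma the identity $[\Qcal_{A_H}(\Ycal_C,\Scal_{\Ycal_C}^\Pcal,\{0\})]^{A_H}=\QS(M_\Pcal)$ is not established. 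A secondary point you gloss over: $\Ycal_C$ may be disconnected, with component-dependent generic stabilizers $\zgot_j\subset\zgot$, so the perturbation direction must be chosen in $\zgot_j^\perp$ on each component (as in Definition~\ref{def:Q-M-mu}); a single generic $\epsilon\in\zgot^*$ would simply miss the image on some components, which is consistent with the convention that such pieces contribute $0$, but this needs to be said. Your description of the last step as a Koszul/tubular factorization that extracts the orbifold Dirac index after taking $A_H$-invariants is correct in outline and is indeed where the heavy transversally elliptic calculus resides.
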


However, our first task is to give a meaning to
a  $\QS(M_\Pcal)\in \Z$ even if $M_{\Pcal}$ is singular.

\subsection{Spin$^c$ index on singular reduced spaces}\label{sec:spin-index-singular}

We consider a connected oriented manifold $N$, equipped with a $\spinc$-bundle $\Scal$. We assume that a torus $G$
acts on the data $(N,\Scal)$. An invariant connection on the determinant line bundle $\det(\Scal)$ defines
a moment map $\Phi : N\to \ggot^*$. We do not assume that $N$ is compact, but we assume that the
map $\Phi$ is proper\footnote{We will use sometimes a slightly different hypothesis : $\Phi$ is proper as a map
from $N$ to an open subset of $\ggot^*$.}.
Thus, for any $\xi\in \ggot^*$, the reduced space $N_\xi:=\Phi^{-1}(\xi)/G$ is compact.

The purpose of this subsection is to explain how we can define the spin$^c$-index, $\QS(N_\mu)\in \Z$, for any $\mu$ in the weight lattice $\Lambda$ of the torus $G$.

Let $\ggot_N$ be the generic infinitesimal stabilizer of the $G$-action on $N$ : the image of $N$ under the map $\Phi$ leaves in an affine space $I(N)$ parallel to $\ggot_N^\perp$.  If $\xi\in I(N)$ is a regular value of $\Phi: N\to I(N)$, the reduced space $N_\xi$ is a compact orbifold (as proved in \cite{pep-vergne:witten}). We can define $\spinc$-bundles on orbifolds, as well as $\spinc$-indices.

We start with the following basic fact.
\begin{lem}
For any regular value $\xi\in I(N)$ of $\Phi: N\to I(N)$, the orbifold $N_\xi$ is oriented and equipped with a family of $\spinc$-bundles $\Scal_{\xi}^{\mu}$  parameterized by $\mu\in \Lambda\cap I(N)$.
\end{lem}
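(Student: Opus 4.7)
The plan is to obtain both the orientation on $N_\xi$ and the family $\{\Scal_\xi^\mu\}$ from a single decomposition of $TN|_{Z_\xi}$, where $Z_\xi := \Phi^{-1}(\xi)$, and then to descend along the locally free quotient $\pi_\xi : Z_\xi \to N_\xi$.

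First I would set up the decomposition. Regularity of $\xi$ means that $d\Phi_n : T_n N \to \ggot_N^\perp$ is surjective on $Z_\xi$, so $d\Phi$ identifies the normal bundle $\nu_{Z_\xi/N}$ with the trivial bundle $[\ggot_N^\perp]$, and forces the infinitesimal stabilizer $\ggot_n$ at each $n \in Z_\xi$ to equal $\ggot_N$ (any strict inclusion would shrink the image of $d\Phi$ below $\ggot_N^\perp$). Let $G_N$ be the connected subgroup with Lie algebra $\ggot_N$ and $H := G/G_N$; then $H$ acts on $Z_\xi$ with finite stabilizers, giving the orbifold structure on $N_\xi$ recalled from \cite{pep-vergne:witten}. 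Combining $\nu_{Z_\xi/N} \cong [\ggot_N^\perp]$ with $TZ_\xi \cong \pi_\xi^* TN_\xi \oplus [\ggot/\ggot_N]$ (from the locally free infinitesimal $H$-action) yields
$$TN\big|_{Z_\xi} \;\cong\; \pi_\xi^*\, TN_\xi \;\oplus\; [W], \qquad W := \ggot/\ggot_N \oplus \ggot_N^\perp.$$

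The invariant inner product identifies $\ggot_N^\perp$ with $\ggot/\ggot_N$, equipping $W \cong (\ggot/\ggot_N)_\C$ with a canonical complex structure $J_0$ and hence an orientation $o(W)$. The given orientation of $N$ combined with $o(W)$ orients $\pi_\xi^* TN_\xi$ $G$-invariantly, which descends to the required orientation on $N_\xi$. For the $\spinc$-bundles, for each $\mu \in \Lambda \cap I(N)$ I would apply normal-factor descent, exactly as in (\ref{eq;inducedbundle}), to the twisted $\spinc$-bundle $\Scal_{(\mu)} := \Scal \otimes [\C_{-\mu}]$, whose moment map is $\Phi - \mu$ and whose regular fiber over $\xi - \mu$ is still $Z_\xi$. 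The spinor module $\bigwedge (\ggot/\ggot_N)_\C$ on the constant normal factor $[W]$ then produces a unique $H$-equivariant graded $\spinc$-bundle $\Scal_\xi^\mu$ on the orbifold $N_\xi$ satisfying
$$\Scal_{(\mu)}\big|_{Z_\xi} \;\cong\; \pi_\xi^*\,\Scal_\xi^\mu \;\otimes\; \bigwedge (\ggot/\ggot_N)_\C.$$

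The main obstacle is verifying that $G_N$ acts trivially on $\Scal_{(\mu)}|_{Z_\xi}$, so that the descent to the $H$-orbifold $N_\xi$ actually goes through. By Remark \ref{rem:L-S-gamma} the infinitesimal action of $b \in \ggot_N$ on a fiber of $\Scal_{(\mu)}$ at $n \in Z_\xi$ is the scalar $i\langle \xi - \mu,\, b\rangle$; this vanishes precisely because $\mu,\xi \in I(N)$ forces $\xi - \mu \in \ggot_N^\perp$, and since $G_N$ is connected the full group-level action is then trivial. The integrality $\mu \in \Lambda$ is what makes the twist $[\C_{-\mu}]$ a genuine equivariant line bundle in the first place; both conditions together are essential, as $\Scal|_{Z_\xi}$ itself carries a nontrivial $G_N$-character and does not descend as a $\spinc$-bundle on $N_\xi$.
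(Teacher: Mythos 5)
Your proof is correct and takes essentially the same route as the paper's: the same decomposition $TN|_{Z_\xi}\simeq\pi_\xi^* TN_\xi\oplus[(\ggot/\ggot_N)_\C]$, the same complex-structure orientation on the normal factor, and the same Kostant-relation argument (Remark \ref{rem:L-S-gamma}) showing $G_N$ acts by the character $e^{i\langle\nu,\cdot\rangle}$, $\nu\in I(N)$, so that twisting by $\C_{-\mu}$ with $\mu\in\Lambda\cap I(N)$ trivializes the $G_N$-action. The only difference is cosmetic: you twist by $\C_{-\mu}$ before descending across the normal factor, whereas the paper first forms $\widetilde{\Scal}_\xi$ and then twists; this yields the same $\Scal_\xi^\mu$.
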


\begin{proof}
Let $G_N$ be the subtorus with Lie algebra $\ggot_N$. Let $G'=G/G_N$. The dual of the Lie algebra $\ggot'$ of $G'$ admits a canonical identification with $\ggot_N^\perp$.

We assume that $\xi$ is a regular value of $\Phi:N\to I(N)$ : the fiber $Z=\Phi^{-1}(\xi)$ is a submanifold
equipped with a locally free action of $G'$. Let $N_{\xi} :=Z/G'$ be the corresponding ``reduced'' space, and
let $\pi: Z\to N_{\xi}$ be the projection map.
We can define the tangent (orbi)-bundle $TN_\xi$ to $N_\xi$.

On $Z$, we obtain an exact sequence
$0\longrightarrow \T Z\longrightarrow \T N\vert_Z \stackrel{d\Phi_\epsilon}{\longrightarrow} Z\times (\ggot')^*\to 0$, and
an orthogonal decomposition $\T Z= \T_{G'} Z \oplus \ggot'_Z$ where $\ggot'_Z$ is the trivial bundle corresponding to the subspace of $\T Z$ formed by the vector fields generated by the infinitesimal action of $\ggot'$. So $\T N\vert_Z$ admits the decomposition
$\T N\vert_Z \simeq \T_{G'} Z\oplus \ggot'_Z \oplus  [(\ggot')^*]$. We rewrite this as
\begin{equation}\label{eq:tangent-Z-1}
\T N\vert_Z \simeq \T_{G'} Z\oplus [\ggot'_\C]
\end{equation}
with the convention $\ggot'_Z=Z\times(\ggot'\otimes i\R) $ and $Z\times (\ggot')^* = Z\times (\ggot'\otimes \R)$.
Note that the bundle $\T_{G'} Z$ is naturally identified with $\pi^*(\T N_{\xi})$.

If we take on $\ggot'_\C$ the orientation $o(i)$ given by the complex structure, there exists a unique orientation $o(N_{\xi})$ on $N_{\xi}$ such that $o(N)=o(N_{\xi}) o(i)$.

\begin{defi} Let $\widetilde{\Scal}_{\xi}$ be the $\spinc$  bundle on the vector bundle $\T_{G'} Z\to Z$ such that
$$
\Scal\vert_Z \simeq\widetilde{\Scal}_{\xi}\otimes [\bigwedge\ggot'_\C].
$$
\end{defi}

The Kostant relation shows that for any $X\in \ggot_N$, the element $e^X$ acts on the fibers of
$\widetilde{\Scal}_{\xi}$ as a multiplication by $e^{i\langle \nu, X\rangle}$ where $\nu$ is any element of
$I(N)$. Hence, for any $\mu\in\Lambda\cap I(N)$, the action of $G_N$ on the tensor $\widetilde{\Scal}_{\xi}\otimes [\C_{-\mu}]$ is trivial. We can then define a $\spinc$-bundle $\Scal_\xi^\mu$ on  $\T N_{\xi}$ by the relation
$$
\widetilde{\Scal}_{\xi}\otimes [\C_{-\mu}]=\pi^*\left(\Scal_\xi^\mu\right).
$$
\end{proof}

\medskip

The proof of the following theorem is given in the next subsection.

\begin{theo}\label{theo:Q-N-mu}
For any $\mu\in I(N)\cap \Lambda$, consider the compact oriented orbifold $N_{\mu+\epsilon}$
associated to a generic\footnote{So that $\mu+\epsilon$ is a regular value of $\Phi: N\to I(N)$.}
element $\epsilon\in\ggot_N^\perp$. Then the index
$$
\Qcal(N_{\mu+\epsilon},\Scal_{\mu+\epsilon}^{\mu})
$$
is independent of the choice of a generic and small enough $\epsilon$.
\end{theo}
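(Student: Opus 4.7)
The plan is to express $\Qcal(N_{\mu+\epsilon}, \Scal^\mu_{\mu+\epsilon})$ as the $G'$-invariant part of the index of a transversally elliptic symbol defined on a fixed compact piece of $N$, in such a way that its continuous dependence (and hence constancy) in $\epsilon$ becomes manifest.

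First I twist away the $G_N$-action: tensoring $\Scal$ with $[\C_{-\mu}]$ produces a $\spinc$-bundle $\Scal'$ on which the subtorus $G_N$ (which acts trivially on $N$) acts trivially on fibers, so $\Scal'$ descends to a $\spinc$-bundle equivariant for the effective quotient action of $G':=G/G_N$, with moment map $\Phi':=\Phi-\mu:N\to \ggot_N^\perp\simeq (\ggot')^*$. Properness of $\Phi$ allows me to fix a $G$-invariant open neighborhood $U$ of $\Phi^{-1}(\mu)$ with $\overline U$ compact, and a small open ball $V\subset (\ggot')^*$ around $0$, such that for every $\epsilon\in\overline V$ the Kirwan vector field $\kappa_\epsilon(n):=-(\Phi'(n)-\epsilon)\cdot n$ does not vanish on $\partial U$. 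The pushed symbol $\sigma_\epsilon:=\sigma(N,\Scal',\Phi'-\epsilon)\vert_U$ (see Definition \ref{def:pushed-sigma}) is then $G'$-transversally elliptic on $U$, with equivariant index $Q_\epsilon\in\hat R(G')$. As $\epsilon$ varies in $V$ the $\sigma_\epsilon$ form a continuous family of transversally elliptic symbols on $U$, so by homotopy invariance $Q_\epsilon$ is independent of $\epsilon\in V$.

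Second, for $\epsilon\in V$ such that $\mu+\epsilon$ is a regular value of $\Phi$, I identify $[Q_\epsilon]^{G'}$ with $\Qcal(N_{\mu+\epsilon},\Scal^\mu_{\mu+\epsilon})$. The zero set of $\kappa_\epsilon$ in $U$ splits into the \emph{principal} component $Z^{\mathrm{prin}}_\epsilon:=\Phi'^{-1}(\epsilon)$ --- a smooth submanifold on which $G'$ acts locally freely --- and (possibly) further components $Z^j_\epsilon$ lying in the singular stratum of the $G'$-action, where some non-zero $\beta_j:=\Phi'(n)-\epsilon\in \ggot_n\cap \ggot_N^\perp$ stabilizes $n$. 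Theorem \ref{theo:indexsumoflocal} applied to $U$ gives $Q_\epsilon=Q^{\mathrm{prin}}_\epsilon+\sum_j Q^j_\epsilon$. On the principal piece, a $G'$-equivariant tubular neighborhood of $Z^{\mathrm{prin}}_\epsilon$ in $U$ is the product $Z^{\mathrm{prin}}_\epsilon\times (\ggot')^*$ (with $G'$ acting only on the first factor, since $\Phi'$ is a $G'$-invariant submersion near $\epsilon$), the pulled-back $\spinc$-bundle factors as $\pi^*(\Scal^\mu_{\mu+\epsilon})\otimes \bigwedge(\ggot')_\C$, and the Witten-deformed symbol factors as the orbifold Dirac symbol on $N_{\mu+\epsilon}$ twisted by $\Scal^\mu_{\mu+\epsilon}$ tensored with a Bott-type element on the trivially-acted $(\ggot')^*$-factor; this is the local $[Q,R]=0$ computation for a regular value, and yields $[Q^{\mathrm{prin}}_\epsilon]^{G'}=\Qcal(N_{\mu+\epsilon},\Scal^\mu_{\mu+\epsilon})$. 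For each non-principal $Z^j_\epsilon$, the infinitesimal action of $\beta_j$ on $\mathbf{d}_{\beta_j}(\Scal')\vert_{Z^j_\epsilon}\otimes \Sym(\Ncal_{\beta_j})$ has strictly positive eigenvalues for all sufficiently small $\epsilon$, exactly as in Proposition \ref{prop:annulation-d-S}: after the $-\mu$ twist, Lemma \ref{lem:L-beta-S} identifies the relevant weight on $\mathbf{d}_{\beta_j}(\Scal')$ as $\langle \Phi'(n),\beta_j\rangle+\tfrac12\ntr_{\T N}|\beta_j|=\|\beta_j\|^2+\langle \epsilon,\beta_j\rangle+\tfrac12\ntr_{\T N}|\beta_j|$, which dominates $\ntr_{\ggot'}|\beta_j|$ by the magical inequality (Proposition \ref{prop:infernal-with-trace}) once $|\epsilon|$ is small enough. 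Hence $[Q^j_\epsilon]^{G'}=0$.

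Combining the two steps, $\Qcal(N_{\mu+\epsilon},\Scal^\mu_{\mu+\epsilon})=[Q_\epsilon]^{G'}$ for every generic small $\epsilon$, and the right-hand side is constant on $V$. The main obstacle lies in executing the second step cleanly: the local $[Q,R]=0$ identification on the principal piece must be carried out on an open piece $U$ of the non-compact $N$ with an orbifold quotient (discrete but possibly non-trivial stabilizers), and the positivity/vanishing on the non-principal pieces requires a uniform application of the magical inequality in the $\epsilon$-shifted regime --- both are natural adaptations of tools already developed in earlier sections of the paper.
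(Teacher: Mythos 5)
Your overall strategy matches the paper's: after the $\C_{-\mu}$-twist you restrict the Witten-deformed symbol to a fixed relatively compact invariant neighborhood $U$ of $\Phi^{-1}(\mu)$, use a homotopy in $\epsilon$ to get constancy of the localized index, split $Z_\epsilon\cap U$ into the principal piece over $\Phi'^{-1}(\epsilon)$ (which reproduces $\Qcal(N_{\mu+\epsilon},\Scal_{\mu+\epsilon}^\mu)$ by the regular-value $[Q,R]=0$ from \cite{pep-vergne:witten}) plus lower strata, and then try to kill the lower strata by a positivity estimate. This is exactly the architecture of Lemmas \ref{lem:epsilon} and \ref{lem:epsilon-index}.

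The problem is the vanishing step on the non-principal components, which you yourself flag as ``the main obstacle'': the argument as written does not go through. First, you introduce a negative term $\ntr_{\ggot'}|\beta_j|$ that you propose to dominate; but $G'$ is a torus, so $\ad(\beta_j)=0$ on $\ggot'$, hence $\ntr_{\ggot'}|\beta_j|=0$, and indeed the abelian Atiyah-Segal localization (Remark \ref{rem:Atiyahsegalabelian}) carries \emph{no} $\bigwedge(\ggot'/\ggot'_{\beta_j})_\C$ factor at all --- there is nothing to dominate. Second, the magical inequality (Proposition \ref{prop:infernal-with-trace}) is the wrong tool: its hypotheses (a very regular $\lambda$ and an admissibility-compatible $\mu$) are not present, and it does not control the quantity $\|\beta_j\|^2+\langle\epsilon,\beta_j\rangle$. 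What actually makes the estimate close is the exact cancellation $\|\beta_j\|^2+\langle\epsilon,\beta_j\rangle=\langle\Phi'(n),\beta_j\rangle=0$, because on the component of $N^{\ggot_j}$ that meets $\Phi'^{-1}(0)$ inside $U$ the locally constant function $\langle\Phi',X\rangle$ ($X\in\ggot_j$) is zero, so $\Phi'(n)\in\ggot_j^\perp$ while $\beta_j\in\ggot_j$. With that observation the weight of $\tfrac{1}{i}\Lcal(\beta_j)$ on $\mathbf d_{\beta_j}(\Scal')\otimes\Sym(\Ncal_{\beta_j})$ is at least $\tfrac12\ntr_{\T N}|\beta_j|$, strictly positive since $0\ne\beta_j\in\ggot_N^\perp$ cannot act trivially on the connected $N$, and the $G'$-invariant part vanishes. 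Replace the appeal to the magical inequality and the ``small $\epsilon$'' domination by this orthogonality computation and the proof becomes the paper's.
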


Thanks to the  previous Theorem, one defines the spin$^c$ index of singular reduced spaces as follows.

\begin{defi}\label{def:Q-M-mu}
If $\mu\in \Lambda$, the number $\QS(N_{\mu})$ is defined by the following dichotomy
$$
\QS(N_{\mu})=
\begin{cases}
   0\qquad\hspace{22mm} {\rm if}\ \mu\notin \  I(N),\\
   \Qcal(N_{\mu+\epsilon},\Scal_{\mu+\epsilon}^{\mu})
   \qquad {\rm if}\ \mu\in \  I(N)\ {\rm  and}\ \epsilon\in\ggot_N^\perp\
    \mathrm{is\ generic}\\
    \mathrm{and\ small\ enough}.
\end{cases}
$$
\end{defi}

The invariant $\QS(N_{\mu})\in \Z$ vanishes if
$\mu$ does not belongs to the relative interior of $\Phi(N)$ in the affine space $I(N)$. It is due to the fact that we can then approach $\mu$ by elements $\mu+\epsilon$ that are not in the image $\Phi(N)$.

\medskip

Let us consider the particular case where $\mu \in I(N)\cap \Lambda$ is a regular value of $\Phi :N\to I(N)$ such that
the reduced space $N_\mu$ is reduced to a point. Let $m_o\in \Phi^{-1}(\mu)$, and let $\Gamma\subset G'$ be the
stabilizer subgroup of $m_o$ ($\Gamma$ is finite). In this case (\ref{eq:tangent-Z-1}) becomes
$\T_{m_o}N\simeq \ggot'_\C$, and $o(N_\mu)$ is the quotient between the orientation of $N$ and those of
$\ggot'_\C$. At the level of graded $\spinc$-bundles we have
$$
\Scal_{m_o}\simeq o(N_\mu) \bigwedge\ggot'_\C\otimes \det(\Scal)\vert_{m_o}^{1/2}
$$
where $\det(\Scal)\vert_{m_o}^{1/2}$ is a one dimensional representation of $\Gamma$ such that \break
$(\det(\Scal)\vert_{m_o}^{1/2})^{\otimes 2}=\det(\Scal)\vert_{m_o}$. In this case Definition \ref{def:Q-M-mu} becomes
\begin{equation}\label{eq:Q-point}
\QS(N_{\mu})=o(N_\mu) \, \mathrm{dim}\left[\det(\Scal)\vert_{m_o}^{1/2}\otimes\C_{-\mu}\right]^\Gamma\ \in\ \{-1,0,1\}.
\end{equation}

\subsection{Proof of Theorem \ref{theo:Q-N-mu}}

In this subsection we work with a fixed $\mu\in I(N)\cap \Lambda$. For any $\epsilon\in \ggot(N)^\perp$, we consider
the moment map $\Phi_\epsilon=\Phi-\mu-\epsilon$.

We start with the fundamental Lemma
\begin{lem}\label{lem:epsilon}
The map $\epsilon\mapsto[\Qcal_G(N,\Scal,\Phi_\epsilon^{-1}(0),\Phi_\epsilon)\otimes\C_{-\mu}]^G$ is constant in a neighborhood of $0$.
\end{lem}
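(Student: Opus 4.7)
The plan is to mimic the strategy of Proposition \ref{prop:independant}. Since $\Phi$ is proper, $\Phi^{-1}(\mu)$ is compact. First, I would pick a $G$-invariant, relatively compact open neighborhood $U\subset N$ of $\Phi^{-1}(\mu)$ such that $\overline{U}\cap Z_{\Phi_0}=\Phi^{-1}(\mu)$. By continuity, there exists a ball $B$ around $0$ in $\ggot_N^\perp$ such that $\kappa_{\Phi_\epsilon}$ does not vanish on $\partial U$ whenever $\epsilon\in B$. Then $\{\sigma(N,\Scal,\Phi_\epsilon)\vert_U\}_{\epsilon\in B}$ is a continuous homotopy of $G$-transversally elliptic symbols on $U$, and therefore the equivariant index $I(\epsilon):=\mathrm{ind}_G^U(\sigma(N,\Scal,\Phi_\epsilon)\vert_U)\in \hat{R}(G)$ is independent of $\epsilon\in B$.

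For each $\epsilon\in B$, the compact set $\overline{U}\cap Z_{\Phi_\epsilon}$ decomposes as $\Phi^{-1}(\mu+\epsilon)\sqcup Z'_\epsilon$, where $Z'_\epsilon$ collects the remaining components, on which $\Phi_\epsilon$ does not vanish. Additivity (Theorem \ref{theo:indexsumoflocal}) gives
\begin{equation*}
I(\epsilon)=\Qcal_G(N,\Scal,\Phi^{-1}(\mu+\epsilon),\Phi_\epsilon)+\Qcal_G(N,\Scal,Z'_\epsilon,\Phi_\epsilon).
\end{equation*}
Since the left side is $\epsilon$-independent, it suffices to prove $\bigl[\Qcal_G(N,\Scal,Z'_\epsilon,\Phi_\epsilon)\otimes \C_{-\mu}\bigr]^G=0$ for all $\epsilon$ in a possibly smaller sub-ball. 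A point $n\in Z'_\epsilon$ with $\beta:=\Phi_\epsilon(n)\neq 0$ lies in $\ggot_n\cap \ggot_N^\perp$; since $\ggot_N$ contains the Lie algebra of the ineffective kernel of the $G$-action, $\beta\notin \ggot_N$ forces $\beta$ to act non-trivially on $\T_n N$, and in particular $\ntr_{\T_n N}|\beta|>0$.

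The main step is the eigenvalue computation. Applying the abelian localization formula (Remark \ref{rem:Atiyahsegalabelian}) to the component $N^\beta\cap \Phi_\epsilon^{-1}(\beta)\subset Z'_\epsilon$, Lemmas \ref{lem:L-beta-S} and \ref{lem:betapositive} show that the eigenvalues of $\tfrac{1}{i}\Lcal(\beta)$ on $\mathbf{d}_\beta(\Scal)\otimes \C_{-\mu}\otimes \Sym^j(\Ncal_\beta)$ are bounded below by
\begin{equation*}
\langle\Phi(n)-\mu,\beta\rangle+\tfrac{1}{2}\ntr_{\T_n N}|\beta|=\|\beta\|^2+\langle\epsilon,\beta\rangle+\tfrac{1}{2}\ntr_{\T_n N}|\beta|,
\end{equation*}
using $\Phi(n)-\mu=\beta+\epsilon$. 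A stratification of $\overline{U}$ by the finitely many infinitesimal stabilizer types appearing there, combined with the fact that $\beta\mapsto\ntr_{\T_n N}|\beta|$ is a piecewise linear norm on $\ggot_n\cap \ggot_N^\perp$ and the compactness of $\overline{U}$, should yield a uniform constant $c>0$ with $\ntr_{\T_n N}|\beta|\geq c\|\beta\|$ for every such $n$ and $\beta$. The lower bound then becomes $\|\beta\|(\|\beta\|-\|\epsilon\|+c/2)>0$ when $\|\epsilon\|<c/2$, so $\Lcal(\beta)$ has no zero eigenvalue on the localizing bundle, and an application of Lemma \ref{lem:triv} to $b=\exp(t\beta)$ for generic $t$ gives the desired vanishing. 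The main obstacle is securing this uniform lower bound $c>0$; once in hand, the rest reduces to a routine weight argument.
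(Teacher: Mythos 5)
Your overall strategy — homotopy-invariance of the deformed symbol on a shell around $\Phi^{-1}(\mu)$, then additivity, then a weight argument to kill the off-center components — is exactly the paper's. Where you diverge, and where you import a difficulty the paper does not have, is in the eigenvalue estimate for $\Lcal(\beta)$ on the localizing bundle.

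The term you compute as $\langle\Phi(n)-\mu,\beta\rangle=\|\beta\|^2+\langle\epsilon,\beta\rangle$ is in fact \emph{identically zero} for every $\beta$ contributing to the Witten decomposition, once one exploits the Kostant relations. If $n$ lies on a connected component $\Xcal$ of $N^{\ggot_i}$ (with $\ggot_i\subset\ggot_n$ the stabilizer type) and $\Xcal$ meets $\Phi^{-1}(\mu)$, then for every $X\in\ggot_i$ the function $\langle\Phi,X\rangle$ is locally constant on $\Xcal$, so $\Phi-\mu$ takes values in $\ggot_i^\perp$ along $\Xcal$. Combined with $\Phi_\epsilon(n)=\beta\in\ggot_i$, this forces $\beta$ to be (up to sign) the orthogonal projection of $\epsilon$ onto $\ggot_i$, and $\Phi(n)-\mu$ to be the complementary projection $\epsilon_\Delta\in\Delta=\ggot_i^\perp$; in particular $\langle\Phi(n)-\mu,\beta\rangle=\langle\epsilon_\Delta,\beta\rangle=0$. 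The paper packages this as the ``$\epsilon$-parametrization'' of Figure~5 and the identity $\Phi(n)=\epsilon_\Delta$, and then Lemma~\ref{lem:L-beta-S} immediately gives $\tfrac{1}{i}\Lcal(\beta)=\tfrac{1}{2}\tr_{\T N|_\Xcal}|\beta|$ on $\mathbf{d}_\beta(\Scal)$, which is strictly positive because $\beta\in\ggot_N^\perp\setminus\{0\}$ cannot act trivially. No uniform comparison $\ntr_{\T_n N}|\beta|\geq c\|\beta\|$ is ever needed, because there is no competing $\langle\epsilon,\beta\rangle$ term to absorb. Your uniform-bound route is plausible (finitely many stabilizer types in $\overline{U}$, homogeneity and positivity on the unit sphere, compactness), but you correctly flag it as the weak point — and it is exactly the step that disappears once you record the orthogonality coming from Kostant. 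Adding that one observation collapses your estimate to the paper's clean one and makes the lemma immediate.
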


\begin{proof}
Changing $\Scal$ to $\Scal\otimes [\C_{-\mu}]$, we might as well take $\mu=0$.

Let $r>0$ be smallest non-zero critical value of $\|\Phi\|^2$, and let $\Ucal:=\Phi^{-1}(\{\xi\ |\ \|\xi\|< r/2\})$. Using Lemma \ref{lem:crit}, we have
$\Ucal\cap \{\kappa_0=0\}=\Phi^{-1}(0)$.

We describe now $\{\kappa_\epsilon=0\}\cap \Ucal$ using
a parametrization  similar to those introduced in \cite{pep-1999}[Section 6].

Let $\ggot_i, i\in I$ be the finite collection of infinitesimal stabilizers for the $G$-action on the compact set $\overline{\Ucal}$.
Let $\Dcal$ be the subset of the collection of subspaces $\ggot_i^{\perp}$ of $\ggot^{*}$ such that
 $\Phi^{-1}(0)\cap N^{\ggot_i}\neq \emptyset.$

Note that $\Dcal$ is reduced to $I(N)$ if $0$ is regular value of $\Phi : N\to I(N)$.
If $\Delta=\ggot_i^{\perp}$ belongs to $\Dcal$, and $\epsilon\in I(N)$, write the orthogonal decomposition $\epsilon=\epsilon_\Delta+\beta_\Delta$ with $\epsilon_\Delta\in \Delta$, and $\beta_\Delta\in \ggot_i$.
Let
$$\Bcal_\epsilon=\{\beta_\Delta=\epsilon-\epsilon_\Delta, \Delta\in \Dcal\}$$
the set of $\beta$ so obtained.

\begin{figure}[!h]
\begin{center}
  \includegraphics[width=2 in]{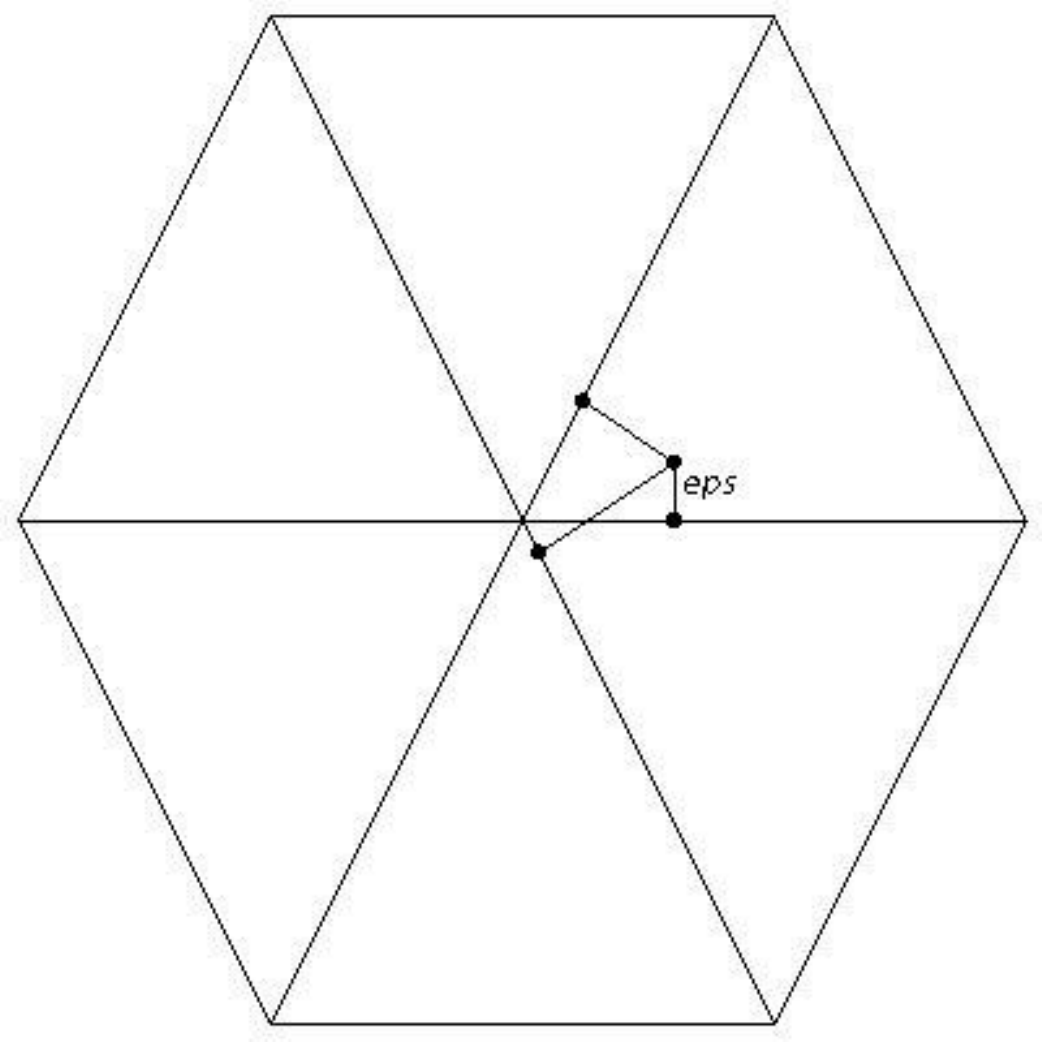}
\caption{ The point $\epsilon$ and its projections $\epsilon_\Delta$}
 \label{fig:betas}
\end{center}
\end{figure}

We denote by $Z_\epsilon$ the zero set of the vector field $\kappa_\epsilon$ associated to $\Phi_\epsilon$.
Thus, if $\epsilon$ is sufficiently small ($\|\epsilon\|<r/2$),
\begin{equation}\label{eq.decomposition.delta}
Z_\epsilon\cap\Ucal
=\bigcup_{\beta\in\Dcal_\epsilon}
N^{\beta}\cap\Phi^{-1}_\epsilon(\beta).
\end{equation}

With (\ref{eq.decomposition.delta}) in hands, we see easily that $t\in [0,1]\mapsto \sigma(N,\Scal,\Phi_{t\epsilon})\vert_{\Ucal}$ is an homotopy of transversally elliptic symbols on $\Ucal$. Hence they have the same index
\begin{eqnarray*}
\Qcal_G(\Ucal,\Scal,\Phi^{-1}(0),\Phi)
&=& \Qcal_G(\Ucal,\Scal, Z_\epsilon\cap \Ucal,\Phi_\epsilon)\\
&=& \sum_{\beta\in\Bcal_\epsilon} \Qcal_G(N,\Scal,\Phi_\epsilon^{-1}(\beta)\cap N^{\beta },\Phi_\epsilon).
\end{eqnarray*}
The lemma will be proved if we check that
$[\Qcal_G(N,\Scal,\Phi_\epsilon^{-1}(\beta)\cap N^{\beta },\Phi_\epsilon)]^G=0$ for any non-zero $\beta\in\Bcal_\epsilon$.

If $\beta_\Delta\in \Bcal_\epsilon$ and $n\in
\Phi_\epsilon^{-1}(\beta_\Delta)\cap N^{\beta_\Delta }$, $\Phi(n)=\beta_\Delta+\epsilon=\epsilon_\Delta$. So
$\langle\Phi(n),\beta_\Delta\rangle=\langle\epsilon_\Delta,\beta_\Delta\rangle=0$. So the infinitesimal action,
$\Lcal(\beta)$, on the fiber of the vector bundle $\Scal_n$ is equal to $0$.

The Atiyah-Segal localization formula for the Witten deformation (Remark \ref{rem:Atiyahsegalabelian}) gives
\begin{eqnarray*}
\Qcal_G(N,\Scal,\Phi_\epsilon^{-1}(\beta)\cap N^{\beta },\Phi_\epsilon)
&=& \Qcal_G(N^\beta,\mathbf{d}_\beta(\Scal)\otimes \mathrm{Sym}(\Vcal_\beta),\Phi_\epsilon^{-1}(\beta),\Phi_\epsilon)\\
&=&\sum_{\Xcal\subset N^\beta}
\Qcal_G(\Xcal,\mathbf{d}_\beta(\Scal)\vert_\Xcal\otimes \mathrm{Sym}(\Vcal_\beta)\vert_\Xcal,\Phi_\epsilon^{-1}(\beta),\Phi_\epsilon)
\end{eqnarray*}
where $\Vcal_\beta\to N^\beta$ is the normal bundle of $N^\beta$ in $N$ and  the sum runs over the connected components $\Xcal$ of $N^\beta$ that intersects $\Phi^{-1}_\epsilon(\beta)$.

Let us look to the infinitesimal action of $\beta$, denoted $\Lcal(\beta)$, on the fibers of the vector bundle $\mathbf{d}_\beta(\Scal)\vert_{\Xcal}\otimes\mathrm{Sym}(\Ncal_\beta)\vert_{\Xcal}$.

This action can be checked at a point  $n\in\Phi^{-1}_\epsilon(\beta)\cap N^{\beta}$.
As the action of $\beta$ on the fiber of the vector bundle $\Scal_n$ is equal to $0$, we obtain

$$
\frac{1}{i}\Lcal(\beta)=
\begin{cases}
\frac{1}{2}\tr_{\T N\vert_\Xcal}(|\beta|)\hspace{25mm} {\rm on}\ \ \mathbf{d}_\beta(\Scal)\vert_{\Xcal},\\
\geq 0\hspace{40mm} {\rm on}\ \ \mathrm{Sym}(\Ncal_\beta)\vert_{\Xcal}
\end{cases}
$$

So we have checked that $\frac{1}{i}\Lcal(\beta)\geq \frac{1}{2}\tr_{\T N\vert_\Xcal}(|\beta|)$ on
$\mathbf{d}_\beta(\Scal)\vert_{\Xcal}\otimes\mathrm{Sym}(\Ncal_\beta)\vert_{\Xcal}$.

Now we remark that $\beta$ does not acts trivially on $N$, since $\beta$ belongs to the direction
of the  subspace $I(N)=\ggot_N^{\perp}$: this forces $\frac{1}{2}\tr_{\T N\vert_\Xcal}(|\beta|)$ to be strictly positive.
Finally we see that $\frac{1}{i}\Lcal(\beta)>0$ on $\mathbf{d}_\beta(\Scal)\vert_{\Xcal}\otimes
\mathrm{Sym} (\Ncal_\beta)\vert_{\Xcal}$, and then
$$\left[
\Qcal_G(\Xcal,\mathbf{d}_\beta(\Scal)\vert_\Xcal\otimes \mathrm{Sym}(\Vcal_\beta)\vert_\Xcal,\Phi_\epsilon^{-1}(\beta),\Phi_\epsilon)\right]^G=0.
$$
if $\beta\neq 0$. The Lemma \ref{lem:epsilon} is proved.
\end{proof}

\medskip

The proof of Theorem \ref{theo:Q-N-mu} will be completed with the following

\begin{lem}\label{lem:epsilon-index}
If $\mu+\epsilon$ is a regular value of $\Phi :N\to I(N)$, the invariant
$[\Qcal_G(N,\Scal,\Phi_\epsilon^{-1}(0),\Phi_\epsilon)\otimes\C_{-\mu}]^G$ is equal to the index
$\Qcal(N_{\mu+\epsilon},\Scal_{\mu+\epsilon}^{\mu})$.
\end{lem}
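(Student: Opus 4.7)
The plan is to reduce the statement to a local model around $Z=\Phi_\epsilon^{-1}(0)=\Phi^{-1}(\mu+\epsilon)$, where the Witten-deformed symbol factorises and the $G$-invariant part yields the spin$^c$ index on the reduced orbifold. First I would absorb the twist by $\C_{-\mu}$ into the $\spinc$-bundle: setting $\tilde\Scal:=\Scal\otimes [\C_{-\mu}]$, equipped with the connection shifted by $i\mu$, the moment map becomes $\tilde\Phi=\Phi-\mu$ and $\tilde\Phi_\epsilon:=\tilde\Phi-\epsilon$ equals $\Phi_\epsilon$. A projection-formula argument identifies $[\Qcal_G(N,\Scal,Z,\Phi_\epsilon)\otimes \C_{-\mu}]^G$ with $[\Qcal_G(N,\tilde\Scal,Z,\tilde\Phi_\epsilon)]^G$, so it suffices to prove the latter equals $\Qcal(N_{\mu+\epsilon},\Scal_{\mu+\epsilon}^{\mu})$.

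Second, by regularity of $\mu+\epsilon$, the quotient $G':=G/G_N$ acts locally freely on $Z$ and the differential of $\tilde\Phi_\epsilon$ trivialises the normal bundle of $Z$ in $N$ as $Z\times (\ggot')^*$. I would choose a $G$-invariant tubular neighborhood $\Ucal\simeq Z\times B$, with $B\subset(\ggot')^*$ a small ball carrying trivial $G$-action, arranged so that $\tilde\Phi_\epsilon(z,v)=v$ at first order along $Z$. Using the splittings $\T N|_Z\simeq \T_{G'}Z\oplus [\ggot'_\C]$ and $\Scal|_Z\simeq\widetilde{\Scal}_{\mu+\epsilon}\otimes [\bigwedge\ggot'_\C]$ from Section \ref{sec:spin-index-singular} and the definition of $\Scal_{\mu+\epsilon}^{\mu}$, one obtains a $G$-equivariant isomorphism of graded Clifford modules on $\Ucal$:
\[
  \tilde\Scal|_\Ucal \;\simeq\; \pi^*\Scal_{\mu+\epsilon}^{\mu}\;\boxtimes\;{\bigwedge}\ggot'_\C,
\]
where $\pi:Z\to N_{\mu+\epsilon}$ denotes the orbifold quotient, and where $G_N$ acts trivially on both factors while $G'$ acts only on $\bigwedge\ggot'_\C$ in the natural way.

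Third, in this product model the symbol $\sigma(N,\tilde\Scal,\tilde\Phi_\epsilon)|_\Ucal$ is homotopic, through $G$-transversally elliptic symbols, to the external product of (i)~the pullback by $\pi$ of the Dirac symbol of $(N_{\mu+\epsilon},\Scal_{\mu+\epsilon}^{\mu})$, well defined as a $G$-transversally elliptic symbol on $Z$ precisely because $G'$ acts locally freely, and (ii)~the fibrewise Witten-deformed symbol on the trivial bundle $Z\times B\to Z$, which is the basic $G$-transversally elliptic Bott-type symbol on $(\ggot')^*$ attached to the $\spinc$-module $\bigwedge\ggot'_\C$ with Kirwan field generated by the $(\ggot')^*$-coordinate. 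Atiyah's multiplicativity theorem for transversally elliptic symbols turns this homotopy into a factorisation of $\Qcal_G(N,\tilde\Scal,Z,\tilde\Phi_\epsilon)$ in $\hat R(G)$. The fibrewise factor is a classical computation: its $G$-index is the character $\sum_{\nu}\C_\nu$ ranging over the weights of $G'$ on $(\ggot')^*$ (with appropriate sign conventions), and pairing with the base factor, upon taking $G$-invariants, selects only the zero-weight piece, equal to $1$. What remains is exactly $\Qcal(N_{\mu+\epsilon},\Scal_{\mu+\epsilon}^{\mu})$.

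The main obstacle lies in the third step, where the factorisation and the descent from $Z$ to $N_{\mu+\epsilon}$ must be performed simultaneously in the orbifold setting produced by the finite stabilisers of the $G'$-action. The standard remedies are either to work on local slices where the action is linear and glue (using that the formula is stable under finite covers of the orbifold charts), or to apply a variant of Proposition \ref{induction} adapted to the quotient map $G\to G'$ instead of a subgroup inclusion, reducing the computation to the product $Z\times B$ with $B$ playing the role of the $\qgot$-ball. Either approach, combined with the computation of the fibrewise Bott index above, completes the proof.
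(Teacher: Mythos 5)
The paper itself does not re-prove this lemma; it cites the stronger Fourier-decomposition identity
\[
\Qcal_G(N,\Scal,\Phi_\epsilon^{-1}(0),\Phi_\epsilon)=\sum_{\nu\in\Lambda\cap I(N)}\Qcal(N_{\mu+\epsilon}, \Scal_{\mu+\epsilon}^{\nu}) \ \C_\nu
\]
from \cite{pep-vergne:witten}, of which the lemma is the $\mu$-isotypical piece. Your proposal reconstructs the missing proof, and the overall strategy (absorb the twist, tubular neighbourhood of $Z=\Phi^{-1}(\mu+\epsilon)$, factorize the deformed Clifford module over $\T_{G'}Z\oplus\ggot'_\C$, reduce to an orbifold index on $N_{\mu+\epsilon}$) is the right one.

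Two points, however, need repair. First, the phrase ``the weights of $G'$ on $(\ggot')^*$'' is meaningless: $G'$ is abelian and acts trivially on its coadjoint space. The fibrewise factor's index should be the regular representation $\sum_{\nu\in\Lambda'}\C_\nu$ of $G'$, where $\Lambda'$ is the weight lattice of $G'$, not anything coming from an action on $(\ggot')^*$. Second, and more seriously, the claim that ``pairing with the base factor, upon taking $G$-invariants, selects only the zero-weight piece, equal to $1$'' overstates the independence of the two factors. The Clifford module $\pi^*\Scal_{\mu+\epsilon}^{\mu}$ on $\T_{G'}Z$ is itself a $G'$-equivariant bundle (this $G'$-action on the fibers is exactly what encodes the orbifold stabilizers), so the $G$-index of the transversal Dirac symbol on $Z$ is already a full Fourier series over $\Lambda'$, and the product with the Bott factor is a convolution, not a simple tensoring with $\sum_\nu\C_\nu$. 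The clean way to close the gap is not to view the two factors as independent external products but to use the excision/Thom-isomorphism picture: the normal bundle $Z\times\ggot'_\C$ carries the Bott symbol, which identifies $\Qcal_G(\Ucal,\tilde\Scal,Z,\tilde\Phi_\epsilon)$ with the $G$-index of the transversal Dirac symbol on $Z$ with coefficients in $\pi^*\Scal_{\mu+\epsilon}^{\mu}$; the free-up-to-finite-stabilizers version of Atiyah's computation (or Kawasaki's orbifold index formula) then produces exactly $\sum_{\nu}\Qcal(N_{\mu+\epsilon},\Scal_{\mu+\epsilon}^{\nu})\C_\nu$. Your final paragraph does flag the orbifold difficulty and proposes the right remedies (covering tricks, or a quotient version of Proposition \ref{induction}), so the gap is acknowledged rather than fatal; but the third step as written does not yet constitute a proof.
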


We assume that $\mu+\epsilon$ is a regular value of $\Phi :N\to I(N)$ : the fiber $Z=\Phi^{-1}(\mu+\epsilon)$ is a submanifold equipped with a locally free action of $G'=G/G_N$. Let $N_{\mu+\epsilon} :=Z/G'$ be the corresponding ``reduced'' space, and let $\pi: Z\to N_{\mu+\epsilon}$ be the projection map.  We have the decomposition
\begin{equation}\label{eq:tangent-Z-preuve}
\T N\vert_Z \simeq \pi^*(\T N_{\mu+\epsilon})\oplus [\ggot'_\C].
\end{equation}
For any $\nu\in \Lambda\cap I(N)$, $\Scal_{\mu+\epsilon}^\nu$ is a the spinor bundle on $N_{\mu+\epsilon}$ defined by the relation
$$
\Scal\vert_Z\otimes \C_{-\nu} \simeq\pi^*\left(\Scal_{\mu+\epsilon}^\nu\right)\otimes [\bigwedge \ggot'_\C].
$$

The following result is proved in  \cite{pep-vergne:witten}.

\begin{prop}We have the following equality in $\hat{R}(G)$
$$\Qcal_G(N,\Scal,\Phi_\epsilon^{-1}(0),\Phi_\epsilon)=\sum_{\nu\in\Lambda\cap I(N)}
\Qcal(N_{\mu+\epsilon}, \Scal_{\mu+\epsilon}^{\nu}) \ \C_\nu.
$$
In particular $[\Qcal_G(N,\Scal,\Phi_\epsilon^{-1}(0),\Phi_\epsilon)\otimes\C_{-\mu}]^G$ is equal to
$\Qcal(N_{\mu+\epsilon},\Scal_{\mu+\epsilon}^{\mu})$.
\end{prop}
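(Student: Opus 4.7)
The plan is to localize the computation to a $G$-invariant tubular neighborhood $\Ucal$ of $Z=\Phi_\epsilon^{-1}(0)=\Phi^{-1}(\mu+\epsilon)$. Because $\mu+\epsilon$ is a regular value, $Z$ is a smooth submanifold on which $G'=G/G_N$ acts locally freely, so $\pi\colon Z\to N_{\mu+\epsilon}$ is an orbifold principal $G'$-bundle. Using the moment-map normal form, a neighborhood of $Z$ in $N$ is $G$-equivariantly diffeomorphic to a neighborhood of the zero section of the trivial normal bundle $Z\times[(\ggot')^*]$, with $G$ acting on the $Z$ factor and trivially on the fiber. The decomposition $\T N|_Z\simeq \T_{G'}Z\oplus[\ggot'_\C]$ together with the canonical factorization $\Scal|_Z\simeq \widetilde{\Scal}\otimes[\bigwedge\ggot'_\C]$ identifies the $\spinc$-data in the tubular model.

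Next I would analyze the Witten-deformed symbol $\sigma(N,\Scal,\Phi_\epsilon)$ inside $\Ucal$. Writing points as $(z,v)\in Z\times(\ggot')^*$, one has $\Phi_\epsilon(z,v)\approx v$, so the Kirwan vector field is $\kappa_\epsilon(z,v)=-v\cdot z$, which lies entirely in the $\ggot'\cdot z$ direction (the torus action on the fiber $(\ggot')^*$ is trivial). This allows a homotopy through $G$-transversally elliptic symbols to an external product of: (i) a $G'$-transversally elliptic Dirac-type symbol on $Z$ twisted by $\widetilde{\Scal}$, Witten-deformed by the Kirwan vector field of the locally free $G'$-action; and (ii) an elliptic Bott symbol on the fibers $[\bigwedge\ggot'_\C]$. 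By the multiplicativity of indices under products when one factor is elliptic (Atiyah \cite{Atiyah74}), the index factorizes accordingly.

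To finish, I would Fourier decompose in $G'$-characters. The Kostant relation applied to $X\in\ggot_N$ (which acts trivially on $N$) shows that $G_N$ acts on $\widetilde{\Scal}|_Z$ by the character $e^{i\langle\mu+\epsilon,X\rangle}$; since $\mu+\epsilon-\nu\in\ggot_N^\perp$ for any $\nu\in\Lambda\cap I(N)$, the twist $\widetilde{\Scal}\otimes\C_{-\nu}$ is $G_N$-equivariantly trivial and descends to the orbifold $\spinc$-bundle $\Scal_{\mu+\epsilon}^{\nu}$ on $N_{\mu+\epsilon}$. For a principal $G'$-bundle with locally free action, the $G'$-character $\nu$ isotypic component of the $G'$-transversally elliptic Dirac index on $Z$ is, by the free-action formula, precisely the elliptic orbifold $\spinc$-index $\Qcal(N_{\mu+\epsilon},\Scal_{\mu+\epsilon}^{\nu})$ downstairs. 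Combined with the contribution of the Bott factor (which contributes one copy of $\C_\nu$ to each weight $\nu$), this yields
\[
\Qcal_G(N,\Scal,\Phi_\epsilon^{-1}(0),\Phi_\epsilon)=\sum_{\nu\in\Lambda\cap I(N)}\Qcal(N_{\mu+\epsilon},\Scal_{\mu+\epsilon}^{\nu})\,\C_\nu,
\]
and the ``in particular'' assertion follows by extracting the coefficient of $\C_\mu$.

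The main obstacle is the rigorous implementation of the product decomposition and the computation of the Bott-factor index in the transversally elliptic $K$-theory of $(\ggot')^*$, together with the verification that orbifold singularities of $N_{\mu+\epsilon}$ cause no trouble. These are exactly the technical points where I would defer to the detailed treatment in \cite{pep-vergne:witten}, since they are independent of the particular moment map and already developed in that reference.
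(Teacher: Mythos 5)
The paper does not actually prove this Proposition: it is stated with the preamble ``The following result is proved in \cite{pep-vergne:witten}'' and is treated purely as a citation. There is therefore no internal argument to compare against line by line; your task was effectively to reconstruct the content of a lemma from the companion paper, and your outline does match the strategy used there. You correctly identify all the structural ingredients: the tubular-neighborhood model of $N$ near the regular level $Z=\Phi^{-1}(\mu+\epsilon)$ with trivial normal bundle $Z\times(\ggot')^*$; the Clifford factorization $\Scal|_Z\simeq\widetilde\Scal_{\mu+\epsilon}\otimes\bigl[\bigwedge\ggot'_\C\bigr]$ underlying the decomposition $\T N|_Z\simeq \pi^*\T N_{\mu+\epsilon}\oplus[\ggot'_\C]$; the fact that $\kappa_\epsilon(z,v)=-v\cdot z$ lies in the orbit directions and hence a homotopy separates the deformed symbol into a $G$-transversally elliptic Dirac-type symbol on $Z$ and an elliptic Bott class built from $\ggot'_\C$; multiplicativity of the index when one factor is elliptic; and finally Atiyah's free-action theorem (in its Kawasaki orbifold form) to identify the $\nu$-isotypic piece with the $\spinc$-index of $\Scal^\nu_{\mu+\epsilon}$ on $N_{\mu+\epsilon}$. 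The use of the Kostant relation to show that only weights $\nu$ with the right $\ggot_N$-component, i.e.\ $\nu\in\Lambda\cap I(N)$, can occur (and that for those $\nu$ the twist $\widetilde\Scal_{\mu+\epsilon}\otimes\C_{-\nu}$ descends) is exactly the argument the paper records as the defining property of $\Scal^\nu_{\mu+\epsilon}$.

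Two small points of care. First, the decomposition is really over $G$-weights, not $G'$-weights: $\nu\in\Lambda\cap I(N)$ is a $G$-weight whose restriction to $G_N$ is pinned by the Kostant relation, and after choosing any base point in $\Lambda\cap I(N)$ one identifies the remaining freedom with $\widehat{G'}$; you elide this translation, which is harmless but worth spelling out. Second, the ``Bott factor'' is not literally a symbol on an independent space: $\ggot'_\C=\ggot'_Z\oplus(\ggot')^*$ mixes the orbit-tangent and normal directions, and the identification of $\ggot'_Z$ with the constant Lie algebra $\ggot'$ uses the locally free action (hence depends on $z$). The clean product form only appears after one restricts to $\T^*_G N$ (killing the $\ggot'_Z$ component of the covector) and trivializes $\ggot'_Z\simeq Z\times\ggot'$; this is the precise content of the ``product with an elliptic factor'' machinery in Atiyah's notes and in \cite{pep-RR}, which you rightly defer to. Given that the paper itself defers the whole statement to \cite{pep-vergne:witten}, your level of detail is entirely appropriate.
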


\subsection{$[Q,R]=0$}

We come back to the setting of a compact $K$-manifold $M$, oriented and of even dimension, that is equipped with a $K$-spinor bundle $\Scal$. Let $\det(\Scal)$ its determinant bundle, and let $\Phi_\Scal$ be the moment map that is attached to an invariant connection on $\det(\Scal)$. We assume that there exists $(\hgot)\in \Hcal_\kgot$ such that
$([\kgot_M,\kgot_M])=([\hgot,\hgot])$. Let $\zgot$ be the center of $\hgot$.

We consider an admissible element $\mu\in\zgot^*$ such that $K_\mu=H$ : the coadjoint orbit $\Pcal:= K\mu$ is admissible and contained in the Dixmier sheet $\kgot^*_{(\hgot)}$. Let
$$
M_\Pcal:= \Phi_\Scal^{-1}(\Pcal)/K
$$
In order to define $\QS(M_\Pcal)\in\Z$ we proceed as follows.

\medskip

We follow here the notations of Section \ref{sec:Slices}. Let $C$ be the connected component of
$\hgot^*_0:=\{\xi\in\hgot^*\ \vert K_\xi\subset H\}$ containing $\mu$. The slice $\Ycal_C=\Phi^{-1}_\Scal(C)$ is
a $H$-submanifold of $M$ equipped with a $H$-$\spinc$ bundle $\Scal_{\Ycal_C}$: the associated
moment map is $\Phi_{\Ycal_C}:=\Phi_\Scal\vert_{\Ycal_C}-\rho_C$ where $\rho_C$ is defined by
(\ref{def:rho-C}).

The element $\tilde{\mu}:=\mu-\rho(\mu)=\mu -\rho_C$ belongs to the weight lattice $\Lambda$ of the torus $A_H:=H/[H,H]$, and the reduced space $M_{K\mu}$ is equal
to
$$
(\Ycal_C)_{\tilde{\mu}}:=\{\Phi_{\Ycal_C}=\tilde{\mu}\}/A_H.
$$

By definition, we take $\QS(M_{K\mu}):=\QS((\Ycal_C)_{\tilde{\mu}})$ where the last term is computed as explained
in the previous section. More precisely, let us decompose $\Ycal_C$ into its connected components $\Ycal_1,\ldots, \Ycal_r$.
For each $j$, let $\zgot_j\subset\zgot$ be the generic infinitesimal stabilizer relative to the $A_H$-action on $\Ycal_j$.
Then we take
$$
\QS(M_\Pcal)=\QS(M_{K\mu}):=\sum_{j} \QS\left( (\Ycal_j)_{\tilde{\mu}+\epsilon_j}\right)
$$
where $\epsilon_j\in\zgot_j^\perp$ are generic and small enough.

\medskip

With this definition of quantization of reduced spaces
$\QS(M_\Pcal)$, we obtain the main theorem of this article, inspired by the $[Q,R]=0$ theorem of Meinrenken-Sjamaar.

Let $M$ be a $K$-manifold and $\Scal$ be a $K$-equivariant $\spinc$-bundle over $M$.
Let $(\hgot)\in \Hcal_\kgot$ such that $([\kgot_M,\kgot_M])=([\hgot,\hgot])$, and consider the set
$\Acal((\hgot))$ of admissible orbits contained in the Dixmier sheet $\kgot^*_{(\hgot)}$.

\medskip

\begin{theo}\label{theo:final}
\begin{equation}\label{eq:QR=0}
\Qcal_K(M,\Scal)=\sum_{\Pcal
\in \Acal((\hgot))} \QS(M_\Pcal) \QS_K(\Pcal).
\end{equation}
\end{theo}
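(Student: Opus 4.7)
The plan is to compute the multiplicity of each irreducible representation $\pi_\Ocal$ (with $\Ocal$ a regular admissible coadjoint orbit) on both sides of (\ref{eq:QR=0}) and verify they agree. On the left-hand side this multiplicity is $\mm_\Ocal$. On the right, Proposition \ref{prop:notregular} gives $\QS_K(\Pcal) = \pi_{s(\Pcal)}$ when $s(\Pcal)$ is regular and $\QS_K(\Pcal) = 0$ otherwise; regrouping the sum over $\Acal((\hgot))$ by the regular admissible orbit $\Ocal = s(\Pcal)$, the multiplicity of $\pi_\Ocal$ on the right equals $\sum_\Pcal \QS(M_\Pcal)$, summed over the $(\hgot)$-ancestors $\Pcal$ of $\Ocal$ in the sense of Definition \ref{defi:h-ancestor}. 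The theorem thus reduces to the identity $\mm_\Ocal = \sum_\Pcal \QS(M_\Pcal)$ for each regular admissible $\Ocal$.

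First I would invoke the decomposition (\ref{eq:sum-mop}) already established in Section \ref{sec:witten-deformation}, namely $\mm_\Ocal = \sum_\Pcal \mm_\Ocal^\Pcal$ over the same $(\hgot)$-ancestors, where $\mm_\Ocal^\Pcal$ is the $K$-invariant part of the Witten-localized index on $M \times \Ocal^*$ supported near the subset $Z_\Ocal^\Pcal \subset Z_\Ocal^{=0}$. It then suffices to prove the equality $\mm_\Ocal^\Pcal = \QS(M_\Pcal)$ for each $(\hgot)$-ancestor $\Pcal = K\mu$ of $\Ocal$.

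Theorem \ref{th:m-lambda-sigma-induction} reduces this to a torus computation: $\mm_\Ocal^\Pcal = \bigl[\Qcal_{A_H}(\Ycal_C, \Scal_{\Ycal_C}^\Pcal, \{0\})\bigr]^{A_H}$, where $\Ycal_C$ is the $H$-slice at $\mu$, $A_H = H/[H,H]$ is a torus acting on $\Ycal_C$, the bundle $\Scal_{\Ycal_C}^\Pcal = \Scal_{\Ycal_C} \otimes \C_{-\tilde\mu}$ with $\tilde\mu = \mu - \rho_C$, and the zero component refers to the fiber $\Phi_{\Ycal_C}^{-1}(\tilde\mu)$. On the other hand, by Definition \ref{def:Q-M-mu} applied componentwise, $\QS(M_\Pcal) = \sum_j \QS((\Ycal_j)_{\tilde\mu + \epsilon_j})$ over the connected components $\Ycal_j$ of $\Ycal_C$, with small generic $\epsilon_j$ in the annihilator of the generic stabilizer for the $A_H$-action on $\Ycal_j$. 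The key step is then to apply Lemma \ref{lem:epsilon} (invariance of the localized index under small shifts $\epsilon$) and Lemma \ref{lem:epsilon-index} (identification of the $A_H$-invariant part with the index of the reduced orbifold) componentwise to each $\Ycal_j$, with $\Phi_{\Ycal_j}$ playing the role of $\Phi$ and $\tilde\mu$ playing the role of $\mu$. This produces $\bigl[\Qcal_{A_H}(\Ycal_j, \Scal_{\Ycal_j}^\Pcal, \{0\})\bigr]^{A_H} = \QS((\Ycal_j)_{\tilde\mu + \epsilon_j})$, and summing over $j$ yields $\mm_\Ocal^\Pcal = \QS(M_\Pcal)$.

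The main obstacle is verifying that the hypotheses of Lemmas \ref{lem:epsilon} and \ref{lem:epsilon-index} genuinely apply in this non-compact slice context: one needs $\Phi_{\Ycal_j}$ to be proper onto an open neighborhood of $\tilde\mu$ in $\zgot^*$, and the critical-point analysis of $\|\Phi\|^2$ underlying those lemmas to remain valid. The required properness follows from the compactness of $\Phi_\Scal^{-1}(\Pcal) \subset M$, which confines the dynamics of interest to a compact piece of $\Ycal_j$; Proposition \ref{prop:independant} furnishes additional flexibility by guaranteeing that $\mm_\Ocal^\Pcal$ is insensitive to small deformations of the moment map used to define it. Modulo this bookkeeping, the theorem is an essentially immediate consequence of the structural reductions already carried out in Sections \ref{sec:computing-multiplicities} and \ref{sec:multiplicity}.
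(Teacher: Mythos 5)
Your proposal is correct and takes essentially the same route as the paper: reduce to the identity $\mm_\Ocal^\Pcal = \QS(M_\Pcal)$ via (\ref{eq:sum-mop}) and Proposition \ref{prop:notregular}, pass to the slice via Theorem \ref{th:m-lambda-sigma-induction}, and then apply Lemmas \ref{lem:epsilon} and \ref{lem:epsilon-index} componentwise on $\Ycal_C = \bigcup_j \Ycal_j$, exactly as the paper's Section \ref{sec:multiplicity} does (with the properness hypothesis handled as in the footnote to Section \ref{sec:spin-index-singular}).
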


\medskip

We end this section by giving yet another criterium for the vanishing of $\Qcal_K(M,\Scal)$.

Consider the map $\Phi_\Scal: M\to \kgot^*$.
At each point $m\in M$, the differential $d_m\Phi_\Scal$ gives a map $T_mM\to \kgot^*$.
Let $\kgot_m^{\perp}\subset \kgot^*$.
From the  Kostant relations, we see that $d_m\Phi_\Scal$ take
value in $\kgot_m^{\perp}.$

\begin{prop}
If $\Qcal_K(M,\Scal)\neq 0$, then there exists $m\in M\setminus M^K$ such that
$\mathrm{Image}(d_m\Phi_\Scal)=\kgot_m^{\perp}.$
\end{prop}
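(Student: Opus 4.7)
The plan is to argue directly: starting from $\Qcal_K(M,\Scal) \neq 0$, Theorem \ref{theo:final} localizes the non-vanishing to some reduced space $M_\Pcal$ with non-zero $\spinc$ index, and the desired point $m$ is produced inside the associated slice $\Ycal_C$ by taking a preimage of a suitable regular value of the slice moment map.

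In detail, Theorem \ref{theo-H-slice} yields $(\hgot)\in\Hcal_\kgot$ with $([\kgot_M,\kgot_M])=([\hgot,\hgot])$; fix a representative $\hgot=[\hgot,\hgot]\oplus\zgot$, let $\qgot=\hgot^{\perp}$ in $\kgot$, and pick a connected component $C$ of $\hgot^*_0$ such that the $H$-slice $\Ycal_C=\Phi_\Scal^{-1}(C)$ is non-empty. Recall that $[H,H]$ acts trivially on $\Ycal_C$, so the torus $A_H=H/[H,H]$ acts with Lie algebra $\zgot$. Theorem \ref{theo:final} then provides $\Pcal=K\mu\in\Acal((\hgot))$, $\mu\in C$, with $\QS(M_\Pcal)\neq 0$. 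By Theorem \ref{theo:QR} and Definition \ref{def:Q-M-mu}, this non-vanishing forces the shifted moment map $\Phi^\Pcal_{\Ycal_C}:=\Phi_\Scal|_{\Ycal_C}-\mu:\Ycal_C\to\zgot^*$ to attain a regular value $\epsilon$ arbitrarily close to $0$ in the affine direction $\zgot_M^{\perp}$ (orthogonal taken inside $\zgot$, where $\zgot_M=\zgot\cap\kgot_M$ is the Lie algebra of the generic $A_H$-stabilizer), with non-empty fiber. At any $y$ in this fiber, $\kgot_y=\kgot_M=[\hgot,\hgot]\oplus\zgot_M$.

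It remains to check $\mathrm{Image}(d_y\Phi_\Scal)=\kgot_y^{\perp}$ and $y\notin M^K$. Decompose $T_yM=T_y\Ycal_C\oplus\qgot\cdot y$ orthogonally and use $\kgot^*=[\hgot,\hgot]^*\oplus\zgot^*\oplus\qgot^*$. Since $\Phi_\Scal(y)\in\zgot^*_0$ has centralizer exactly $\hgot$, the Kostant relation $\iota(X_M)\Omega_\Scal=-d\langle\Phi_\Scal,X\rangle$ gives $d_y\Phi_\Scal(\qgot\cdot y)=\qgot\cdot\Phi_\Scal(y)=\qgot^*$. Since $\Phi_\Scal(\Ycal_C)\subset\zgot^*$ and $\epsilon$ is a regular value of $\Phi^\Pcal_{\Ycal_C}$ onto its affine target, $d_y\Phi_\Scal(T_y\Ycal_C)=\zgot_M^{\perp}$ inside $\zgot^*$. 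Hence $\mathrm{Image}(d_y\Phi_\Scal)=\zgot_M^{\perp}\oplus\qgot^*$, which equals $\kgot_y^\perp$ computed from the orthogonal decomposition of $\kgot$. Finally $y\notin M^K$ because $\kgot_y=\kgot_M\subsetneq\kgot$; the trivial-action case $\kgot_M=\kgot$ is implicitly excluded since then $M\setminus M^K=\emptyset$.

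The main obstacle is the passage from $\QS(M_\Pcal)\neq 0$ to the existence of an actual regular value $\epsilon$ of $\Phi^\Pcal_{\Ycal_C}$ with non-empty fiber, together with the control of $\kgot_y$ at points of that fiber. This requires combining the definition of the $\spinc$ index on singular reduced spaces (Definition \ref{def:Q-M-mu}) with the identification $M_\Pcal=(\Ycal_C)_{\tilde\mu}$ from Theorem \ref{theo:QR}. Once $y$ is in hand, the final identification $\mathrm{Image}(d_y\Phi_\Scal)=\kgot_y^{\perp}$ is essentially mechanical, following from the Kostant relations and the structural orthogonal decompositions established in Theorem \ref{theo-H-slice}.
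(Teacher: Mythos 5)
Your proof is correct and follows essentially the same route as the paper's. The paper's version is terser — it asserts directly that nonvanishing of $\Qcal_K(M,\Scal)$ forces $\Phi_\Scal(\Ycal_j)$ to have nonempty interior in $\zgot_j^\perp$ for some connected component $\Ycal_j$ of the slice and then invokes Sard to produce a point with surjective slice differential, while you unpack the same content through the $\epsilon$-deformation built into Definition~\ref{def:Q-M-mu}; the subsequent identification of $\kgot_y$ via the Kostant relation and the splitting of $d_y\Phi_\Scal$ into slice and $\qgot$-directions match the paper (the only cosmetic points being that $\zgot_M$ should be read as the generic $A_H$-stabilizer of the \emph{specific} component $\Ycal_j$ carrying the nonzero index, and that $d_y\Phi_\Scal(\qgot\cdot y)=\qgot\cdot\Phi_\Scal(y)$ is really equivariance of $\Phi_\Scal$ rather than the Kostant relation).
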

\begin{proof}
If we consider the decomposition of the slice $\Ycal_C=\bigcup \Ycal_j$ in connected components,
for $\Qcal_K(M,\Scal)\neq 0$, then for some $j$,  $\Phi(Y_j)$ has non empty interior in $\zgot_j^{\perp}$. Here $\zgot_j$ is the infinitesimal stabilizer of the action of $H/[H,H]$ on $\Ycal_j$.
Thus $\zgot_j$ is equal to $\kgot_m\subset \hgot$ for generic $m\in \Ycal_j$.
So there exists a point $m\in \Ycal_j$ such that
the differential of $\Phi_\Scal|_{\Ycal}$ is surjective on
$\zgot_j^{\perp}\subset \hgot^*$.
Now if we consider $K\Ycal_j\subset M$, then
$\mathrm{Image}(d_m\Phi_\Scal)=\hgot^*\oplus \zgot_j^{\perp}$. This is exactly $\kgot_m^{\perp}$.
\end{proof}

\medskip

When the action of $K$ is abelian, we can always reduce ourselves to an effective action with $\kgot_M=\{0\}$.
Then the support of  decomposition of $\Qcal_K(M,\Scal)$
is contained in the interior of $\Phi_\Scal(M)\cap \Lambda$.
If this set has no interior point, then $\Qcal_K(M,\Scal)=0$.
This small remark implies the well-known Atiyah-Hirzebruch vanishing theorem in the spin case \cite{Atiyah-Hirzebruch70}, as well as the variant of Hattori \cite{Hattori78}.

We also note another  corollary.

\begin{coro}
If the two form $\Omega_\Scal$ is exact, and the $K$-action on $M$ is non-trivial then $\Qcal_K(M,\Scal)=0$.
\end{coro}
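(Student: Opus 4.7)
The plan is to exploit the freedom in the choice of connection on $\det(\Scal)$ to reduce to a constant moment map, and then apply Theorem \ref{theo:final}. First I would average a primitive of the closed $K$-invariant form $\Omega_\Scal$ to obtain a $K$-invariant $1$-form $\alpha$ with $d\alpha=\Omega_\Scal$. Changing $\nabla$ to $\nabla+2i\alpha$ gives a new connection with vanishing curvature and moment map $\Phi'$. The Kostant relation $\iota(X_M)\Omega'=-d\langle\Phi',X\rangle$ now forces $d\langle\Phi',X\rangle=0$ for every $X\in\kgot$, so $\Phi'$ is locally constant; since $M$ is connected, $\Phi'\equiv\beta$ with $\beta\in\kgot^*$ fixed by $K$, hence central. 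The equivariant index $\Qcal_K(M,\Scal)$ depends only on the equivariant first Chern class of $\det(\Scal)$ (Berline-Vergne), so it is independent of the connection, and I may evaluate it with the new one.

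Next I would invoke Theorem \ref{theo:final} with the image $\Phi'(M)=\{\beta\}$. Every contributing ancestor orbit $\Pcal$ must intersect the image, hence $\Pcal=K\beta=\{\beta\}$, placing $\Pcal$ in the Dixmier sheet of $(\kgot)$. If $([\kgot_M,\kgot_M])$ is not of the form $([\hgot,\hgot])$ for $\hgot\in\Hcal_\kgot$, Theorem \ref{theo:vanishing-1} closes the argument; if it is but $(\hgot)\neq(\kgot)$, then $\{\beta\}$ does not lie in $\Acal((\hgot))$ and the sum in Theorem \ref{theo:final} is empty. The delicate case is $(\hgot)=(\kgot)$. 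Here I would use the fact that $[\kgot,\kgot]$ is the unique semisimple subalgebra of $\kgot$ of maximal dimension to conclude $[\kgot,\kgot]\subset\kgot_M$; then $[K,K]$ (connected) acts trivially on a dense open set of $M$ and therefore on all of $M$, and trivially on $\Scal$ since it commutes with the Clifford action and a semisimple compact group has no non-trivial characters. Consequently $K$ acts through the torus $A:=K/[K,K]$, and non-triviality of the $K$-action transfers to $A$, giving $\agot_M\subsetneq\agot$ and hence $\agot_M^\perp\neq 0$.

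Finally, I would show $\QS(M_{\{\beta\}})=0$. With $\hgot=\kgot$, the slice $\Ycal_C$ collapses to $M$ itself, $\rho_C=\rho(\beta)=0$ (as $\qgot^\beta=0$) and $\tilde\mu=\beta$; Definition \ref{def:Q-M-mu} therefore gives $\QS(M_{\{\beta\}})=\Qcal(M_{\beta+\epsilon},\Scal^{\beta}_{\beta+\epsilon})$ for small generic $\epsilon\in\agot_M^\perp$. Choosing any $\epsilon\neq 0$, one has $\beta+\epsilon\notin\{\beta\}=\Phi'(M)$, so the reduced fiber is empty and the index vanishes. Theorem \ref{theo:final} then yields $\Qcal_K(M,\Scal)=0$.

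The hard part will be the step from $([\kgot_M,\kgot_M])=([\kgot,\kgot])$ to triviality of the $[K,K]$-action on both $M$ and $\Scal$: the first implication rests on the uniqueness (by dimension) of a maximal semisimple subalgebra in a compact reductive Lie algebra together with connectedness of $M$, and the second is a short character-theoretic argument via Schur. Everything else—the averaging producing $\alpha$, independence of the index from the connection, and the perturbation formula defining $\QS$ on singular reduced spaces—is directly available from the results already established in the paper.
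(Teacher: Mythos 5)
Your argument is correct, and the first half (averaging $\alpha$ over $K$, passing to $\nabla+2i\alpha$ so that the new curvature is zero and the new moment map is a central constant $\beta$, and invoking invariance of the index under change of connection) is exactly the paper's opening move. Where you diverge is in how you extract the vanishing from the constancy of $\Phi_\Scal$. You run the full case analysis through Theorem \ref{theo:final}: first using Theorem \ref{theo:vanishing-1} when $([\kgot_M,\kgot_M])$ is not of the form $([\hgot,\hgot])$, then noting that $\{\beta\}$ lies in the Dixmier sheet of $(\kgot)$ so that for $(\hgot)\neq(\kgot)$ no orbit in $\Acal((\hgot))$ meets $\Phi_\Scal(M)$, and finally, in the case $(\hgot)=(\kgot)$, reducing to the torus $A=K/[K,K]$ and showing $\QS(M_{\{\beta\}})=0$ because a small generic perturbation $\beta+\epsilon$ with $\epsilon\in\agot_M^\perp\setminus\{0\}$ has empty fiber. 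The paper instead cites the proposition directly preceding the corollary: if $\Qcal_K(M,\Scal)\neq 0$ there is a point $m\in M\setminus M^K$ with $\mathrm{Image}(d_m\Phi_\Scal)=\kgot_m^\perp$, which is manifestly impossible for a constant map when the action is nontrivial. That route is a one-liner, but it leans on an auxiliary proposition; yours instead unpacks the $[Q,R]=0$ statement itself together with the perturbation definition of $\QS$ on singular reduced spaces, which makes the mechanism of the vanishing more explicit. Two small remarks on your write-up: the cleanest reason $[\kgot_M,\kgot_M]=[\kgot,\kgot]$ is that $[\kgot,\kgot]$ is an $\mathrm{Ad}(K)$-stable ideal (so its only conjugate is itself), rather than a maximal-dimension argument; and the triviality of the $[H,H]$-action on the slice bundle is already built into the slice construction in Section \ref{sec:geometric} via the Kostant relation and Remark \ref{rem:L-S-gamma}, so the character-theoretic aside, while correct, is not needed.
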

It is due to the fact that if $\Omega_\Scal=d\alpha$, by modifying the connection on $\det(\Scal)$ by $\alpha$, our moment map  is constant.
So if the action is non trivial, $\Qcal_K(M,\Scal)=0$.

\section{Examples: multiplicities and reduced spaces}\label{sec:examples}

In this last section, we give some simple examples in order to illustrate various features of our result relating
multiplicities and reduced spaces.

An open question remains even for a toric manifold $M$ equipped with a non ample
line bundle $L$.
The determinant line bundle of the spinor bundle $\Scal:=\bigwedge_{\C}\T M\otimes L$ is
equal to $\det(\Scal):=\det_\C(\T M)\otimes L^{\otimes 2}$. A connection $\nabla$ on $\det(\Scal)$ determines
a moment map $\Phi_\nabla:M\to\tgot^*$ and a curvature $\Omega_{\nabla}=\frac{1}{2i}\nabla^2$.
The push-forward of  the density $(\Omega_{\nabla})^{\dim M/2}$ by $\Phi_\nabla$  does not depends on the choice of the connection: it is a
signed measure, denoted $DH(M,\Scal)$,  and we still call it  the Duistermaat-Heckmann measure. The support of
$DH(M,\Scal)$ is a union of convex polytopes
contained in $\Phi_{\nabla}(M)$. Can we find $\nabla$
such that the image $\Phi_{\nabla}(M)$ is exactly the support of the Duistermaat-Heckman measure~?

\subsection{The reduced space might not be connected}\label{exa:P1}
We consider the simplest case of the theory. Let $\Pbb^1:=\Pbb^1(\C)$ be the projective space of (complex) dimension one.
Consider the (ample) line bundle $\Lcal\to \Pbb^1$, dual of the tautological bundle. It is obtained as quotient of the trivial line bundle
$\C^2\setminus\{(0,0)\} \times \C$ on $\C^2\setminus\{(0,0)\}$ by the action $u\cdot(z_1,z_2,z)= (u z_1, u z_2,uz)$ of $\C^*$.
We consider the action of $T=S^1$ on $\Lcal\to\Pbb^1$ defined by $t\cdot [z_1,z_2, z]= [t^{-1} z_1,  z_2, z]$.

Let $\Scal(n)$ be the $\spinc$-bundle $\bigwedge_\C \T\Pbb^1 \otimes \Lcal^{\otimes n}$. The character $\QS_T(M, \Scal(n))$ is equal to
$H^{0}(\Pbb^1,\Ocal(n))-H^{1}(\Pbb^1,\Ocal(n))$ where $\Ocal(n)$ is the sheaf of holomorphic sections of $\Lcal^{\otimes n}$. Note that the holomorphic line bundle $\Lcal^{\otimes n}$ is not ample if $n\leq 0$. We have
\begin{itemize}
\item $\QS_T(M, \Scal(n))= - \sum_{k=n+1}^{-1}t^{k}$ when $n\leq  -2$,
\item $\QS_T(M, \Scal(-1))= 0$,
\item $\QS_T(M, \Scal(n))=  \sum_{k=0}^{n}t^{k}$ when $n\geq 0$.
\end{itemize}

The determinant line bundle of $\Scal(n)$ is  $\Lbb_n=[\C_{-1}]\otimes \Lcal^{\otimes 2n+2}$
where $[\C_{-1}]$ is the trivial line bundle equipped with the representation $t^{-1}$ on $\C$.

Remark that $\Pbb^1$ is homogeneous under $U(2)$, so there exists a unique $U(2)$-invariant connection on $\Lbb_n$.
The corresponding moment map $\Phi_{\Scal(n)}$ is such that
\begin{equation}\label{eq:PhiP1}
\Phi_{\Scal(n)}([z_1,z_2])=(n+1)\frac{|z_1|^2}{|z_1|^2+|z_2|^2}-\frac{1}{2}.
\end{equation}

The image $I_n=\Phi_{\Scal(n)}(M)$ is
\begin{itemize}
\item the interval $[-\frac{1}{2},n+\frac{1}{2}]$ when $n\geq 0$,
\item  a point $\{-\frac{1}{2}\}$ when $n=-1$,
\item the interval $[n+\frac{1}{2}, -\frac{1}{2}]$ when $n\leq -2$,
\end{itemize}

It is in agreement with our theorem. Indeed all characters occurring in $\QS_T(M,\Scal(n))$ are the integral points in the relative interior of $I_n$, and all reduced spaces are points.

If we consider simply the action of $T$ on $\Pbb^1$, the choice of connection may vary.
In fact, given any smooth function $f$ on $\R$, we can define a connection $\nabla^f$ on $\Lbb_n$ such  that the corresponding moment map is
$$
\Phi_{\Scal(n)}^f([z_1,z_2])=\Phi_{\Scal(n)}([z_1,z_2])+f\left(\frac{|z_1|^2}{|z_1|^2+|z_2|^2}\right)
\frac{|z_1|^2|z_2|^2}{(|z_1|^2+|z_2|^2)^2}.
$$
Let $\Omega_{\Scal(n)}^f$ be half the curvature of $(\Lbb_n, \nabla^f)$, then the Duistermaat-Heckman
measure $(\Phi_{\Scal(n)}^f)_*\Omega_{\Scal(n)}^f$ is independent of the choice of the connection $\nabla^f$ and is equal to
the characteristic function of $I_n$.

Take for example $n=4$ and the function $f(x)=15$. the corresponding moment map is
$$
\Phi([z_1,z_2])=5\frac{|z_1|^2}{|z_1|^2+|z_2|^2}+15
\frac{|z_1|^2|z_2|^2}{(|z_1|^2+|z_2|^2)^2}-\frac{1}{2}.
$$
Figure \ref{graphPhi} is the graph of $\Phi$ in terms of $x=\frac{|z_1|^2}{|z_1|^2+|z_2|^2}$ varying between $0$ and $1$.
We see that the image of $\Phi$ is  the interval $[-\frac{13}{6}, \frac{9}{2}, ]$, but the image of the signed measure
is still  $[-\frac{1}{2}, \frac{9}{2}]$: so for this choice of connection the image of $\Phi$ is larger
than the support of the Duistermaat-Heckman measure.

Above the integral points in $[-\frac{13}{6}, -\frac{1}{2}]$, the reduced
space is not connected, it consists of two points giving opposite contributions to the index. So our theorem holds.

%

\begin{figure}[!h]
\begin{center}
  \includegraphics[width=2 in]{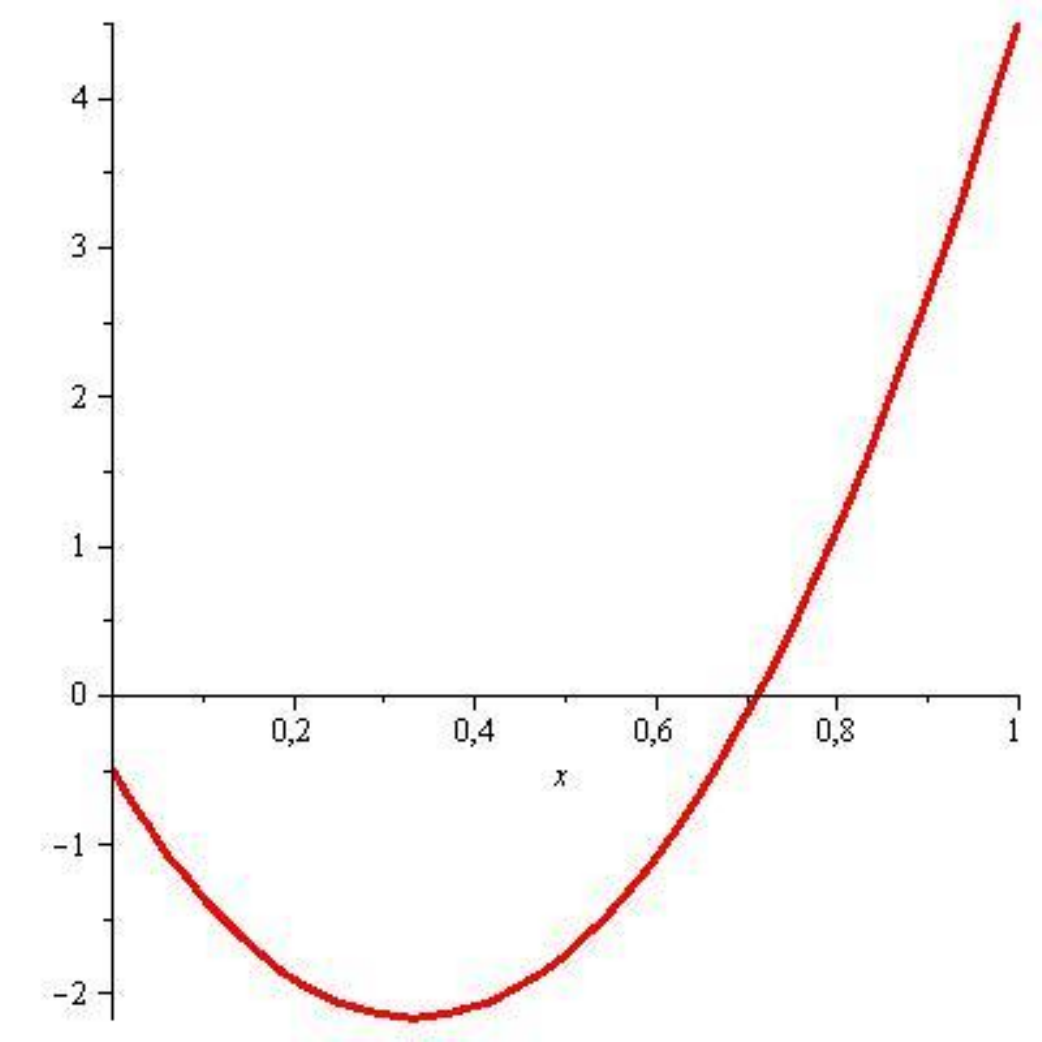}
\caption{ The graph of $\Phi$}
 \label{graphPhi}
\end{center}
\end{figure}

\subsection{The image of the moment map might be non convex}\label{sec:hirzebruch-surface}

We consider $M$ to be the Hirzebruch surface.
Represent $M$ as the quotient of
$\Ucal=\C^2-\{(0,0)\}\times \C^2-\{(0,0)\}$ by the free action of
$\C^*\times \C^*$ acting by
$$(u,v)\cdot(z_1,z_2,z_3,z_4)= (u z_1, u z_2,u v z_3,vz_4)$$
and we denote by $[z_1,z_2,z_3,z_4]\in M$ the equivalence class of
$(z_1,z_2,z_3,z_4)$.
The map  $\pi:[z_1,z_2,z_3,z_4]\to [z_1,z_2]$ is a fibration of $M$ on $P_1(\C)$ with fiber $P_1(\C)$.

Consider the  line bundle $L(n_1,n_2)$ obtained as quotient of the trivial line bundle $\Ucal\times \C$ on
$\Ucal$ by the action $$(u,v)\cdot(z_1,z_2,z_3,z_4,z)= (u z_1, u z_2,uv z_3,vz_4, u^{n_1} v^{n_2}z)$$
for $(u,v)\in \C^*\times \C^*$.
The line bundle  $L(n_1,n_2)$ is ample if and only if $n_1>n_2> 0$.

We have a canonical action of the group $K:=U(2)$ on $M$ : $g\cdot [Z_1,Z_2]=[gZ_1,Z_2]$ for $Z_1,Z_2\in \C^2-\{(0,0)\}$ and the line bundle $L(n_1,n_2)$ with action
 $g\cdot [Z_1,Z_2,z]=[gZ_1,Z_2,z]$ is $K$-equivariant.

We are interested in the (virtual) $K$-module
$$H^{0}(M,\Ocal(n_1,n_2))-H^{1}(M,\Ocal(n_1,n_2))+H^{2}(M,\Ocal(n_1,n_2))$$ where $\Ocal(n_1,n_2)$ be the sheaf of holomorphic sections of $L(n_1,n_2)$.

In this case, it is in fact possible to compute directly individual cohomology groups  $H^{i}(M,\Ocal(n_1,n_2))$.
However, we will describe here  only results on the alternate sum and relate them to the moment map.

Let $T=U(1)\times U(1)$ be the maximal torus of $K$. The set $Y:=\{[z_1,z_2,z_3,z_4]\in M\,\vert\, z_1=0\}$ is a
$T$-invariant complex submanifold of $M$ (with trivial action of $(t_1,1)$). The map
$$Y\to \Pbb^1(\C),\quad [0,z_2,z_3,z_4]\mapsto [(z_2)^{-1}z_3,z_4]$$ is a $T$-equivariant isomorphism   and
the map $(g,y)\in K\times Y\mapsto g\cdot y\in M$ factorizes through an isomorphism
$K\times_{T} Y\simeq M$.
Thus $M$ is an induced manifold.

For any $(a,b)\in \Z^2$, we denote $\C_{a,b}$ the $1$-dimensional representation of $T$ associated to the character
$(t_1,t_2)\mapsto t_1^a t_2^b$. We denote by $e_1^*,e_2^*$ the canonical bases of $\tgot^*\simeq\R^2$.
The Weyl chamber is $\tgot^*_{\geq 0}=\{x e_1^*+y e_2^*, x\geq y\}$.
The elements $e_1^*, e_2^*$ are conjugated by the Weyl group.

The line bundle $L(n_1,n_2)$, when restricted to $Y\simeq \Pbb^1(\C)$, is isomorphic to $\Lcal^{\otimes n_2}\otimes [\C_{0,-n_1}]$.

We consider $L_\kappa=L(3,2)$ the line bundle obtained from the reduction of the trivial line bundle
$\bigwedge^4\C^4$  with natural action of $\C^*\times \C^*$. We denote $\Scal_M:=\bigwedge_\C\T M$
(resp. $\Scal_Y:=\bigwedge_\C \T Y$) the $\spinc$-bundle associated to the complex structure on $M$ (resp. $Y$).

We denote by $\varphi :Y\to [0,1]$ the map defined by $\varphi(y)=\frac{|a_1|^2}{|a_1|^2+|a_2|^2}$ if $y\simeq [a_1,a_2]$.

\begin{prop}\label{prop:connection}
$\bullet$ Let $\Scal(n_1,n_2)$ be the spin bundle $\Scal_M \otimes L(n_1,n_2)$ on $M$. Its determinant line bundle is
$$
\Lbb_{n_1,n_2}=[\C_{\mathrm{det}}]\otimes L_\kappa \otimes L(2n_1,2n_2)
$$
where $[\C_{\mathrm{det}}]\to M$ is the trivial $U(2)$-equivariant line bundle associated to the character $\mathrm{det} : U(2)\to \C^*$.

$\bullet$ There exists a connection on $\Lbb_{n_1,n_2}$ such that the corresponding moment map
$\Phi_{n_1,n_2}: K\times_T Y\to \kgot^*$ is defined by
$$
\Phi_{n_1,n_2}([k,y])=\Big(-(n_1+\frac{3}{2})+ (n_2+1)\varphi(y)\Big) k\cdot e_2^*  +\frac{1}{2}(e_1^*+e_2^*).
$$

\end{prop}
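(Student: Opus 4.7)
The key observation is that $M \simeq K \times_T Y$ is induced, hence both a $K$-equivariant line bundle on $M$ and a $K$-equivariant moment map $M \to \kgot^*$ are determined by their restriction to the slice $Y \simeq \Pbb^1$. Both parts of the proposition thus reduce to computations on $\Pbb^1$, using the restriction formula $L(n_1,n_2)|_Y = \Lcal^{\otimes n_2} \otimes [\C_{0,-n_1}]$ from the discussion preceding the statement together with Example \ref{P1}.

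For the first assertion, since $\det(\Scal_M) = \det_\C TM$ one has $\det(\Scal(n_1,n_2)) = \det_\C TM \otimes L(2n_1,2n_2)$, so it suffices to prove the $U(2)$-equivariant isomorphism $\det_\C TM \simeq [\C_{\det}] \otimes L(3,2)$. I would verify this by restricting both sides to $Y$ and computing $T$-weights at the two $T$-fixed points $p_1 = [0,1,0,1]$ and $p_2 = [0,1,1,0]$ of $Y$. Using the local charts obtained by normalising $(z_2,z_4)=(1,1)$ near $p_1$ and $(z_2,z_3)=(1,1)$ near $p_2$, the two $T$-weights on $T_{p_i}M$ can be read off, giving the determinant characters $e_1^* - 2e_2^*$ at $p_1$ and $e_1^*$ at $p_2$. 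These match the characters of $[\C_{\det}]\otimes L(3,2)|_Y = \Lcal^{\otimes 2}\otimes[\C_{1,-2}]$ (using $\Lcal|_{p_1}=0$ and $\Lcal|_{p_2}=e_2^*$, which follow from the conventions fixed in Example \ref{P1}), so the two $T$-equivariant line bundles on $Y$ agree, and hence so do their $K$-equivariant extensions to $M$.

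For the second assertion, decompose $\Lbb_{n_1,n_2} = [\C_{\det}] \otimes L(2n_1+3, 2n_2+2)$ and equip each factor with a natural $U(2)$-invariant Hermitian connection: the trivial flat connection on $[\C_{\det}]$, and on $L(a,b)$ the Chern connection of the Hermitian metric descended from the standard Hermitian metric on the trivial bundle $\Ucal \times \C$. The flat connection on $[\C_{\det}]$ contributes the constant $\tfrac12(e_1^*+e_2^*)$ to the $\spinc$-moment map $\Phi_{\Scal(n_1,n_2)}$ (Kostant's relation carries the factor $2i$ for determinant bundles). Restricting to $y \in Y$ via $L(a,b)|_Y = \Lcal^{\otimes b}\otimes[\C_{0,-a}]$, and using the moment map formula (\ref{eq:PhiP1}) for $\Lcal$ on $\Pbb^1$ (so $\Phi_\Lcal = \varphi\, e_2^*$), one obtains
\[
\Phi_{\Scal(n_1,n_2)}(y) = \tfrac12(e_1^*+e_2^*) + \bigl(-(n_1+\tfrac32) + (n_2+1)\varphi(y)\bigr)\, e_2^*.
\]
Extending by $K$-equivariance, and using that $\tfrac12(e_1^*+e_2^*)$ is $K$-fixed (being half the trace character, hence central in $\kgot^*$), yields the claimed formula for $\Phi_{n_1,n_2}([k,y])$. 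The delicate point is the $U(2)$-equivariant identification of $\det_\C TM$ in the first step; once it is secured, the moment-map computation reduces to standard bookkeeping built on (\ref{eq:PhiP1}).
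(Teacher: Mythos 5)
Your proof is correct and follows essentially the same route as the paper's: identify $\Lbb_{n_1,n_2}$ as $[\C_{\det}]\otimes L(2n_1+3,2n_2+2)$, restrict to the slice $Y\simeq\Pbb^1$, read off the moment map there using (\ref{eq:PhiP1}), and extend by $K$-invariance. The paper's own proof is a one-sentence construction of the $T$-invariant connection on $(\Lbb_{n_1,n_2})|_Y$ and does not prove the first bullet at all; your fixed-point computation of the tangent weights at $[0,1,0,1]$ and $[0,1,1,0]$, matched against the fiber weights of $[\C_{\det}]\otimes L(3,2)|_Y$, correctly fills in that omitted check, and your moment-map bookkeeping (the factor $2i$ in Kostant's relation, $\Phi_\Lcal=\varphi\,e_2^*$, and the $[\C_{\det}]$ contribution $\tfrac12(e_1^*+e_2^*)$) reproduces the stated formula.
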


\begin{proof}

For the second point, we construct a $U(2)$-invariant connection on
$\Lbb_{n_1,n_2}$ by choosing the $T$-invariant connection on
$(\Lbb_{n_1,n_2})|_Y$ having moment map
$ \left(-(n_1+\frac{3}{2})+ (n_2+1)\varphi(y)\right)  e_2^*  +\frac{1}{2}(e_1^*+e_2^*)$ under the $T$-action (see Equation (\ref{eq:PhiP1})).
\end{proof}

\bigskip

From Proposition \ref{prop:connection}, it is not difficult to describe the ``Kirwan set"
$\Delta(n_1,n_2)=\mathrm{Image}(\Phi_{n_1,n_2})\cap\tgot^*_{\geq 0}$ for all cases of $n_1,n_2$.
It depends of the signs of
$n_1+\frac{3}{2},n_2+1,n_1-n_2+\frac{1}{2}$, that is, as we are working with integers,  the signs of $n_1+1$, $n_2+1$ and $n_1-n_2$.
We concentrate in the case where $n_1+1\geq 0, n_2+1\geq 0$ (other cases are similarly treated).
Then, we have two cases:

$\bullet$ If $n_1\geq n_2$, then the Kirwan set $\Delta(n_1,n_2)$  is the interval $$[(n_1-n_2)+\frac{1}{2}, n_1+\frac{3}{2}](-e_2^*)+\frac{1}{2} (e_1^*+e_2^*).$$

$\bullet$ If $n_2> n_1$, then the Kirwan set $\Delta(n_1,n_2)$  is the union of the intervals
  $$[0,n_2-n_1-\frac{1}{2}]e_1^* +\frac{1}{2} (e_1^*+e_2^*)$$ and
  $$[0,n_1+\frac{3}{2}](-e_2^*)+\frac{1}{2}(e_1^*+e_2^*).$$

If $n_1\geq n_2\geq 0$  the curvature of the corresponding connection on  $\Lbb_{n_1,n_2}=L(2 n_1+3,2n_2+2)$ (which is an ample line bundle) is non degenerate, thus the image   is a convex subset of $\tgot^*_{\geq 0}$ (in agreement with Kirwan convexity theorem)
while for $n_2>n_1$ the image set is not convex.

\medskip

The character $\Qcal_K(n_1,n_2):=\Qcal_K(M, \Scal(n_1,n_2))$ is equal to the (virtual) $K$-module
$H^{0}(M,\Ocal(n_1,n_2))-H^{1}(M,\Ocal(n_1,n_2))+H^{2}(M,\Ocal(n_1,n_2))$ where $\Ocal(n_1,n_2)$ be the sheaf of holomorphic sections of $L(n_1,n_2)$.

Let $\Lambda_{\geq 0}=\{(\lambda_1,\lambda_2); \lambda_1\geq \lambda_2\} $ be the  set of dominant weights for $U(2)$.
We index the representations of $U(2)$ by $\rho+\Lambda_{\geq 0}$. Here $\rho=(\frac{1}{2},\frac{-1}{2})$ and $\lambda_1, \lambda_2$ are integers.
We then have
$$\pi_{(\frac{1}{2},-k-\frac{1}{2})}=S^k$$ the space of complex polynomials on $\C^2$ homogeneous of degree $k$.

If $n_2\geq 0$, we know that $\Qcal_{T}(Y, \Scal_Y \otimes \Lcal^{\otimes n_2})=\sum_{k=0}^{n_2} t^k_{2}$.
From the induction formula (\ref{eq:induction-index})
 (or direct computation via Cech cohomology !!)  we obtain

$\bullet$ If $n_1\geq n_2$,
then
$$\Qcal_K(n_1,n_2)=\sum_{k=n_1-n_2}^{n_1} \pi_{(\frac{1}{2},-k-\frac{1}{2})}.$$

$\bullet$ If $n_2>n_1$, then  $$\Qcal_K(n_1,n_2)=\sum^{n_1}_{k=0} \pi_{(\frac{1}{2},-k-\frac{1}{2})}- \sum^{n_2-n_1-2}_{k=0} \pi_{(k+\frac{3}{2},\frac{1}{2})}.$$

\medskip
Let us checked how our theorem works in these cases.
First, we notice that we are in a multiplicity free case : all the non-empty
reduced spaces are points.

$\bullet$ Consider the case where $n_1\geq n_2$.
We see that the parameter $(\frac{1}{2},-k-\frac{1}{2})$
belongs to the relative interior of  the interval $\Delta(n_1,n_2)$.
In particular for $b=(0,0)$, the unique point in
 the relative interior of  the interval $\Delta(0,0)$ is $\rho$. This is in agreement to the fact that the representation $\Qcal_K(0,0)$ is the trivial representation of $K$.

$\bullet$ Consider the case where $n_2> n_1$.
We see that the parameter $(\frac{1}{2},-k-\frac{1}{2})$ belongs to the relative interior of $[-n_1-\frac{3}{2},0]e_2^*+\frac{1}{2}(e_1^*+e_2^*)$ if and only if $k\leq n_1$. Similarly,
the parameter $(k+\frac{3}{2},\frac{1}{2})$ belongs to the relative interior of
$[0,n_2-n_1-\frac{1}{2}]e_1^*+\frac{1}{2}(e_1^*+e_2^*)$ if and only if $k\leq n_2-n_1-2$.

In Figures \ref{image-1}, \ref{image-2}, \ref{image-3},
we draw the   Kirwan subsets of $\tgot^*_{\geq 0}$ corresponding to
the values $(n_1,n_2)=(8,5)$ or $(3,6)$.
The circle points on the  red line represents the admissible points occurring with multiplicity $1$  in
$\Qcal_K(n_1,n_2)$.
The diamond points on the blue line represents the admissible points occurring  with multiplicity $-1$ in  $\Qcal_K(n_1,n_2)$.

\begin{figure}[!h]
\begin{center}
  \includegraphics[width=2 in]{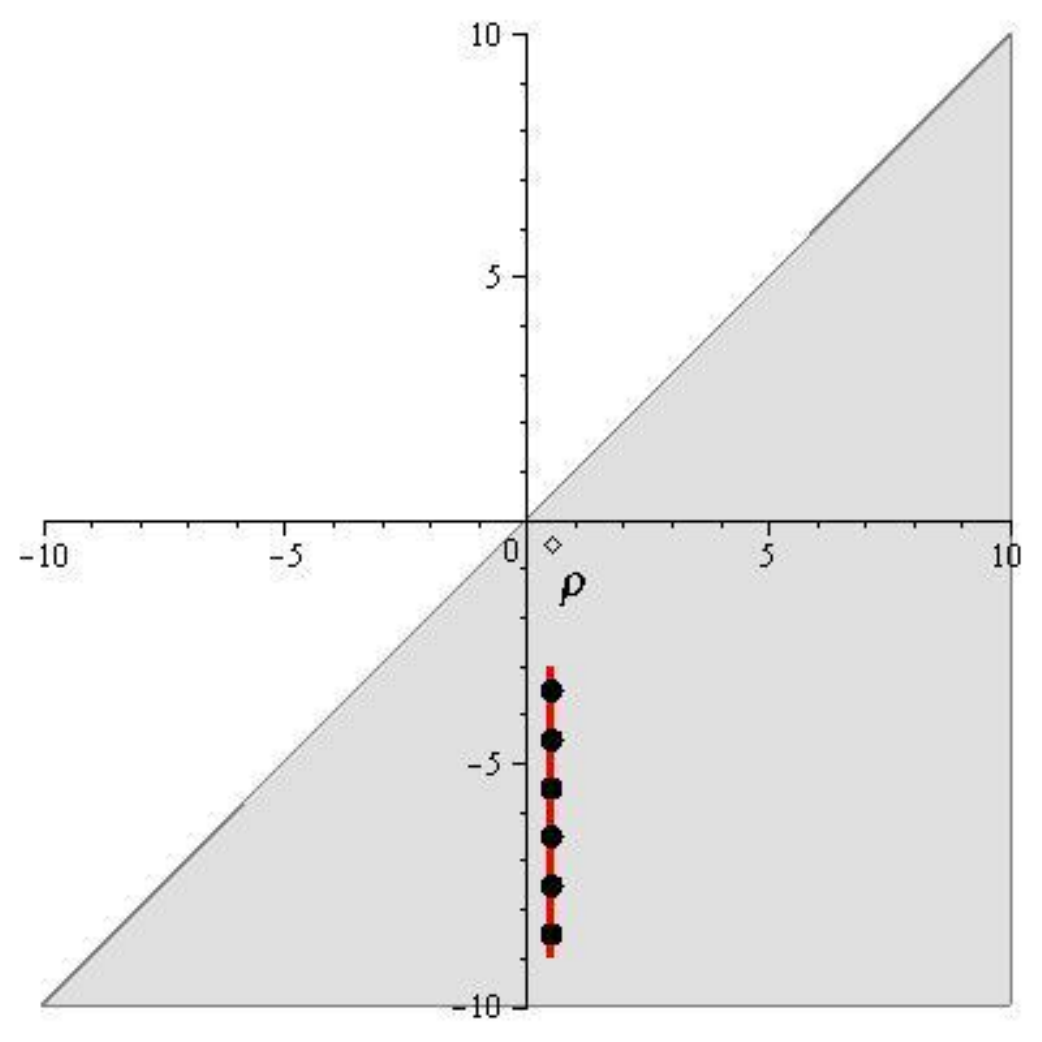}
\caption{$K$-Multiplicities for  $\Qcal_K(8,5)$}
\label{image-1}
 \end{center}
\end{figure}

\begin{figure}[!h]
\begin{center}
  \includegraphics[width=2 in]{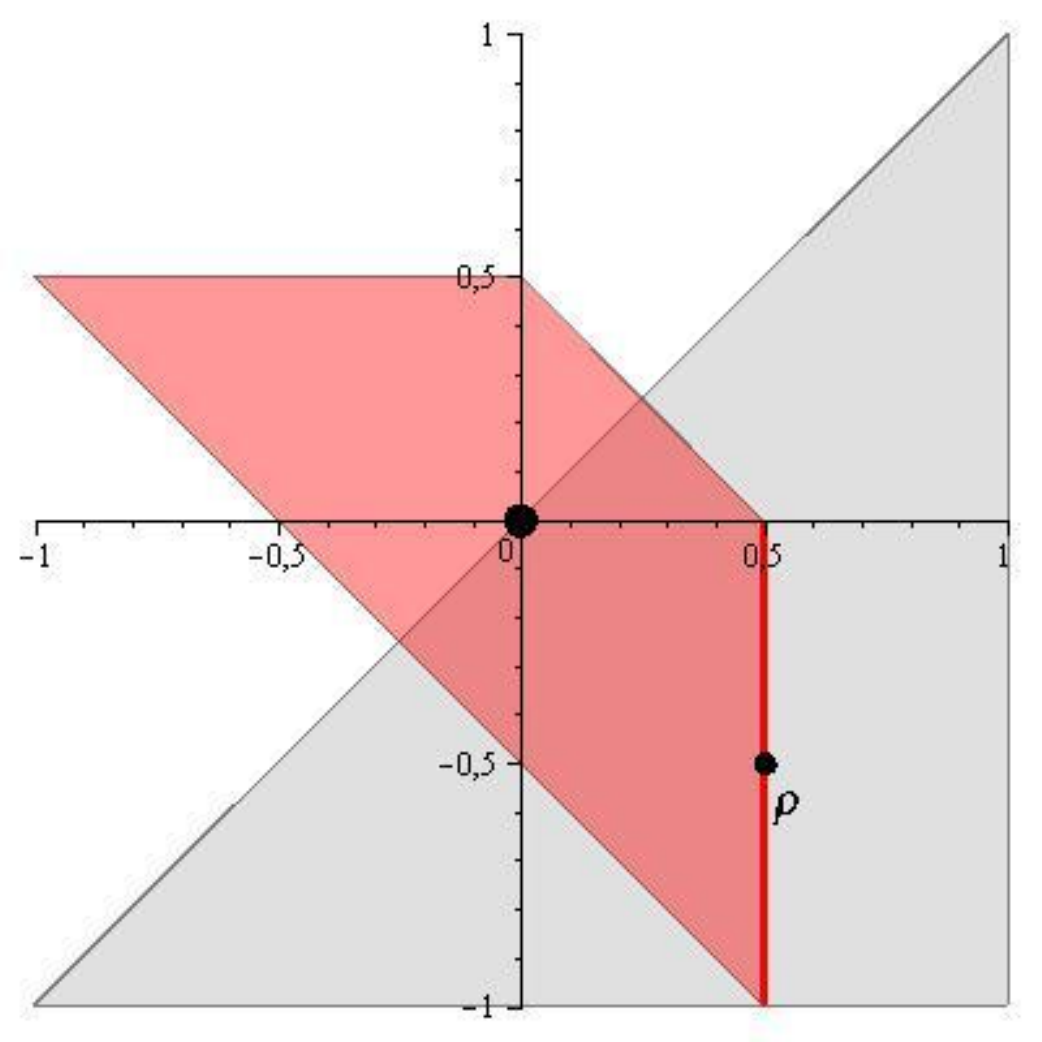}
\end{center}
\caption{$T$-multiplicities for  $\Qcal_T(0,0)$}
\label{image-2}
\end{figure}

\begin{figure}[!h]
\begin{center}
  \includegraphics[width=2 in]{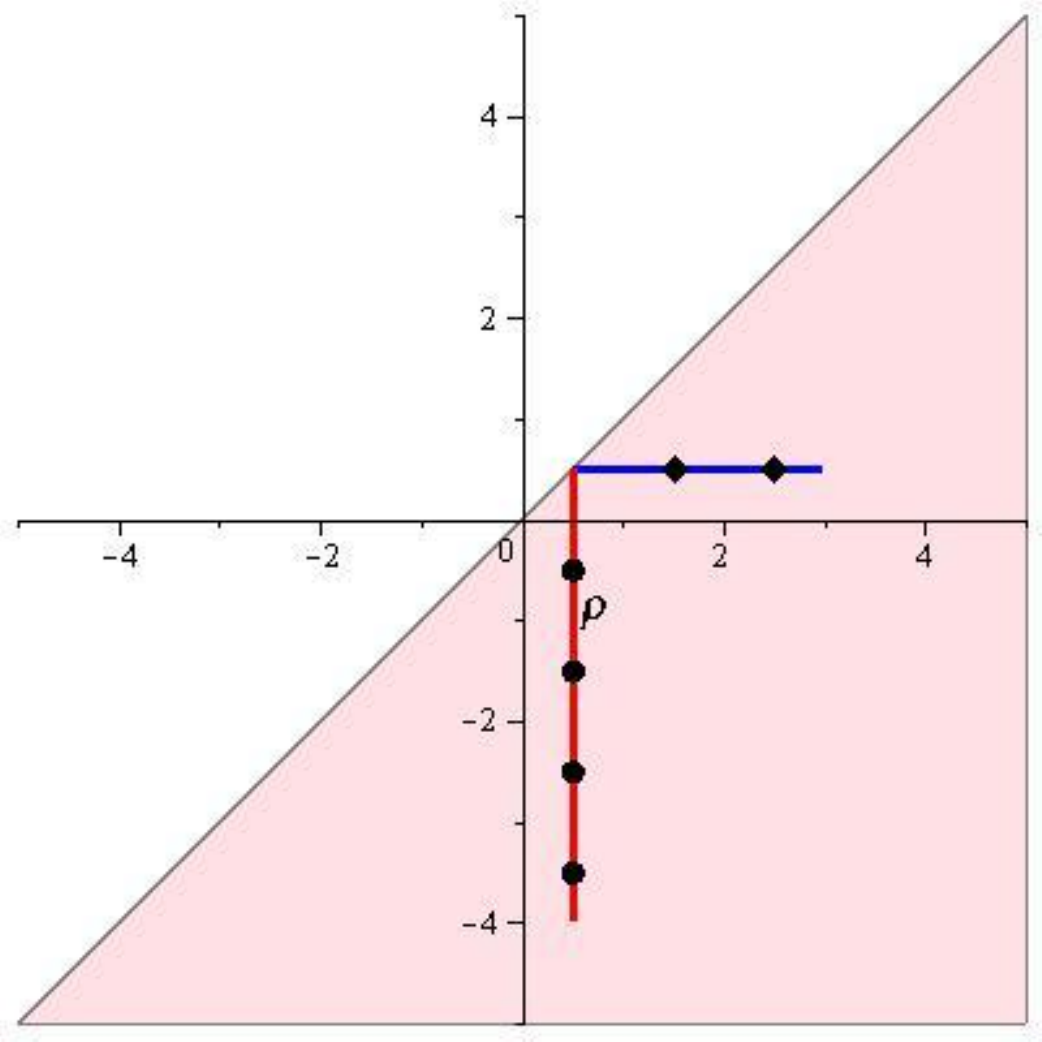}
\caption{$K$-Multiplicities for  $\Qcal_K(3,6)$}
\label{image-3}
\end{center}
\end{figure}

\medskip

Consider  now $M$ as a $T$-manifold. Let $\Phi_{n_1,n_2}^T:M\to \tgot^*$ be the moment map relative to the action of $T$ which is the composite of $\Phi_{n_1,n_2}:M\to\kgot^*$ with the projection $\kgot^*\to\tgot^*$. Thus, the image of $\Phi_{n_1,n_2}^T$ is the convex hull of $\Delta(n_1,n_2)$ 
and its symmetric image with respect to the diagonal.

Consider first the case where $n_1=n_2=0$.
Thus our determinant bundle $\Lbb_{0,0}=[\C_{\mathrm{det}}]\otimes L(3,2)$ is ample.  The image of the moment map
$\Phi_{0,0}^T: M\to \tgot^*$ is equal to the convex polytope
$\Delta$ with vertices $(0,\frac{1}{2}),(\frac{1}{2},0),(\frac{1}{2},-1),(-1,\frac{1}{2})$,
the images of the $4$ fixed points $[1,0,1,0]$, $[1,0,0,1]$,$[0,1,1,0]$,$[0,1,0,1]$.
 The only integral point in the interior of the polytope is $(0,0)$ and the reduced space
$(\Phi_{0,0}^T)^{-1}((0,0))/T$ is a point. The representation
$\QS_T(M, \Scal(0,0))$
is indeed the trivial representation of $T$.

We now concentrate on the case $(n_1,n_2)=(3,6)$. The line bundle $\Lbb:=[\C_{\mathrm{det}}]\otimes \Lbb_{3,6}$ is not an ample bundle, so that its curvature $\Omega_\Lbb$ is degenerate, and the Liouville form $\beta_{\Lbb}=\Omega_\Lbb\wedge \Omega_\Lbb $ is a signed measure on $M$.  Let us draw the Duistermaat measure $(\Phi_{\Lbb})_*\beta_{\Lbb}$, a signed measure on $\tgot^*$. In red the measure is with value $1$, in blue the measure is with value $-1$.

We also verify that our theorem is true.
Indeed the representation
$\Qcal_T(M,\Scal(3,6))=\Qcal_K(M,\Scal(3,6))\vert_T$ is
$$
1+t_1^{-1}+t_2^{-1}+t_1^{-2}+t_1^{-1}t_2^{-1}+t_2^{-2}+t_1^{-3}+t_1^{-2}t_2^{-1}+t_1^{-1}t_1^{-2}+t_2^{-3}
-t_1t_2- t_1t_2^2-t_1^2t_2.
$$
The $\lambda\in \Z^2$ such that $t^\lambda$ occurs in
$\Qcal_T(M,\Scal(3,6))$ are the integral points in the interior of the image of $\Phi_\Lbb(M)$ :
they have multiplicity $\pm1$, and the reduced space are points.

In this case, we verify thus that the image of the moment map is exactly the support of the Duitermaat-Heckman measure, however, we do not know if (even in the toric case, and non ample bundle) we can always find a connection with this property.

\begin{figure}
\begin{center}
  \includegraphics[width=2 in]{Hnonample.pdf}
\caption{$T$-multiplicities for  non ample bundle on Hirzebruch surface}
 \label{nonample}
\end{center}
\end{figure}

\subsection{The multiplicity of the trivial representation comes from two reduced spaces}\label{sec:induced-example}

Let $\C^4$ with its canonical basis $\{e_1,\ldots,e_4\}$. Let $K\simeq SU(3)$ be the subgroup of $SU(4)$ that fixes $e_4$.

Let $T=S(U(1)\times U(1)\times U(1))$ be the maximal torus of $K$ with Lie algebra $\tgot=\{(x_1,x_2,x_3), \sum_i x_i=0\}$, and
Weyl chamber $\tgot^*_{\geq 0}:=\{ \xi_1\geq \xi_2\geq \xi_3, \sum_i \xi_i=0\}$. We choose the fundamental roots
$\omega_1,\omega_2$ so that $K_{\omega_1}= S(U(2)\times U(1))$ and $K_{\omega_2}= S(U(1)\times U(2))$.
Recall that $\omega_1,\omega_2$ generates the weight lattice $\Lambda\subset\tgot^*$ so that
$\Lambda_{\geq 0}= \N \omega_1 +\N \omega_2$. Note also that $\rho=\omega_1+\omega_2$. For any $\lambda \in \Lambda_{\geq 0}+\rho$, we denote $\pi_{\lambda}$ the irreducible representation of $K$ with highest weight $\lambda-\rho$.

Let  $X=\{0\subset L_1\subset L_2\subset \C^4,\ \dim L_i= i\}$ be the homogeneous partial flag manifold under the action of $SU(4)$.
We have two lines bundles over $X$: $\Lcal_1(x)=L_1$ and $\Lcal_2(x)=L_2/L_1$ for $x=(L_1,L_2)$.

Our object of study is the complex submanifold
$$
M=\{(L_1,L_2)\in X\ \vert\  \C e_4\subset L_2\}.
$$
The group $K$ acts on $M$, and the generic stabilizer of the action is $[K_{\omega_1},K_{\omega_1}]\simeq SU(2)$.
We consider the family of lines bundles
$$
\Lcal(a,b)=\Lcal_1^{\otimes a}\vert_M\otimes \Lcal_2^{\otimes - b}\vert_M,\quad (a,b)\in\N^2.
$$
Let $\Scal_M:=\bigwedge_\C \T M$ be the $\spinc$-bundle associated to the complex structure on $M$.
We compute the characters
$$
\Qcal_K(a,b):= \Qcal_{K}( M,\Scal_M \otimes \Lcal(a,b))\in R(K).
$$
Again
$$\Qcal_K(a,b)=\sum_{i=0}^{\dim M} (-1)^i H^i(M, \Ocal(\Lcal(a,b))).$$

We notice that $K_{\omega_1}$ corresponds to the subgroup of $K$ that fixes the line $\C e_3$. The set
$Y:=\{(L_1,L_2)\in X\,\vert\, L_2=\C e_3\oplus \C e_4\}$ is a $K_{\omega_1}$-invariant complex submanifold of $M$
such that the map $(k,y)\in K\times Y\mapsto ky\in M$ factorizes through an isomorphism
$K\times_{K_{\omega_1}} Y\simeq M$. Notice that $[K_{\omega_1},K_{\omega_1}]$ acts trivially on $Y$.

If we take $a\geq 4$ and $b\geq 1$ we get  that
\begin{equation}\label{eq:Q-a-b}
\Qcal_K(a,b)= -\sum_{k=0}^{b-1} \pi_{k\omega_1+\rho} - \sum_{j=0}^{a-4} \pi_{j\omega_2+\rho}.
\end{equation}
In particular the multiplicity of $\pi_\rho$ (the trivial representation) in $\Qcal_K(a,b)$ is equal to $-2$.

\medskip

We now  verify the formula (\ref{eq:QR=0}) in our case. The $\spinc$-bundle $\Scal_M$ is equal to
$\Scal_{K\omega_1}\otimes K\times_{K_{\omega_1}}\Scal_Y$. The corresponding determinant line bundle $\det(\Scal_M)$ satisfies
\begin{eqnarray*}
\det(\Scal_M)&=& K\times_{K_{\omega_1}}\C_{3\omega_1}\otimes K\times_{K_{\omega_1}}\det(\Scal_Y)\\
&=& K\times_{K_{\omega_1}}\C_{2\omega_1}\otimes \Lcal_1^{\otimes -2}.
\end{eqnarray*}
Hence for the $\spinc$-bundle $\Scal_M\otimes \Lcal(a,b)$ we have
\begin{eqnarray*}
\det(\Scal_M\otimes \Lcal(a,b))&=& \det(\Scal_M)\otimes \Lcal(a,b)^{\otimes 2}\\
&=& K\times_{K_{\omega_1}}\C_{(2b+2)\omega_1}\otimes \Lcal_1^{\otimes 2(a+b-1)}.
\end{eqnarray*}
The line bundle $\det(\Scal_M\otimes \Lcal(a,b))$ is equipped with a natural holomorphic and hermitian connection
$\nabla$. To compute the corresponding moment map $\Phi_{a,b}:M\to \kgot^*$, we notice that
 $\Lcal_1=K\times_{K_{\omega_1}}\Lcal^{-1}$ where $\Lcal\to \Pbb^1$ is the prequantum line bundle over $\Pbb_1$
 (equipped with the Fubini-Study symplectic form). If we denote $\varphi :Y\simeq\Pbb^1\to [0,1]$ the function defined by $
 \varphi([z_1,z_2])=\frac{|z_1|^2}{|z_1|^2+|z_2|^2}$, we see that
 $$
 \Phi_{a,b}([k,y])=k\left[\left((b+1)-(a+b-1)\varphi(y)\right) \omega_1\right].
 $$
for $[k,y]\in M$. In this case,  the Kirwan set $\Phi_{a,b}(M)\cap \tgot^*_{\geq 0}$ is the non convex set
$[0,b+1]\omega_1\cup [0,a-2]\omega_2$.

\medskip

We know (see Exemple \ref{exa:ExampleU3}) that the set $\Acal((\kgot_{\omega_1}))$ is equal to the collection of orbits
$K(\frac{1+2n}{2}\omega_i), n\in\N, i=1,2$, and we have $\Qcal_K(K(\frac{1}{2}\omega_i))=0$ and
$\Qcal_K(K(\frac{3+2k}{2}\omega_i))= \pi_{k\omega_i+\rho}$ when $k\geq 0$.

If we apply (\ref{eq:QR=0}), we see that $\pi_{k\omega_1+\rho}$ occurs in $\Qcal_K(a,b)$ only if $\frac{3+2k}{2}<b+1$ :
so $k\in\{0,\ldots, b-1\}$. Similarly $\pi_{j\omega_2+\rho}$ occurs in $\Qcal_K(a,b)$ only if $\frac{3+2j}{2}<a-2$ : so
$j\in\{0,\ldots, a-4\}$. For all this cases the corresponding reduced spaces are points and one could check that the corresponding quantizations are all equal to $-1$ (see (\ref{eq:Q-point})).

In this case, two orbits $\Pcal_i= K(\frac{3}{2}\omega_i), i=1,2$ are the ancestors of the trivial representation in $\Acal((\kgot_{\omega_1}))$, and the multiplicity of the trivial representation in $\Qcal_{K}( M,\Scal_M \otimes \Lcal(a,b))$
is equal to
$$
\QS(M_{\Pcal_1})+\QS(M_{\Pcal_2})=-2.
$$




{\small

}

\end{document}